\newcommand{\bG}{\mathbf{G}}
\newcommand{\bH}{\mathbf{H}}
\newcommand{\bB}{\mathbf{B}}
\newcommand{\bPB}{\mathbf{PB}}
\newcommand{\bT}{\mathbf{T}}
\newcommand{\SO}{\textrm{SO}}
\providecommand{\C}{}
\renewcommand{\C}{{\mathbb C}}
\newcommand{\D}{{\mathbb D}}
\newcommand{\N}{{\mathbb N}}
\newcommand{\R}{{\mathbb R}}
\newcommand{\Z}{{\mathbb Z}}
\newcommand{\Q}{{\mathbb Q}}
\newcommand{\A}{{\mathbb A}}
\newcommand{\SL}{\operatorname{SL}}
\newcommand{\GL}{\operatorname{GL}}
\renewcommand{\Re}{\operatorname{Re}}
  \newcommand {\nf}{{\mathfrak n}}
   \newcommand {\pf}{{\mathfrak p}}
   \newcommand {\sk}{{\mathfrak s}}
\newcommand{\F}{{\mathbb F}}
\newcommand{\scD}{\mathscr D}
\newcommand{\scO}{\mathscr O}
\newcommand{\Mat}{\mathrm{Mat}_{2\times 2}}
\renewcommand{\restriction}{|}
\newcommand{\Nr}{\operatorname{Nr}}
\theoremstyle{definition}
\newtheorem{remark}{Remark}[section]
\newtheorem{defn}{Definition}
\newtheorem{example*}{Example}
\theoremstyle{plain}
\newtheorem{theorem}{Theorem}[section]
\newtheorem{conj}{Conjecture}
\newtheorem{cor}[theorem]{Corollary}
\newtheorem{lemma}[theorem]{Lemma}
\newtheorem{prop}[theorem]{Proposition}
\numberwithin{equation}{section}
\begin{document}

\author{Valentin Blomer}
\author{Farrell Brumley}
\author{Ilya Khayutin}
 
\address{Mathematisches Institut, Endenicher Allee 60, 53115 Bonn, Germany}
\email{blomer@math.uni-bonn.de}

\address{Sorbonne Universit\'e, Universit\'e Paris Cit\'e, CNRS, IMJ-PRG, F-75005 Paris, France}
\email{brumley@imj-prg.fr}
  
\title[The mixing conjecture]{The mixing conjecture under GRH}

\thanks{The first author was supported in part by Germany's Excellence Strategy grant EXC-2047/1 - 390685813 and ERC Advanced Grant 101054336. The second author is supported by the Institut Universitaire de France and ANR-FNS Grant  ANR-24-CE93-0016. The third author has been supported by National Science Foundation  Grant No.\  DMS-1946333, a Sloan Research Fellowship and an AMS Centennial Fellowship.}

\begin{abstract} 
We prove the mixing conjecture of Michel--Venkatesh for the class group action on Heegner points of large discriminant on compact arithmetic surfaces attached to maximal orders in rational quaternion algebras. The proof is conditional on the generalized Riemann hypothesis, and, when the division algebra is indefinite, we furthermore assume the Ramanujan conjecture. We establish the mixing conjecture for the discrete spectrum of the modular surface as well, under the same conditions. Our methods, which provide an effective rate, are based on the spectral theory of automorphic forms and their $L$-functions, together with sieve methods and multiplicative functions.
\end{abstract} 
\subjclass[2010]{Primary: 11F67,  \textcolor{blue}{11F70}, 11M41, Secondary: \textcolor{blue}{11R52}}
\keywords{toric periods, equidistribution, mixing conjecture, Rankin--Selberg $L$-functions, class group, Heegner points
}

\setcounter{tocdepth}{1}  

\maketitle

\section{Introduction}

A celebrated theorem of Duke \cite{Du} and Golubeva--Fomenko \cite{GF} states that the primitive integer points of norm $\sqrt{d}$ equidistribute when projected to the unit sphere $S^2$, as $d\to\infty$ along sequences avoiding local obstructions. More precisely, let $\Z_{\rm prim}^3=\{x\in\Z^3: \gcd(x_1, x_2, x_3) = 1\}$ denote the primitive points of the standard integral lattice in $\R^3$, and put 
\[
R_d = \{x \in \Z_{\rm prim}^3 : \, x_1^2 + x_2^2 + x_3^2 = d\}.
\]
Then $\D=\{d \in \Bbb{N} : d \not\equiv 0, 4, 7 \!\!\pmod 8\}$ is the set of locally admissible integers. The aforementioned authors, following a breakthrough of Iwaniec \cite{Iw} on the estimation of Fourier coefficients of half-integral weight Maass forms, showed that, as $d\in\D$ tends to infinity, the rescaled sets $d^{-1/2} R_d$ equidistribute on $S^2$ with respect to the normalized rotationally invariant measure $m$, with a power-saving rate of convergence. Using ergodic methods, Linnik \cite{Li2} had previously established  a similar result that required an auxiliary congruence condition on the set of eligible integers $d$. In fact, Linnik showed that his congruence condition could be removed under the assumption of the generalized Riemann hypothesis (GRH), and his method provided a logarithmic rate of convergence under the GRH assumption. The work of Duke and Golubeva--Fomenko can therefore be seen as rendering Linnik's result unconditional and strongly improving the rate of convergence. 

In recent years a growing body of work has been devoted to the topic of going beyond uniform distribution of integral points on the sphere and exploring their fine-scale spatial statistics, especially when measured by deviation, variance, and energy estimates \cite{EMV, BRS, HR, Shu}. 
Central to Linnik's approach is the action of the Picard group of the quadratic order of discriminant $-d$ on the set of primitive integral points $R_d$. This paper addresses an  ergodic theoretic property of the class group trajectories on these integral points, namely, the \textit{mixing conjecture} of Michel and Venkatesh \cite[Conjecture 2]{MV}, which we now describe. For convenience of exposition, we shall restrict ourselves to the set $\D^\flat$ of \textit{square-free} locally admissible integers.

\subsection{The mixing conjecture}\label{sec:mix-conj}

The set $R_d$ enjoys some natural symmetries coming from the finite rotation group $\SO_3(\Z)$. It will be more convenient to work modulo these symmetries, setting
\begin{equation}\label{eq:def-Rd}
\mathcal{R}_d= \Gamma 
\backslash R_d\quad\textrm{ and }\quad \mathcal{S}^2=\Gamma 
\backslash S^2,
\end{equation}
where $\Gamma \subseteq \SO_3(\Z)$ is a subgroup of $\SO_3(\Z)$ of index at most 2 as described in \cite[Section 3]{EMV}.
Let $\mu$ denote the pushforward measure of $m$ under the quotient map $S^2\rightarrow \mathcal{S}^2$. The equidistribution of $d^{-1/2}R_d$ on $(S^2,m)$ is then equivalent to that of $d^{-1/2}\mathcal{R}_d$ on $(\mathcal{S}^2,\mu)$.

A key structural observation, which undergirds both the classical cardinality estimate $|R_d| = d^{1/2 + o(1)}$ and Duke's equidistribution theorem, is the existence, for $d>3$, of a free action on $\mathcal{R}_d$ of the class group ${\rm Pic}(\scO_E)$ of the ring of integers $\scO_E$ of the imaginary quadratic field $E=\Q(\sqrt{-d})$. If $d\equiv 1,2\pmod{4}$, then $\mathcal{R}_d$ splits into two class group orbits, and when $d\equiv 3\pmod{8}$ the action is transitive.
This fact was discovered by Gau{\ss} \cite{Ga} and put into a robust algebraic framework by Venkov \cite{Ve}, using the theory of optimal embeddings of quadratic number fields into quaternion algebras. We may then view $\mathcal{R}_d$, equipped with the uniform probability measure, as a measure-preserving dynamical system under the action of ${\rm Pic}(\mathscr{O}_E)$.
 
In the seminal work \cite{Fu}, Furstenberg introduced a theory of arithmetic operations on dynamical systems, which relies on the following key concept. Given two Borel probability spaces $(X,\mu_X)$ and $(Y,\mu_Y)$ on which an abelian group $S$ acts by measure-preserving transformations, a \textit{joining} between them is an $S$-invariant Borel probability measure on the product space $X \times Y$, whose marginals are $\mu_X$ and $\mu_Y$. Joinings of non-isomorphic systems are related to questions of simultaneous equidistribution \cite{EL}. By contrast, a certain class of self-joinings of a single system is related to the ergodic theoretic notion of mixing. Indeed, given $(X,\mu_X)$ as above, and an element $s\in S$, the associated \textit{off-diagonal} self-joining $(X\times X,\mu_{X,s}^\Delta)$, supported on the graph of $s$, is defined by the rule $\mu_{X,s}^\Delta(A,B)=\mu_X(A\cap s^{-1} B)$. With $S$ assumed to be infinite, the transformation $s$ is mixing on $X$ precisely when the sequence of off-diagonals $\{\mu_{X,s^n}^\Delta\}_{n\in\N}$ converges to the trivial joining $\mu_X\otimes\mu_X$.

In the setting of the Picard group action on Linnik points on the sphere, for $d\in\D^\flat$ and $[\sk]\in {\rm Pic}(\scO_E)$, the set
\begin{equation}\label{eq:RdDelta}
\mathcal{R}_d^\Delta([\sk])=\{( [x] , [\sk].[x]) \mid [x]\in \mathcal{R}_d\},
\end{equation}
equipped with the diagonal action of the class group ${\rm Pic}(\scO_E)$, defines an off-diagonal self-joining of $\mathcal{R}_d$. A beautiful conjecture of Michel and Venkatesh \cite{MV} states that, although the acting group here is finite, the rescaled sequence $d^{-1/2}\mathcal{R}_d^\Delta([\sk])$ should converge to the trivial self-joining $(\mathcal{S}^2\times \mathcal{S}^2,\mu\otimes\mu)$, as the size of the group $|{\rm Pic}(\scO_E)|$, as well as the ``complexity'' of the ideal class $[\sk]$, go to infinity. To be more precise, let $q$ denote the smallest norm of an integral ideal representing $[\sk]$. The following is the mixing conjecture of the title.

\begin{conj}[Michel--Venkatesh]\label{MV-conj}
The set $d^{-1/2} \mathcal{R}_d^\Delta([\sk])$ equidistributes relative to the product measure $\mu\times\mu$ on $\mathcal{S}^2\times\mathcal{S}^2$ provided that $q\rightarrow \infty$ as $d \rightarrow \infty$ along $\D^\flat$.
\end{conj}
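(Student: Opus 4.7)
The plan is to reduce Conjecture~\ref{MV-conj} to a bound on a twisted moment of Rankin--Selberg central $L$-values, and then to invoke GRH (together with Ramanujan in the indefinite case) to establish that bound.

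By Weyl's criterion applied to the Peter--Weyl basis of $L^2({\sf S}^2,\mu)$, the problem is equivalent to proving that for any pair of Hecke--Laplace eigenfunctions $\phi,\psi$ on ${\sf S}^2$, at least one of vanishing mean, the normalized correlation
\[
\Sigma_d(\phi,\psi;[\sk]) := \frac{1}{|{\sf R}_d|}\sum_{[x]\in{\sf R}_d}\phi(d^{-1/2}[x])\,\overline{\psi(d^{-1/2}[\sk].[x])}
\]
tends to zero as $d\to\infty$ through $\D^\flat$ with $q\to\infty$. Fix a base point $x_0\in{\sf R}_d$; by Gau{\ss}--Venkov the set ${\sf R}_d$ is (up to the mild $2$-torsion ambiguity) a principal homogeneous space for $H={\rm Pic}(\scO_E)$. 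Fourier analysis on $H$ then gives
\[
\Sigma_d(\phi,\psi;[\sk]) = \sum_{\chi\in\widehat{H}} \overline{\chi([\sk])}\, P_\phi(\chi)\,\overline{P_\psi(\chi)},
\]
where $P_\phi(\chi)=|H|^{-1}\sum_{h\in H}\overline{\chi(h)}\phi(hx_0)$ is the Heegner period of $\phi$ against the class group character $\chi$. The contribution of $\chi=1$ is, by Duke's theorem, asymptotic to the product of means of $\phi$ and $\psi$, and hence vanishes under our mean-zero hypothesis.

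For $\chi\ne 1$, the essential analytic input is Waldspurger's formula, which expresses $|P_\phi(\chi)|^2$, up to explicit local and archimedean factors and a $d^{-1/2}$ scaling, as the central value of a degree four Rankin--Selberg $L$-function $L(1/2,\pi_\phi\otimes\pi_\chi)$, where $\pi_\chi$ is the theta lift of $\chi$ from the CM field $E$. Under GRH one has Lindel\"of-consistent control of each such central value, which in particular gives $|P_\phi(\chi)P_\psi(\chi)|\ll d^{-1/2+\varepsilon}$. A naive triangle inequality over $\widehat{H}$, however, only produces a bound of order $d^{\varepsilon}$, not the required $o(1)$; genuine cancellation from the oscillatory factor $\overline{\chi([\sk])}$ must be extracted.

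This last step is the main obstacle. The strategy is to open each $L(1/2,\pi_\phi\otimes\pi_\chi)$ via its approximate functional equation as a truncated sum of Hecke eigenvalues $\lambda_\phi(\mathfrak{a})$ over integral ideals of $E$ of norm $\lesssim d$, and to eliminate the outer sum over $\chi$ by orthogonality of class group characters. What remains is a double sum over pairs $(\mathfrak{a},\mathfrak{b})$ of integral ideals of $E$ satisfying $[\mathfrak{a}\mathfrak{b}^{-1}]=[\sk]$. The hypothesis that every ideal representing $[\sk]$ has norm at least $q$ forces $\max(\Nm(\mathfrak{a}),\Nm(\mathfrak{b}))\ge \sqrt{q}$, confining the relevant pairs to a thin arithmetic regime in which GRH-driven cancellation in Hecke eigenvalues, combined with Linnik-style large-sieve inputs, yields an effective power saving as $q\to\infty$. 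In the indefinite case, Ramanujan's conjecture is needed to rule out losses from non-tempered archimedean parameters and to justify uniform spectral summation over $(\phi,\psi)$ via Sobolev estimates on the compact quaternionic Shimura surface; synthesising these bounds back through the spectral decomposition then delivers the effective rate claimed in the theorem.
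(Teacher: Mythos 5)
Your reduction to the correlation sum $\Sigma_d(\phi,\psi;[\sk])$, the Fourier expansion over $\widehat{H}={\rm Pic}(\scO_E)^\vee$, the treatment of the trivial character via Duke's theorem, and the intention to use Waldspurger and the approximate functional equation all correctly describe the paper's opening moves (Sections 2.4--2.5 and 5). However, there are two serious gaps that would prevent the proposal from closing the argument, plus a structural error in the sparse-sum reduction.

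First, and most fundamentally, the proposal presents \emph{only} the large-$q$ argument. Opening the Rankin--Selberg $L$-function and applying a Shiu--Nair/Barban-type sieve produces the sparse sum $\sum_{Q(x,y)\le Y}|\lambda_\pi(Q(x,y))|$, where $Q=Q_{[\sk]}$ is the reduced form of discriminant $-D$ attached to $[\sk]$ and $Y\asymp D$. That sieve is effective only when $Q$ is sufficiently ``round,'' i.e., when its minimum $q$ satisfies $q \ge D^{1/6+\varepsilon}$ (Proposition 6.4 requires the condition \eqref{sieve}, and even conjecturally one cannot hope to do better than $q \ge D^{\varepsilon}$). But the hypothesis of the conjecture only says $q\to\infty$, and $q$ may grow as slowly as $\log D$. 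In that regime the binary form is extremely skewed and the sieve estimate collapses. The paper addresses this with a \emph{completely separate} argument: for $q \le D^{1/2-\varepsilon}$ one identifies $H_D^\Delta([\sk])$ with a Heegner packet of level $N=\Nr\,\nf$ on an intermediate Hecke correspondence $Y_0^\Delta(N)$, spectrally expands across $L^2(Y_{K_0(N)})$ (Proposition \ref{lemma-spectral-decomp}), and estimates the resulting triple products and twisted toric periods via Ichino and Waldspurger with explicit local-factor bounds. Your proposal contains nothing corresponding to this spectral/small-$q$ argument, and the two ranges are both needed to cover $q\to\infty$.

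Second, the off-diagonal case is not handled. When the cuspidal representations $\pi_\phi$ and $\pi_\psi$ generated by $\phi$ and $\psi$ are distinct, Waldspurger's theorem expresses $|P_\phi(\chi)|^2$, not $P_\phi(\chi)$, as an $L$-value; the product $P_\phi(\chi)\overline{P_\psi(\chi)}$ therefore carries an uncontrolled sign (a ``square root of a Waldspurger period'') and cannot be opened via an approximate functional equation, so the orthogonality-over-$\chi$ step is not available. The paper sidesteps this by applying absolute values and invoking the main theorem of \cite{BB}, which under GRH bounds $\sum_\chi |\mathscr{W}_\scD(f_1,\chi)\mathscr{W}_\scD(f_2,\chi)|$ by $(\log D)^{-1/4+\varepsilon}$ \emph{uniformly in $[\sk]$} and with no use of $q\to\infty$. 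This is a substantial external input (a fractional moment bound for Rankin--Selberg $L$-values), not something the proposal supplies or replaces. Finally, a smaller issue: in the diagonal case $\phi=\psi$, where your opening step does make sense, orthogonality over $\chi$ collapses the expression to a \emph{single} sum over ideals $\mathfrak{a}$ with $[\mathfrak{a}]=[\sk]$ (equivalently over values of $Q_{[\sk]}$), with only an auxiliary $m$-sum from the $\chi_E$-factor of the degree-four $L$-function. Your claimed double sum over pairs $(\mathfrak{a},\mathfrak{b})$ with $[\mathfrak{a}\mathfrak{b}^{-1}]=[\sk]$ corresponds to a second moment $\sum_\chi \bar\chi(\sk)|P_\phi(\chi)|^2|P_\psi(\chi)|^2$, which is not what $\Sigma_d$ is.
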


To better understand the statement of the mixing conjecture, it is helpful to restrict the action of the class group on $\mathcal{R}_d$ to certain monogenic subgroups, as in \cite{EMV}. Fix an odd prime $p$ and let $\D^\flat(p)$ be the subset of $d\in\D^\flat$ for which $p$ splits in $E=\Q(\sqrt{-d})$. After chosing a prime ideal $\mathfrak{p}$ lying over $p$, we may then consider the action of $\Z$ on $\mathcal{R}_d$ given by taking integral powers of the fixed ideal class $[\mathfrak{p}]\in {\rm Pic}(\scO_E)$. For every $d\in\D^\flat(p)$ and natural number $n\in\N$ we may consider the set of trajectories $\{ ([\mathfrak{p}]^i.[x] )_{ i=0,1,\ldots ,n} \mid  [x] \in \mathcal{R}_d\}$ in $\mathcal{R}_d$. Then $\mathcal{R}_d^\Delta([\mathfrak{p}]^n)$, as defined in \eqref{eq:RdDelta}, consists of all starting points and their corresponding endpoints. If these trajectories are sufficiently random, the former should retain very little information of the latter. In this set-up one  might say that the sequence $\mathcal{R}_d^\Delta([\mathfrak{p}]^n)$, $d \rightarrow \infty$, with the diagonal $[\mathfrak{p}]^\Z$ action is \textit{mixing}, if the rescaled set $d^{-1/2}\mathcal{R}_d^\Delta([\mathfrak{p}]^n)$ equidistributes in the product space $(\mathcal{S}^2\times\mathcal{S}^2,\mu\times\mu)$. A necessary condition for this to hold is that the length of the trajectories should go to infinity with $d\in\D^\flat(p)$. This length\footnote{Note that the endpoint $[\mathfrak{p}]^n.[x]$ depends on $n$ only through its class modulo the order of $[\mathfrak{p}]$ in ${\rm Pic}(\scO_E)$.} is given by the unique representative $n_{[\mathfrak{p}]}$ in $\{0,\ldots ,{\rm ord}([\mathfrak{p}])-1\}$ of the congruence class of $n$ modulo ${\rm ord}([\mathfrak{p}])$. In other words, we require $\log_p {\rm N}\mathfrak{q}\rightarrow\infty$, where $\mathfrak{q}=\mathfrak{p}^{n_{[\mathfrak{p}]}}$ is the integral ideal of smallest norm representing $[\mathfrak{p}^n]$.

Returning to the setting of Conjecture \ref{MV-conj}, a fundamental obstacle to mixing is the possible existence of low degree Hecke correspondences on which the sets $\mathcal{R}_d^\Delta([\sk])$ could accumulate. Interpreted adelically, the invariant measures defined on these Hecke correspondences define  off-diagonal self-joinings of $(\mathcal{S}^2,\mu)$. For example, when $[\sk]$ is the trivial class, $d^{-1/2}\mathcal{R}_d^\Delta([\sk])$ is contained in the diagonal copy of the sphere $\Delta \mathcal{S}^2\subset\mathcal{S}^2\times\mathcal{S}^2$. The condition on the minimal norm of an ideal representing $[\sk]$ in Conjecture \ref{MV-conj} is therefore a necessary condition for equidistribution in the product, as otherwise a subsequence of shifted diagonal sets will stay forever trapped in a fixed intermediate subvariety.

In this paper, we prove Conjecture \ref{MV-conj} under the assumption of the generalized Riemann hypothesis (GRH).

\begin{theorem}[Main Theorem, first version]\label{thm:sphere}
Assume GRH. Then Conjecture \ref{MV-conj} holds.
\end{theorem}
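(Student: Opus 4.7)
The plan is to establish equidistribution via Weyl's criterion on ${\sf S}^2 \times {\sf S}^2$, testing on tensor products of spherical harmonics, and to control the resulting Weyl sums via the Waldspurger period formula together with an averaged $L$-function argument that exploits the shift. By density of tensor products $Y_1 \otimes Y_2$ of spherical harmonics on $C({\sf S}^2 \times {\sf S}^2)$, it suffices to show that for every pair of nontrivial spherical harmonics of bounded degree, the Weyl sum
\begin{equation*}
W_{[\sk]}(Y_1, Y_2) := \sum_{[x] \in {\sf R}_d} Y_1(d^{-1/2}[x])\, \overline{Y_2(d^{-1/2}[\sk].[x])}
\end{equation*}
is $o(|{\sf R}_d|)$, with an effective power saving in $d \in \D^\flat$ and in $q = \mathrm{N}\mathfrak{q} \to \infty$. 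The degenerate cases with a constant $Y_i$ collapse to Duke's equidistribution of $d^{-1/2}{\sf R}_d$, which holds under GRH by Linnik's classical method. Parametrizing each ${\rm Pic}(\scO_E)$-orbit of ${\sf R}_d$ from a chosen base point $[x_0]$ and applying Fourier inversion on the finite group $G := {\rm Pic}(\scO_E)$ converts the shift into a character phase:
\begin{equation*}
W_{[\sk]}(Y_1, Y_2) \;=\; \frac{1}{|G|} \sum_{\chi \in \widehat{G}} \bar\chi([\sk])\, P_\chi(Y_1)\, \overline{P_\chi(Y_2)},
\end{equation*}
where $P_\chi(Y) := \sum_{[\af] \in G} \bar\chi([\af])\, Y([\af].[x_0])$ is the $\chi$-twisted toric period.

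By Waldspurger's formula for the positive definite quaternion algebra underlying ${\sf S}^2$, the squared period $|P_\chi(Y_i)|^2$ is proportional, up to explicit archimedean and ramified local factors, to the central value $L(1/2, \pi_{i,E} \otimes \chi)/L(1, \pi_i, \mathrm{Ad})$, where $\pi_i$ is the Jacquet--Langlands transfer to ${\rm PGL}_2/\Q$ of the representation generated by $Y_i$, and $\pi_{i,E}$ is its base change to $E$. Under GRH, each such value obeys the Lindel\"of bound $L(1/2, \pi_{i,E} \otimes \chi) \ll (d q_\chi)^\eps$, yielding the pointwise estimate $|P_\chi(Y_i)| \ll d^{1/4+\eps}$. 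Taking absolute values inside the $\chi$-sum kills the shift phase $\bar\chi([\sk])$ and reproduces only the trivial bound $|W_{[\sk]}| \ll |{\sf R}_d| \, d^\eps$. The entire saving must be extracted from cancellation in the character sum produced by the shift.

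The main analytic idea is to expand $P_\chi(Y_1)\overline{P_\chi(Y_2)}$---or, after a carefully chosen Cauchy--Schwarz step that preserves the shift phase, its absolute value squared---as a Dirichlet polynomial $\sum_{\mathfrak{n}} c(\mathfrak{n})\,\chi(\mathfrak{n})$ over integral ideals $\mathfrak{n}$ of $\scO_E$, by opening up the central $L$-values via an approximate functional equation truncated (under GRH) at length polynomial in $d q_\chi$. Inserting this expansion back into $W_{[\sk]}$ and exchanging orders of summation, the inner sum $\sum_{\chi \in \widehat{G}} \chi(\mathfrak{n})\,\bar\chi([\sk])$ collapses by orthogonality to the diagonal condition $[\mathfrak{n}] = [\sk]$ in $G$. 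Since every integral ideal in the class $[\sk]$ has norm at least $q$, this diagonal is sparser than the untwisted one by a factor of order $q^{-1}$; after applying GRH-based estimates for partial sums of the Hecke eigenvalues $\lambda_{\pi_{i,E}}(\mathfrak{n})$ over the sparse diagonal, this propagates to the desired saving of a power of $q^{-1}$ over the trivial bound. The chief technical obstacle is that Waldspurger's formula produces $|P_\chi(Y_i)|^2$ rather than the cross-product $P_\chi(Y_1)\overline{P_\chi(Y_2)}$ directly, so the needed Dirichlet expansion must be produced either through a triple-product or Ichino-type identity on $B \times B$, or through a fourth-moment analysis that retains the shift phase after Cauchy--Schwarz. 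Additional care is required to control uniformly the spectral parameters of the $\pi_i$, the conductor of $\chi$, and the ramified local factors from Waldspurger's formula---in particular, primes dividing the discriminant and primes of ramification of $B$---and to balance the approximate functional equation cutoff against the sparsity gained from the shift, which together constitute the bulk of the technical work.
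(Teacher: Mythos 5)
Your opening steps are the right ones and match the paper's: Weyl's criterion on $Y_1\otimes Y_2$, Parseval over ${\rm Pic}(\scO_E)^{\vee}$ (your formula for $W_{[\sk]}$ is exactly \eqref{eq:shifted-prod}), and Waldspurger's formula. But the argument as outlined has two genuine gaps.

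First, the off-diagonal case. You correctly flag that Waldspurger produces $|P_\chi(Y_i)|^2$ rather than the cross term $P_\chi(Y_1)\overline{P_\chi(Y_2)}$ when $\pi_1\neq\pi_2$, and you propose circumventing this via a triple-product identity on $B\times B$ or a fourth-moment Cauchy--Schwarz retaining the phase. Neither is how the paper proceeds, and neither is obviously workable: a $B\times B$ triple-product identity of the required shape is not known, and squaring out destroys the distinction between the two representations. The paper instead drops the phase entirely in this case: one takes absolute values in \eqref{eq:shifted-prod} and quotes the fractional-moment decorrelation result of \cite{BB}, which shows under GRH that the two families $\{L(1/2,\pi_1\times\chi)\}_\chi$, $\{L(1/2,\pi_2\times\chi)\}_\chi$ are ``statistically independent'' enough to give $\sum_\chi |\mathscr{W}(f_1,\chi)\mathscr{W}(f_2,\chi)| \ll (\log D)^{-1/4+\varepsilon}$, uniformly in $[\sk]$. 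No shift phase and no fourth moment are needed; this is a fundamentally different mechanism from what you propose, and one you will not reinvent from the Waldspurger formula alone.

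Second, and more seriously, the quantitative claim that orthogonality plus sparsity of the class $[\sk]$ ``propagates to the desired saving of a power of $q^{-1}$'' is wrong, and in fact the entire sieve route only works for $q$ \emph{large}. After opening the $L$-values and applying orthogonality, the sum you must control is a sum of $|\lambda_\pi(\mathcal{Q}(x,y))|$ over lattice points $(x,y)$ with $\mathcal{Q}(x,y)\ll D$, where $\mathcal{Q}=Q_{[\sk]}$ is the reduced form of discriminant $-D$ with minimum $q$; there are $\asymp D^{1/2}$ such lattice points regardless of $q$, so no power of $q^{-1}$ appears. The actual saving comes from the fact that $|\lambda_\pi(n)|<1$ on average (Sato--Tate information, implemented via a sieve as in \cite{Kh} and \cite{Ho}), and it is only a \emph{logarithmic} saving $(\log D)^{-1/18+\varepsilon}$. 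Moreover the sieve requires the sequence $\{\mathcal{Q}(x,y)\}$ to be equidistributed in arithmetic progressions, which forces $\mathcal{Q}$ to be well-rounded: concretely one needs $q\geq D^{1/6+\varepsilon}$ (a van der Corput lattice-point estimate). For $q$ below this threshold the sieve approach simply fails, and the paper must develop an entirely separate argument (Part III): identify $H_D^\Delta([\sk])$ with a Heegner packet on the Hecke correspondence $Y_0^\Delta(N)$, spectrally expand $f_1\cdot f_2(N\cdot)$ across an orthonormal basis of level $N$, and bound each term by a triple-product and a Waldspurger period via Ichino's formula and the Lindel\"of hypothesis, paying a factor $N^{1/2}$ which is admissible precisely when $q\leq D^{1/2-\varepsilon}$. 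Without this second regime, the proof does not cover all $q\to\infty$ and the theorem is not established.
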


For the most general statement of our result, which includes a quantitative rate of convergence, see Theorem \ref{main-thm}.  Our methods pass through the spectral theory of automorphic forms, various period formulae and moments of $L$-functions, as well as sums of multiplicative functions along multivariate polynomial sequences, and information toward the Sato--Tate equidistribution law for Hecke eigenvalues. In particular, we exploit our two recent works \cite{BB,Kh}. We do not use any ergodic theoretic methods or input; for example, the relevant estimates taken from \cite{Kh} are purely analytic number theoretic. Nevertheless, the joinings theorem of Einsiedler and Lindenstrauss \cite{EL} inspired the structure of our argument.

\subsection{Previous results}\label{sec:prev-results}
Two important works have established partial progress toward Conjecture \ref{MV-conj}. We briefly review them here, in order to put Theorem \ref{thm:sphere} in greater context.

Firstly, the aforementioned paper \cite{EMV} of Ellenberg, Michel, and Venkatesh proved Conjecture \ref{MV-conj} if both of the following conditions hold:
\begin{enumerate}[leftmargin=4.2\parindent]
\smallskip
\item[\textit{(EMV1)}] $d\in\D^\flat(p)$, where $p>2$ is a fixed auxiliary prime;
\smallskip
\item[\textit{(EMV2)}] there is a fixed $\eta>0$ such that $q\leq d^{1/2-\eta}$ for all $q$ and $d$.
\smallskip
\end{enumerate}
The condition \textit{(EMV1)} comes from an application of Linnik's ergodic method. The condition \textit{(EMV2)} assures that the equidistribution of $\mathcal{R}_d^\Delta([\sk])$ in a low degree Hecke correspondence can be bootstrapped, via a spectral gap estimate, to equidistribution in $\mathcal{S}^2\times\mathcal{S}^2$.

More recently, in a novel combination of ergodic theory and analytic number theory \cite{Kh}, the last named author removed the second condition \textit{(EMV2)}, under the following assumptions:
\begin{enumerate}
\smallskip
\item[\textit{(Kh1)}] $d\in\D^\flat(p_1)\cap\D^\flat(p_2)$ where $p_1, p_2$ are auxiliary distinct odd primes;
\smallskip
\item[\textit{(Kh2)}] the absence of Siegel zeros for Dedekind zeta functions. 
\smallskip
\end{enumerate}
The double Linnik condition \textit{(Kh1)} allows for the use of the powerful joinings theorem for higher rank diagonalizable actions of Einsiedler and Lindenstrauss \cite{EL}. The second condition \textit{(Kh2)} allows for a successful treatment of the delicate range excluded from \textit{(EMV2)}, namely, when $q$ is close to $d^{1/2}$. This range is treated through an extension of Nair's classical estimates on sums of multiplicative functions along polynomial sequences. Such sums arise in \cite{Kh} as a geometric expansion of a  relative trace formula designed to capture the correlation of $\mathcal{R}_d^\Delta([\sk])$ with a fixed Hecke correspondence.

Let us compare these results with  Theorem \ref{thm:sphere}. The methods of \cite{EMV} could in principle be adapted to dispense with condition \textit{(EMV1)} under the assumption of GRH, just as Linnik did in the original equidistribution problem. On the other hand, their methods are incapable of removing condition \textit{(EMV2)}, even conditionally on GRH. The more apt comparison is rather to \cite{Kh}, which, like Theorem \ref{thm:sphere}, covers all ranges of $q$. Under the GRH assumption, which is stronger than \textit{(Kh2)}, our main theorem can be seen as removing the congruence conditions \textit{(Kh1)} of \cite{Kh}, while also providing an effective rate. It is worth noting here that the theorem in \cite{Kh} is ineffective even if one assumes GRH, because there is no known effective version of the joinings rigidity theorem of Einsiedler and Lindenstrauss \cite{EL}.   

\subsection{More general formulation}\label{sec:more-gen-form}
For expository purposes of the introduction, we have intentionally confined our discussion to the case of integral points on the sphere, but Duke's theorem and subsequent developments are best viewed as concrete instances of a more general problem. Indeed, from the point of view of algebraic groups, the theorem of Duke (as well as that of Duke and Schulze-Pillot \cite{DSP}) can be rephrased as the equidistribution of an adelic torus orbit of large fundamental discriminant inside the adelic quotient of an inner form $\bG$ of ${\rm PGL}_2$ associated with a definite quaternion algebra over $\Q$. From this perspective, there is a close connection to other celebrated equidistribution results (``Problems of Linnik type'', as coined in \cite{MV}), such as Duke's theorem \cite{Du} on Heegner points and closed geodesics on the modular curve, where one takes $\bG=\mathbf{PGL}_2$.

This adelic framework is therefore the more natural setting for Conjecture \ref{MV-conj}, as well as the previous work on it surveyed in \S \ref{sec:prev-results}. Our main theorem is in fact stated in this general language in Theorem \ref{main-thm}, after the necessary notation has been introduced. We point out, however, that there are two hypotheses that appear in Theorem \ref{main-thm} that are absent in Theorem \ref{thm:sphere}, which is specialized to the sphere, whose underlying quaternion algebra is the Hamiltonian quaternions $B^{(2,\infty)}$. Namely, when $\bG=\mathbf{PGL}_2$ we restrict to test functions in the discrete spectrum (as was done in \cite{BB}), and more generally when $\bG={\bf PB}^\times$ with ${\bf B}$ indefinite we assume the Ramanujan conjecture for $\mathbf{PGL}_2$.

\subsection{Mixing as ergodic theoretic Andr\'e--Oort}\label{sec:A-O}

We close the introduction with a brief comparison of the mixing conjecture with the Andr\'e--Oort conjecture. We shall describe this link in greater detail in \S \ref{sec:Hecke-explanation}. 

In the closely related setting of modular and Shimura curves the mixing conjecture is a strong form of the Andr\'e--Oort conjecture for the product of two modular (or Shimura) curves, where Zariski density is replaced by equidistribution. The first major progress towards the  Andr\'e--Oort conjecture in this setting (now a theorem, by Andr\'e \cite{A}) was made by Edixhoven \cite{Edix}, conditionally under the assumption of  GRH.  Our general result Theorem \ref{main-thm} can therefore be viewed as roughly analogous to that of Edixhoven.

\subsection*{Acknowledgement} We would like to express particular thanks to the referees whose careful and detailed report led to substantial improvements in the  presentation of this paper. 

\section{Overview of proof}\label{sec:pf-overview}

Before describing our approach to Theorem \ref{thm:sphere}, we state a more general result in Theorem \ref{thm:2nd-version} below, of which the mixing conjecture for the sphere is but a special case. We make an effort to minimize the notational overhead in this section, which we develop in ample detail in \S \ref{sec:prelim} and \S \ref{sec:adelic-mixing-main-theorem}. Moreover, in the course of describing our method, in \S \ref{sec:Hecke-explanation}, we shall again reformulate our main result in the special setting of Shimura curves, in the spirit of the measure theoretic version of the Andr\'e--Oort conjecture alluded to in \S \ref{sec:A-O}; this is Theorem \ref{thm:A-O-style}. In any case, the more general statement of our main theorem is given later, in Theorem \ref{main-thm}, after the requisite notation has been set up.

\subsection{More general version of main theorem}\label{sec:more-gen-version}

To motivate the more general version, note that the finite quotient of the sphere $\mathcal{S}^2=\Gamma\backslash S^2$ from \eqref{eq:def-Rd} can be realized as an adelic double quotient, in the following way. Let $B$ be the quaternion algebra over $\Q$ of Hamilton quaternions and denote by $R\subset B$ the Hurwitz quaternions; then $R$ is the unique maximal order in $B$,  up to $B^\times$-conjugacy. Setting $\bG = \bPB^{\times}$ to be the affine algebraic group defined over $\Q$ representing the projective unit group of $B$, i.e., $\bG(\Q)=\Q^\times\backslash B^\times$, we have $\mathcal{S}^2=\bG(\Q)\backslash\bG(\A)/K$. Here $K=K_\infty K_f$, $K_f$ is the closure in $\bG(\A_f)$ of $R^\times/{\pm 1}$, and $K_\infty\simeq\SO(2)$ is a maximal compact torus of $\bG(\R)\simeq\SO(3)$.

This description naturally generalizes to $\bG$ representing the projective group of units of an arbitrary quaternion algebra $B$ over $\Q$, along with a choice of a maximal order $R\subset B$. For $B$ indefinite, i.e., $\mathbf{B}(\Bbb{R}) = \Mat(\R)$, this recovers a wide class of Shimura curves, and for $B$ definite we obtain a finite union of (finite quotients of) ellipsoids. We denote any such space by $Y_K$ --- in this section we shall abbreviate this to $Y$ --- and call it a \textit{quaternionic variety}; see \S \ref{sec:Shimura} for more details. We will also denote by ${\rm Ram}_B$  the finite even set of rational places where $B$ ramifies and set $d_B\in\mathbb{N}$ to be the product of all primes in ${\rm Ram}_B$. In the split case we have $d_{\Mat}=1$.

Now, $Y$ admits a notion of ``special point", namely, one whose stabilizer (the Mumford--Tate group) is a torus defined over $\Q$. Such special points are partitioned into packets, each packet being a principal homogeneous space for the Picard group of an imaginary quadratic order $\scO$. One can then define the discriminant of a packet to be $D=|{\rm disc}(\scO)|$; it is \textit{admissible}, in the sense of Lemma \ref{lemma:adm-disc}. Since the level structure $K$ defining $Y$ arises from a maximal order, for every imaginary quadratic field $E$ there are $O_B(1)$ such packets depending on the splitting behavior of primes dividing ${\rm gcd}(D,d_B)$; see Proposition \ref{prop:number-of-packets}. 

We now take $D$ to be an admissible fundamental discriminant and fix such a packet --- a torsor under the class group ${\rm Pic}(\scO_E)$ --- and denote it by $H_D$. This notation is imprecise because, as we have just noted, there can in general be several packets associated to the same discriminant $D$, but we allow for it in the introduction for simplicity. The notation $H_\scD$, which we introduce in \eqref{def:Heegner-packet}, will distinguish these packets.

One can then define a \textit{joint Heegner packet} in $Y\times Y$ as a single orbit of ${\rm Pic}(\scO_E)$ under the diagonal action on a pair of special points from the same packet $(x,y)\in H_D\times H_D$. In this case, we let $[\sk]\in {\rm Pic}(\scO_E)$ be uniquely defined by $y=[\sk].x$ and write $H_D^\Delta([\sk])$ for this joint packet.

For example, when $Y=\mathcal{S}^2$, the set $\mathcal{R}_d$ from \S\ref{sec:mix-conj} is a union of at most two packets $H_D$, where $E=\Q(\sqrt{-d})$. In particular, when $d\equiv 3\pmod 8$ we have $\mathcal{R}_d=H_{\Q(\sqrt{-d})}$ and the set $\mathcal{R}_d^\Delta ([\sk])$ from \eqref{eq:RdDelta} is $H_{\Q(\sqrt{-d})}^\Delta([\sk])$.

Finally, just as in the discussion preceding Conjecture \ref{MV-conj}, we let $q$ denote the smallest norm of an integral ideal representing $[\sk]$.

\begin{theorem}[Main theorem, second version]\label{thm:2nd-version}
Let $B$ be a non-split quaternion algebra over $\Q$. Assume GRH and (when $B$ is indefinite) the Ramanujan conjecture for $\mathbf{PGL}_2$. For every imaginary quadratic field $E$ choose $[\sk] \in {\rm Pic}(\scO_E)$. Then $H_D^\Delta([\sk])$ equidistributes in $Y\times Y$ as $D=|{\rm disc}(E)|\rightarrow\infty$, with a quantitative rate of convergence,  provided that $q$ also tends to infinity with $D$.
\end{theorem}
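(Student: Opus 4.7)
The plan is to follow a Weyl-style equidistribution argument reduced to the analysis of joint toric periods. Spectrally decomposing test functions on $Y\times Y$ into tensors $\phi_1\boxtimes \overline{\phi_2}$, with $\phi_i$ automorphic forms on $\bG(\Q)\backslash \bG(\A)/K$ (restricting to the discrete spectrum when $B$ is indefinite, as in \cite{BB}), it suffices to show that
\[
P([\sk];\phi_1,\phi_2) := \frac{1}{|H_D|}\sum_{x\in H_D}\phi_1(x)\,\overline{\phi_2([\sk].x)}
\]
tends to $0$ as $D,q\to\infty$, whenever at least one of the $\phi_i$ is non-constant. The marginal direction (taking $\phi_2$ constant) is precisely Duke's theorem on $Y$, so the genuinely new input is required for pairs of non-constant forms.

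Since $H_D^\Delta([\sk])$ is an orbit of ${\rm Pic}(\scO_E)$, I would Fourier expand each restriction $\phi_i|_{H_D}$ in class group characters $\chi$, producing twisted toric periods $W_\chi(\phi_i)$. A short computation gives
\[
P([\sk];\phi_1,\phi_2) = \sum_{\chi\in\widehat{{\rm Pic}(\scO_E)}} W_\chi(\phi_1)\,\overline{W_\chi(\phi_2)}\,\chi([\sk])
\]
(up to a conjugation convention). Waldspurger's formula then identifies $|W_\chi(\phi_i)|^2$, up to benign local factors, with the central value $L(\tfrac12,\pi_{\phi_i}\otimes \theta_\chi)/L(1,{\rm ad},\pi_{\phi_i})$, where $\theta_\chi$ is the automorphic induction of $\chi$ to $\GL_2$. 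Under GRH each such $L$-value is controlled within $D^{o(1)}$, and the Ramanujan hypothesis (in the indefinite case) bounds the ramified and archimedean local factors uniformly.

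The crux is to extract genuine cancellation from the twist $\chi([\sk])$: a direct Cauchy--Schwarz over $\chi$ would only recover the product-of-marginals equidistribution already implied by two applications of Duke. Following the strategy of \cite{Kh} combined with the moment machinery of \cite{BB}, I would open one factor via an approximate functional equation, recasting the sum over $\chi$ as a joint sum over $\chi$ and integral ideals $\mathfrak{n}$ of $\scO_E$ weighted by a Hecke eigenvalue $\lambda_{\pi_{\phi_i}}(\mathfrak{n})$ and a phase $\chi([\sk][\mathfrak{n}])$. Orthogonality of class group characters then collapses the $\chi$-sum, restricting $\mathfrak{n}$ to the class $[\sk]^{-1}$; at the level of rational primes, this becomes a sum of multiplicative functions along a polynomial sequence cutting out the relevant coset in ${\rm Pic}(\scO_E)$, whose size and cancellation are governed precisely by $q$.

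The main obstacle I anticipate is the regime in which $q$ is close to $D^{1/2}$, where the required sums of multiplicative functions along multivariate polynomial sequences lie beyond the reach of classical estimates and demand the Nair-type extensions established in \cite{Kh}. The strategy must deploy these estimates \emph{without} the auxiliary splitting-prime condition (Kh1) of that paper, relying instead on GRH (which supersedes (Kh2)) and on the conditional moment bounds of \cite{BB} to handle the resulting spectral averages, including the Hecke-correlated contributions coming from pairs $(\pi_{\phi_1},\pi_{\phi_2})$ sharing small-degree Hecke relations. Because the joinings theorem of \cite{EL} is bypassed entirely, the final bound is effective; assembling the decay in $q$ from the generic range with the GRH-moment bounds in the Hecke-correlated range yields the quantitative rate of convergence asserted.
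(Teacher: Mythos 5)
Your proposal captures, fairly accurately, the \emph{large-$q$} (``arithmetic range'') half of the argument: the Parseval expansion over class group characters, Waldspurger, opening one central value via the approximate functional equation, collapsing the $\chi$-sum by orthogonality to a coset sum, and the resulting sparse sums of Hecke eigenvalues along values of the reduced binary quadratic form $Q_{[\sk]}$, which are then estimated by the Nair-type sieve of \cite{Kh}. This matches the paper's treatment of the diagonal case when $q \geq D^{1/6+\varepsilon}$, and you are right that the off-diagonal case $\pi_{\phi_1}\neq\pi_{\phi_2}$ is handled by the moment result of \cite{BB} for the full range of $q$.

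However, there is a genuine gap: your proof offers no mechanism at all for the \emph{small-$q$} regime. The sieve argument you invoke requires the quadratic form $Q_{[\sk]}$ to be well-rounded (its minimum $q$ must exceed roughly $D^{1/6}$, via van der Corput), precisely because the sequence $Q_{[\sk]}(x,y)$ must be well-distributed in arithmetic progressions for Nair/Shiu-type estimates to apply. You identify the difficulty near $q\approx D^{1/2}$ and correctly point to \cite{Kh}'s Theorem 9.7 there, but the opposite end of the range --- $q$ tending to infinity slowly, say $q\asymp (\log D)^{100}$ or even $q\leq D^{1/6}$ --- is where the sieve breaks down entirely and your proposal simply says nothing. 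The paper handles this with an independent spectral argument: identify the joint packet $H^\Delta_D([\sk])$ with an aligned Heegner packet on the intermediate Hecke correspondence $Y^\Delta_0(N)$ (with $N=\Nr\mathfrak{n}$ for a primitive ideal $\mathfrak{n}$ in the class $[\sk]$), spectrally expand $f_1(x)f_2([\sk].x)$ in an orthonormal basis of $L^2(Y_{K_0(N)})$, and estimate each term via the Ichino--Watson triple product formula and Waldspurger's formula, together with Lindel\"of (a consequence of GRH) to control the resulting central $L$-values. The trivial eigenfunction then contributes $\lambda(N)N^{-1/2}\to 0$. Without this second mechanism --- which corresponds to the other branch of the Gelfand formation $\Delta(\bT)\subset\Delta(\bG)\subset\bG\times\bG$, rather than $\Delta(\bT)\subset\bT\times\bT\subset\bG\times\bG$ --- the theorem is not proved. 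Your remark that (Kh1) is ``replaced'' by GRH and \cite{BB} is also somewhat off: (Kh1) fed the Einsiedler--Lindenstrauss joinings theorem in \cite{Kh}, and what actually replaces that ergodic input here is this spectral small-$q$ argument combined with \cite{BB} for the off-diagonal, not the sieve itself.
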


\begin{remark}\label{rem:always-imag-quad} In the indefinite case, one can also formulate a mixing conjecture for joint packets of closed geodesics in $Y$ (or its unit tangent bundle) associated with the class group of a real quadratic field. Such a variant was described in the context of Duke's theorem in \S\ref{sec:more-gen-form}. We have restricted our attention in this paper to the case of imaginary quadratic fields for simplicity.
\end{remark}

\subsection{Special subvarieties and Heegner points higher level}\label{sec:Hecke-explanation}

As was mentioned after the statement of Conjecture \ref{MV-conj}, the fundamental obstacle to Theorem \ref{thm:2nd-version} is the presence of intermediate subvarieties in which $H_D^\Delta([\sk])$ could find itself trapped. In this section we shall identify these intermediate subvarieties, and interpret the condition that $q\rightarrow\infty$ as the geometric condition, clearly necessary, that $H_D^\Delta([\sk])$ escapes from them.

Now $H_D^\Delta([\sk])$ of course leaves the diagonal copy $Y^\Delta\subset Y\times Y$ the moment $[\sk]$ is non-trivial, and for a quantitative result we require $q\rightarrow \infty$. On the other hand, for every primitive integral ideal $\nf$ of $\scO_E$ representing $[\sk]$, $H_D^\Delta([\sk])$ lies in the graph of the Hecke correspondence $Y_0^\Delta(N)\subset Y\times Y$ of degree $\varphi(N)$, where $N$ is the norm of $\nf$. When $B$ is indefinite, so that $Y$ has the structure of a Shimura curve, such subvarieties are examples of \textit{weakly special} subvarieties (the others being pairs $(x,y)$ of special points, along with $Y\times \{y\}$, $\{x\}\times Y$, with $x$ and $y$ being special). For general $B$, it is known that $H_D^\Delta([\sk])$ lies in the set $H^\Delta(N,D)$ of Heegner points of level $N$ and discriminant $D$ on $Y^\Delta_0(N)$.

More can be said, in fact. Indeed, an integral ideal $\nf\subset\scO_E$ representing $[\sk]$ contains more information than its norm $N$ determining the Hecke correspondence $Y^\Delta_0(N)$. Let us assume momentarily that $N$ verifies the \textit{Heegner hypothesis} relative to $E$, so that all prime factors $p\mid N$ split in $E$: $p\scO_E=\pf^+\pf^-$, with $\pf^+\neq\pf^-$. Then $\nf$ is determined by its norm $N$ along with the choice of a prime ideal $\pf^+$ lying over every $p\mid N$. By the theory of orientations, this extra information corresponds to a choice of class group orbit on $H^\Delta(N,D)$:
\begin{equation}\label{eq:HE-decomp}
H^\Delta(N,D)=\bigsqcup_{\substack{\nf\subset\scO_E\\\Nr\nf=N}}H^\Delta(\nf,D).
\end{equation}
We can then assert the following more precise statement: for every primitive integral ideal $\nf$ representing the class $[\sk]$, whose norm $N$ verifies the Heegner hypothesis relative to $E$, the joint packet $H_D^\Delta([\sk])$ is equal to the Heegner packet $H^\Delta(\nf, D)$ of level $N$ and discriminant $D$ in $Y_0^\Delta(N)$. The parametrization \eqref{eq:HE-decomp} of Heegner points of fixed level and fundamental discriminant on quaternionic varieties is explained from the adelic point of view and without the simplifying assumption of the Heegner hypothesis in \S \ref{sec:oriented-packets}. Their relation with Hecke correspondences is described in \S\ref{sec521}.

\begin{example*}
We review the above facts in the classical setting of Heegner points on modular curves, where $B$ is the split algebra $\Mat(\Q)$.

In this case, the space $Y$ is the open modular curve $\SL_2(\Z)\backslash\mathbb{H}$, parametrizing isomorphism classes of elliptic curves $\mathscr{E}$ over $\C$. The \textit{level one Heegner points of discriminant $-D$} on $Y$ are those $\mathscr{E}$ having complex multiplication by $\scO_E$. More generally, for a positive integer $N$ we let $Y_0(N)$ be the space of triples $(\mathscr{E},\mathscr{E}',\phi)$, where $\mathscr{E},\mathscr{E}'\in Y$ and $\phi: \mathscr{E}\rightarrow \mathscr{E}'$ is a cyclic degree $N$ isogeny. The \textit{Heegner points of level $N$ and discriminant $-D$} on $Y_0(N)$ are those $(\mathscr{E},\mathscr{E}',\phi)\in Y_0(N)$ for which both $\mathscr{E}$ and $\mathscr{E}'$ have complex multiplication by $\scO_E$. Under the Heegner hypothesis, this set has cardinality $2^{\omega(N)}|\operatorname{Pic}(\scO_E)|$, where $\omega(N)$ denotes the number of (distinct) prime factors of $N$, and decomposes naturally into $2^{\omega(N)}$ disjoint sets $H(\nf,D)$, parametrized by the ideals $\nf\subset\scO_E$ of norm $N$, on which ${\rm Pic}(\scO_E)$ acts simply transitively. Specifically, $H(\nf,D)$ consists of those $(\mathscr{E},\mathscr{E}',\phi)\in H(N,D)$ such that $\ker\phi$ is annihilated by $\nf$. For a transcription of these facts into the classical language of classes of primitive binary quadratic forms, see \cite{GKZ}.

Note that in this case, given special points $\mathscr{E}$ and $\mathscr{E}'$ in $H_D$, the minimal norm $q$ of an integral ideal representing the class $[\sk]\in {\rm Pic}(\scO_E)$ taking $\mathscr{E}'$ to $\mathscr{E}$, as in the statement of Theorem \ref{thm:2nd-version}, is the smallest degree of a cyclic isogeny between them.

We now relate these notions to the preceding more general set-up. The graph of the Hecke correspondence at $N$ is the image $Y^\Delta_0(N)$ of the map $Y_0(N)\rightarrow Y\times Y$ which forgets $\phi$. The curves $Y_0(N)$ and $Y_0^\Delta(N)$ are birationally equivalent, but not necessarily isomorphic, as $\phi$ is not always uniquely determined by the pair $(\mathscr{E},\mathscr{E}')$; see Lemma \ref{lem:birational}. On the other hand, $H^\Delta(N,D)$ and $H^\Delta(\nf,D)$ are the isomorphic images in $Y\times Y$ of $H(N,D)$ and $H(\nf,D)$, respectively, under this same map (see Lemma \ref{lem:shifted-diag-torus} and \S \ref{sec521}), yielding the decomposition \eqref{eq:HE-decomp}.
\end{example*}

Returning to the general case, we may now reinterpret the condition on $q$ in Conjecture \ref{MV-conj} in terms of the various inclusions $H_D^\Delta([\sk])\subset Y_0^\Delta(N)$. Namely, that condition says that the minimal degree of a Hecke correspondence containing $H_D^\Delta([\sk])$ should tend to infinity with $D$. Equivalently, the joint Heegner packets $H_D^\Delta([\sk])$ ``escape'' the intermediate subvariety $Y^\Delta_0(N)$ as $D$ gets large, in the sense that each fixed Hecke correspondence $Y^\Delta_0(N)$ contains only finitely many $H_D^\Delta([\sk])$. This allows us to state the following version of our main theorem, where we impose $B$ indefinite in order to access the language of weakly special subvarieties.

\begin{theorem}[Main Theorem: third version]\label{thm:A-O-style}
Let $Y$ be a compact Shimura curve associated to a maximal order in an indefinite quaternion algebra. Let $(x_i,y_i)\in Y\times Y$ be a sequence of pairs of special points in $Y$, such that $x_i$ and $y_i$ are both defined over the Hilbert class field of the same imaginary quadratic field $E_i$. Assume the sequence $(x_i, y_i)$ is \textit{strict}, i.e., for every weakly special subvariety $Z \subset Y\times Y$ defined over $\Q$, we have $(x_i, y_i) \not\in Z$ for $i\gg_Z 1$.

Then, under the assumptions of GRH and the Ramanujan conjecture for $\mathbf{PGL}_2$, the Galois orbits of $(x_i,y_i)$ equidistribute in the limit in $Y\times Y$. Moreover, the rate of equidistribution is quantitative and effective in ${\rm disc}(E_i)$.
\end{theorem}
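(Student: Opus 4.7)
The plan is to reduce Theorem \ref{thm:A-O-style} to Theorem \ref{thm:2nd-version} in two steps: first, identify the Galois orbit of $(x_i, y_i)$ with a joint fundamental packet as in Section \ref{sec:more-gen-version}; second, translate the strictness hypothesis into the arithmetic conditions $D_i = |\mathrm{disc}(E_i)| \to \infty$ and $q_i \to \infty$. Since $Y$ is compact, the underlying quaternion algebra $B$ is a division algebra, so Theorem \ref{thm:2nd-version} applies, and the hypotheses of GRH and the Ramanujan conjecture for $\PGL_2$ are precisely its input in the indefinite case. The resulting rate of equidistribution will then be effective and quantitative in $D_i$, as the statement requires.

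For the first step, since both $x_i$ and $y_i$ are defined over the Hilbert class field $H$ of $E_i$, Shimura reciprocity identifies the action of $\mathrm{Gal}(H/E_i) \cong \mathrm{Pic}(\mathcal{O}_{E_i})$ on each coordinate with the class group action on special points. Hence the Galois orbit of $(x_i, y_i)$ under $\mathrm{Gal}(H/E_i)$ coincides with the diagonal orbit $\{([\sk].x_i, [\sk].y_i) : [\sk] \in \mathrm{Pic}(\mathcal{O}_{E_i})\}$, which is a joint packet $H_{D_i}^\Delta([\sk_i])$ in the sense of Section \ref{sec:more-gen-version}, with $[\sk_i]$ the class relating $x_i$ and $y_i$. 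By Proposition \ref{prop:number-of-packets}, up to $2^\omega$ packets of discriminant $D_i$ can exist; if $x_i$ and $y_i$ lie in distinct packets, I would extend the notation $H_{D_i}^\Delta([\sk_i])$ to a shifted-diagonal orbit between two such packets and check that the proof of Theorem \ref{thm:2nd-version} is insensitive to this refinement. Passage from $\mathrm{Gal}(\overline{\mathbb{Q}}/E_i)$ to $\mathrm{Gal}(\overline{\mathbb{Q}}/\mathbb{Q})$ via complex conjugation at worst doubles the orbit and does not affect equidistribution.

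For the second step, I will use the list in Section \ref{sec:Hecke-explanation} of weakly special subvarieties of $Y \times Y$ over $\mathbb{Q}$: the full product, the graphs $Y_0^\Delta(N)$ of Hecke correspondences, the horizontal/vertical loci $G \times Y$ and $Y \times G$ where $G$ runs over $\mathbb{Q}$-rational Galois orbits of special points, and zero-dimensional Galois orbits of pairs of special points. If $D_i$ were bounded along a subsequence, the finiteness of special points of bounded discriminant would force $(x_i, y_i)$ into either a fixed zero-dimensional Galois orbit or some fixed $G \times Y$, contradicting strictness. If $q_i$ were bounded along a subsequence, then by the discussion of Section \ref{sec:Hecke-explanation} and \S\ref{sec521}, the pair would lie in $Y_0^\Delta(q_i)$, and a further subsequence on which $q_i = N_0$ is constant would place all $(x_i, y_i)$ in the single Hecke correspondence $Y_0^\Delta(N_0)$, again violating strictness.

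With both asymptotic conditions established, Theorem \ref{thm:2nd-version} gives the effective equidistribution of $H_{D_i}^\Delta([\sk_i])$ in $Y \times Y$, and hence of the Galois orbits of $(x_i, y_i)$ with the same rate. The main technical hurdle in this reduction lies in the second step: one needs both a complete enumeration of the weakly special subvarieties of $Y \times Y$ for compact Shimura curves and the precise dictionary between ideals in $\mathcal{O}_{E_i}$ representing $[\sk_i]$ and the Hecke correspondences containing the joint packet. Both ingredients are geometric rather than analytic, and should follow from the material developed in Section \ref{sec:Hecke-explanation}; the analytic core of the theorem is then entirely encapsulated in Theorem \ref{thm:2nd-version}.
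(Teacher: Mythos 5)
The paper does not give an explicit proof of Theorem \ref{thm:A-O-style}; it is stated in \S\ref{sec:Hecke-explanation} as a geometric reformulation of Theorem \ref{thm:2nd-version} (and ultimately Theorem \ref{main-thm}), and the surrounding text supplies precisely the dictionary you use, so your reduction is the intended one. Both of your steps are sound in outline: the identification of the $\mathrm{Gal}(\overline\Q/E_i)$-orbit of $(x_i,y_i)$ with a joint packet via the theory of complex multiplication matches the remark after \eqref{def:Heegner-packet}; and your translation of strictness into $D_i\to\infty$ and $q_i\to\infty$, via the enumeration of $\Q$-rational weakly special subvarieties of $Y\times Y$ and the containment $H_{D_i}^\Delta([\sk_i])\subset Y_0^\Delta(N)$ for any primitive ideal of norm $N$ in the class $[\sk_i]$, is exactly the observation made before the statement of the theorem. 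Passing from $\mathrm{Gal}(\overline\Q/E_i)$- to $\mathrm{Gal}(\overline\Q/\Q)$-orbits at most doubles the set, and since the two halves have equal cardinality, equidistribution of each implies equidistribution of the union.

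The one place where your proposal is under-argued is the ``different packets'' concern that you yourself flag. You are right to flag it: by Proposition \ref{prop:number-of-packets}, at each prime $p\mid d_B$ inert in $E_i$ there are two packets of discriminant $D_i$, and a shift $s\in\bT(\A_f)$ \emph{never} moves between them, because at such $p$ one has $\bT(\Q_p)\subset K_p$. So if $x_i$ and $y_i$ land in distinct packets, the Galois orbit of $(x_i,y_i)$ is \emph{not} of the form $H^\Delta_{\scD_i}(s_i)$ and Theorem \ref{thm:2nd-version} does not apply verbatim. The fix is concrete rather than a matter of ``checking insensitivity'': there is a unique $w_i\in\mathscr{W}=\prod_{p\mid d_B}\bG(\Q_p)/K_p$ (supported at the primes inert in $E_i$) with $y_i=w_i.y_i'$ and $y_i'$ in the same packet as $x_i$. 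The $\mathrm{Pic}(\scO_{E_i})$-action commutes with right translation by $w_i$ (both act on opposite sides, and $w_i$ normalizes $K$), so the Galois orbit of $(x_i,y_i)$ is the image of $H^\Delta_{\scD_i}(s_i)$ under $(e,w_i)$, i.e.\ a translate of a genuine joint packet by the $\Q$-rational Atkin--Lehner involution $w_i$. Since $w_i$ ranges over a finite group and preserves $\mu$, equidistribution of $H^\Delta_{\scD_i}(s_i)$ implies that of its translate with the same rate; on the period side this is $\mathscr{P}^\Delta_{\scD_i}(\varphi;s_i,w_i)=\mathscr{P}^\Delta_{\scD_i}(R_{(e,w_i)}\varphi;s_i)$, in complete analogy with the reduction of \S\ref{sec442}. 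This same translate also accounts for the strictness argument: after passing to a subsequence where both $q_i=N_0$ and $w_i=w_0$ are constant, the orbits all lie in the fixed $\Q$-rational correspondence $Y_0^\Delta(N_0)\cdot(e,w_0)$, which is weakly special. With this point made explicit the reduction is complete, and the analytic input is exactly Theorem \ref{thm:2nd-version} under GRH plus Ramanujan, as you say.
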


\subsection{The Weyl criterion}\label{sec:Weyl-crit}
In the remainder of this section, we describe our approach to Theorem \ref{thm:2nd-version}.
Weyl's equidistribution criterion roughly states that Theorem \ref{thm:2nd-version} follows from an understanding of the asymptotic size of the uniform sample
\begin{equation}\label{eq:joint-Heegner-period}
P_D^\Delta(\varphi;[\sk])=\frac{1}{|H_D|}\sum_{x\in H_D}\varphi(x,[\sk].x),
\end{equation}
for functions $\varphi$ lying in the $L^2$ spectrum of $Y\times Y$. For this we test against an $L^2$-orthonormal basis adapted to the decomposition of $L^2(Y\times Y)$ into tensor products of irreducible automorphic representations $\sigma\subset L^2([\bG(\A)])$. Indeed, we have a Hilbert space direct sum decomposition 
\[
L^2(Y\times Y)=\bigoplus_{\sigma_1,\sigma_2\subset L^2([\bG(\A)])}\sigma_1^K\otimes \sigma_2^K.
\]
The fact that $R$ is maximal implies that each $\sigma_i^K$ is a line, say generated by $f_i$. Then we take as a basis element $\varphi=f_1\otimes f_2$. As the case $f_1 = f_2 = \text{const.}$ is trivial, we may assume that $f_1$ and $f_2$ are not both constant. In this case, we wish to prove $P_D^\Delta(\varphi;[\sk])=o(1)$, as $q,D\rightarrow\infty$, under the hypotheses of Theorem \ref{thm:2nd-version}. Here and in the following all implied constants depend at most on the quaternion algebra $B$, and \emph{polynomially} on the archimedean parameters (cf.\ \S\ref{sec:reduction-to-sparse}) of $f_1$ and $f_2$. In turn, this should allow for a dependence on the test function $\varphi$ by means of an appropriate Sobolev norm. 

If one $f_i$ is cuspidal and the other is constant, then Duke's theorem \cite{Du} applies, without recourse to GRH, providing a strong upper bound on $P_D^\Delta(\varphi;s)$, which decays as a small power of $D$. (Strictly speaking, the setting of \cite{Du} is constrained to $B=\Mat(\Q)$ or the Hamilton quaternions. This restriction has been removed elsewhere in the literature. See \cite[\S 2.2]{MV} for a summary.) We may therefore assume that both $f_1, f_2$ are cuspidal. 

The remaining argument divides into two overlapping ranges:
\begin{enumerate}
\item \textit{the large $q$ range}, where $q\geq D^{1/6+\varepsilon}$, which we treat in Sections \ref{sec:large-q} and \ref{sec:diag-est}, conditionally on GRH and (in the indefinite case) the Ramanujan conjecture, using \cite{BB} and estimates on sparse sums of multiplicative functions;

\item \textit{the small $q$ range}, where $q\leq D^{1/2-\varepsilon}$ goes to infinity, which we treat in \S \ref{sec:small-q}, conditionally on the Lindel\"of hypothesis and (in the indefinite case) the Ramanujan conjecture, using a spectral expansion across an intermediate Hecke correspondence of degree $q^{1+o(1)}$.
\end{enumerate}
In fact, for $q \leq D^{\eta}$ and $\eta > 0$ sufficiently small, the
method of the small $q$ range is completely unconditional and depends
only on known subconvexity results. The above subdivision supposes the Lindel\"of hypothesis and the Ramanujan conjecture for small $q$ simply to ensure that the two ranges overlap.

\subsection{Large $q$ range}\label{sec:large-q-range}

In this case we apply the Parseval identity to obtain
\begin{equation}\label{eq:shifted-prod}
P_D^\Delta(\varphi;[\sk]) =\sum_{\chi\in {\rm Pic}(\scO_E)^\vee} \chi(\sk) W_D(f_1, \chi) \overline{W_D(f_2, \chi)},
\end{equation}
where 
\[
W_D(f_i, \chi)  =\frac{1}{|H_D|}\sum_{x\in H_D} f_i(x) \chi(x).
\]
Recall the notation $\bG={\bf PB}^\times$ from \S\ref{sec:more-gen-version}. Let $\sigma_j$ be the cuspidal automorphic representation of $\bG(\A)$ generated by $f_j$, as in \S\ref{sec:Weyl-crit}. If $\sigma_1 \neq \sigma_2$, we can forgo any cancellation in the sum over $\chi$ in \eqref{eq:shifted-prod}, apply absolute values and use the main result of \cite{BB} to bound $P_D^\Delta(\varphi;[\sk])$ by  $O_\varepsilon\left((\log D)^{-1/4+\varepsilon}\right)$, uniformly in $[\sk]$.   This requires the fundamental hypothesis of GRH, but applies for the entire range of $q$ relative to $D$. Henceforth we may therefore assume that $\sigma_1 = \sigma_2 =\sigma$, generated by $f$.

Let $\pi={\rm JL}(\sigma)$ be the Jacquet--Langlands correspondent of $\sigma$ on $\mathbf{PGL}_2$. By Waldspurger's theorem the expression \eqref{eq:shifted-prod} equals
\begin{equation}\label{3}
\frac{1}{|{\rm Pic}(\scO_E)|} \sum_{\chi \in {\rm Pic}(\scO_E)^\vee}\bar{\chi}([\sk])  L(1/2, \pi\times \chi),
\end{equation}
up to multiplication by a normalizing constant involving an $L$-values at 1 in the denominator,  which can be estimated easily on GRH. The use of GRH here stands in contrast to its use for the off-diagonal contributions in \cite{BB}, where it was used in a more fundamental way to control the average size of the \textit{central} value. By an approximate functional equation we are essentially left with bounding
\[
\sum_{\substack{(x, y) \not= (0, 0)\\ Q(x,y)\ll D}}   \frac{\lambda_\pi(Q(x, y))}{ Q(x, y)^{1/2}},
\]
where $\lambda_\pi(n)$ are the Hecke eigenvalues of $f$ at integers $n\in\N$ and $Q$ is the reduced positive definite binary quadratic form corresponding to the ideal class $[\sk]$. This reduction step is executed in Lemma \ref{lemma:conversion-to-sparse-sum}. We must therefore bound the average of a multiplicative function over the sparse sequence of values of a binary quadratic form of large discriminant. The key result is Proposition \ref{prop:Kh} below, complemented by \eqref{bound1}, where we estimate general sums of Hecke eigenvalues over values of quadratic forms.

It was probably first observed by Barban \cite{Ba} that the large sieve is extremely powerful in such situations, and this has been refined by many authors, e.g.,\  \cite{Wo,  NT, Kh}. In our case,   the version of \cite{Kh} is most suited for our purpose. Such an approach, however, requires positivity, so we simply apply absolute values to the coefficients and hope to win by the fact that $\lambda_\pi(n)$ satisfies a Sato--Tate law that makes it a little less than 1 on average. As in the analysis of the off-diagonal term  $\sigma_1 \neq \sigma_2$, a power saving error term as in Duke's theorem becomes impossible at this point; at best we can obtain a saving on a logarithmic scale. We remark, however, that we do not 
need the entire Sato--Tate law: some functorial information (e.g.,\ automorphy of the symmetric fourth power) suffices to get a logarithmic saving. This path was blazed first by Holowinsky \cite{Ho}, and similar ideas were also used by Lester--Radziwi{\l\l} \cite{LR}. 

In order to apply the sieve, the sequence $\{Q(x, y) : (x, y) \in \Bbb{Z}^2\}$ needs to be sufficiently well-behaved in arithmetic progressions, and this in turn requires $Q$ to be sufficiently ``well-rounded''. In other words, its minimum $q$ must not be too small. Quantitatively, by a classical result of van der Corput, the required lower bound is $q \geq D^{1/6}$; this could be reduced a little using more sophisticated exponential sum techniques. Conjecturally on the best possible error term in the Gau{\ss} circle problem, $q \geq D^{\varepsilon}$ suffices.

\subsection{Small $q$ range}\label{sec:bottom-q-sketch} 
Recall the joint Heegner period $P_D^\Delta(\varphi; [\sk])$ from \eqref{eq:joint-Heegner-period}. Then
\[
P_D^\Delta(f_1\otimes f_2; [\sk])=\frac{1}{|H_D|}\sum_{x\in H_D} f_1(x) f_2([\sk].x).
\]
Now, it is well-known that the action of ${\rm Pic}(\scO_E)$ on $H_D$ does not extend to the whole quaternionic variety $Y$, so one cannot, say, spectrally decompose $f_1(\cdot) f_2([\sk].\,\cdot)|_{Y^\Delta}$ across $L^2(Y^\Delta)$. However, for any primitive integral ideal $\mathfrak{n}$ representing $[\sk]$, the discussion in \S\ref{sec:Hecke-explanation} identifies $H_D^\Delta([\sk])$ with a Heegner packet $H^\Delta(\mathfrak{n},D)$ of level $N$ and discriminant $D$ appearing in the decomposition \eqref{eq:HE-decomp}. Thus we have the identity
\begin{equation}\label{2}
P_D^\Delta(f_1\otimes f_2; [\sk])=\frac{1}{|H_D|}\sum_{x\in H^\Delta(\mathfrak{n},D)} f_1(x) f_2(Nx).
\end{equation}
This last identity has the effect of replacing the action by $[\sk]$ with multiplication by $N$, at the price of adding an appropriate level structure.

To treat the right-hand side of \eqref{2}, we may spectrally expand $f_1(x) f_2(Nx)$ \textit{as a function of $x\in Y_0^\Delta(N)$.} We obtain
\begin{equation}\label{2a}
\frac{1}{|H_D|} \sum_{j\geq 0}  \langle f_1 f_2(N\cdot), u_j\rangle \sum_{x\in H^\Delta(\mathfrak{n},D)} u_j(x),
\end{equation}
where the sum runs over an orthonormal basis of eigenforms $\{u_j\}$ of level $N$ (including the constant function). The $j$-sum is rapidly decaying in the spectral parameter, so it is effectively of length $N$. A precise version of this spectral expansion (including Eisenstein series if $B = \text{Mat}_{2 \times 2}(\Bbb{Q})$) is given in Proposition \ref{lemma-spectral-decomp} below.

If $f_1 = f_2 = f$, then it is easy to see that the constant function contributes essentially $\lambda_f(N) N^{-1/2}$, where $\lambda_f(N)$ is the Hecke eigenvalue of $f$ at $N$. By non-trivial bounds towards the Ramanujan conjecture, this tends to zero as soon as $N$ tends to infinity. 

Both the triple product and the Heegner period are related to (canonical) square-roots of central $L$-values. At this point we have no chance to exploit any cancellation in the spectral sum. Applying absolute values will therefore inevitably lose roughly a factor $N^{1/2}$, but at least under the assumptions of the Lindel\"of hypothesis the $L$-functions can be estimated optimally. We therefore expect a bound of the shape $D^{-1/4} N^{1/2} (DN)^{\varepsilon}$, which is admissible provided that $N\leq D^{1/2 - \varepsilon}$. Note that in \eqref{2a} we have the flexibility to take any ideal $\mathfrak{n}$ in the same class as $\mathfrak{s}$. Taking the representative with minimal norm $q$ as in the statement of Theorem \ref{thm:2nd-version}, we obtain the weakest possible condition.

Needless to say, this is only an optimistic back-of-an-envelope computation, which omits several technical details, such as local factors and oldforms in period formulae. 

\begin{remark}
The strategy just described should be compared to that of \cite{EMV}, which uses equidistribution in stages over the tower $H^\Delta_E ([\sk])\subset Y^\Delta_0(N)\subset Y \times Y$. Our method is roughly based on the same principle, but uses a spectral expansion across $Y^\Delta_0(N)$, an idea which can already be found in Michel--Venkatesh \cite[Remark 2.1]{MV1}. See also \S\ref{gelfand}. 
\end{remark}

\begin{remark}
That the spectral argument fails for very large $q$ should not be surprising, as the right-hand side of \eqref{2} considers the distribution of $|H_D|\approx D^{1/2}$ points on a surface of volume $\approx N$, so the natural limit should indeed be $N\leq D^{1/2 + o(1)}$. In terms of the reduced positive definite binary quadratic form $Q$ determined by $[\sk]$, the methods for both ranges are nicely complementary: the spectral approach is successful if the ellipse described by $Q$ is sufficiently slim, while the sieve approach is powerful if $Q$ is sufficiently round. In addition, at least under the assumption of reasonable conjectures (GRH and Gau{\ss} circle problem), there is a huge overlap, but neither of the two methods covers the entire range.
\end{remark}

\subsection{Gelfand formations}\label{gelfand}

This division into two quantitative ranges described in Sections \ref{sec:large-q-range} and \ref{sec:bottom-q-sketch} has a representation theoretical counterpart.

Recall that a pair of algebraic groups $\mathcal{H}\subset\mathcal{G}$ over $\Q$ is called a \textit{strong Gelfand pair} if for every place $v$ of $\Q$ one has the local multiplicity one property $\dim {\rm Hom}_{\mathcal{H}(\Q_v)}(\pi_v,\sigma_v)\leq 1$ for all irreducible admissible representations $\pi_v$ and $\sigma_v$ of $\mathcal{G}(\Q_v)$ and $\mathcal{H}(\Q_v)$, respectively. When this fails to hold for a given $\mathcal{H}\subset\mathcal{G}$ one can hope to construct the next best thing: a \textit{strong Gelfand tower}, where an intermediate subgroup $\mathcal{L}$ is found to make each inclusion $\mathcal{H}\subset\mathcal{L}\subset\mathcal{G}$ a strong Gelfand pair. When this can be done in two different ways, the resulting schematic construction is called a \textit{strong Gelfand formation}. In the theory of automorphic forms, the process of spectrally expanding the $\mathcal{H}$-period of an automorphic form on $\mathcal{G}$ across the intermediate subgroups of a Gelfand formation often leads to powerful identities between seemingly unrelated families of $L$-functions. This idea was beautifully expounded in \cite{Rez}.

Let $\bT$ be a subgroup of $\bG$ isomorphic to the projective torus ${\rm Res}_{E/\Q}\mathbb{G}_m/\mathbb{G}_m$, determined by the imaginary quadratic extension $E$. When the above principle is applied to the case of interest of this paper, we obtain the strong Gelfand formation
\[
\xymatrix  @R=1.3pc{
& \bG\times \bG\ar@{-}[ld]  \ar@{-}[rd] \\
\bT\!\times\! \bT \!\!\ar@{-}[rd] &\quad &\!\!\Delta\bG \ar@{-}[ld] \\
 & \Delta \bT
}
\]
where $\Delta\bG$ is the diagonal embedding of $\bG$ into $\bG\times\bG$, and similarly for $\Delta\bT$. Our methods can be viewed as applying the above strong Gelfand formation to the automorphic form $f_1(g_1)f_2(g_2s)$ on $(\bG\times\bG)(\A)$. Going back to the quantitative division of ranges, the case of small $q$ is handled by the strong Gelfand tower $\Delta\bT\subset\Delta\bG\subset\bG\times\bG$, while the delicate large $q$ range, as well as the off-diagonal analysis imported from \cite{BB}, is handled by the strong Gelfand tower $\Delta\bT\subset\bT\times\bT\subset\bG\times\bG$. 

\section{Quaternionic varieties and Heegner points}\label{sec:prelim}

We now begin preparing the ground to state a more general version of the mixing conjecture than those presented in Theorems \ref{thm:sphere}, \ref{thm:2nd-version}, and \ref{thm:A-O-style}. Over the course of the next two sections, our aim is to state Conjecture \ref{conj:adelic-mixing}, which is the version that we shall actually prove, conditionally and at full level, in Theorem \ref{main-thm}. Before doing so, we must spend a substantial amount of time setting up the relevant notation. 

Throughout, $\A$ will denote the adele ring of $\Q$ and $\A_f$ the finite adeles. We refer to \cite[Section 2]{BB} for a detailed dictionary between the adelic and classical language of the objects considered here. 

\subsection{Adelic quotients}
For any linear algebraic group $\bH$ defined over $\mathbb{Q}$ we will write $[\,\cdot\,]\colon\bH(\A)\to \bH(\Q)\backslash\bH(\A)$ for the quotient map. If $U_f<\bH(\A_f)$ is a compact open subgroup and $U=U_\infty\cdot U_f <\bH(\A)$ for a compact subgroup $U_\infty<\bH(\R)$, then we will denote by $[\,\cdot\,]_U\colon \bH(\Q)\backslash\bH(\A)\to \bH(\Q)\backslash \bH(\A) \slash U$ the quotient map between the two. We will also use the notation $[\,\cdot\,]_U$ for the surjective quotient map from $\bH(\A)$ to $\bH(\Q)\backslash \bH(\A) \slash U$. This should cause no confusion because the maps are compatible. In particular, we have
\[
[\bH(\A)]=\bH(\Q)\backslash\bH(\A),\qquad
[\bH(\A)]_U=\bH(\Q)\backslash\bH(\A)\slash U.
\]

\subsection{Quaternionic varieties}\label{sec:Shimura}
Let $B$ be a quaternion algebra over $\Q$, with ramification set ${\rm Ram}_B$ and discriminant $d_B=\displaystyle\prod_{p\in{\rm Ram}_B} p$. For every rational place $v$ define $B_v=B\otimes_\Q \Q_v$. Fix a maximal order $R\subset B$. Denote by $R_p\subset B_p$ the completion of $R$ in $B_p$. For every prime $p\in {\rm Ram}_B$ the order $R_p$ is the unique maximal order in $B_p$. For  primes $p\notin {\rm Ram}_B$ we fix an algebra isomorphism $\phi_p:B_p\xrightarrow{\sim} \Mat(\Q_p)$, such that $\phi_p(R_p)=\Mat(\Z_p)$. 
Define $B_f=\prod_p' B_p$, where the restricted direct product over primes is taken with respect to the system $(R_p)_p$. Let $\widehat{R}=\prod_p R_p\subset B_f$; then $R=B\cap \widehat{R}$. 

Set $\bB^\times$ to be the algebraic group representing the group of units of $B$, and let $\bG$ represent of projective group of units of $B$, i.e.,
\begin{alignat*}{2}
\bB^\times(A)=\operatorname{GL}_1(B\otimes_\Q A)=\left(B \otimes_\Q A\right)^\times, \quad && \bG(A)=\operatorname{PGL}_1(B\otimes_\Q A)=A^\times \backslash\left(B \otimes_\Q A\right)^\times,
\end{alignat*}
for any $\Q$-algebra $A$. Then $\bB^\times$ and $\bG$ are affine algebraic groups defined over $\Q$, and $\bG\simeq \mathbb{G}_m\backslash \bB^\times$. Set $\widetilde{K}_f=\widehat{R}^\times\subset\bB^\times(\A_f)$. Let $K_f$ be the image of $\widetilde{K}_f$ in $\bG(\A_f)$ under the  quotient homomorphism $\bB^\times(\A_f)\rightarrow \bG(\A_f)$. Let $K_\infty$ be a maximal compact torus of $\bG(\R)$. Let $U_f$ be a finite index subgroup of $K_f$ and write $U=U_f K_\infty$. Recall that $[\bG(\A)]=\bG(\Q)\backslash\bG(\A)$ and set
\begin{equation}\label{def:Y}
Y_U=[\bG(\A)]_U= \bG(\Q)\backslash \bG(\A)/U,
\end{equation}
a possibly disconnected space\footnote{if $B$ is definite, or $U_f$ is not maximal.}. If $\mathbf{G}$ is indefinite then $Y_U$ is a complex Shimura curve with level structure given by $U_f$. 

Write $K_f=\prod_p K_p$, where $K_p$ is the image of $R_p^\times$ in $\bG(\Q_p)$ under the quotient map. Then $K_p$ is a maximal compact subgroup of $\bG(\Q_p)$ for every $p\nmid d_B$ and is an index $2$ normal subgroup of the compact group $\bG(\Q_p)$ for $p \mid d_B$. Indeed, in the latter case, $B_p$ is the unique quaternion division algebra over $\Q_p$, its ramification index is $2$, and $K_p$ is the subgroup of elements who have a representative with even valuation. We say that a compact subgroup $U_f=\prod_p U_p<\bG(\A_f)$ is \emph{almost maximal} if $U_p$ is a maximal compact subgroup for all $p\not\in{\rm Ram}_B$ and $U_p$ is the normal subgroup of elements with even valuation if $p\in{\rm Ram}_B$. The almost maximal compact subgroups are exactly the subgroups of $\bG(\A_f)$ that arise as projectivizations of units in the completion of a maximal order.
When $U_f=K_f$ is almost maximal, we write $K=K_fK_\infty$ and consider $Y_K=[\bG(\A)]_K$. 

\textbf{Example:} Consider the quaternion algebra $B=B^{(2,\infty)}$ that is ramified exactly at $2$ and $\infty$; these are the Hamilton quaternions over $\Bbb{Q}$ with $d_B = 2$. A maximal order $R$ is the set of Hurwitz quaternions. The group $\bG(\Bbb{R})$ is well-known to be isomorphic to ${\rm SU}(2)/\{\pm 1\} \cong {\rm SO}(3)$ via $(x + iy + jz + k w) \mapsto (\begin{smallmatrix} x + \sqrt{-1} y & z + \sqrt{-1} w\\ -z + \sqrt{-1} w & x - \sqrt{-1} y \end{smallmatrix})$. If we choose $U_f = K_f$ to be image of $\widehat{R}^{\times} \subseteq \bB^{\times}(\A_f) \rightarrow \bG(\Bbb{A}_f)$ and $K_{\infty} = {\rm SO}(2) \subseteq {\rm SO}(3)$, for instance embedded in the upper left corner, we recover $Y_U$ as the finite quotient of the sphere $\mathcal{S}^2$ from the introduction. 

When $B = \text{Mat}_{2 \times 2}$, then $d_B = 1$, $\bG = \mathbf{PGL}_2$. In this case, almost maximal is the same as maximal, and $Y=\mathbf{PGL}_2(\Q)\backslash\mathbf{PGL}_2(\A)/\mathbf{PGL}_2(\widehat{\Z}){\rm PSO}(2)\simeq {\rm PSL}_2(\Z)\backslash\mathbb{H}$.
\medskip

For later purposes, we fix local Haar measures ${\rm d}g_v$ on $\bG(\Q_v)$ for each place $v$ in the following way. If $v\in {\rm Ram}_B$, we let ${\rm d}g_v$ be the unique probability Haar measure on $\bG(\Q_v)$. For primes $p\nmid d_B$ we normalize ${\rm d}g_p$ to give the maximal compact subgroup $K_p$ volume 1. If $B$ is indefinite we let ${\rm d}g_\infty$ denote the choice of Haar measure for which ${\rm d}g_\infty={\rm d}x\, {\rm d}^\times y\, {\rm d}k$ in the standard coordinates for the Iwasawa decomposition $\bG(\R)=\mathbf{N}(\R)\mathbf{A}(\R)K_\infty$. Then $\prod_v {\rm d}g_v$, which induces the product measure on open sets of the form $\bG(\Q_S)\times\prod_{p\notin S}K_p$, where $S\supset {\rm Ram}_B$, defines a Haar measure on $\bG(\A)$. We continue to write $\prod_v {\rm d}g_v$ for the measure on $[\bG(\A)]$ given by the quotient with the counting measure on $\bG(\Q)$. Let $C_\bG>0$ denote its total volume and put $m_\bG=C_\bG^{-1}\prod_v {\rm d}g_v$. Then $m_\bG$ is the unique $\bG(\A)$-invariant Borel probability measure on $[\bG(\A)]$. Let $\mu$ denote the push-forward of $m_\bG$ to $Y_U$. 
We shall sometimes refer to $\mu$ as the \textit{uniform probability measure}. Then, for $f_1,f_2\in L^2(Y_U,\mu)$, the inner product
\begin{equation}\label{eq:inner-prod}
\langle f_1,f_2\rangle=\int_{Y_U} f_1(y)\overline{f_2(y)}\, {\rm d} \mu(y)=\int_{[\bG(\A)]} f_1(g)\overline{f_2(g)}\, {\rm d} m_\bG(g)
\end{equation}
is the restriction of the natural inner product on $L^2([\bG(\A)],m_\bG)$. In other words, $L^2(Y_U,\mu)=L^2([\bG(\A)],m_\bG)^U$.

\subsection{Heegner packets}\label{sec:special}
A \emph{homogeneous toral datum} is a pair $\scD=(\iota,g)$ with $\iota\colon E\to B$ a ring embedding of a quadratic field $E/\mathbb{Q}$ in $B$, and an element $g\in \bG(\A)$. For a fixed quadratic field $E$, the existence of some $\iota$ is equivalent, by the Albert--Brauer--Hasse--Noether theorem, to asking that every rational place $v$ lying in ${\rm Ram}_B$ be inert or ramified in $E$. The map $\iota$ induces an embedding of the algebraic torus ${\rm Res}_{E/\Q}\mathbb{G}_m$ into $\bB^\times$ and of the projective torus $\mathbb{G}_m\backslash{\rm Res}_{E/\Q}\mathbb{G}_m$ into $\bG$, whose respective images we denote by $\widetilde{\bT}$ and $\bT$.  Following \cite[\S 4]{ELMV2} we call the subset $[\bT(\A)g]\subset [\bG(\A)]$ a \emph{homogeneous toral set}.

In accordance with Remark \ref{rem:always-imag-quad}, we shall henceforth assume that $E$ is imaginary quadratic. Then the group of real points of the projective torus $\bT(\R)$ is conjugate to the fixed compact maximal torus $K_\infty\simeq {\rm PSO}(2)$ of $\bG(\R)$. We will restrict in the following to the case that $\bT(\R)=g_\infty K_\infty g_\infty^{-1}$, and we will call $\scD$ and $[\bT(\A)g]$ in this case a \emph{homogeneous Heegner datum} and a \emph{homogeneous Heegner set} respectively. Let $U= U_f K_\infty<K$ be a finite index subgroup as above. Following \cite[\S 5]{ELMV2} we shall call the projection
\begin{equation}\label{def:Heegner-packet}
H_\scD=[\bT(\A)g]_{U}\subset [\bG(\A)]_U
\end{equation}
a \emph{Heegner packet}. We suppress the dependence of $U$ in the notation because it is assumed to be fixed, while the Heegner packets will vary.

\begin{remark}\label{rem31}
In classical language, if $Y_U$ is a modular curve with $\Gamma_0(N)$ level structure and all primes dividing $N$ split in $E$, then the points of $H_\scD$ are Heegner points of $Y_U$. More generally, if $B$ is indefinite then the points of $H_\scD$ are special points of $Y_U$, and the theory of complex multiplication establishes that $H_\scD$ is a single Galois orbit of a special point on $Y_U$. For more details, see \cite[\S 2.3]{BD}. 

When $B=B^{(2,\infty)}$ is the Hamiltonian quaternion algebra, then $E=\Q(\sqrt{-d})$ with $d$ squarefree embeds in $\bB(\Bbb{Q})$ if and only if $d \equiv 1, 2$ (mod 4) or $d \equiv 3$ (mod 8). If we  choose $g_f=e$, the set $H_\scD$ recovers $\mathcal{R}_d$ as defined in \eqref{eq:def-Rd}. 
\end{remark}

Set $U_\scD=\bT(\A)\cap gUg^{-1}$, this is a compact subgroup of the torus $\bT(\A)$. 
If we write $U=U_f K_\infty$ with $U_f<K_f$ of finite index, then $U_\scD=U_{\scD,f}\cdot \bT(\R)$, where $U_{\scD,f}=\bT(\A_f)\cap g_f U_f g_f^{-1}$ is a compact open subgroup of $\bT(\A_f)$.

\begin{lemma}\label{lem:bij-HeegnerSetPacket}
The map
\begin{equation}\label{eq:Heegner_set2packet}
\bT(\Q)\backslash \bT(\A)/U_\scD=[\bT(\A)]_{U_\scD}\to [\bT(\A)g]_{U}=H_\scD
\end{equation}
induced by $\bT(\A)\hookrightarrow\bG(\A)$, $t\mapsto t g$, is a bijection.
\end{lemma}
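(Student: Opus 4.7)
The plan is to verify well-definedness, surjectivity, and injectivity of the map in turn.

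Surjectivity is immediate from the definition $H_\scD=[\bT(\A)g]_U$. For well-definedness, left-invariance under $\bT(\Q)$ is automatic because $\bT(\Q)\subset\bG(\Q)$. For right-invariance under $U_\scD$: any $u_\scD\in U_\scD=\bT(\A)\cap gUg^{-1}$ can be written as $u_\scD=gu_0g^{-1}$ with $u_0\in U$, so $(tu_\scD)g=tgu_0$, which gives the same class in $[\bG(\A)]_U$.

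The substance of the argument is injectivity. Suppose $[t_1g]_U=[t_2g]_U$, so that $\gamma t_1gu=t_2g$ for some $\gamma\in\bG(\Q)$ and $u\in U$, equivalently
\[
\gamma = t_2\cdot(g u^{-1}g^{-1})\cdot t_1^{-1}.
\]
The key reduction is to prove $\gamma\in\bT(\Q)$; once this is known, commutativity of $\bT$ yields $gug^{-1}=t_1^{-1}\gamma^{-1}t_2\in\bT(\A)$, so $gug^{-1}\in U_\scD$, and rearranging as $t_2=\gamma t_1\cdot(gug^{-1})$ exhibits $t_1$ and $t_2$ as representatives of the same double coset in $\bT(\Q)\backslash\bT(\A)/U_\scD$.

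To establish $\gamma\in\bT(\Q)$, I specialize the equation at the archimedean place. By the definition of a homogeneous Heegner datum, $\bT(\R)=g_\infty K_\infty g_\infty^{-1}$, and since $U_\infty=K_\infty$ we obtain $g_\infty U_\infty g_\infty^{-1}=\bT(\R)$. The relation $\gamma_\infty=t_{2,\infty}\cdot(g_\infty u_\infty^{-1}g_\infty^{-1})\cdot t_{1,\infty}^{-1}$ then expresses $\gamma_\infty$ as a product of three elements of the group $\bT(\R)$, forcing $\gamma_\infty\in\bT(\R)$. Since $\bT$ is a closed algebraic subgroup of $\bG$ defined over $\Q$, we have $\bT(\R)\cap\bG(\Q)=\bT(\Q)$, whence $\gamma\in\bT(\Q)$. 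The only non-formal input is this archimedean descent, and it uses precisely the two features of the homogeneous Heegner datum hypothesis: compactness of $\bT(\R)$ (from $E$ being imaginary quadratic) and its coincidence with $g_\infty K_\infty g_\infty^{-1}$. This is the step that would fail in the real quadratic setting alluded to in Remark \ref{rem:always-imag-quad}.
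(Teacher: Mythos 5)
Your proof is correct and follows essentially the same approach as the paper: reduce injectivity to showing $\gamma\in\bT(\Q)$, and obtain this by examining the archimedean component, where $g_\infty u_\infty^{-1}g_\infty^{-1}\in\bT(\R)$ forces $\gamma\in\bT(\R)\cap\bG(\Q)=\bT(\Q)$. You spell out well-definedness and the final double-coset step in a bit more detail than the paper (which leaves them implicit or "straightforward"), and your closing remark on why the argument hinges on compactness of $\bT(\R)$ is an accurate and useful observation.
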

\begin{proof} The surjectivity of \eqref{eq:Heegner_set2packet} is clear. To check injectivity it is sufficient to show that if $\gamma t_1 g u=t_2 g$ for some $t_1,t_2\in \bT(\A)$, $\gamma\in\bG(\Q)$ and $u\in U$ then $\gamma\in \bT(\Q)$ and $u\in g^{-1}\bT(\A)g$. We can rewrite  the archimedean component as $\gamma = t_{2,\infty} (g_\infty k_\infty^{-1} g_\infty^{-1})t_{1,\infty}^{-1}\in \bT(\R)$, where $k_\infty\in K_\infty$ is the archimedean component of $u$. Hence $\gamma\in \bT(\R)\cap\bG(\Q)=\bT(\Q)$ as required. It is   straightforward to verify $u=g^{-1} t_1^{-1} \gamma^{-1} t_2 g\in g^{-1}\bT(\A) g$ as required.
\end{proof}
The left-hand side of \eqref{eq:Heegner_set2packet} is a finite set, because algebraic tori have finite class number \cite[VI, Theorem 1]{La}. We deduce from the lemma that the Heegner packet $H_\scD$ has finite cardinality.

\subsection{Local and global toral measure normalizations}\label{sec:periodic-measure}
The homogeneous toral set $[\bT(\A)g]$ carries an action of the locally compact group $g^{-1}\bT(\A) g$. Moreover, it is a principal homogeneous space for this action. Because $\bT$ is anisotropic over $\Q$, the homogeneous space $[\bT(\A)]$ carries a unique Borel \emph{probability} measure ${\rm d}t$ that is invariant under the $\bT(\A)$-action. Pushing forward by the map of right multiplication by $g$ we get a unique Borel probability measure on $[\bT(\A)g]$ invariant under $g^{-1}\bT(\A) g$, which we again denote by ${\rm d} t$. This is the \emph{periodic measure} supported on the homogeneous toral set. 

For later purposes, especially the estimation of local toric integrals in Section \ref{sec:main-estimate}, it will be necessary to fix local measures ${\rm d}t_v$ on $\bT(\Q_v)$ for every place $v$, in such as way that ${\rm d}t=\otimes_v {\rm d}t_v$. To do so, we first assign normalizations to the Haar measures of $\Q_v^\times$ and $E_v^\times$, form the product measure over all $v$, and then push forward the quotient of these to the projective torus via the embedding $\iota: \A_E^\times/\A^\times\xrightarrow{\,\sim\,}\bT(\A)$.

In more detail, let $F$ be a number field (to be taken to be either $\Q$ or $E$ in what follows). We let $e_\Q: \Q\backslash\A\rightarrow \C$ be the unique additive character whose restriction to $\R$ is $e^{2\pi i x}$. Denote by ${\rm d}_Fx$ the unique self-dual Haar measure on $\A_F$ relative to the additive character $e_F=e_\Q\circ {\rm tr}_{F/\Q}:F\backslash\A_F\rightarrow\C$. Then ${\rm d}_Fx$ assigns $F\backslash \A_F$ volume 1. At any place $v$ of $\Q$ let ${\rm d}_{F_v}x$ be the additive Haar measure  on $F_v=\prod_{w\mid v}F_w$ induced by ${\rm d}_Fx$; when $v=p$, the ring of integers $\scO_{F_p}$ of $F_p$ is of volume $|\operatorname{Disc}(\scO_{F_p})|_p^{1/2}$. We then normalize the local multiplicative Haar measure on $F_v^\times$ by putting ${\rm d}^\times_{F_p} y_p=\left(\prod_{w\mid p}\zeta_{F,w}(1)\right) {\rm d} y_{E,p}/|y_p|_p$, when $v=p$, so that $\scO_{F_p}^\times$ is given volume $|\operatorname{Disc}(\scO_{F_p})|_p^{1/2}$, and ${\rm d}^\times_{F_v} y_v={\rm d} y_{F,v}/|y_v|_v$, when $v\mid\infty$. Let $\otimes_v {\rm d}_{F_v}^\times y_v$ denote the unique Haar measure on $\A_F^\times$ inducing the product measure on open sets of the form $F_S^\times \times\prod_{p\notin S}\scO_{F_p}^\times$. We write ${\rm d}^\times_F y$ for the measure on $\A^1_F$ given by the quotient of $\otimes_v {\rm d}_{F_v}^\times y_v$ by ${\rm d}t/t$ on $\R_+^\times$. Since ${\rm d}^\times_F y$ assigns $F^\times\backslash \A_F^1$ volume 
\begin{equation}\label{rho}
   \rho_F={\rm Res}_{s=1}\zeta_F(s),
\end{equation}   
    the measure $\mu_F=\rho_F^{-1}{\rm d}^\times_F y$ is a probability Haar measure on $F^\times\backslash\A_F^1$.

With this notation in place, we let ${\rm d}t$ denote the pushforward of $\mu_E/\mu_\Q$ on $[\bT(\A)]$ under the isomorphism $(\A_E^1/E^\times)/(\A^1/\Q^\times)=(\A_E^\times/E^\times)/(\A^\times/\Q^\times) \xrightarrow{\,\sim\,}[\bT(\A)]$ induced by $\iota$.

We now pass to measures on the Heegner packet $H_\scD=[\bT(\A)g]_U$. The latter does not carry a torus action, but it does admit an induced action by the abelian group $[\bT(\A)]_{U_\scD}$. Since  the periodic measure ${\rm d}t$ on $[\bT(\A)g]$ is $g^{-1}\bT(\A) g$-invariant, the push-forward of ${\rm d}t$ under the quotient map $[\bT(\A)g]\to [\bT(\A)g]_U$ is itself $[\bT(\A)]_{U_\scD}$-invariant. As $[\bT(\A)]_{U_\scD}$ acts simply transitively, the push-forward is the uniform probability measure.

\subsection{Intersection of Heegner packets}
Different choices of the Heegner datum $\scD=(\iota, g)$ may give rise to the same homogeneous Heegner set $[\bT(\A)g]$. For example, if $\gamma \in \bG(\Q)$ then $[(\gamma \bT \gamma^{-1})(\A)(\gamma g)]=[\bT(\A)g]$. We now show that if two Heegner packets intersect then they coincide, and they necessarily arise from this construction.

\begin{lemma}\label{lemma:disjoint-packets}
Let $\bT_1,\bT_2<\bG$ be two maximal tori, defined and anisotropic over $\Q$. Fix $g_1,g_2\in\bG(\A)$ and write $g_i=(g_{i,v})_v$ for $i=1,2$. Assume also $g_{i,\infty}^{-1}\bT_i(\R)g_{i,\infty}=K_\infty$ for $i=1,2$. Let $U<K$ be a finite index subgroup. If $[\bT_1(\A)g_1]_U\cap [\bT_2(\A)g_2]_U\neq \emptyset$ then $\bT_2=\gamma \bT_1 \gamma^{-1}$ for some $\gamma\in \bG(\Q)$ and $g_2\in \bT_2(\A) \gamma g_1 U$. In particular, $[\bT_1(\A)g_1]_U=[\bT_2(\A)g_2]_U$.
\end{lemma}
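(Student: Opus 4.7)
The plan is to unpack the intersection hypothesis into a concrete identity in $\bG(\A)$, exploit the archimedean normalization $g_{i,\infty}^{-1}\bT_i(\R)g_{i,\infty}=K_\infty$ to establish the conjugacy between the two tori, and then extract the remaining claims algebraically.

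First, a common point in $[\bT_1(\A)g_1]_U\cap[\bT_2(\A)g_2]_U$ furnishes elements $\gamma_1,\gamma_2\in\bG(\Q)$, $t_i\in\bT_i(\A)$, and $u_i\in U$ with $\gamma_1 t_1 g_1 u_1=\gamma_2 t_2 g_2 u_2$. Setting $\gamma=\gamma_2^{-1}\gamma_1$ and $u=u_1u_2^{-1}$ produces the clean working identity
\[
\gamma\, t_1\, g_1\, u = t_2\, g_2,\qquad \gamma\in\bG(\Q),\ u\in U.
\]

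Next I project this identity to the archimedean place. Since $t_{1,\infty}\in\bT_1(\R)=g_{1,\infty}K_\infty g_{1,\infty}^{-1}$ the element $g_{1,\infty}^{-1}t_{1,\infty}^{-1}g_{1,\infty}$ lies in $K_\infty$, and combined with $u_\infty\in K_\infty$ one gets $\gamma g_{1,\infty}=t_{2,\infty}g_{2,\infty}k'$ for some $k'\in K_\infty$. Conjugating $K_\infty$ by each side and invoking the hypothesis again, together with the commutativity $t_{2,\infty}\bT_2(\R)t_{2,\infty}^{-1}=\bT_2(\R)$, yields
\[
\gamma\,\bT_1(\R)\,\gamma^{-1} = \bT_2(\R).
\]
Because $\bT_i$ is a one-dimensional anisotropic $\Q$-torus and $\bT_i(\R)$ is a circle, the real points are an infinite subset of a one-dimensional connected algebraic group and hence Zariski dense in $\bT_i$. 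Applied to the algebraic morphism $t\mapsto\gamma t\gamma^{-1}$, this upgrades the real equality to the algebraic identity $\gamma\bT_1\gamma^{-1}=\bT_2$.

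Finally, from the working identity I rewrite
\[
g_2 = \bigl(t_2^{-1}\,\gamma t_1\gamma^{-1}\bigr)\,\gamma g_1\, u,
\]
and the parenthesized factor lies in $\bT_2(\A)$ by the conjugacy just established, giving $g_2\in\bT_2(\A)\gamma g_1 U$. The equality of Heegner packets follows by substitution: $\bT_2(\A) g_2=\bT_2(\A)\gamma g_1 u=\gamma\bT_1(\A)g_1 u$, and quotienting by $\bG(\Q)$ on the left and by $U$ on the right gives $[\bT_2(\A)g_2]_U=[\bT_1(\A)g_1]_U$. The only step that is not mere bookkeeping is the Zariski density passage from $\bT_i(\R)$ to $\bT_i$, but this is automatic in our one-dimensional anisotropic setting; every other manipulation is forced by the commutativity of the tori and the archimedean normalization.
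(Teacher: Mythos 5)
Your proof is correct, and it follows the same overall architecture as the paper's: extract the identity $\gamma t_1 g_1 u = t_2 g_2$, project to the archimedean place to deduce $\gamma\,\bT_1(\R)\,\gamma^{-1}=\bT_2(\R)$, upgrade that to the scheme-theoretic statement $\gamma\bT_1\gamma^{-1}=\bT_2$ over $\Q$, and then rearrange to read off $g_2\in\bT_2(\A)\gamma g_1 U$ and the equality of packets. The one place where your route diverges is the upgrade step. The paper first intersects with $\bG(\Q)$ to conclude $\gamma\bT_1(\Q)\gamma^{-1}=\bT_2(\Q)$ (using $\bT_i(\Q)=\bT_i(\R)\cap\bG(\Q)$), and then invokes Zariski density of rational points of a torus. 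You instead argue directly that $\bT_i(\R)$ is Zariski dense in $\bT_{i,\C}$, which is immediate: a one-dimensional connected algebraic group is an irreducible curve, and $\bT_i(\R)$ is an infinite subset. Since conjugation by $\gamma$ is a $\Q$-automorphism of $\bG$, the equality $\gamma\bT_{1,\C}\gamma^{-1}=\bT_{2,\C}$ then descends to $\Q$ because both sides are reduced closed $\Q$-subschemes. This buys you a more elementary argument: you sidestep both the identity $\bT_i(\Q)=\bT_i(\R)\cap\bG(\Q)$ and the nontrivial arithmetic fact that rational points of a $\Q$-torus are Zariski dense (which for a general torus is Rosenlicht's theorem, though in this quadratic setting it is just the infinitude of $E^\times/\Q^\times$). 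The rest of your manipulations — factoring $g_2 = (t_2^{-1}\gamma t_1\gamma^{-1})\,\gamma g_1 u$ and noting the bracketed element lies in $\bT_2(\A)$ by the conjugacy just established — match the paper exactly.
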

\begin{remark}
In the case $\bG$ is indefinite part of this lemma is evident from  the point of view of Shimura varieties and the theory of complex multiplication; see \cite[\S 2.3]{BD}. Indeed, the set $H_\scD$ can be identified with a Galois orbit of a special point on the Shimura variety $\bG(\Q) \backslash( \C- \R )\times \bG(\A_f) \slash U_f$. Hence, if two Galois orbits intersect, then they coincide. We provide a complete group-theoretic proof of the lemma.
\end{remark}
\begin{proof}
Let $t_i\in\bT_i(\A)$ and $\gamma\in\bG(\Q)$ satisfy $\gamma t_1 g_1 U = t_2 g_2 U$. The archimedean component of this expression can be written as $\gamma g_{1,\infty} (g_{1,\infty}^{-1} t_{1,\infty} g_{1,\infty}) \in g_{2,\infty }(g_{2,\infty}^{-1} t_{2,\infty} g_{2,\infty}) K_\infty$. The assumption $g_{i,\infty}^{-1}\bT_i(\R)g_{i,\infty}=K_\infty$ now implies $\gamma g_{1,\infty} \in g_{2,\infty} K_\infty$. Thus $\gamma \bT_1(\R) \gamma^{-1}= \gamma g_{1,\infty} K_\infty g_{1,\infty}^{-1} \gamma^{-1}=g_{2,\infty} K_\infty g_{2,\infty}^{-1}=\bT_2(\R)$. 

We need to show that $\gamma$ conjugates not only the real points of the tori, but also the algebraic torus varieties defined over $\Q$. We use the  fact that  the image of $\bT_i(\Q)$ in $\bT_i(\R)$ is the intersection $\bT_i(\R)\cap\bG(\Q)$. This implies $\gamma \bT_1(\Q) \gamma^{-1} =\gamma \bT_1(\R) \gamma^{-1} \cap \bG(\Q)=\bT_2(\R)\cap \bG(\Q)=\bT_2(\Q)$. The rational points of a torus defined over $\Q$ are Zariski dense in the torus. This implies that the closed subschemes $\gamma \bT_1 \gamma^{-1}$ and $\bT_2$ coincide.

We rewrite the expression $\gamma t_1 g_1U =  t_2 g_2 U$ in the form  $t_2^{-1} (\gamma t_1 \gamma^{-1} )\gamma g_1 U=g_2 U$. Thus $g_2\in \bT_2(\A) \gamma g_1 U$, as claimed. 
\end{proof}

\subsection{The order and discriminant of a Heegner packet}\label{sec:disc}
In \cite[\S 4.2]{ELMV2} a notion of discriminant is defined for a homogeneous toral set $[\bT(\A)g]$, generalizing the discriminant of a Heegner point. We review a definition adapted to homogeneous Heegner sets on unit groups of quaternion algebras.

For each finite place $p$ of $\mathbb{Q}$ we consider $\scO_{\scD,p}=\iota(E\otimes \Q_p)\cap g_p R_p g_p^{-1}$. This is an order in the quadratic \'etale algebra $\iota(E\otimes \Q_p)\simeq E\otimes \Q_p$. 

If $B$ ramifies at $p$, then $B_p$ contains a unique maximal order, so that $R_p=g_p R_p g_p^{-1}$. Moreover, $R_p$ is the set of all integral elements of $B_p$, and $\scO_{\scD,p}$ is the set of all integral elements of $\iota(E\otimes \Q_p)$. Hence $\scO_{\scD,p}$ is necessarily the unique maximal order of the field $\iota(E\otimes \Q_p)$ for all $p\in {\rm Ram}_B$.

We define the local discriminant $D_p$ of the homogeneous Heegner set $[\bT(\A)g]$ to be the discriminant of the order $\scO_{\scD,p}$. The global discriminant is defined as $D=\prod_p D_p$. Equivalently, $R^g=\cap_p g_p R_p g_p^{-1}\subset B$ is an order in $B$ (the intersection is well-defined because $g_p R_p g_p^{-1}=R_p$ for all but finitely many primes $p$), and $\scO_\scD=\iota(E)\cap R^g$ is an order in the field $\iota(E)\simeq E$. Then $D=|\operatorname{disc}(\scO_\scD)|$. 

Finally, we observe that the discriminant $D$ and the global quadratic order $\scO_\scD$ are well-defined invariants of a Heegner packet $[\bT(\A)g]_U$ for $U<K$. Indeed, $\scO_{\scD,p}$ does not change if we replace $g_p$ by any element of $g_p K_p$, hence the local orders, the global order, and the discriminant depend only on the class of $g$ in $\bG(\A)\slash K$. 

\begin{remark}
Assuming $g_f=e$, the embedding $\iota\colon E\to B$ is optimal if and only if $\scO_\scD=\scO_E$, if and only if the discriminant $D$ is fundamental.
\end{remark}

\begin{remark}\label{rem:almost-max-pic}
If $U=K$ is almost maximal, there is a canonical identification
\begin{equation}\label{eq:iso-Pic}
{\rm Pic}(\scO_\scD)\simeq [\bT(\A)]_{K_\scD}=\bT(\Q)\backslash \bT(\A_f)/K_{\scD,f}.
\end{equation}
In particular, the measure on $H_\scD$ from \S\ref{sec:periodic-measure} is ${\rm Pic}(\scO_\scD)$-invariant.  This makes explicit the action of the Picard group ${\rm Pic}(\scO_\scD)$ on the Heegner packet $H_\scD$. We caution, however, that when $U$ is not almost maximal, it is not in general true that $[\bT(\A)]_{U_\scD}$ is the Picard group of an order in $E$. 
\end{remark}

\section{The adelic mixing conjecture and the main theorem}\label{sec:adelic-mixing-main-theorem}
In this section, we formulate an adelic version of the mixing conjecture, in the presence of a fixed level structure. We then state the main result of this paper, Theorem \ref{main-thm}, in its most general and quantitative form.

\subsection{The classical shifting element}\label{sec:shift}
Let $\scO_\scD$ be the global order defined in \S\ref{sec:disc}. Consistently with Conjecture \ref{MV-conj} and Theorem \ref{thm:2nd-version}, we make the following definition.

\begin{defn}
For $[\sk]\in {\rm Pic}(\scO_\scD)$ let
\begin{equation}\label{eq:defn-q}
q:=\min_{\substack{\mathfrak{n}\subset\scO_{\scD}\\ [\mathfrak{n}]=[\sk]}}\Nr\mathfrak{n}.
\end{equation}
Thus $q$ is the minimal norm of an integral ideal representing the class $[\sk]$.
\end{defn}

\begin{remark}
By Minkowski's theorem, one has $q\leq (2/\pi) \sqrt{|{\rm disc}(\scO_{\scD})|}$.
\end{remark}

In \S\ref{sec:large-q-range} we considered the reduced primitive binary integral quadratic form $Q$ of discriminant $|{\rm disc}(\scO_E)|$ associated with the class $[\sk]\in {\rm Pic}(\scO_E)$. This correspondence holds more generally, as we now recall; see e.g.\ \cite[Section 7]{Cox}.  Let $\mathfrak{n}$ be an invertible ideal in the global order $\mathscr{O}_\scD$, and fix an oriented $\Z$-basis $[\alpha,\beta]$ for $\mathfrak{n}$. Then $f_{\mathfrak{n},[\alpha,\beta]}=\Nr(\alpha x + \beta y)/\Nr\mathfrak{n}$ defines an element in $F_D$, the set of primitive binary integral quadratic forms of discriminant $D$. Any other choice of oriented $\Z$-basis yields a form in the same equivalence class under the action of $\SL_2(\Z)$. The resulting map onto $\SL_2(\Z)\backslash F_D$ descends to a bijective correspondence $[\mathfrak{n}]\leftrightarrow \bar{f}_{[\mathfrak{n}]}:=\SL_2(\Z).f_{\mathfrak{n},[\alpha,\beta]}$ from the Picard group ${\rm Pic}(\scO_{\scD})$. Let ${\rm Red}_D\subset F_D$ denote the reduced forms, which constitute a set of representatives for the $\SL_2(\Z)$-action. We obtain a bijection
\begin{equation}\label{eq:quad-form-assignment}
{\rm Pic}(\scO_{\scD})\xrightarrow{\sim} {\rm Red}_D,\qquad [\mathfrak{n}]\mapsto Q_{[\mathfrak{n}]},
\end{equation}
where $Q_{[\mathfrak{n}]}$ is the unique reduced form representing the class $\bar{f}_{[\mathfrak{n}]}$.

\begin{remark}\label{rem:Q-and-q}
An integer $n$, coprime to the conductor of $\scO_\scD$, is represented by a class in $\SL_2(\Z)\backslash F_D$ precisely when it is represented as the norm of an invertible ideal in $\scO_\scD$. We deduce that the integer $q$, as defined in \eqref{eq:defn-q}, is the smallest non-zero integer represented by $Q_{[\sk]}$. The previous discussion shows in particular
\begin{equation}\label{orth-char}
  \#\{[\mathfrak{n}] = [\mathfrak{s}]  : {\rm Nr}\, \mathfrak{n} = n\}  = \frac{1}{w} \#\{(x, y) \in \Bbb{Z}^2 : Q_{[\mathfrak{s}]}(x, y) = n\}
\end{equation}
for any ideal $\mathfrak{s}$, where $w$ is the number of roots of unity in $\scO_\scD$.
\end{remark}

\subsection{The adelic shifting element}\label{sec:adelic-shift}

We are working our way towards the formulation of an adelic version of the mixing conjecture, for which we shall instead fix a shifting element $s\in\bT(\A_f)$. 

We begin by explicating the relation between elements of $\bT(\A_f)$ and the group $\mathcal{I}(\scO_\scD)$ of all invertible proper fractional $\scO_\scD$-ideals. The map $(E\otimes \A_f)^\times \slash \widehat{\scO}_\scD^\times \to \mathcal{I}(\scO_\scD)$ given by $(\lambda_p)_p\mapsto \bigcap_p \lambda_p \scO_{\scD,p}$ is a bijection, because the ring $\scO_\scD$ is Gorenstein and every element of $\mathcal{I}(\scO_\scD)$ is everywhere locally principal. This induces a bijection
\begin{equation*}
\A_f^\times \backslash (E\otimes \A_f)^\times \slash \widehat{\scO}_\scD^\times\xrightarrow{s\mapsto \cap_p s_p \scO_{\scD,p}}\mathcal{I}(\Z) \backslash \mathcal{I}(\scO_\scD) \to \Q^\times \backslash \mathcal{I}(\scO_\scD),    
\end{equation*}
where the last equality holds because $\Q$ has class number $1$. We can now use the embedding $\iota$ to define a bijection
\begin{equation}\label{eq:s-to-ideal}
\bT(\A_f) \slash {K_{\scD,f}} \xrightarrow{\sim} \Q^\times \backslash \mathcal{I}(\scO_\scD).    
\end{equation}

Recall that an integral ideal $I$ in $\scO_\scD$ is said to be \textit{primitive} if every integral ideal in its homothety class $\mathbb{Q^\times} I$ is an integral multiple of $I$. In particular, $I$ is not divisible by any prime ideal in $E$ which is inert over $\Q$.  Every homothety class of ideals has a unique primitive integral representative, because it has a unique integral ideal of minimal norm. We then define the \emph{primitive integral ideal associated with $s\in\bT(\A_f)$} to be the unique primitive integral ideal $\sk\subset \scO_\scD$ representing the image of $s$ under the map \eqref{eq:s-to-ideal}.
 
\begin{remark}
The map \eqref{eq:s-to-ideal} induces the isomorphism \eqref{eq:iso-Pic}, a fact which justifies the choice of notation $[\sk]$ for the image of $s$ under the map \eqref{eq:iso-Pic}.
\end{remark}

\subsection{Joint Heegner packets}\label{sec:joint-Heegner-packet}
We shall now work with two copies of $\bG$. Fix a homogeneous toral datum $\scD$ and let $\bT<\bG$ be the associated torus. We write $\Delta\bT$ for the image of the diagonal embedding of $\bT$ into the product $\bG\times\bG$. 

For $s\in \bT(\A_f)$ we consider the \textit{joint homogeneous toral set}
\[
[\Delta\bT(\A)(g,sg)]\subset [(\bG\times\bG)(\A)].
\]
Note that the shifts $s$ and $t_\Q s$ give rise to the same joint homogeneous toral set for all $t_\Q\in\bT(\Q)$. Recall the periodic measure ${\rm d}t$ on $[\bT(\A)]$ from \S\ref{sec:periodic-measure}. The push-forward of ${\rm d}t$ under the map
\begin{equation}\label{eq:tg-tsg}
[\bT(\A)]\rightarrow [(\bG\times \bG)(\A)], \quad [t]\to [(tg,tsg)],
\end{equation}
defines a $(g,sg)^{-1}\Delta\bT(\A)(g,sg)$-invariant Borel probability measure on $[(\bG\times \bG)(\A)]$, supported on $[\Delta\bT(\A)(g,sg)]$. Evaluating it on an integrable function $\varphi\colon [(\bG\times\bG)(\A)]\to \C$ yields the period integral
\begin{equation}\label{defn:PEdelta}
P^\Delta_\scD(\varphi;s)=\int_{[\bT(\A)]} \varphi(tg,tsg) \, {\rm d} t.
\end{equation}

We now add level structure and furthermore assume that $\scD$ is a homogeneous Heegner datum, as defined in \S\ref{sec:special}. Let $U_f\subset K_f$ be of finite index and write $U=U_fK_\infty$ as in \S\ref{sec:Shimura}. We continue to denote by $[\,\cdot\,]_{U\times U}$ the canonical quotient map $(\bG\times\bG)(\A)\rightarrow [(\bG\times\bG)(\A)]_{U\times U}=Y_U\times Y_U$. We then consider the following \emph{joint Heegner packet}
\[
H^\Delta_\scD(s)=[\Delta\bT(\A)(g,sg)]_{U\times U}\subset Y_U\times Y_U.
\]
For $\scD$ fixed, the joint packet depends only on the class $[s]$ of $s$ in the double quotient $[\bT(\A)]_{U_\scD}=\bT(\Q)\backslash \bT(\A_f)/U_{\scD,f}$; if $U=K$ is almost maximal, this is the class of $s$ in the Picard group ${\rm Pic}(\scO_\scD)$. 

\begin{remark}
If $B$ is indefinite then $H^\Delta_\scD(s)$ is a single Galois orbit of a special point of the form $(x,[s].x)\in Y_U\times Y_U$ for any $x\in H_\scD$.
When $B=B^{(2,\infty)}$ is the Hamiltonian quaternions, $E=\Q(\sqrt{-d})$ and $g_f=e$, the shifted set $\mathcal{R}_d^\Delta(s)$ from \eqref{eq:RdDelta} breaks up into a disjoint union of at most two such joint packets.
\end{remark}

The map \eqref{eq:tg-tsg} induces a map
\begin{equation}\label{eq:Heegner_set2jointpacket}
[\bT(\A)]_{U_\scD}\to [\Delta\bT(\A)(g, s g)]_{U\times U}=H^\Delta_\scD(s).
\end{equation}
This is well-defined because $\bT$ is abelian and $s U_\scD s^{-1}=U_\scD$.

\begin{lemma}\label{lem:shifted-diag-torus}
The map \eqref{eq:Heegner_set2jointpacket} is a bijection.
\end{lemma}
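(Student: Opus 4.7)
The strategy is to mirror the proof of Lemma~\ref{lem:bij-HeegnerSetPacket}, with the novelty -- the shift $s$ -- absorbed for free because $\bT$ is abelian, so that $s$ commutes with every element of $\bT(\A)$ and normalizes $U_\scD$. In particular, I expect injectivity to be forced by the first coordinate alone, with the second coordinate contributing no extra constraint.

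First I would check that the map \eqref{eq:Heegner_set2jointpacket} is well defined on $[\bT(\A)]_{U_\scD}=\bT(\Q)\backslash\bT(\A)/U_\scD$. Given $\tau\in\bT(\Q)$ and $u_\scD\in U_\scD=\bT(\A)\cap gUg^{-1}$, write $u_\scD=gug^{-1}$ for some $u\in U$. Using $su_\scD=u_\scD s$ (abelianness of $\bT$), one computes
\[
(\tau t u_\scD g,\, \tau t u_\scD s g) = (\tau,\tau)\cdot(tg,tsg)\cdot(u,u),
\]
which represents the same class in $Y_U\times Y_U$. Surjectivity is immediate from the very definition of $H^\Delta_\scD(s)$ as the image of $\Delta(\bT)(\A)(g,sg)$ in $Y_U\times Y_U$.

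For injectivity, I would suppose that $(t_1g,t_1sg)$ and $(t_2g,t_2sg)$ project to the same class in $Y_U\times Y_U$. Comparing the first coordinates yields an equation $\gamma t_1 g u = t_2 g$ for some $\gamma\in\bG(\Q)$ and $u\in U$. This is precisely the hypothesis of Lemma~\ref{lem:bij-HeegnerSetPacket}, whose proof gives $\gamma\in\bT(\Q)$ and $gug^{-1}\in\bT(\A)\cap gUg^{-1}=U_\scD$. Hence $t_2=\gamma t_1(gug^{-1})$ with $\gamma\in\bT(\Q)$ and $gug^{-1}\in U_\scD$, i.e.\ $[t_1]=[t_2]$ in $[\bT(\A)]_{U_\scD}$, as required.

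The only conceptual point -- more bookkeeping than obstacle -- is the observation that the first coordinate alone suffices to conclude injectivity; the analogous equation coming from the second coordinate, after using abelianness of $\bT$ and $s\in\bT(\A_f)$, is automatically implied and so yields no additional information. Thus the shift $s$ is essentially invisible to the argument, and the proof reduces to the unshifted case already handled.
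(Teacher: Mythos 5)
Your proof is correct and takes the same route as the paper: the paper's proof simply observes that composing \eqref{eq:Heegner_set2jointpacket} with the first-coordinate projection recovers the map of Lemma \ref{lem:bij-HeegnerSetPacket}, so injectivity is inherited. Your version spells out the underlying computation and also records the well-definedness check, which the paper states just before the lemma, but the content is the same.
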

\begin{proof}
It is evidently surjective. The injectivity follows from the observation that the composition of  \eqref{eq:Heegner_set2jointpacket} with the projection map onto the first coordinate is the map $[\bT(\A)]_{U_\scD}\to[\bT(\A)g]_U$, which is injective by Lemma \ref{lem:bij-HeegnerSetPacket}.
\end{proof}

By virtue of Lemmata \ref{lem:bij-HeegnerSetPacket} and \ref{lem:shifted-diag-torus} the map  
\begin{equation}\label{eq:alphaU}
H_\scD\to H^\Delta_\scD(s),\qquad [tg]\mapsto [(tg,tsg)].
\end{equation}
is a $[\bT(\A)]_{U_\scD}$-equivariant bijection. Using the commutativity of $\bT(\A)$ to write $tsg=sx$, where $x=tg$, it follows that $H^\Delta_\scD(s)=\{(x,sx): x\in H_\scD\}$, and the uniform measure on $H_\scD$ projects to the same on $H^\Delta_\scD(s)$. Since, from \S\ref{sec:periodic-measure}, the former derives from the periodic measure on $[\bT(\A)]$, we obtain, for an integrable  function $\varphi\colon Y_U\times Y_U\to \C$,
\begin{equation}\label{eq:defn-PEdelta}
P_\scD^\Delta(\varphi;s)=\frac{1}{|H_\scD|}\sum_{x\in H_\scD}\varphi(x,sx).
\end{equation}
We call $P_\scD^\Delta(\varphi;s)$ the \textit{joint Heegner period}. This recovers the expression \eqref{eq:joint-Heegner-period}, when the shifting element $g\in\bG(\A)$ in the homogeneous Heegner datum $\scD=(\iota,g)$ is trivial at finite places.

\subsection{The conjecture and our main result}\label{subsec:mixing-conj}
We are now ready to present the mixing conjecture of Michel and Venkatesh \cite{MV}. It asserts minimal conditions under which joint Heegner packets should tend to the product of the uniform measures on $Y_U\times Y_U$.

\begin{conj}[The adelic mixing conjecture at level $U$]\label{conj:adelic-mixing}
Let $\varphi\in C_c(Y_U\times Y_U)$ and fix a sequence of homogeneous Heegner data $\{\scD_i\}_i$ and shifts $s_i\in\bT_i(\A_f)$. Denote by $D_i$ the discriminant of $\scO_{\scD_i}$ and let $q_i$ be the minimal norm of an integral ideal representing the class of $s_i$ in $[\bT_i(\A_f)]_{K_{\scD_i}}\simeq \operatorname{Pic}(\scO_{\scD_i})$. Assume $|D_i|\to \infty$ and $q_i\to \infty$ as $i \to \infty$. Then
\begin{equation*}
P^\Delta_{\scD_i}(\varphi;s_i)\rightarrow \int_{Y_U\times Y_U} \varphi \, {\rm d} (\mu\times \mu)
\end{equation*}
as $i\rightarrow\infty$.\end{conj}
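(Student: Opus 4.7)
The plan is to verify Conjecture \ref{conj:adelic-mixing} by the Weyl equidistribution criterion, under the hypotheses of GRH and (in the indefinite case) Ramanujan, testing against functions in the $L^2$-spectrum of $Y_U\times Y_U$. Decomposing $L^2(Y_U\times Y_U)=\bigoplus_{\sigma_1,\sigma_2}\sigma_1^{U_f}\otimes\sigma_2^{U_f}$ and approximating in a suitable Sobolev norm, it suffices to show that $\mathscr{P}^\Delta_{\scD_i}(f_1\otimes f_2;s_i)\to 0$ whenever $f_j\in\sigma_j^{U_f}$ and at least one $\sigma_j$ is non-trivial. The case where exactly one $f_j$ is cuspidal and the other constant follows from Duke's theorem applied to the individual packet $H_{\scD_i}$. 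The remaining case $\sigma_1,\sigma_2$ both cuspidal is the core of the argument and will be treated by a two-range analysis depending on the size of $q_i$ relative to $D_i$.

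In the \emph{small-$q$ range} $q_i\leq D_i^{1/2-\varepsilon}$, choose for each $i$ a primitive integral ideal $\mathfrak{n}\subset\scO_{\scD_i}$ of minimal norm $N=q_i$ representing $[s_i]$. Using the adelic analogue of the decomposition \eqref{eq:HE-decomp} to identify $H^\Delta_{\scD_i}(s_i)$ with a Heegner packet of level $N$ lying on the Hecke correspondence $Y_0^\Delta(N)$, I would rewrite
\[
\mathscr{P}^\Delta_{\scD_i}(f_1\otimes f_2;s_i)=\frac{1}{|H_{\scD_i}|}\sum_{x\in H^\Delta(\mathfrak{n},D_i)}f_1(x)f_2(Nx),
\]
spectrally expand $f_1(\,\cdot\,)f_2(N\,\cdot\,)$ as a function on $Y_0^\Delta(N)$ against an orthonormal basis of level $N$ (including Eisenstein series when $B$ splits), and apply absolute values term by term. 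The Parseval-type side is controlled by (conditional) subconvexity for the triple product, while the Heegner periods on the enlarged level are bounded by Waldspurger and GRH; Ramanujan is used to control the local factors when $B$ is indefinite. This yields a bound of the optimistic shape $D_i^{-1/4}q_i^{1/2}(D_iq_i)^\varepsilon$, giving decay precisely in the stated range.

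In the \emph{large-$q$ range} $q_i\geq D_i^{1/6+\varepsilon}$, apply Parseval on ${\rm Pic}(\scO_{\scD_i})$ as in \eqref{eq:shifted-prod}. When $\sigma_1\ne\sigma_2$, absolute values combined with the off-diagonal estimate of \cite{BB} (conditional on GRH) yield a uniform logarithmic decay in $D_i$. When $\sigma_1=\sigma_2=\sigma$ with Jacquet--Langlands lift $\pi={\rm JL}(\sigma)$, Waldspurger's formula converts the expression into a first moment of $L(1/2,\pi\times\chi)$ twisted by $\bar\chi([\sk])$ over $\chi\in{\rm Pic}(\scO_{\scD_i})^\vee$, as in \eqref{3}. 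An approximate functional equation, together with \eqref{orth-char}, transforms this moment into a sum
\[
\sum_{\substack{(x,y)\ne(0,0)\\ Q_{[\sk]}(x,y)\ll D_i}}\frac{\lambda_\pi(Q_{[\sk]}(x,y))}{Q_{[\sk]}(x,y)^{1/2}}
\]
of a multiplicative function along values of the reduced binary quadratic form $Q_{[\sk]}$ of discriminant $D_i$. One then bounds this sum absolutely via the large-sieve extension of Khayutin \cite{Kh}, using automorphy of the symmetric fourth power of $\pi$ to get a sub-unit average for $|\lambda_\pi|$ (the Holowinsky--Lester--Radziwi\l\l\ device), which delivers the requisite logarithmic saving.

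The main obstacle will be the diagonal large-$q$ regime: the Waldspurger side is a positive first moment of central $L$-values, so no cancellation can be extracted at the level of representations, and the entire argument must proceed through the arithmetic behaviour of $\lambda_\pi$ along values of $Q_{[\sk]}$. Making the sieve effective requires both a well-roundedness condition on $Q_{[\sk]}$ (so that its values equidistribute in arithmetic progressions, which by van der Corput forces $q\geq D^{1/6}$) and a functorial input strong enough to push $\sum|\lambda_\pi|$ slightly below the Deligne bound. The secondary difficulty is ensuring, under GRH plus Ramanujan, that the small-$q$ and large-$q$ ranges genuinely overlap, so that every sequence with $q_i\to\infty$ is covered by one of the two arguments.
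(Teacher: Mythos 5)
Your proposal is essentially the route the paper itself takes: Weyl's criterion and reduction to tensor test functions $f_1\otimes f_2$; Duke's theorem for the mixed cuspidal/constant case; the split into the arithmetic range $q\geq D^{1/6+\varepsilon}$ (Parseval on $\operatorname{Pic}(\scO_E)^\vee$, Waldspurger, approximate functional equation, and Khayutin's sieve using ${\rm sym}^2/{\rm sym}^4$ for the logarithmic saving, with \cite{BB} handling $\sigma_1\neq\sigma_2$) and the spectral range $q\leq D^{1/2-\varepsilon}$ (identification of $H^\Delta_\scD(s)$ inside the intermediate Hecke correspondence $Y_0^\Delta(N)$, spectral expansion at level $N$, Ichino--Watson and Waldspurger bounded by Lindel\"of). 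The only minor omissions are that the small-$q$ expansion also produces a constant-function contribution $\lambda(N)\langle f_1,f_2\rangle\ll N^{-1/2+\varepsilon}$ (giving the full error $N^{-1/2+\varepsilon}+N^{1/2}D^{-1/4+\varepsilon}$), and that the paper proves the conjecture only under GRH (plus Ramanujan for $B$ indefinite), at almost maximal level $U=K$, for \emph{fundamental} discriminants, and restricted to the discrete spectrum when $B=\Mat(\Q)$ -- not at an arbitrary level $U$ as stated in Conjecture \ref{conj:adelic-mixing}.
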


We prove the above conjecture under a variety of assumptions, among which is a maximality condition on the level structure and (when $B=\Mat(\Q)$) a discreteness condition on the space of eligible test functions. The precise result is as follows.

\begin{theorem}[Main Theorem:\ detailed version]\label{main-thm}
Assume $U=K$ is almost maximal. Let $\scD=(\iota,g)$ be a homogeneous Heegner datum with a fundamental discriminant $D$, and let $s\in\bT(\A_f)$.
Assume the generalized Riemann hypothesis and, whenever $B$ is indefinite, the Ramanujan conjecture for $\mathbf{PGL}_2\slash\Q$. For $\varphi\in L^2_{\rm disc}(Y_K\times Y_K,\mu\times\mu)$ it holds that 
\[
P_\scD^\Delta(\varphi;s)=\int_{Y_K\times Y_K} \varphi \, {\rm d} (\mu\times \mu) +O_{\varepsilon,\varphi, B}(h_{\varepsilon}(q,D)),
\]
where
\[
h_{\varepsilon}(q,D)=\begin{cases}
q^{-1/2 +\varepsilon} + q^{1/2} D^{-1/4 + \varepsilon}, & 1\leq q\leq D^{1/2-\varepsilon};\\
(\log D)^{-1/18+\varepsilon}, & q\geq D^{1/2-\varepsilon}.
\end{cases}
\]
The error rate $h_{\varepsilon}(q,D)$ does not depend on the choice of the almost maximal compact subgroup $K$. In particular, when $B$ is a division algebra, if $\{\scD_i\}_i$ is a sequence of Heegner data with fundamental discriminants $D_i\to\infty$, and $s_i$ is a sequence of elements in $\bT_i(\A_f)$ with $q_i\rightarrow\infty$, then $H_{\scD_i}^\Delta(s_i)\subset Y_K\times Y_K$ equidistributes according to the product measure $\mu\times\mu$.
\end{theorem}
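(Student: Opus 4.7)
The plan is to apply Weyl's equidistribution criterion to the decomposition of $L^2(Y_K \times Y_K, \mu \times \mu)$ into tensor products of irreducible constituents. Since $K$ is almost maximal, each space $\sigma^K$ of $K$-invariants is at most one-dimensional, so I may reduce to testing $\mathscr{P}_\scD^\Delta(\varphi;s)$ against pure tensors $\varphi = f_1 \otimes f_2$ where each $f_i$ generates an irreducible automorphic representation $\sigma_i$. The main term on the right-hand side corresponds to $f_1 = f_2 = 1$, so the task is to show $\mathscr{P}_\scD^\Delta(f_1 \otimes f_2; s) = O_{\varepsilon}(h_\varepsilon(q,D))$ when $(f_1, f_2)$ is not both constant. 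When exactly one of $f_1, f_2$ is constant, the joint period collapses to a single toric period, which is controlled by Duke's theorem (and its generalization by Duke--Schulze-Pillot and others to arbitrary quaternion algebras) with a power saving in $D$, unconditionally. I therefore concentrate on the case that both $f_1, f_2$ are cuspidal, and split into the two complementary ranges of $q$ built into $h_\varepsilon(q,D)$.

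For the large $q$ range ($q \geq D^{1/2-\varepsilon}$), I apply Parseval expansion over the dual of ${\rm Pic}(\scO_\scD)$ to write
\[
\mathscr{P}_\scD^\Delta(\varphi; s) = \sum_{\chi \in {\rm Pic}(\scO_\scD)^\vee} \chi(\sk)\, \mathscr{W}_D(f_1, \chi)\, \overline{\mathscr{W}_D(f_2, \chi)}.
\]
If $\sigma_1 \neq \sigma_2$, absolute values combined with the main theorem of \cite{BB} (which uses GRH) give the stated logarithmic decay uniformly in $[\sk]$. If $\sigma_1 = \sigma_2 = \sigma$, Waldspurger's formula converts the expression to a twisted average of central values $L(1/2, \pi \times \chi)$ for $\pi = \operatorname{JL}(\sigma)$. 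After an approximate functional equation (using GRH to truncate at length $\asymp D$) and inserting absolute values on the coefficients, the problem reduces to bounding
\[
\sum_{\substack{(x,y) \neq (0,0) \\ Q_{[\sk]}(x,y) \ll D}} \frac{|\lambda_\pi(Q_{[\sk]}(x,y))|}{Q_{[\sk]}(x,y)^{1/2}},
\]
a sum of a nearly multiplicative function along the sparse sequence of values of the reduced form $Q_{[\sk]}$ of discriminant $D$. The large sieve framework of Barban--Wolke--Nair, refined in \cite{Kh}, handles such sums, and Sato--Tate savings via automorphy of $\operatorname{Sym}^4 \pi$ extract a logarithmic gain. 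The required distribution of $Q_{[\sk]}$-values in arithmetic progressions, for which van der Corput's method applies, constrains us to $q \geq D^{1/6+\varepsilon}$, which is comfortably within the large $q$ regime.

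For the small $q$ range ($q \leq D^{1/2-\varepsilon}$), I exploit the geometric identification from Section~\ref{sec:Hecke-explanation}: choosing $\mathfrak{n}$ a primitive integral representative of $[\sk]$ of minimal norm $N = q$, the joint packet $H_\scD^\Delta(s)$ coincides with the Heegner packet $H^\Delta(\mathfrak{n}, D)$ embedded in the Hecke correspondence $Y_0^\Delta(N) \subset Y_K \times Y_K$. Thus
\[
\mathscr{P}_\scD^\Delta(f_1 \otimes f_2; s) = \frac{1}{|H_\scD|} \sum_{x \in H^\Delta(\mathfrak{n},D)} f_1(x)\, f_2(Nx),
\]
and I spectrally expand the function $x \mapsto f_1(x) f_2(Nx)$ on $Y_0^\Delta(N)$ against an orthonormal basis $\{u_j\}$ of level $N$ (including Eisenstein series when $B = \Mat(\Q)$), with the spectral sum of effective length $N^{1+o(1)}$ by rapid decay in the spectral parameter. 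Each inner product $\langle f_1 \bar{f_2}(N\cdot), u_j \rangle$ is a triple-product period controlled by Ichino's formula by a central value $L(1/2, \pi_{f_1} \times \pi_{f_2} \times \pi_{u_j})$, and the remaining sum $\sum_{x \in H^\Delta(\mathfrak{n},D)} u_j(x)$ is a toric period on $Y_0(N)$ controlled by Waldspurger's formula by $L(1/2, \pi_{u_j} \times \chi_E)$. With GRH (hence Lindelöf) bounding all central values and the Ramanujan conjecture bounding the archimedean and ramified local factors polynomially, absolute values lead to a bound of shape $D^{-1/4} N^{1/2} (DN)^\varepsilon$, which matches the $h_\varepsilon(q,D)$ rate in this range.

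The principal obstacle is the narrow overlap region $D^{1/6+\varepsilon} \leq q \leq D^{1/2-\varepsilon}$ where both methods must remain effective simultaneously: the spectral approach degrades as $N \to D^{1/2}$ (matching $|H_\scD|$ to $\operatorname{vol}(Y_0^\Delta(N))$), while the sieve approach requires $Q_{[\sk]}$ to be sufficiently round for the polynomial equidistribution to hold. GRH and the Ramanujan conjecture are what guarantee the overlap; without Ramanujan, the spectral expansion at the cusp-form level becomes too lossy in the archimedean parameters of $u_j$ to cover the middle range. The final step is to combine the two estimates, verify the uniformity and Sobolev-norm dependence on $\varphi$ required for the spectral sum of tensor products to converge, and observe that when $B$ is a division algebra there are no Eisenstein contributions and $L^2_{\mathrm{disc}} = L^2$, yielding the unconditional equidistribution statement in that setting.
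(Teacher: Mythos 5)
Your proposal is correct and follows essentially the same approach as the paper: reduction to pure tensors $f_1 \otimes f_2$, disposal of the constant case by Duke's theorem, the large-$q$ range via Parseval over ${\rm Pic}(\scO_E)^\vee$ combined with \cite{BB} for $\sigma_1 \neq \sigma_2$ and Waldspurger plus the sieve bound on $\sum |\lambda_\pi(Q(x,y))|$ for $\sigma_1 = \sigma_2$, and the small-$q$ range via spectral expansion across $Y_0^\Delta(N)$ using Ichino--Watson and Waldspurger to control the triple and toric periods. The only minor imprecision is writing $L(1/2, \pi_{u_j} \times \chi_E)$ for the toric-period Waldspurger bound where the paper has the four-dimensional $L(1/2, \pi_{u_j} \times \theta_E)$ with $\theta_E = \mathbf{1} \boxplus \chi_E$; this does not affect the shape of the argument.
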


One can check that the dependence on $\varphi$ in the error term is through an appropriate Sobolev norm. Since $B$ is taken to be fixed throughout this paper, we will usually drop the subscript $B$ from asymptotic inequalities of the form $\ll_B$, as in the error term in Theorem \ref{main-thm}. The specialization to the set-up described in Remark \ref{rem31} recovers Theorem \ref{thm:sphere}. 

\subsection{Reductions of the main theorem}\label{sec:reductions}
To prove Theorem \ref{main-thm}, it suffices to show 
\begin{equation}\label{eq:joint-period-bound}
P_\scD^\Delta(\varphi;s)\ll_{\varepsilon,\varphi} h_{\varepsilon}(q,D ),
\end{equation}
for $\varphi\in L^2_{\rm cusp}(Y_K\times Y_K,\mu\times\mu)$, since the case when one of the components of $\varphi$ is the constant function is covered by Duke's theorem.  See the discussion in \S\ref{sec:Weyl-crit}. We now give two further reductions, having to do with the Heegner datum $\scD$ and the shifting element $s$.

\subsubsection{Reduction to Heegner data with archimedean translation}\label{reduction-to-arch-trans}
Theorem \ref{main-thm} can be reduced to the case when the translation $g$ in the Heegner datum $\scD=(\iota,g)$ is purely archimedean. This follows from the fact that $\bG$ has a finite class number.

Indeed, let $\xi_1,\ldots,\xi_h\in \bG(\A_f)$ be a full set of representatives for the double quotient $\bG(\Q) \backslash \bG(\A_f) \slash K_f$. Write $g=\delta \widetilde{g}_{\infty} \xi_{j} k$, with $\delta\in\bG(\Q)$, $\widetilde{g}_{\infty}\in\bG(\R)$ and $k\in K_f$. Consider the modified Heegner datum $\widetilde{\scD}=(\delta^{-1} \iota \delta, \widetilde{g}_\infty)$; then the fact that $\varphi$ is $K\times K$-invariant implies
\begin{equation*}
P_\scD^\Delta(\varphi;s)=P_{\widetilde{\scD}}^\Delta(R_{(\xi_j,\xi_j)}\varphi;\delta^{-1}s\delta)\;,
\end{equation*}
where $R_{(\xi_j,\xi_j)}(\varphi)(h_1,h_2)=\varphi(h_1\xi_j,h_2\xi_j)$ is $\xi_j K \xi_j^{-1}$-invariant. The latter group is also an almost maximal compact subgroup, and, moreover,
$\widetilde{\scD}$ has a purely archimedean translation. We can thus deduce Theorem \ref{main-thm} for a Heegner packet with an adelic translation from the same statement for packets with purely archimedean translations by replacing $O_{\varepsilon,\varphi}(h_{\varepsilon}(q,D))$ by
\begin{equation*}
\max_{1\leq j \leq h}O_{\varepsilon,R_{(\xi_j,\xi_j)}\varphi}(h_{\varepsilon}(q,D)).
\end{equation*}

\subsubsection{Reduction to shifts coprime to \texorpdfstring{$d_B$}{discriminant of B}}\label{sec442}
Assume henceforth the translation $g=g_\infty$ is purely archimedean. Another simple reduction of Theorem \ref{main-thm} is to the case when $s_p$ is trivial for all $p\mid d_B$. This is equivalent to reducing to the case that $\gcd(N,d_B)=1$, where $N$ is the norm of the primitive integral ideal $\sk$ associated with $s$, as in \S\ref{sec:adelic-shift}. Write $s=s_0 s_{d_B}$, where $s_0$ is supported away from $d_B$.  For each prime $p\mid d_B$, fix $\sigma_p\in \bG(\Q_p)- K_p$, and for $l\mid d_B$ define $\sigma_l=\prod_{p\mid l} \sigma_p$. Let $l_s\mid d_B$ be the product of the primes $p\mid d_B$ where $s_p\not\in K_p$.
Then because $\varphi$ is $K\times K$-invariant and $K_p$ is a normal subgroup of index $2$ in $\bG(\Q_p)$ for all $p\mid d_B$, we deduce
\begin{equation*}
P_\scD^\Delta(\varphi;s)=P_\scD^\Delta(R_{(e,s_{d_B})})\varphi;s_0)=P_\scD^\Delta(R_{(e,\sigma_{l_s})}\varphi;s_0).
\end{equation*}
Because all primes dividing $d_B$ are either inert and of order $1$ in the class group, or ramified and of order at most $2$ in the class group, the minimal norm of an integral ideal representing $s_0$ is at most $q/l_s$. Hence we can deduce Theorem \ref{main-thm} for a general shift $s$ from the same statement for shifts supported away from $d_B$  by replacing $O_{\varepsilon,\varphi}(h_{\varepsilon}(q,D))$ by
\begin{equation*}
\max_{l\mid d_B} O_{\varepsilon,R_{(e,\sigma_l)}\varphi}(h_{\varepsilon}(q/l,D)).
\end{equation*}

\section{Statement of the large $q$ estimate and the off-diagonal case}\label{sec:large-q}

The goal of this section is to prove the following result, which establishes \eqref{eq:joint-period-bound} in the most delicate range, when $q$ is close to $D^{1/2}$ . 
\begin{prop}\label{prop2}
For $j=1,2$, let $\sigma_j$  be cuspidal automorphic representations of $\bG(\A)$ with $\sigma_j^K\neq \{0\}$, and let $f_j$ be $L^2$ normalized vectors in $\sigma_j^K$. Let $\scD=(\iota,g)$ be a homogeneous Heegner datum, where $\iota\colon E\to B$ is a ring embedding of an imaginary quadratic field $E/\mathbb{Q}$ of discriminant $-D$, and $g_f=e$. Let $s\in\bT(\A_f)$ and define $q$ as in \eqref{eq:defn-q}.
Assume the generalized Riemann hypothesis.
\begin{enumerate}
\item\label{item-BB} If $\sigma_1\neq \sigma_2$ then $P_\scD^\Delta(f_1\otimes f_2;s)\ll_\varepsilon (\log D)^{-1/4+\varepsilon}$, uniformly in $s$;
\item\label{item-BBK} If $\sigma_1=\sigma_2$, then we can take $f_1=f_2=f$. If $B$ is indefinite, assume the Ramanujan conjecture for $\mathbf{PGL}_2/\Q$. Let $\varepsilon > 0$ and assume that $q \geq D^{1/6 + \varepsilon}$. Then
\[
P_\scD^\Delta(f\otimes f;s) \ll_\varepsilon (\log D)^{-1/18+\varepsilon}.
\]
\end{enumerate}
\end{prop}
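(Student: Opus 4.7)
The plan is to Fourier expand the joint period across the characters of the class group and handle the two cases separately. Since $U=K$ is almost maximal and $D$ is fundamental, Remark \ref{rem:almost-max-pic} provides the identification ${\rm Pic}(\scO_E)\simeq [\bT(\A)]_{K_\scD}$, and character orthogonality gives the Plancherel identity
\[
\mathscr{P}_\scD^\Delta(f_1\otimes f_2;s) \;=\; \sum_{\chi\in\widehat{{\rm Pic}(\scO_E)}} \chi([\sk])\,\mathscr{W}_\scD(f_1,\chi)\,\overline{\mathscr{W}_\scD(f_2,\chi)},
\]
where $\mathscr{W}_\scD(f,\chi)$ denotes the $\chi$-twisted toric period of $f$. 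By Waldspurger's formula, $|\mathscr{W}_\scD(f,\chi)|^2$ equals, up to explicit archimedean and ramified local factors, the central Rankin--Selberg value $L(1/2,\pi\otimes\chi)$ divided by $|\operatorname{Pic}(\scO_E)|\cdot L(1,\operatorname{ad}\pi)$, where $\pi=\operatorname{JL}(\sigma)$.

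Part (\ref{item-BB}) is then a direct corollary of our earlier work \cite{BB}. When $\sigma_1\neq\sigma_2$, the right-hand side above is precisely the off-diagonal correlation of two distinct families of twisted toric periods treated there. Its main result, proved under GRH, gives the uniform-in-$[\sk]$ bound $(\log D)^{-1/4+\varepsilon}$; uniformity arises because $|\chi([\sk])|=1$ is only used via the triangle inequality in $\chi$.

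For part (\ref{item-BBK}) with $f_1=f_2=f$ the summands become $|\mathscr{W}_\scD(f,\chi)|^2\chi([\sk])$, so any cancellation must come from the oscillation of $\chi([\sk])$. Substituting Waldspurger's formula and controlling $L(1,\operatorname{ad}\pi)^{-1}$ on GRH, the estimate reduces to bounding
\[
\frac{1}{|\operatorname{Pic}(\scO_E)|}\sum_{\chi}\overline{\chi([\sk])}\,L(1/2,\pi\otimes\chi).
\]
An approximate functional equation of length $\sqrt{D}$ writes $L(1/2,\pi\otimes\chi)$ as a smooth sum of $\lambda_\pi(n)\,r_{E,\chi}(n)\,n^{-1/2}$ with $r_{E,\chi}(n)=\sum_{\mathfrak{a}\subset\scO_E,\,\Nr\mathfrak{a}=n}\chi(\mathfrak{a})$. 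Performing the $\chi$-sum by orthogonality then isolates integral $\scO_E$-ideals representing the class $[\sk]$, which by Remark \ref{rem:Q-and-q} correspond to integer pairs $(x,y)$ with $Q(x,y)=n$, where $Q=Q_{[\sk]}$ is the reduced positive-definite binary form of discriminant $-D$ associated with $[\sk]$. The problem thus reduces to the estimation of the sparse sum
\[
\frac{1}{\sqrt{|\operatorname{Pic}(\scO_E)|}}\sum_{(x,y)\in\Z^2\setminus\{(0,0)\}} \frac{\lambda_\pi(Q(x,y))}{Q(x,y)^{1/2}}\,V\!\left(\frac{Q(x,y)}{\sqrt{D}}\right),
\]
for a smooth rapidly decaying weight $V$; this reduction step I would isolate as Lemma \ref{lemma:conversion-to-sparse-sum}.

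The main obstacle is to beat the trivial bound in this sparse sum. The plan is to apply the triangle inequality to the $\lambda_\pi$-values and invoke the large-sieve-type estimate for multiplicative functions along values of binary quadratic forms, recalled in Proposition \ref{prop:Kh} and complemented by the general Hecke-eigenvalue sum estimate \eqref{bound1}. The well-roundness assumption $q\geq D^{1/6+\varepsilon}$ enters precisely here: by a classical result of van der Corput it is essentially the sharpest condition guaranteeing that $\{Q(x,y)\}$ equidistributes in arithmetic progressions to small moduli, which is exactly the input the sieve requires. To obtain a logarithmic saving we follow Holowinsky, extracting non-trivial cancellation in $\sum_{p\leq X}|\lambda_\pi(p)|^k p^{-1}$ against the Sato--Tate mean from the analytic behavior (on GRH) of the symmetric fourth power $L$-function of $\pi$, whose automorphy on $\operatorname{GL}_5$ is known. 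Balancing the resulting Sato--Tate saving against the loss from $L(1,\operatorname{ad}\pi)^{-1/2}$ and from $|\operatorname{Pic}(\scO_E)|^{-1/2}$ yields the final exponent $1/18$.
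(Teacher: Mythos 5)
Your proposal follows essentially the same route as the paper: Parseval/Plancherel over $\widehat{\operatorname{Pic}(\scO_E)}$, [BB] for the off-diagonal, Waldspurger plus the approximate functional equation plus class-group orthogonality to reduce the diagonal case to a sparse sum over $Q(x,y)$, and then the Nair--Shiu sieve combined with Holowinsky's use of symmetric-power $L$-functions. The only imprecision worth flagging: the exponent $1/18$ is not the output of a "balance" against $L(1,\operatorname{ad}\pi)^{-1/2}$ or $|\operatorname{Pic}(\scO_E)|^{-1/2}$ --- the former is $O(1)$ on GRH and the latter is eaten by the class number formula. It arises purely from the pointwise Hecke-relation inequality $|\lambda_\pi(p)|\leq 1+\tfrac12\lambda_\pi(p^2)-\tfrac1{18}\lambda_\pi(p^2)^2$, which feeds into $L(1,\operatorname{sym}^2\pi\times\theta_E)$ and $L(1,\operatorname{sym}^4\pi\times\theta_E)$. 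You also gloss over the $\chi$-dependence of the ramified local factors $C_{\rm Ram}(\pi,\chi)$, which forces the extra maximum over ideals $\mathfrak d\mid d_B$ in the reduction, and over the fact that one may assume root number $+1$ before opening the $L$-function; these are small but necessary bookkeeping steps.
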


The proof of Proposition \ref{prop2} roughly follows the outline of \S\ref{sec:large-q-range}. 

We begin by recalling the definition of the Heegner packet $H_\scD=[\bT(\A)g]_K$ from \eqref{def:Heegner-packet}. The discussion in \S\ref{sec:periodic-measure} defines a $[\bT(\A)]_{K_\scD}$-invariant probability measure ${\rm d} t$ on $H_\scD$, which was shown to be the uniform probability measure. Since the translating element $g$ is purely archimedean (and $U=K$ is almost maximal) the global order $\scO_\scD$ from \S\ref{sec:disc} is the maximal order $\scO_E$ in $E$. We may then identify any class group character $\chi\in {\rm Pic}(\scO_E)^\vee$ with a character of $[\bT(\A)]_{K_\scD}$ via the isomorphism \eqref{eq:iso-Pic}. Composing with the isomorphism of Lemma \ref{lem:bij-HeegnerSetPacket} we may also view $\chi$ as a function on $H_\scD$. We define the \textit{$\chi$-twisted Heegner period} of $f_j$ as
\[
W_\scD(f_j, \chi)=\int_{H_\scD} f_j (t) \chi(t)\, {\rm d} t=\frac{1}{|H_\scD|}\sum_{t\in H_\scD} f_j(t)\chi(t).
\]

We may decompose $L^2(H_\scD, {\rm d} t)$ into a direct sum of $\chi$-isotypic components, where $\chi$ varies over ${\rm Pic}(\scO_E)^\vee$. We apply the Parseval identity to the joint Heegner period in \eqref{eq:defn-PEdelta} to obtain
\begin{equation}\label{eq:Parseval-again}
P_\scD^\Delta(f_1\otimes f_2;[\sk]) =\sum_{\chi\in {\rm Pic}(\scO_E)^\vee} \chi(\sk) W_\scD(f_1, \chi) \overline{W_\scD(f_2, \chi)}.
\end{equation}
This is the starting point of the argument. We break down the analysis into two cases, according to whether $\sigma_1$ and $\sigma_2$ are distinct or not.

We may quickly dispense with the first part of Proposition \ref{prop2}. Indeed, Part \eqref{item-BB}  follows by applying absolute values 
\[
|P_\scD^\Delta(f_1\otimes f_2;[\sk])| \leq \sum_{\chi\in {\rm Pic}(\scO_E)^\vee} |W_\scD(f_1, \chi)W_\scD(f_2, \chi)|
\]
and citing the central result in \cite{BB}. 

\section{The diagonal estimate: sparse sums of Hecke eigenvalues}\label{sec:diag-est}

The proof of Part \eqref{item-BBK} of Proposition \ref{prop2} occupies the rest of this section.

\subsection{Bounds at $s=1$ for general $L$-functions}

We shall need the following standard estimates on automorphic $L$-functions at the point $s=1$.

\begin{lemma}\label{tit} Let $L(s)$ be a holomorphic $L$-function of fixed degree $d$ and analytic conductor $C_L$ in the extended Selberg class, not necessarily primitive, with Dirichlet coefficients $\lambda(n)$. Assume GRH for $L(s)$ and assume that the local roots $\alpha(p, j)$, $1 \leq j \leq d$, satisfy $|\alpha(p, j)| \leq p^{1/2 - \rho}$ for some $\rho > 0$. Then
\begin{equation}\label{log-series}
  \sum_{p \leq x} \frac{\lambda(p)}{p} = \log L(1) + O_{\varepsilon}(1), \quad x \geq (\log C_L)^{2+\varepsilon}.
\end{equation}
Moreover, for fixed $\alpha > 0$ we have 
\begin{equation}\label{titch1}
\begin{split}
&  L(1) \gg (\log\log C_L)^{-\alpha}, \quad \text{if } \lambda(p) \geq -\alpha \text{ for all } p,\\
 & L(1) \ll (\log\log C_L)^{\alpha}, \quad \text{if } \lambda(p) \leq \alpha \text{ for all } p.\\
\end{split}  
\end{equation}
\end{lemma}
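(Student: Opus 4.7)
The plan is to derive both \eqref{log-series} and \eqref{titch1} from the standard decomposition of $\log L(s)$ near $s=1$, combined with the GRH explicit formula and Mertens' theorem. For $\Re s > 1$, expanding the Euler product gives
\begin{equation*}
\log L(s) = \sum_p \frac{\lambda(p)}{p^s} + R(s), \qquad R(s) := \sum_p \sum_{k \geq 2} \frac{1}{k p^{ks}} \sum_{j=1}^d \alpha(p,j)^k.
\end{equation*}
The hypothesis $|\alpha(p,j)| \leq p^{1/2-\rho}$ makes $R$ absolutely convergent on $\Re s \geq 1$, so $R(1) = O_{d,\rho}(1)$. It therefore suffices to establish effective convergence of the partial sums of $\sum_p \lambda(p)/p$ to $\log L(1) + O(1)$ at the rate governed by $x \geq (\log C_L)^{2+\varepsilon}$.

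For \eqref{log-series}, I would reduce matters to the tail estimate $\sum_{p > x} \lambda(p)/p = O(1)$. Under GRH the classical explicit formula yields
\begin{equation*}
\psi_L(t) := \sum_{n \leq t} \Lambda_L(n) \ll t^{1/2} \log^2(C_L t),
\end{equation*}
using the Riemann--von Mangoldt density $N(T) \ll T \log(C_L T)$ and the placement of all nontrivial zeros on $\Re s = 1/2$. Subtracting the $k \geq 2$ prime-power contribution, which is bounded directly using the hypothesis by $\sum_{k \geq 2} \sum_{p \leq t^{1/k}} d\, p^{k(1/2-\rho)} \log p \ll_{d,\rho} t^{1-\rho}$, gives
\begin{equation*}
B(t) := \sum_{p \leq t} \lambda(p) \log p \ll t^{1/2} \log^2(C_L t) + t^{1-\rho}.
\end{equation*}
Partial summation against $1/(t\log t)$ then yields
\begin{equation*}
\Bigl|\sum_{p > x} \frac{\lambda(p)}{p}\Bigr| \ll x^{-1/2}\log(C_L x) + x^{-\rho},
\end{equation*}
which is $O_{d,\rho,\varepsilon}(1)$ once $x \geq (\log C_L)^{2+\varepsilon}$. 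This proves \eqref{log-series}.

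For \eqref{titch1}, I would specialize \eqref{log-series} to $x := (\log C_L)^{2+\varepsilon}$, obtaining $\log L(1) = \sum_{p \leq x} \lambda(p)/p + O(1)$. The sign condition $\lambda(p) \geq -\alpha$ (respectively $\lambda(p) \leq \alpha$) together with Mertens' theorem $\sum_{p \leq x} 1/p = \log\log x + O(1)$ then gives $\log L(1) \geq -\alpha \log\log x + O(1)$ (respectively $\leq \alpha \log\log x + O(1)$). A short computation with the chosen $x$ produces $\log\log x = \log\log\log C_L + O_\varepsilon(1)$, so that $L(1) \gg (\log\log C_L)^{-\alpha}$ (respectively $L(1) \ll (\log\log C_L)^{\alpha}$).

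The only substantive input is the classical GRH bound on $\psi_L$ for $L$-functions in the extended Selberg class holomorphic on a neighborhood of $s=1$; modulo this, all steps are routine partial-summation and Mertens-type manipulations, so no real obstacle arises.
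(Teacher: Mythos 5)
The paper does not prove this lemma; it cites \cite[Lemma~5]{BB}. So I can only evaluate your argument on its own terms.

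Your overall architecture is sound: isolate the $k\ge 2$ prime-power terms (which converge at $s=1$ thanks to $|\alpha(p,j)|\le p^{1/2-\rho}$), reduce \eqref{log-series} to the tail estimate $\sum_{p>x}\lambda(p)/p=O(1)$, and then deduce \eqref{titch1} by Mertens. The Mertens step is fine once you note that $\log x\asymp \log\log C_L$ for $x=(\log C_L)^{2+\varepsilon}$ (the $\log\log x$ you wrote is a distraction; what enters the exponentiation is $\log x$).

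There is, however, a genuine gap in the tail estimate, at the precise scale $x\asymp(\log C_L)^{2+\varepsilon}$ that the lemma asserts. You state $\psi_L(t)\ll t^{1/2}\log^2(C_L t)$, but then claim that partial summation against $1/(t\log t)$ produces $\bigl|\sum_{p>x}\lambda(p)/p\bigr|\ll x^{-1/2}\log(C_L x)+x^{-\rho}$. These two statements are not consistent. With $B(t)=\sum_{p\le t}\lambda(p)\log p\ll t^{1/2}\log^2(C_L t)+t^{1-\rho}$, Abel summation actually gives
\[
\Bigl|\sum_{p>x}\frac{\lambda(p)}{p}\Bigr|\ll \frac{\log^2(C_L x)}{x^{1/2}\log x}+\frac{\log x}{x^{1/2}}+x^{-\rho},
\]
which differs from your claim by a factor $\log(C_L x)/\log x$. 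Near the threshold, $\log x\asymp\log\log C_L$ while $\log(C_L x)\asymp\log C_L$, so this missing factor is $\asymp\log C_L/\log\log C_L$. Plugging $x=(\log C_L)^{2+\varepsilon}$ into the correct expression gives a first term $\asymp (\log C_L)^{1-\varepsilon/2}/\log\log C_L$, which is unbounded for $\varepsilon<2$. So the tail is \emph{not} $O(1)$ with the $\psi_L$ bound you quoted; as written, your argument would only give the threshold $x\ge(\log C_L)^{4+\varepsilon}$, and hence bounds like $L(1)\ll(\log\log C_L)^{2\alpha}$ rather than the stated \eqref{titch1}.

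The good news is that your partial-summation output \emph{is} consistent with a sharper form of the explicit formula, namely $\psi_L(t)\ll t^{1/2}\log t\,\log(C_L t)$. This bound is obtained by truncating the explicit formula at height $T=t^{A}$ for a fixed $A\ge 2$ (so that the horizontal-segment error $O(t\log^2(C_L tT)/T)$ is negligible) rather than at $T=\sqrt{t}$; the zero sum is then $\ll \sqrt{t}\sum_{|\gamma|<T}(1+|\gamma|)^{-1}\ll\sqrt{t}\log T\log(C_L T^{d})\ll\sqrt{t}\log t\log(C_L t)$. With this sharper input, your partial summation does produce $\log(C_L x)/\sqrt{x}+x^{-\rho}$, which is $O((\log C_L)^{-\varepsilon/2})$ at $x=(\log C_L)^{2+\varepsilon}$, and the rest of your proof goes through. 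So the fix is local: replace the stated $t^{1/2}\log^2(C_L t)$ bound by $t^{1/2}\log t\log(C_L t)$, and briefly justify it via the long-truncation explicit formula. Without that refinement, the computation you describe does not close at the threshold the lemma demands.
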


\begin{proof}
See \cite[Lemma 5]{BB}.
\end{proof}

\subsection{Reduction to a sparse coefficient sum}\label{sec:reduction-to-sparse}
We now assume that $\sigma_1=\sigma_2=\sigma$. Then the Jacquet--Langlands correspondent $\pi=\sigma^{\rm JL}$ of $\sigma$ is a cuspidal automorphic representation of $\mathbf{GL}_2(\A)$, having trivial central character. We record some information about $\pi$ and introduce some related notation. 

Since $\sigma$ has non-zero invariants under the almost maximal subgroup $K_f$, we deduce that the conductor of $\pi$ is the reduced discriminant $d_B$ of $B$. Moreover since $\sigma_\infty$ has non-zero invariants under $K_\infty$, the archimedean component $\pi_\infty$ of $\pi$, as an irreducible unitary representation of $G=\mathbf{PGL}_2(\R)$, is either a principal series representation of the form ${\rm Ind}_P^G({\rm sgn}^\epsilon |\cdot |^{\nu_\pi},{\rm sgn}^\epsilon |\cdot |^{-\nu_\pi})$, where $P<G$ is the standard Borel subgroup, $\nu_\pi\in i\R\cup (-1/2,1/2)$, and $\epsilon\in \{0,1\}$ (when $B$ is indefinite), or the discrete series representation of weight $k \geq 2$ (when $B$ is definite). In the latter case, we put $\nu_\pi=k$, and refer to $\nu_\pi$ in either case as the ``spectral parameter'' of $\pi=\sigma^{\rm JL}$. 

When $B$ is indefinite, so that $\pi_\infty$ is a principal series, we let $\vartheta_\pi=|{\rm Re}\, \nu_\pi|$ measure the failure of $\pi$ to satisfy the Selberg eigenvalue conjecture. By \cite{KS} we have $\vartheta_\pi\in [0,7/64]$. We extend this definition to the case of $B$ definite (in which case $\pi_\infty$ is the weight $k$ discrete series representation) by putting $\vartheta_\pi=0$.

Finally, we write the Dirichlet series and Euler product expansion of the standard $L$-function for $\pi$ as
\[
L(s,\pi)=\sum_{n\geq 1}\lambda_\pi(n)n^{-s}=\prod_p \prod_{i=1,2} \Big(1-\frac{\alpha_\pi(p,i)}{p^s}\Big)^{-1} \qquad ({\rm Re}\, s>1).
\]
Recall from the statement of Proposition \ref{prop2} that we are assuming the Ramanujan conjecture for $\pi$ at finite places when $B$ is indefinite (for $B$ definite, this is Deligne's theorem). For all primes $p$ the local roots satisfy $|\alpha_\pi(p,i)|\leq 1$ (in fact $|\alpha_\pi(p,i)|=1$ when $p\nmid d_B$).

With the above notation in place, we may now state and prove the following proposition. In it, we use Waldspurger's formula and the approximate functional equation to reduce Part \eqref{item-BBK} of Proposition \ref{prop2} to estimates on the sparse coefficient sum
\[
S_\pi(Y,\mathcal{Q}) :=  \sum_{  Q(x, y) \leq Y}|\lambda_\pi(\mathcal{Q}(x, y) )|,
\]
for a primitive integral positive definite binary quadratic form $\mathcal{Q}$ of discriminant $-D<0$.

\begin{lemma}\label{lemma:conversion-to-sparse-sum}
We have
\begin{equation}\label{sothat}
P_\scD^\Delta(f\otimes f;s)\ll \max_{\substack{  {\rm Nr}(\mathfrak{d}) \mid d_B}}\log\log D  \sum_{Y = 2^{\nu}} \sum_{m} \frac{1}{m} \frac{S_\pi(Y,Q_{[\sk\mathfrak{d}]})}{Y^{1/2}} \Big(1 + \frac{Ym^2}{D}\Big)^{-A} \Big(  \frac{Ym^2}{D}\Big)^{-\vartheta_{\pi } -  \varepsilon}
 \end{equation}
for any fixed $\varepsilon, A > 0$, where, for an ideal class $[\mathfrak{n}]\in {\rm Pic}(\scO_E)$, the quadratic form $Q_{[\mathfrak{n}]}$ is defined in \eqref{eq:quad-form-assignment}.
\end{lemma}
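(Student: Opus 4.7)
The plan is to pass through the standard chain Parseval $\to$ Waldspurger $\to$ approximate functional equation $\to$ character orthogonality $\to$ quadratic form parametrization. Starting from \eqref{eq:Parseval-again} and applying Waldspurger's central value formula, I would write
\[
|\mathscr{W}_\scD(f,\chi)|^2 = \frac{c_{\mathrm{loc}}(\chi)\,L(1/2,\pi\times\chi)}{D^{1/2}\,L(1,\Ad\pi)\,L(1,\eta_E)},
\]
where $\pi=\sigma^{\mathrm{JL}}$, $\eta_E$ is the quadratic character of $E/\Q$, and $c_{\mathrm{loc}}(\chi)$ collects the local Waldspurger factors at primes $p\mid d_B$. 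These ramified local factors have the effect of decomposing the $\chi$-sum into branches indexed by divisors $\mathfrak{d}\mid d_B$, each of which has the effect of shifting the target class from $[\sk]$ to $[\sk\mathfrak{d}]$. Under GRH, the bound \eqref{titch1} gives $L(1,\Ad\pi),\,L(1,\eta_E)\gg(\log\log D)^{-O(1)}$, producing the outer $\log\log D$ factor in \eqref{sothat}.

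Next I would apply an approximate functional equation to $L(1/2,\pi\times\chi)$. Using the Rankin--Selberg expansion
\[
L(s,\pi\times\chi)\cdot\zeta^{(d_B)}(2s) = \sum_{m,n\geq 1}\frac{\lambda_\pi(n)\,r_\chi(nm^2)}{m^{2s}\,n^{s}}
\]
(away from $d_B$), where $r_\chi(k)=\sum_{\Nr\mathfrak{A}=k}\chi(\mathfrak{A})$, the AFE truncates the sum at analytic conductor $\asymp\sqrt{D}\cdot(1+|\nu_\pi|)^{2}$. The variable $m$ and the weight $1/m$ in \eqref{sothat} are inherited from this $\zeta(2s)$ factor, while the power $(Ym^2/D)^{-\vartheta_\pi-\varepsilon}$ emerges from a Mellin--Barnes contour shift of the archimedean AFE weight combined with the Kim--Sarnak bound on the spectral parameter $\nu_\pi$.

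I would then swap the order of summation to perform the $\chi$-sum first. Orthogonality on $\operatorname{Pic}(\scO_E)$ gives
\[
\sum_{\chi}\chi(\sk\mathfrak{d})\,\overline{\chi(\mathfrak{A})}=|\operatorname{Pic}(\scO_E)|\cdot\mathbf{1}\bigl([\mathfrak{A}]=[\sk\mathfrak{d}]\bigr),
\]
reducing the inner sum to integral ideals of $\scO_E$ lying in the single class $[\sk\mathfrak{d}]$. By \eqref{orth-char} the number of such ideals of norm $n$ is proportional to $\#\{(x,y)\in\Z^2:Q_{[\sk\mathfrak{d}]}(x,y)=n\}$, so the $n$-sum becomes $\sum_{(x,y)}\lambda_\pi\bigl(Q_{[\sk\mathfrak{d}]}(x,y)\bigr)$ weighted by the AFE envelope. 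A dyadic partition $Y=2^\nu$ of the support, combined with the triangle inequality applied to $\lambda_\pi$, assembles the inner sum into $S_\pi(Y,Q_{[\sk\mathfrak{d}]})/Y^{1/2}$; taking the maximum over $\mathfrak{d}\mid d_B$ yields \eqref{sothat}.

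The principal technical obstacle will be the careful local analysis of the Waldspurger formula at the ramified primes $p\mid d_B$, where $\pi_p$ is square-integrable and $f$ is fixed by the almost-maximal compact $K_p$: this local computation is exactly what produces the decomposition $\max_{\Nr(\mathfrak{d})\mid d_B}$ rather than a single main term, and the bookkeeping of the factor $\zeta^{(d_B)}(2s)$ in the Rankin--Selberg expansion must be reconciled with it. A subsidiary but essential point is to ensure that all archimedean dependence on $\nu_\pi$ remains polynomial, so that after Sobolev-normalizing $f\otimes f$ it is absorbed into the implicit constant; the factor $(Ym^2/D)^{-\vartheta_\pi-\varepsilon}$ in the statement encodes the unavoidable loss of this kind when $nm^2\ll\sqrt{D}$.
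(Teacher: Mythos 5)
Your chain of steps --- Parseval \eqref{eq:Parseval-again}, Waldspurger \eqref{eq:FMP-Walds}, the GRH bound \eqref{titch1} on $L(1,\chi_E)$, approximate functional equation, character orthogonality, conversion to the quadratic form via \eqref{orth-char}, and dyadic partition --- is precisely the route the paper follows, and your identification of the $\max_{\mathfrak{d}\mid d_B}$ as coming from the ramified Waldspurger local factors $\prod_{p\mid(D,d_B)}L_p(1/2,\pi\times\chi)^{-1}$ is also correct.

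However, the Rankin--Selberg Dirichlet series you write down is not the right one, and the error is not cosmetic. The correct expansion, away from $d_B$, is
\[
L(s,\pi\times\chi)=L(2s,\chi_E)\sum_{n\geq 1}\frac{\lambda_\pi(n)\,r_\chi(n)}{n^s}=\sum_{m,n\geq 1}\frac{\chi_E(m)\,\lambda_\pi(n)\,r_\chi(n)}{m^{2s}\,n^{s}},
\]
so the $m$-variable enters through $L(2s,\chi_E)$ rather than through a $\zeta^{(d_B)}(2s)$ factor placed on the left, and --- crucially --- the two indices $m,n$ are \emph{decoupled} in the Dirichlet coefficients; they couple only through the approximate-functional-equation weight $W(nm^2/D)$. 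Your proposed coefficient $r_\chi(nm^2)$ couples them multiplicatively: after applying orthogonality $\sum_\chi\chi(\sk\mathfrak{d})\overline{\chi(\mathfrak{A})}$, your version would count ideals $\mathfrak{A}$ of norm $nm^2$ in the fixed class, i.e.\ lattice points with $Q_{[\sk\mathfrak{d}]}(x,y)=nm^2$, weighted by $\lambda_\pi(Q_{[\sk\mathfrak{d}]}(x,y)/m^2)$ --- which is not the quantity $S_\pi(Y,Q_{[\sk\mathfrak{d}]})$ appearing in \eqref{sothat}, and it does not obviously reduce to it. With the correct decoupled expansion, orthogonality acts on the $n$-sum alone, \eqref{orth-char} converts the resulting restricted ideal count into $\#\{(x,y):Q_{[\sk\mathfrak{d}]}(x,y)=n\}$, the $m$-sum factors out and is bounded by $\sum_m 1/m$ against the rapidly decaying envelope, and dyadic ranges $Y\leq n<2Y$ assemble $S_\pi(Y,Q_{[\sk\mathfrak{d}]})/Y^{1/2}$. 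A smaller slip: the analytic conductor of $L(1/2,\pi\times\chi)$ is $\asymp D^2$ in the $D$-aspect, so the truncation is at $nm^2\lesssim D$ (matching the argument $nm^2/D$ inside $W$), not at $\sqrt{D}$.
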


Here and in the following a summation condition of the form $Y=2^{\nu}$ denotes a sum over $\nu \in \Bbb{N}$, and $Y$ is  shorthand notation for $2^{\nu}$. The notation reflects a dyadic partition of unity. 

\begin{proof}
Applying the Parseval relation \eqref{eq:Parseval-again} with $f_1=f_2=f$ yields
\begin{equation}\label{eq:Parseval-diag}
P_\scD^\Delta(f\otimes f;[\sk]) =\sum_{\chi\in {\rm Pic}(\scO_E)^\vee} \chi(\sk) |W_\scD(f, \chi)|^2.
\end{equation}
We may assume that ${\rm Hom}_{\bT(\A)}(\sigma,\chi)\neq 0$ for otherwise $W_\scD(f;\chi)=0$. Under this non-vanishing hypothesis, we may then express $|W_\scD(f;\chi)|^2$ in terms of $L$-values, using an exact form of Waldspurger's theorem, such as that given in \cite[Theorem 1.1]{FMP} and explicated further in \cite[Appendix A]{BB}. Indeed, we have
\begin{equation}\label{eq:FMP-Walds}
|W_\scD(f;\chi)|^2=C_{\bG}C_{\rm Ram}(\pi,\chi)\frac{1}{\sqrt{D}}\frac{1}{L(1,\chi_E)^2} \frac{L(1/2, \pi \times \chi)}{L(1,\pi,{\rm Ad})}F(\pi_\infty),
\end{equation}
where $\chi_E$ is the quadratic character associated to the field extension $E/\Bbb{Q}$, the constant $C_\bG>0$ depends only on $\bG$, $C_{\rm Ram}(\pi,\chi)>0$ depends on $\chi$ and on the local ramified components of $\pi$, and (since $\chi_\infty=1$ in our case) the function $F$ depends only on the spectral parameters of $\pi_\infty$. More precisely, we have
\[
C_{\text{Ram}}(\pi, \chi)=C'_{\text{Ram}}(\pi, D) \prod_{p \mid (D, d_B)} \frac{1}{L_{p}(1/2, \pi \times \chi)}
\]
where $C'_{\text{Ram}}(\pi, D)$ is independent of $\chi$ and bounded by $O_{\pi}(1)$, uniformly in $D$. We recall from \cite[(6.3) -- (6.4)]{BB} that
\[
L_p(s, \pi \times \chi) =  \prod_{i , k= 1, 2}  \Big( 1 - \frac{\alpha(p, i) \xi_{\chi}(p, k)  }{p^s}\Big)^{-1},
\]
where
\[
\{\xi_{\chi}(p, 1), \xi_{\chi}(p, 2)\} = \begin{cases} \{\chi(\mathfrak{p}), \bar{\chi}(\mathfrak{p})\}, & (p) = \mathfrak{p} \bar{\mathfrak{p}}, \mathfrak{p} \not= \bar{\mathfrak{p}},\\\{\chi(\mathfrak{p}), 0\}, & (p) = \mathfrak{p}^2,\\ \{-1, 1\}, & \chi_E(p) = -1.\end{cases}
\]

We now insert \eqref{eq:FMP-Walds} into \eqref{eq:Parseval-diag}. Identifying $\sqrt{D}L(1,\chi_E)$ with $(2\pi/w_E)|{\rm Pic}(\scO_E)|$ by means of the class number formula, where $w_E$ is the number of roots of unity of $\mathscr{O}_E^\times$, we obtain
\[
P_\scD^\Delta(f\otimes f;s) \ll  \frac{1}{|{\rm Pic}(\scO_E)|} \sum_{\substack{  {\rm Nr}(\mathfrak{d}) \mid d_B}} \Big|\sum_{\chi\in {\rm Pic}(\scO_E)^\vee } \chi(\sk\mathfrak{d}) \frac{L(1/2,\pi\times \chi)}{L(1, \chi_E) L(1,\pi, \text{Ad})}\Big|.
\]
From \eqref{titch1}, applied to $L(1, \chi_E)$, we have 
\[
P_\scD^\Delta(f\otimes f;s) \ll  \frac{\log\log D}{|{\rm Pic}(\scO_E)|}\sum_{\substack{  {\rm Nr}(\mathfrak{d}) \mid d_B}} \Big|\sum_{\chi\in {\rm Pic}(\scO_E)^\vee }  \chi(\sk\mathfrak{d}) L(1/2,  \pi\times \chi)\Big|.
\]
The root number of $L(s,\pi\times\chi)$ is independent of $\chi$ and is in $\{\pm 1\}$; we may assume that this root number is 1, since otherwise all twisted central values vanish, which then implies the vanishing of the period $P_\scD^\Delta(f\otimes f;s)$. We open the $L$-function using the approximate functional equation \cite[Theorem 5.3]{IK} 
\begin{equation}\label{approx}
L(1/2, \pi \times \chi)  = \sum_{m, n} \frac{\chi_E(m)}{m} \frac{\lambda_\pi(n) }{n^{1/2}} \sum_{{\rm Nr}(\mathfrak{a}) = n} \chi(\mathfrak{a})W\Big( \frac{nm^2}{D}\Big).
\end{equation}
Here, $W$ depends only on $\pi$ and satisfies $W(x) \ll_{\varepsilon, A, \pi} x^{-\vartheta_{\pi} - \varepsilon}(1 + x)^{-A}$ for every $\varepsilon, A > 0$ with polynomial dependence on the archimedean type of $\pi$; cf.\ \cite[Proposition 5.4]{IK}.  We insert \eqref{approx} and use orthogonality of characters along with \eqref{orth-char} to obtain 
\[
P_\scD^\Delta(f\otimes f;s)\ll \max_{\substack{  {\rm Nr}(\mathfrak{d}) \mid d_B}}\log\log D \sum_{m}\frac{1}{m} \sum_{(x, y) \not= (0, 0)}  \frac{|\lambda_\pi(Q_{[\sk\mathfrak{d}]}(x, y) )|}{Q(x, y)^{1/2}} \Big| W\Big( \frac{Q_{[\sk\mathfrak{d}]}(x, y)}{D/m^2}\Big) \Big|,
\]
Inserting the stated estimate on $W$ and applying a dyadic decomposition, we obtain \eqref{sothat}.\end{proof}

The following sections give estimates on the $S_\pi(Y,Q_{[\sk\mathfrak{d}]})$ appearing in Lemma \ref{lemma:conversion-to-sparse-sum}, using a combination of sieve methods with some distributional results on the coefficients $\lambda_\pi(n)$ stemming from known instances of functoriality of symmetric power lifts of $\pi$.

\subsection{Basic estimate}\label{sec:basic-est}

Let $\mathcal{Q}=Q_{[\sk\mathfrak{d}]}$, where $\mathfrak{d}\mid d_B$. In the following, we establish two bounds for $S_\pi(Y, \mathcal{Q})$, each expressed in terms of the truncation parameter $Y$ as well as two quantities attached to the quadratic form $\mathcal{Q}$: the absolute value $D$ of the discriminant and the smallest non-zero integer represented by $\mathcal{Q}$. Since $\mathfrak{d}\mid d_B$, and $d_B$ is fixed, the minimal value of any such $\mathcal{Q}$ differs from the minimal value $q$ of $Q=Q_{[\sk]}$ (see Remark \ref{rem:Q-and-q}) by a bounded scalar multiple.
 
By \cite[Lemma 3.1]{BG} together with the Ramanujan bound we have the bound
\begin{equation}\label{bound1}  
S_\pi(Y,\mathcal{Q}) =  \sum_{  Q(x, y) \leq Y}|\lambda_\pi(\mathcal{Q}(x, y) )|
 \ll Y^{\varepsilon}\Big( \frac{Y}{D^{1/2}} + \frac{Y^{1/2}}{q^{1/2}}\Big),
\end{equation}
which is of use for $Y$ small relative to $D$. Indeed, when $Y \leq D^{1-10\varepsilon}$, for a small value of $\varepsilon>0$, we may insert \eqref{bound1} into \eqref{sothat} to obtain
\[
P_\scD^\Delta(f\otimes f;s) \ll_{\varepsilon} D^{\varepsilon} \sum_{\substack{Y = 2^{\nu} \\ Y \leq D^{1-10\varepsilon}}}   \Big( \frac{Y}{D}\Big)^{-\theta_{\pi} - \varepsilon} \Big(\frac{Y^{1/2  }}{D^{1/2  }} + \frac{1}{q^{1/2}}\Big)  \ll_{\varepsilon} D^{-\varepsilon},
\]
provided that $q \geq D^{10\varepsilon}$.

In the more delicate range when $Y$ is close to $D$ we shall use \cite[Theorem 9.7]{Kh} to show the following bound. 

\begin{prop}\label{prop:Kh} Suppose that 
\begin{equation}\label{sieve}
 q \geq \frac{D^{5/6}}{Y^{2/3}} (DY)^{\varepsilon}.
\end{equation}
Then, assuming GRH and the Ramanujan conjecture for $\mathbf{PGL}_2$, we have
\begin{equation}\label{bound2}
S_\pi(Y, \mathcal{Q}) =  \sum_{  Q(x, y) \leq Y}|\lambda_\pi(\mathcal{Q}(x, y) )|
\ll_\varepsilon \frac{Y}{D^{1/2}} (\log D)^{-1/18+\varepsilon}.
\end{equation}
\end{prop}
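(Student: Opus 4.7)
The plan is to derive \eqref{bound2} by invoking the sieve bound [Kh, Theorem 9.7] for sums of non-negative multiplicative functions along values of primitive positive-definite binary quadratic forms. That theorem converts $S_\pi(Y,\mathcal{Q})$ into an expression of the shape $(Y/\sqrt{D})\cdot \mathcal{M}_\pi(Y)$, where the factor $Y/\sqrt{D}$ accounts for the expected number of lattice points $(x,y)$ with $\mathcal{Q}(x,y)\leq Y$, and $\mathcal{M}_\pi(Y)$ is a normalized mean value of $|\lambda_\pi|$ along the sequence of represented integers. The hypothesis \eqref{sieve} is precisely the input needed for the large-sieve component of that theorem to go through: namely, that $\mathcal{Q}(x,y)$ is sufficiently equidistributed in residue classes modulo primes up to the sieving level. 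This equidistribution is proven via van der Corput's lattice-point method applied to the ellipse defined by $\mathcal{Q}$, and its success requires that the minimum $q$ of $\mathcal{Q}$ is not too small relative to $D$; the exponents $5/6$ and $2/3$ in \eqref{sieve} are exactly what van der Corput delivers.

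The task then reduces to showing $\mathcal{M}_\pi(Y)\ll (\log D)^{-1/18+\varepsilon}$. For this I would follow the strategy of Holowinsky, as refined by Lester--Radziwi\l{}\l{}: use the relation $|\lambda_\pi(p)|^2=1+\lambda_{\operatorname{Sym}^2\pi}(p)$ (valid at unramified primes) together with a Cauchy--Schwarz/Rankin trick to reduce to controlling $\sum_{p\leq Y}|\lambda_\pi(p)|/p$. Under the Ramanujan conjecture (assumed in the indefinite case, known by Deligne in the definite case), each $|\lambda_\pi(p)|^\alpha$ can be expanded as a linear combination of $\lambda_{\operatorname{Sym}^k\pi}(p)$ for small $k$ with an error controlled by higher symmetric power coefficients. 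By Gelbart--Jacquet, Kim--Shahidi, and Kim, the symmetric square, cube, and fourth power lifts of $\pi$ are automorphic, hence their $L$-functions lie in the Selberg class and satisfy the hypotheses of Lemma \ref{tit}. Applying \eqref{log-series} and \eqref{titch1} under GRH then yields
\[
\sum_{p\leq Y}\frac{|\lambda_\pi(p)|}{p}\leq \delta \log\log Y+O(1),
\]
for some constant $\delta<1$ coming from the Sato--Tate average of $|\lambda|$, which, after inserting into a Halász/Shiu-type mean value theorem along the quadratic-form values, gives a saving of $(\log Y)^{-(1-\delta)+\varepsilon}$. Tracking the inevitable Cauchy--Schwarz loss through the sieve step yields the exponent $1/18$.

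The principal obstacle will be to match the normalization in [Kh, Theorem 9.7] to the present sum $S_\pi(Y,\mathcal{Q})$, and in particular to verify that the sieving level permitted by \eqref{sieve} is compatible with both the van der Corput input and the Halász-type output. A secondary issue is uniformity in the ancillary data: the quadratic form $\mathcal{Q}=Q_{[\sk\mathfrak{d}]}$ depends on the class $[\sk]$ and on the divisor $\mathfrak{d}\mid d_B$, and one must ensure that all implied constants are independent of this choice, with the dependence on $\pi$ absorbed polynomially into the spectral parameter. A final technical point is that Lemma \ref{tit} requires $Y\geq (\log C_L)^{2+\varepsilon}$ for the relevant symmetric power $L$-functions, whose analytic conductors grow as powers of $D$; this is automatic in the regime $Y\asymp D$, but must be checked at the boundary between the two ranges treated in \S\ref{sec:basic-est}.
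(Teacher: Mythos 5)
Your high-level plan is in the right place --- cite [Kh, Theorem 9.7], feed in condition \eqref{sieve} to make the sieve step go through, then win a logarithmic factor from a Sato--Tate-style saving using automorphy of low symmetric powers and the GRH estimates from Lemma \ref{tit}. But there are two concrete mismatches with what the argument actually requires.

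First, you have misattributed the exponent $1/18$. It does not arise from ``tracking a Cauchy--Schwarz loss through the sieve step''; there is no Cauchy--Schwarz or Rankin trick in the paper's argument. After applying [Kh, Theorem~9.7] one is left, not with a vague mean value $\mathcal{M}_\pi(Y)$, but with an explicit product of Euler-type factors: a product $\prod_{p\leq D^{100}}(1-\rho_\mathcal{Q}(p)/p^2)$ and a sum $\sum_{a\leq D^{100},\,a\ \square\text{-free}}|\lambda_\pi(a)|\rho_\mathcal{Q}(a)/a^2$. The product is bounded by $(\log D)^{-1+\varepsilon}$ using GRH on $L(1,\chi_E)$. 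For the sum, one opens it as an Euler product and inserts the pointwise inequality
\[
|\lambda_\pi(p)| \leq 1 + \tfrac{1}{2}\lambda_\pi(p^2) - \tfrac{1}{18}\lambda_\pi(p^2)^2,
\]
valid under the Ramanujan bound and the Hecke relations; applying \eqref{log-series} and \eqref{titch1} to $L(1,\operatorname{sym}^2\pi\times\theta_E)$ and $L(1,\operatorname{sym}^4\pi\times\theta_E)$ gives the exponent $17/18$. The net saving $(\log D)^{-1+17/18}=(\log D)^{-1/18}$ is therefore the direct fingerprint of the coefficient $-1/18$ in the polynomial inequality, not a Cauchy--Schwarz artifact; there is no need to pass through $\sum_p|\lambda_\pi(p)|/p$ with an unspecified constant $\delta<1$ and then invoke Halász. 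Second, and relatedly, there is no additional Halász/Shiu mean-value step: [Kh, Theorem 9.7] already delivers the bound in the form of a truncated Euler product, so the only remaining task is to evaluate these products at $s=1$ using Lemma \ref{tit}. (You also invoke automorphy of $\operatorname{sym}^3\pi$, which is not needed; only $\operatorname{sym}^2$ and $\operatorname{sym}^4$ enter.) Your concerns about uniformity in $\mathcal{Q}$ and the conductor threshold for Lemma \ref{tit} are valid points to check, but they are handled by the choice $X=D^{100}$ and the density bounds \eqref{den}, not by anything van der Corput-specific inside this proof.
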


For the portion $Y \geq D^{1-10\varepsilon}$ we need to assume
 $q \geq D^{1/6 + 50 \varepsilon}$ 
in order to satisfy \eqref{sieve}. In this case we may insert \eqref{bound2} into \eqref{sothat} to obtain
\begin{displaymath}
\begin{split}
P_\scD^\Delta(f\otimes f;s) & \ll_\varepsilon (\log D)^{-1/18+\varepsilon} \sum_{\substack{Y = 2^{\nu} \\ Y \geq D^{1-10\varepsilon}}} \sum_m \frac{1}{m} \frac{Y^{1/2}}{D^{1/2}} \Big(1 + \frac{Ym^2}{D}\Big)^{-A} \Big(  \frac{Ym^2}{D}\Big)^{-\theta_{\pi } -  \varepsilon}\\
 & \ll_{\varepsilon} (\log D)^{-1/18+\varepsilon} \sum_{\substack{Y = 2^{\nu} \\ Y \geq D^{1-10\varepsilon}}} \Big(  \frac{Y}{D}\Big)^{1/2 - \theta_{\pi} - \varepsilon} \Big(1 + \frac{Y }{D}\Big)^{-A}   \ll_{\varepsilon} (\log D)^{-1/18+\varepsilon}. 
 \end{split}
 \end{displaymath}
The proof of Proposition \ref{prop2} is therefore reduced to that of Proposition \ref{prop:Kh}. 

\subsection{A sieving argument}
The crucial ingredient to Proposition \ref{prop:Kh} is the following estimate, due to the third author \cite{Kh}, and building on the work of Shiu and Nair, which converts a sum of a multiplicative function $f$ on values of an integral binary quadratic form $\mathcal{Q}$ into short sums and products involving the density function 
\[
\rho_{\mathcal{Q}}(n) = \#\{(x, y) \in (\Bbb{Z}/n\Bbb{Z})^2 \mid \mathcal{Q}(x, y) \equiv 0 \, (\text{mod } n)\}
\]
and the multiplicative function $f$ itself.

\begin{lemma}\label{lemma:Kh} Suppose the conditions \eqref{sieve}. Then, under the Ramanujan conjecture, we have
\[
S_\pi(Y,\mathcal{Q}) \ll \frac{Y}{D^{1/2}} \prod_{ 3 \leq p \leq D^{100}} \Big(1 - \frac{\rho_{Q}(p)}{p^2}\Big) \sum_{\substack{a \leq D^{100} \\ a\textrm{ is } \square-\mathrm{free}}} \frac{|\lambda_\pi(a)| \rho_{\mathcal{Q}}(a)}{a^2}.
\]
\end{lemma}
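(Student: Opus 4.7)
The statement is an application of \cite[Theorem 9.7]{Kh}, a Nair--Shiu--Tenenbaum style upper bound for sums of a nonnegative multiplicative function along values of a primitive integral positive-definite binary quadratic form. My plan is to verify the hypotheses of that theorem for $f(n)=|\lambda_\pi(n)|$, the form $\mathcal{Q}=Q_{[\sk\mathfrak{d}]}$ (of discriminant $-D$ and minimal value $\asymp q$), and sieve level $z=D^{100}$, and then quote the theorem directly.

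The multiplicative input is clean: under the Ramanujan conjecture (which we are assuming), $|\lambda_\pi|$ is a nonnegative multiplicative function with $|\lambda_\pi(p^k)|\leq k+1\leq\tau(p^k)$, placing it squarely in the divisor-type regime for which the Nair--Shiu machinery was designed. The geometric input is the standard lattice-point count
\[
\#\{(x,y)\in\Z^2 : Q(x,y)\leq Y,\ a\mid\mathcal{Q}(x,y)\}=\frac{\rho_\mathcal{Q}(a)}{a^2}\cdot\frac{2\pi Y}{\sqrt{D}}+E(Y,a),
\]
obtained because the ellipse $\{Q\leq Y\}$ has area $2\pi Y/\sqrt{D}$ and the divisibility condition picks out exactly $\rho_\mathcal{Q}(a)$ out of $a^2$ residue classes. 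Combining these via a Selberg-type upper-bound sieve, applied through a Möbius-style decomposition of $|\lambda_\pi|$ supported on squarefree divisors, yields the claimed product--sum shape: the density $\prod_{3\leq p\leq z}(1-\rho_\mathcal{Q}(p)/p^2)$ arises as the standard sieve factor encoding the event ``no square of a small prime divides,'' while the sum $\sum_{a\leq z,\,a\text{ sqfree}}|\lambda_\pi(a)|\rho_\mathcal{Q}(a)/a^2$ encodes the squarefree local contributions. The prime $p=2$ is excluded from the product because $\rho_\mathcal{Q}(2)/4$ may be close to $1$; its contribution is absorbed into the implicit constant via Ramanujan. The tail $a>z$ is controlled by Rankin-type partial summation using the bound $|\lambda_\pi(a)|\leq\tau(a)$ together with $\rho_\mathcal{Q}(a)\leq a^{\varepsilon}$.

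The main obstacle --- and the reason for hypothesis \eqref{sieve} --- is quantitative control of the aggregated error $\sum_{a\leq z}|E(Y,a)|$ once $z$ is taken as large as $D^{100}$. A van der Corput-type bound on lattice points in the elongated ellipse $\{Q\leq Y\}$ (whose axes have ratio $\asymp D^{1/2}/q$) gives a per-class discrepancy of order $(Y/q)^{1/3}+O(1)$; summing across the $\rho_\mathcal{Q}(a)$ classes and over $a\leq z$ and matching this against the main term $Y/\sqrt{D}$ leads precisely to the exponent constraint $q\geq D^{5/6}/Y^{2/3}(YD)^{\varepsilon}$. The technically demanding step of \cite[Theorem 9.7]{Kh}, which I would not attempt to reprove from scratch, is the smoothed sieve weight construction that harvests this sharp range by averaging over nearby moduli rather than bounding pointwise; once that input is in place, the specialization of the theorem to $f=|\lambda_\pi|$ is a direct verification, and the lemma follows.
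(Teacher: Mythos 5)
Your proposal follows essentially the same route as the paper: both apply \cite[Theorem 9.7]{Kh} to $f=|\lambda_\pi|$ with $\mathscr{E}=\{Q\leq Y\}$, area $2\pi Y/\sqrt D$, and the error/range analysis you sketch (van der Corput on the elongated ellipse, leading to $R_{\max}\ll (YD)^{1/2}/q^{3/2}$ and $\theta_l=2/3+\varepsilon$, hence the condition \eqref{sieve}) is exactly what the paper imports from \cite[Lemma 10.4, Remark 9.5]{Kh}. Two small points you gloss over. First, the cited theorem produces the sum over \emph{all} $a\leq D^{100}$, not only square-free $a$; since the square-free restriction strengthens the bound, an extra step is needed, which the paper supplies by writing each $a$ uniquely as a coprime product of a square-free and a powerful part, so that
\[
\sum_{a \leq D^{100}} \frac{|\lambda_\pi(a)| \rho_{\mathcal{Q}}(a)}{a^2} \leq
\sum_{\substack{a \leq D^{100} \\ a\ \square\text{-free}}} \frac{|\lambda_\pi(a)| \rho_{\mathcal{Q}}(a)}{a^2}
\sum_{a\ \mathrm{powerful}} \frac{|\lambda_\pi(a)| \rho_{\mathcal{Q}}(a)}{a^2},
\]
with the powerful sum $O(1)$ by Ramanujan and $\rho_{\mathcal{Q}}(p^k)\leq (k+1)p^k$; your attribution of the square-free shape to an internal ``M\"obius-style decomposition'' inside the sieve does not match how the theorem is actually stated. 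Second, the admissibility of the sieve level $X=D^{100}$ (and the choice of $\delta$) rests on the observation that \eqref{sieve} together with Minkowski's bound $q\leq D^{1/2}$ forces $Y\geq D^{1/2}$, a parameter check you should make explicit rather than leave implicit in ``verify the hypotheses.'' Neither point is a serious obstruction, but both are needed to close the argument as stated.
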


\begin{proof} We  apply \cite[Theorem 9.7]{Kh} with $f(n) = |\lambda_\pi(n)|$,  $\mathscr{E} = \{(x, y) \in \Bbb{R}^2 \mid  \mathcal{Q}(x, y) \leq Y\}$, $A(\mathscr{E}) =2\pi Y/\sqrt{D}$, and
\[
R_{\text{max}} \leq  \frac{A(\mathscr{E})^{1/2} D^{3/4} }{q^{3/2}} \ll \frac{(YD)^{1/2}}{q^{3/2}}, \quad \theta_l = 2/3 + \varepsilon
\]
by \cite[Lemma 10.4]{Kh} and  \cite[Remark 9.5]{Kh}. Then the existence of constants $A, B$ as in \cite[Definition 9.6]{Kh} and $C_l$ as in \cite[Definition 9.4]{Kh} is guaranteed. The existence of $\eta > 0$ as in \cite[Theorem 9.7]{Kh} follows from the first condition in \eqref{sieve}. It is easy to see (cf.\ \cite[Lemma B.1]{Kh}) that
\begin{equation}\label{den}
\begin{split}
\rho_{\mathcal{Q}}(p) = p(1 + \chi_E(p)) - \chi_E(p) \,\, (p \text{ odd}), \quad \rho_{\mathcal{Q}}(p^k) \leq (k+1) p^k\,\, (k\geq 2).
\end{split}
\end{equation}
In order to apply \cite[Theorem 9.7]{Kh}, we need to check the existence of $r, C, X, \delta$ as defined in \cite[Theorem 9.7]{Kh}. 
By \eqref{den} we  can choose any $r < 1$ and some suitable $C$. Since $q \leq D^{1/2}$ by the Minkowski bound, \eqref{sieve} implies automatically $Y \geq D^{1/2}$. Hence 
we may choose $X = D^{100}$ and $\delta = 600$. Now \cite[Theorem 9.7]{Kh} yields the stated bound, except with the $a$-sum not restricted to square-free numbers. The claim follows from the decomposition
\begin{equation*}
\sum_{a \leq D^{100}} \frac{|\lambda_\pi(a)| \rho_{\mathcal{Q}}(a)}{a^2} \leq
\sum_{\substack{a \leq D^{100} \\ a\textrm{ is } \square-\mathrm{free}}} \frac{|\lambda_\pi(a)| \rho_{\mathcal{Q}}(a)}{a^2} 
\sum_{\substack{a =1 \\ a\textrm{ is powerful}}}^\infty \frac{|\lambda_\pi(a)| \rho_{\mathcal{Q}}(a)}{a^2}
\end{equation*}
using the multiplicativity of the summnds and from the inequality
\begin{equation*}
\sum_{ a\textrm{  powerful}}  \frac{|\lambda_\pi(a)| \rho_{\mathcal{Q}}(a)}{a^2} \ll_\varepsilon
\sum_{  a\textrm{ powerful}} \frac{1}{a^{1-\varepsilon}}  \ll_{\varepsilon} 1
\end{equation*}
for $\varepsilon < 1/2$ that follows from the Ramanujan bound and \eqref{den}.
\end{proof}

\subsection{Bounding short sums using $L$-functions}\label{sec:short-sums}
We now estimate the product and the sum appearing in Lemma \ref{lemma:Kh}, under the assumption of GRH. 

The following result uses the $L$-function estimates of Lemma \ref{tit} along with the existence of the first few symmetric functorial lifts from $\mathbf{GL}_2$.

\begin{lemma}\label{lem:Kh-sum} Under GRH and the Ramanujan conjecture for $\mathbf{PGL}_2$ we have 
\[
\sum_{\substack{a \leq D^{100} \\ a\textrm{ is } \square-\mathrm{free}}} \frac{|\lambda_\pi(a)| \rho_{\mathcal{Q}}(a)}{a^2}  \ll_\varepsilon (\log D)^{17/18 + \varepsilon}.
\]
\end{lemma}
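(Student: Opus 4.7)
My plan is to combine Rankin's trick with a pointwise trigonometric majorant of $|\lambda_\pi(p)|$ expressible in terms of Hecke eigenvalues of known symmetric power lifts, then invoke Lemma \ref{tit} on the resulting prime sums.

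First, since $\rho_{\mathcal{Q}}$ is multiplicative on square-free arguments and \eqref{den} gives $\rho_{\mathcal{Q}}(p)/p^2 = (1+\chi_E(p))/p + O(1/p^2)$, I would drop the cutoff $a \leq D^{100}$ (all summands being nonnegative) to bound the sum by the Euler product
\[
\sum_{\substack{a \leq D^{100}\\ a \text{ square-free}}}\frac{|\lambda_\pi(a)|\rho_\mathcal{Q}(a)}{a^2} \leq \prod_{p\leq D^{100}}\left(1+\frac{|\lambda_\pi(p)|\rho_\mathcal{Q}(p)}{p^2}\right).
\]
Taking logarithms and absorbing the second-order Taylor tail via the convergence of $\sum_p p^{-2}$, the task reduces to bounding the prime sum
\[
\Sigma := \sum_{p\leq D^{100}}\frac{|\lambda_\pi(p)|(1+\chi_E(p))}{p}.
\]

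The key step is the pointwise majorization. Using the Ramanujan bound to write $\lambda_\pi(p) = 2\cos\theta_p$, I would construct an explicit trigonometric polynomial $P(\theta) = a_0 + a_1\cos(2\theta) + a_2\cos(4\theta)$ satisfying $P(\theta) \geq 2|\cos\theta|$ for all $\theta$ and having Sato--Tate average $a_0 - a_1/2 \leq 17/18$. Such a $P$ can be produced by a short one-parameter optimization, e.g., by enforcing tangent contact at a suitable interior angle together with a boundary zero. The restriction to trigonometric degree $\leq 4$ reflects the fact that only $\text{sym}^k\pi$ for $k\leq 4$ are known to be automorphic on the appropriate $\GL_{k+1}$ (Gelbart--Jacquet for $k=2$, Kim--Shahidi for $k=3$, Kim for $k=4$). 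Via the Chebyshev identity $2\cos(2k\theta_p) = \lambda_{\text{sym}^{2k}\pi}(p) - \lambda_{\text{sym}^{2k-2}\pi}(p)$ for $k \in \{1,2\}$, the expression $P(\theta_p)(1+\chi_E(p))$ becomes a linear combination of $(1+\chi_E(p))\lambda_{\text{sym}^{2k}\pi}(p)$ for $k\in\{0,1,2\}$, whose prime sums are governed by the $L$-functions $L(s, \text{sym}^{2k}\pi)$ and $L(s, \text{sym}^{2k}\pi\times\chi_E)$.

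Now Lemma \ref{tit} applied to each of these $L$-functions converts the prime sums to $\log L(1, \cdot) + O(1)$. Under GRH the bound \eqref{titch1} yields $\log L(1, \text{sym}^{2k}\pi) = O_\pi(1)$ (since $\pi$ is fixed) and $\log L(1, \text{sym}^{2k}\pi\times\chi_E) = O(\log\log\log D)$. The only term growing with $D$ comes from the constant part of $P$ paired against $\sum_p(1+\chi_E(p))/p = \log\log D + O(\log\log\log D)$, with the Sato--Tate mean $a_0 - a_1/2 = 17/18$ appearing as its coefficient. Altogether, $\Sigma \leq \tfrac{17}{18}\log\log D + O(\log\log\log D)$; exponentiating yields the lemma.

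The main obstacle is the construction of the majorant itself: the cusp of $2|\cos\theta|$ at $\theta = \pi/2$, together with the restriction to frequencies $\leq 4$ (the theoretical lower bound on the Sato--Tate mean being $8/(3\pi) \approx 0.849$, strictly smaller than what is achievable under our constraints), makes verifying the pointwise inequality $P \geq 2|\cos\theta|$ while keeping the mean at $17/18$ a delicate but finite computation.
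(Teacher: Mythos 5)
Your proposal is essentially the paper's proof. The one step you defer to a ``delicate but finite computation'' is made explicit there as the pointwise bound $|\lambda_\pi(p)| \leq 1 + \tfrac12\lambda_\pi(p^2) - \tfrac1{18}\lambda_\pi(p^2)^2$, which under Ramanujan (writing $\lambda_\pi(p)=2\cos\theta_p$) is precisely your $P(\theta)=\tfrac43+\tfrac79\cos 2\theta-\tfrac19\cos 4\theta$ with Sato--Tate mean $\tfrac43-\tfrac12\cdot\tfrac79=\tfrac{17}{18}$ (tangency at $\theta=\pi/3$, equality at $\theta=0$); the remainder of the argument proceeds exactly as you describe via $\mathrm{sym}^2\pi\times\mathrm{sym}^2\pi=\mathrm{sym}^4\pi\boxplus\mathrm{sym}^2\pi\boxplus\mathbf{1}$, pairing against $\theta_E=\mathbf{1}\boxplus\chi_E$, and Lemma~\ref{tit}.
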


\begin{proof} 
We obtain using \eqref{den}
\begin{displaymath}
\begin{split}
& \sum_{\substack{a \leq D^{100} \\ a\textrm{ is } \square-\mathrm{free}}} \frac{|\lambda_\pi(a)| \rho_{\mathcal{Q}}(a)}{a^2}  \leq
\prod_{  \substack{p \leq D^{100} \\ p \nmid N}}\Big( 1+ \frac{|\lambda_\pi(p)|(1 + \chi_E(p))}{p}\Big).
\end{split}
\end{displaymath}
The Ramanujan bound and the Hecke relations imply $|\lambda_\pi(p)| \leq \frac{1}{2} + \frac{1}{2}\lambda_\pi(p^2) - \frac{1}{18}\lambda_\pi(p^2)^2$, so that
\[
\prod_{\substack{  p \leq D^{100} \\ p \nmid N}}\Big( 1+ \frac{|\lambda_\pi(p)|(1 + \chi_E(p))}{p}\Big) \leq \exp\Big(  \sum_{\substack{ p \leq D^{100}\\ p \nmid N} }  \frac{\frac{1}{2} + \frac{1}{2}\lambda_\pi(p^2) - \frac{1}{18}\lambda_\pi(p^2)^2 }{p} (1+ \chi_E(p))\Big).
\]
Finally, using \eqref{log-series} and letting $\theta_E = \textbf{1} \boxplus \chi_E$, we have 
\[
\sum_{\substack{ p \leq D^{100}\\ p \nmid N} }   \frac{\lambda_\pi(p^2)(1+ \chi_E(p) )  }{p} =   \log L(1, \text{sym}^2 \pi \times \theta_E) + O_\pi(1).
\]
Since $\text{sym}^2\pi \times \text{sym}^2\pi = \text{sym}^4\pi \boxplus\text{sym}^2\pi \boxplus\textbf{1}$, we obtain similarly
\[
\sum_{\substack{ p \leq D^{100}\\ p \nmid N} }   \frac{\lambda_\pi(p^2)^2(1+ \chi_E(p) )  }{p} = \log\log D +  \log  L(1, \text{sym}^4 \pi \times \theta_E) + \log  L(1, \text{sym}^2 \pi \times \theta_E) + O_\pi(1).
\]
Combining this with \eqref{titch1}, we obtain the stated bound.
\end{proof}

We bound the product over the local densities appearing in Lemma \ref{lemma:Kh} using the following result.

\begin{lemma}\label{lem:Kh-prod} Under GRH we have $\prod_{ 3 \leq p \leq D^{100}} \big(1 - \frac{\rho_{\mathcal{Q}}(p)}{p^2}\big) \ll_\varepsilon \log D^{-1+\varepsilon}$.
\end{lemma}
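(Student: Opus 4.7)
The plan is to take the logarithm of the product and split it into a main term contributed by Mertens' theorem and a secondary term controlled by GRH estimates for $L(1,\chi_E)$.

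Starting from the explicit formula in \eqref{den}, for odd primes $p$ we have
\[
1 - \frac{\rho_{\mathcal{Q}}(p)}{p^2} = 1 - \frac{1+\chi_E(p)}{p} + \frac{\chi_E(p)}{p^2}.
\]
Since this quantity lies in $[0,1]$ and is uniformly bounded away from $0$ (in fact at least $(1-2/p)(1+O(1/p^2))$), we may take logarithms and Taylor expand to get
\[
\log\!\left(1 - \frac{\rho_{\mathcal{Q}}(p)}{p^2}\right) = -\frac{1}{p} - \frac{\chi_E(p)}{p} + O\!\left(\frac{1}{p^2}\right),
\]
uniformly in $p\geq 3$. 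Summing over $3\leq p\leq D^{100}$, I would separate the resulting sum into the Mertens piece $\sum_p 1/p = \log\log D + O(1)$ and the character sum $\sum_p \chi_E(p)/p$.

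The character sum is handled by Lemma \ref{tit} applied to $L(s,\chi_E)$: this is a degree $1$ $L$-function of analytic conductor $\ll D$ and trivial local bounds, so under GRH \eqref{log-series} gives
\[
\sum_{p\leq D^{100}} \frac{\chi_E(p)}{p} = \log L(1,\chi_E) + O(1),
\]
valid since $D^{100}\geq (\log D)^{2+\varepsilon}$. Applying the upper half of \eqref{titch1} to $L(1,\chi_E)$ (whose coefficients satisfy $\chi_E(p)\in\{-1,0,1\}$) yields $L(1,\chi_E)\ll (\log\log D)^{1+\varepsilon}$, hence $\log L(1,\chi_E) \ll \log\log\log D$.

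Combining these ingredients,
\[
\sum_{3\leq p\leq D^{100}} \log\!\left(1 - \frac{\rho_{\mathcal{Q}}(p)}{p^2}\right) = -\log\log D + O(\log\log\log D),
\]
and exponentiating gives the claimed bound $(\log D)^{-1+\varepsilon}$ for any $\varepsilon>0$. There is no real obstacle here; the only subtlety is to verify that Lemma \ref{tit} applies to $L(s,\chi_E)$, which is immediate since $\chi_E$ is a primitive quadratic Dirichlet character of conductor dividing $D$ and degree one $L$-functions have trivial local bounds ($\rho = 1/2$).
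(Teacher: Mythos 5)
Your overall strategy is the paper's, just written additively: the paper factors the product (up to a convergent Euler factor) into $\prod(1-1/p)\cdot\prod(1-\chi_E(p)/p)$ and then applies Mertens, \eqref{log-series}, and \eqref{titch1}; taking logarithms and Taylor-expanding as you do is the same decomposition.

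However there is a direction error at the very end. After taking logs and using \eqref{log-series}, the identity you arrive at is
\[
\sum_{3\leq p\leq D^{100}} \log\Big(1-\frac{\rho_{\mathcal{Q}}(p)}{p^2}\Big)
= -\log\log D \;-\; \log L(1,\chi_E) + O(1).
\]
To prove the stated \emph{upper} bound on the product you must bound this sum from \emph{above}, which requires $-\log L(1,\chi_E)$ to be bounded above, i.e.\ you need a \emph{lower} bound on $L(1,\chi_E)$. This is the first line of \eqref{titch1}, which applies because $\chi_E(p)\geq -1$, and it gives $L(1,\chi_E)\gg (\log\log D)^{-1}$. You instead invoked $L(1,\chi_E)\ll (\log\log D)^{1+\varepsilon}$, which only bounds $\log L(1,\chi_E)$ from above and hence controls the product from \emph{below} — the wrong direction. (Indeed you would need that estimate only if you wanted the two-sided asymptotic $-\log\log D + O(\log\log\log D)$ that you assert.) The fix is to cite the other half of \eqref{titch1}: with $L(1,\chi_E)\gg (\log\log D)^{-1}$ one gets the product $\ll (\log\log D)/\log D \ll_\varepsilon (\log D)^{-1+\varepsilon}$, completing the proof.
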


\begin{proof} 
By a similar, but simpler, reasoning to Lemma \ref{lem:Kh-sum}, we have 
\begin{displaymath}
\begin{split}
 \prod_{ 3 \leq p \leq D^{100}} \Big(1 - \frac{\rho_{\mathcal{Q}}(p)}{p^2}\Big) &=\prod_{ 3 \leq p \leq D^{100}} \Big(1 - \frac{p(1 + \chi_E(p)) - \chi_E(p)}{p^2}\Big)\\
 & \ll  \prod_{  p \leq D^{100}} \Big(1 - \frac{1 }{p}\Big) \prod_{   p \leq D^{100}} \Big(1 - \frac{\chi_E(p)}{p}\Big) \\
 &  \ll  \frac{1}{\log D} \exp\Big(-\sum_{p \leq D^{100}} \frac{\chi_E(p)}{p}\Big) \ll \frac{1}{L(1, \chi_E)\log D},
\end{split}
\end{displaymath}
by \eqref{den} and \eqref{log-series}. An appeal to \eqref{titch1} completes the proof. \end{proof}

Inserting Lemmata \ref{lem:Kh-sum} and \ref{lem:Kh-prod} into Lemma \ref{lemma:Kh} completes the proof of Proposition \ref{prop:Kh}.

\section{Oriented and aligned Heegner packets}\label{sec:oriented-packets}

The goal of Sections \ref{sec:oriented-packets} and \ref{sec:HHS} is to provide the necessary background on Heegner points and Hecke correspondences for the proof of Proposition \ref{prop-smallq}. The latter proves Theorem \ref{main-thm} for the small $q$ range as described in \S\ref{sec:bottom-q-sketch}. Specifically, in this section we clarify the parametrization of Heegner packets of \S\ref{sec:Hecke-explanation} --- without the Heegner condition and for general quaternion algebras --- thereby providing a more adelic perspective on the classical results of \cite{GKZ}.
 
\subsection{Fibering over Heegner packets of level 1}\label{sec:variation}
We begin by introducing the set of Heegner packets with level structure, and describing how they can be partitioned according to their level 1 projections.

To fix an embedding $\iota\colon E\to B$ is equivalent to fixing a rational torus $\bT<\bG$ with splitting field $E$. Hence a Heegner packet is parameterized by the datum $(\bT,g)$, where $\bT$ is the projective group of units of an embedding of an imaginary quadratic field $\iota\colon E\to B$ and $g\in\bG(\A)$ satisfies $g_\infty^{-1} \bT(\R) g_\infty=K_\infty$. Equivalently, $\bT$ is a maximal torus of $\bG$, defined over $\Q$ and anisotropic over $\R$.

Now fix $\bT$ and let $U=U_fK_\infty$, where $U_f<K_f$ is of finite index. We denote by $H_E(U)$ the collection of Heegner packets of the form $[\bT(\A)g]_U$ (due to Lemma \ref{lem:all-field-embeddings-conjugate} the collection $H_E(U)$ depends only on the field $E$, and does not depend on the particular embedding $\iota$.). Then $H_E(U)$ is parametrized by the class of $g$ in $\bT(\A) \backslash \bG(\A) \slash U$. Since the class of $g_\infty$ in $\bT(\R) \backslash \bG(\R) \slash K_\infty$ is fixed by the requirement $g_\infty^{-1} \bT(\R) g_\infty=K_\infty$, the Heegner packets in $H_E(U)$ are parametrized by the double quotient $H_E(U_f)=\bT(\A_f) \backslash \bG(\A_f) \slash U_f$. In particular, the set of Heegner packets of \emph{level $1$} is described by $H_E(K_f)=\bT(\A_f)\backslash \bG(\A_f) \slash K_f$. We shall provide an explicit description of the level $1$ packets in \S\ref{sec:level-one-packets}.

With $U_f<K_\infty$ of finite index, as before, we have
\begin{equation}\label{eq:Heegner-packets-double-quotient}
H_E(U_f)={\prod}'_p \bT(\Q_p) \backslash \bG(\Q_p) \slash U_p,
\end{equation}
the factorization into local components on the right-hand side being a restricted product, so that at all but finitely many primes the class is the trivial class of the identity. An immediate observation is that the parameter space \eqref{eq:Heegner-packets-double-quotient} is countable. 

We can now partition \eqref{eq:Heegner-packets-double-quotient} according to the level $1$ classes. We claim that
\begin{equation}\label{eq:Heegner-level-decomposition}
H_E(U_f)=\bigsqcup_{[g_f]\in H_E(K_f)} \bT(\A_f) \cdot  g_f \cdot  K_{\scD,f} \backslash K_f \slash U_f\;,
\end{equation}
where we recall from \S\ref{sec:special} that $K_{\scD,f}=\bT(\A_f) \cap g_f K_f g_f^{-1}$. Indeed each element of the union corresponds to a level $1$ Heegner packet, and all the Heegner packets in $[\bG(\A)]_U$ projecting to a fixed Heegner packet $[\bT(\A)g]_K$ are described by classes in $K_{\scD,f} \backslash K_f \slash U_f$. Notice that this double quotient is finite; hence there are finitely many Heegner packets in $[\bG(\A)]_U$ lying above a fixed level $1$ Heegner packet.

Because the associated global order $\mathscr{O}_\scD$ of a Heegner packet depends only on the level $1$ projection, we see that all the Heegner packets corresponding to a fixed union element in \eqref{eq:Heegner-level-decomposition} have the same associated order, and in particular the same discriminant $D$.

\subsection{Heegner Packets of level one}\label{sec:level-one-packets}

Let $K=K_f K_\infty$, where $K_f$ is almost maximal. In this paragraph we find which integers are discriminants of Heegner packets in $Y_K$ and how many packets correspond to a fixed discriminant. 

\begin{lemma}\label{lemma:adm-disc}
Fix $D\in\mathbb{N}$ which is not a perfect square. If there exists a packet of discriminant $D$ in $Y_K$ then $D=D_0 f^2$, with $D_0$ a fundamental discriminant, $(\frac{-D}{p})\neq 1$ for all $p\mid d_B$, and $\gcd(f,d_B)=1$. In this case we say $D$ is admissible. 
\end{lemma}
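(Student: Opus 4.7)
The plan is to unpack the discriminant recipe of \S\ref{sec:disc} at each rational prime and combine it with the local embedding constraint for imaginary quadratic fields into $B$. There are essentially three independent local facts to verify, one at all primes (giving the factorization $D=D_0 f^2$) and two at the primes dividing $d_B$.

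First I would take a packet of discriminant $D$ in $Y_K$, arising from a homogeneous toral datum $\scD=(\iota,g)$ with $\iota\colon E\hookrightarrow B$. Since $\bT(\R)$ is compact and $D$ is not a perfect square, the field $E$ is imaginary quadratic, and the associated global order $\scO_\scD=\iota(E)\cap R^g$ is an order in $\scO_E$. Standard structure theory of orders in imaginary quadratic fields then immediately yields $D=|\operatorname{disc}(\scO_\scD)|=D_0 f^2$, where $D_0=|\operatorname{disc}(\scO_E)|$ is fundamental and $f=[\scO_E:\scO_\scD]$ is the conductor.

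For the splitting condition at $p\mid d_B$, I would localize $\iota$ to obtain an injection of commutative $\Q_p$-algebras $E\otimes\Q_p\hookrightarrow B_p$ into the unique quaternion division algebra over $\Q_p$. Since every commutative subalgebra of a finite-dimensional division algebra is itself a field, $E\otimes\Q_p$ cannot be isomorphic to $\Q_p\times\Q_p$; that is, $p$ must be non-split in $E$, i.e.\ $\left(\frac{-D}{p}\right)\neq 1$. (The condition is vacuous whenever $p\mid D$, since then the Kronecker symbol is already $0$.)

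Finally, for $\gcd(f,d_B)=1$, I would invoke the observation already recorded in \S\ref{sec:disc}: at each $p\in{\rm Ram}_B$ the local order $R_p$ is the unique maximal order of $B_p$ and coincides with the full set of integral elements of $B_p$. Consequently $\scO_{\scD,p}=\iota(E\otimes\Q_p)\cap R_p$ is the set of integral elements of $\iota(E\otimes\Q_p)$, hence the maximal order of that quadratic \'etale extension of $\Q_p$. Therefore the local conductor at every $p\mid d_B$ is trivial, giving $\gcd(f,d_B)=1$. The only step requiring nontrivial input is this last one, but with the local structure of maximal orders at ramified primes already extracted in \S\ref{sec:disc} it presents no real obstacle.
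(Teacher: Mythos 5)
Your proposal is correct and follows the same three-part structure as the paper's proof: the factorization $D=D_0 f^2$ from the theory of orders in $E=\Q(\sqrt{-D})$, coprimality of $f$ with $d_B$ from the maximality of $\scO_{\scD,p}$ at ramified primes (the paper phrases this as the $p$-adic valuation of $D$ being at most $1$, but it is the same observation from \S\ref{sec:disc}), and the non-split condition at ramified primes. The one place you diverge slightly is in the last step: the paper appeals to the Albert--Brauer--Hasse--Noether theorem, whereas you localize $\iota$ at $p$ and use the elementary fact that a finite-dimensional commutative subalgebra of the division algebra $B_p$ must be a field; since an embedding $\iota$ is already in hand, your direct local argument is a perfectly valid and somewhat lighter alternative to citing the global existence criterion, of which it proves the only direction needed here.
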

Note that admissibility is a congruence condition modulo $8 d_B^2$. Later we shall establish that if $D$ is admissible then a packet of discriminant $D$ always exists.

\begin{proof}
Observe first that $D$ determines the splitting field of the torus $E=\mathbb{Q}(\sqrt{-D})$. Assume a packet of discriminant $D$ exists; then $D$ is the discriminant of an order in $E$ and $D=D_0 f^2$, with $D_0=|\operatorname{disc}(E)|$ and $f$ the conductor of the order. Moreover, Section \ref{sec:disc} implies that the $p$-adic valuation of $D$ is at most $1$ for all $p\mid d_B$; hence $\gcd(f,d_B)=1$. The Albert--Brauer--Hasse--Noether theorem implies that an embedding $E\hookrightarrow B$ exists if and only if all primes dividing $p$ are not split in $E$, thus $\left(\frac{-D}{p}\right)\neq 1$ for all $p\mid d_B$, as claimed.
\end{proof}

We deduce that the collection of Heegner packets in $Y_K$ can be organized as follows:
\[
\bigsqcup_{\substack{E \textrm{ imaginary quadratic}\\ {\rm disc}(E)=-D_0 \textrm{ admissible}}} H_E(K).
\]
We now fix an admissible fundamental discriminant $D_0$ and let $E$ be the imaginary quadratic field with ${\rm disc}(E)=-D_0$. Then we may decompose
\[
H_E(K)=\bigsqcup_{\substack{f\geq 1\\ (f,d_B)=1}} H(K,D_0f^2),
\]
the union being disjoint by Lemma \ref{lemma:disjoint-packets}. We now compute the cardinality of $H(K,D_0f^2)$.

\begin{lemma}\label{lem:all-field-embeddings-conjugate}
Let $E/\mathbb{Q}$ be a quadratic field extension. Then all the embeddings of $E$ into $B$ are conjugate to each other by an element of $\bG(\Q)$. Similarly, if $E_p/\Q_p$ is a quadratic \'etale algebra then all embeddings of $E_p$ into $B_p$ are conjugate by an element of $\bG(\Q_p)$.
\end{lemma}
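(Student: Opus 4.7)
The proof will rest on the Skolem--Noether theorem, which asserts that for a central simple algebra $A$ over a field $k$ and a simple $k$-algebra $S$, any two $k$-algebra embeddings $S \hookrightarrow A$ are conjugate by an element of $A^\times$.

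For the global statement, apply Skolem--Noether directly: $E$ is a field (hence simple as a $\Q$-algebra), and $B$ is central simple over $\Q$. Thus any two embeddings $\iota_1,\iota_2\colon E\to B$ are conjugate by some $b\in B^\times$, i.e.\ $\iota_2 = b\,\iota_1(\cdot)\,b^{-1}$. Since $\bG(\Q)=\Q^\times\backslash B^\times$, the image of $b$ in $\bG(\Q)$ witnesses the required conjugation. For the local statement with $E_p/\Q_p$ a field extension, the identical argument applies with $k=\Q_p$ and $A=B_p$ (the latter is central simple over $\Q_p$, being either $\Mat(\Q_p)$ or the unique quaternion division algebra).

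The one case requiring separate treatment is $E_p\simeq \Q_p\times\Q_p$, where $E_p$ is semisimple but not simple, so Skolem--Noether does not apply verbatim. First I would observe that the existence of any embedding $\iota\colon \Q_p\times\Q_p\to B_p$ forces $B_p$ to contain nontrivial zero divisors (namely $\iota(1,0)$ and $\iota(0,1)$), hence $B_p\simeq\Mat(\Q_p)$. An embedding $\iota$ is then determined by the orthogonal idempotent $\iota(1,0)$, a rank-one projection in $\Mat(\Q_p)$; equivalently, by the ordered pair of complementary lines $(\operatorname{im}\iota(1,0),\operatorname{im}\iota(0,1))$ in $\Q_p^2$. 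The group $\GL_2(\Q_p)$ acts transitively on such ordered pairs by change of basis, and its image in $\bG(\Q_p)$ gives the desired conjugation.

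This case-splitting is essentially the entire content; there is no substantive obstacle, merely the minor bookkeeping needed to dispense with the non-simple semisimple case. The statement is a classical consequence of Skolem--Noether, and I would present it accordingly in two short paragraphs.
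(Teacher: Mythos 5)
Your proof is correct, and it follows a genuinely different route from the paper. The paper sets up the scheme $\mathbf{V}$ of embeddings of $E$ into $B$, observes $\bB^\times$ acts transitively on $\mathbf{V}(\bar{\Q})$, and then descends rational transitivity via the vanishing of $\ker\bigl[H^1(\widetilde{\bT})\to H^1(\bB^\times)\bigr]$, which holds by Hilbert 90 for $\widetilde{\bT}\simeq\operatorname{Res}_{E/\Q}\mathbb{G}_m$; the local case is handled ``mutatis mutandis.'' You instead invoke Skolem--Noether directly when $E$ (or $E_p$) is a field, and handle the split \'etale case $\Q_p\times\Q_p$ by the concrete observation that such an embedding forces $B_p\simeq\Mat(\Q_p)$ and is determined by an ordered pair of complementary lines, on which $\GL_2(\Q_p)$ acts transitively. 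Both proofs are sound; yours is shorter and more elementary given that Skolem--Noether is taken as standard, while the paper's cohomological phrasing fits the surrounding discussion (which already conjugates tori and reasons about rational points of algebraic varieties) and makes the mechanism of the descent step explicit. One minor point worth stating explicitly in your write-up: conjugation by $b\in B^\times$ descends to $\bG(\Q)=\Q^\times\backslash B^\times$ because the center $\Q^\times$ acts trivially by conjugation --- you implicitly use this when passing from $b$ to its image in $\bG(\Q)$.
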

\begin{proof}
We verify the global claim for $E/\Q$. The proof of the local case follows mutatis mutandis; alternatively it can be deduced from the global case. Assume an embedding $E\hookrightarrow B$ exists, as otherwise the claim is trivially true.
Let $\mathbf{V}$ be the affine algebraic variety defined over $\Q$ representing the embeddings of $E$ into $B$, i.e.,
\[
\mathbf{V}(A)=\left\{\iota\colon E\otimes A \hookrightarrow B\otimes A \mid \iota \textrm{ is a ring embedding} \right\}
\]
for all $\Q$-algebras $A$. The group $\bB^\times$ acts on $\mathbf{V}$ and the action is scheme theoretically transitive, because over an algebraically closed field all split quadratic \'etale algebras are conjugate to the $2\times 2$ matrix algebra. In particular, $\bB^\times(\bar{\Q})$ acts transitively on $\mathbf{V}(\bar{\Q})$, the latter being the set of embeddings of the split \'etale algebra $\bar{\Q}^2$ into $\Mat(\bar{\Q})$. Fix an embedding $\iota_0\colon E\hookrightarrow B$ and let $\widetilde{\bT}<\bB^\times$ be the associated torus, it is the stabilizer of $\iota_0$ for the $\bB^\times$-action. All points in $\mathbf{V}(\Q)$ are in the $\bB^\times(\bar{\Q)}$-orbit of $\iota_0$, and we want to show they constitute a single $\bB^\times(\Q)$-orbit. This can be checked via Galois cohomology; that is, we need to show that $\ker\left[H^1(\widetilde{\bT},{\rm Gal}(\bar{\Q}/\Q))\to H^1(\bB^\times,{\rm Gal}(\bar{\Q}/\Q)) \right]$ is trivial. This kernel is trivial because $\widetilde{\bT}\simeq \operatorname{Res}_{E/\Q} \mathbb{G}_m$ and $H^1(\operatorname{Res}_{E/\Q} \mathbb{G}_m,{\rm Gal}(\bar{\Q}/\Q))$ is trivial by Hilbert's Satz 90.
\end{proof}

\begin{prop}\label{prop:number-of-packets}
For $D$ admissible, let $\omega$ be the number of primes $p\mid d_B$ such that $\left(\frac{-D}{p}\right)=-1$. Then there are exactly $2^\omega$ Heegner packets of discriminant $D$ in $Y_K$. For $p\mid d_B$ the group $K_p$ is an index two normal subgroup of $\bG(\Q_p)$ and the group 
\[
\mathcal{W}=\prod_{p\mid d_B} \bG(\Q_p)\slash K_p\simeq \left(\Z\slash 2 \Z\right)^{\omega(d_B)}
\]
acts on $Y_K$. In fact, $\mathcal{W}$ acts transitively on the $2^\omega$ packets of discriminant $D$. If one restricts to the product of $\bG(\Q_p)\slash K_p$ over primes ramified in $B$ and inert in $E$, then this action is simply transitive.
\end{prop}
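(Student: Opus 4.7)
The plan is to analyze each factor of the local decomposition \eqref{eq:Heegner-packets-double-quotient} of $H_E(K_f)$ and to count those combinations of local classes whose local discriminants $D_p$ multiply to the given global discriminant $D$. By \S\ref{sec:disc}, $D_p$ depends only on the class of $g_p$ in $\bT(\Q_p) \backslash \bG(\Q_p)/K_p$, so I would enumerate local double cosets of prescribed local discriminant and then take the product. The transitivity and simple transitivity claims for $\mathscr{W}$ will follow from the same analysis, since $\mathscr{W}$ acts place-by-place at primes $p \mid d_B$ and visibly preserves the local discriminants there.

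The main content sits at primes $p \mid d_B$, where $B_p$ is a division quaternion algebra with unique maximal order $R_p$. Since $R_p$ is invariant under conjugation, the local order $\scO_{\scD,p} = \iota(E_p) \cap R_p$ is automatically the maximal order of $E_p$, so $D_p = D_{0,p}$ --- consistent with the admissibility condition $\gcd(f, d_B) = 1$ of Lemma \ref{lemma:adm-disc}. To count double cosets I would normalize the valuation on $B_p^\times$ so that $v_{B_p}(p) = 2$; then $K_p$ is exactly the image of the elements of even valuation, giving $\bG(\Q_p)/K_p \simeq \Z/2\Z$ via $v_{B_p} \bmod 2$. When $p$ is inert in $E$, the extension $E_p/\Q_p$ is unramified, so $v_{B_p}|_{\iota(E_p^\times)} = 2 v_{E_p}$ is always even; hence $\bT(\Q_p) \subset K_p$, the double coset space $\bT(\Q_p) \backslash \bG(\Q_p)/K_p$ has two elements, and $\mathscr{W}_p \simeq \Z/2\Z$ permutes them simply transitively. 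When $p$ is ramified in $E$, a uniformizer of $E_p$ is also a uniformizer of $B_p$, so $\iota(E_p^\times)$ surjects onto $\bG(\Q_p)/K_p$; the double coset space is a singleton and $\mathscr{W}_p$ acts trivially. Admissibility precludes the split case at $p \mid d_B$.

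At primes $p \nmid d_B$, where $B_p \simeq \Mat(\Q_p)$ and $K_p = \PGL_2(\Z_p)$, I would invoke the classical bijection between $\bT(\Q_p) \backslash \bG(\Q_p)/K_p$ and the set of $\Z_p$-orders of $E_p$ given by $g_p K_p \mapsto \iota^{-1}(g_p \Mat(\Z_p) g_p^{-1})$. This is the content of Eichler's theory of optimal embeddings, or equivalently the description of torus orbits on the Bruhat--Tits tree (the axis or fixed set of $\bT(\Q_p)$, together with the distance to it, records the conductor of the associated $\Z_p$-order). The upshot is that at each $p \nmid d_B$ there is exactly one local double coset of prescribed local discriminant $D_p = p^{2 v_p(f)} D_{0,p}$. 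Multiplying across all places yields the total count $\prod_{p \mid d_B,\ p\text{ inert in }E} 2 = 2^\omega$, and the simply transitive action of $\prod_{p \mid d_B,\ p\text{ inert in }E} \mathscr{W}_p$ on the $2^\omega$ packets follows by multiplying the local actions. The main technical obstacle is verifying the local bijection at $p \nmid d_B$ uniformly across the split, inert, and ramified behavior of $E_p/\Q_p$; I expect to rely on standard but careful results in the theory of optimal local embeddings of quadratic orders into $\Mat(\Z_p)$.
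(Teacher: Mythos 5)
Your proposal is correct and follows essentially the same strategy as the paper's proof: reduce to counting local double cosets $\bT(\Q_p)\backslash\bG(\Q_p)/K_p$ with prescribed local discriminant $D_p$, with a dichotomy at primes $p\mid d_B$ (inert gives two, ramified gives one) and a single class at $p\nmid d_B$. Your use of the normalized valuation $v_{B_p}\bmod 2$ to identify $\bG(\Q_p)/K_p$ and compute the double coset space at $p\mid d_B$ is a slightly cleaner presentation than the paper's, and your Bruhat--Tits/optimal-embedding description at $p\nmid d_B$ is equivalent to the paper's appeal to the lattice classification in \cite[Corollary 4.4]{ELMV1} together with the fact that proper fractional $\scO_p$-ideals are principal for Gorenstein local orders.
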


Notice that in the case that $D$ is coprime to the discriminant of $B$, this statement is essentially contained in \cite[Lemma 2.5]{BD}, albeit in a different language.

\begin{remark}
The failure of uniqueness here arises because an almost maximal compact subgroup $K_f<\bG(\A_f)$ is not maximal at ramified primes. We can recover uniqueness of packets of discriminant $D$ if we replace $K_f$ by a maximal compact open subgroup of $\bG(\A_f)$, i.e.,\ replace $K_p$ by $\bG(\Q_p)$ for all primes $p\mid d_B$. The price for working with a maximal compact subgroup is not only the loss of some minor information from passing from $Y_K$ to the quotient $\mathcal{W}\backslash Y_K$, but also that the Picard group no longer acts freely on a packet. All the primes of $E$ above the primes dividing $d_B$ will act trivially.
\end{remark}

\begin{proof}
We first establish that a packet with discriminant $D$ does exists. Assume $D=D_0f^2$ is admissible, and let $\operatorname{disc}(E)=-D_0$. For $p\mid d_B$ the condition $(\frac{-D_0}{p})= (\frac{-D}{p})\neq 1$ implies that  $p$ does not split in $E$, and the Albert--Brauer--Hasse--Noether theorem implies that a ring embedding $\iota_0\colon E\hookrightarrow B$ exists. Denote by $E_p=E\otimes \Q_p$ for all primes $p$, and
let $\bT<\bG$ be the associated torus. Fix $g_\infty \in \bG(\R)$ such that $g_\infty^{-1}\bT(\R) g_\infty=K_\infty$.

Write $D=\prod_p D_p$. The discussion in \S \ref{sec:disc} implies that $\iota(E_p)\cap R_p=\scO_{E_p}$ for all $p\mid d_B$.  Note also that $|\operatorname{disc}(\iota(E_p)\cap R_p)|_p=1=|D_p|_p$ for almost all primes $p$. Hence the local discriminant of $[\bT(\A)g_\infty]_K$ coincides with $D_p$ for all primes $p\mid d_B$ and for almost all other primes.  To show a packet of discriminant $D$ exists it suffices to find for each exceptional prime $p$ an element $g_p\in \bG(\Q_p)$ such that $g_p^{-1}\iota(E_p)g_p\cap R_p$ has discriminant $D_p$. We can then set $g_p=e$ for all non exceptional primes, and the packet $[\bT(\A)g]_K$ with $g=(g_\infty,g_2,g_3,\ldots)\in \bG(\A)$ has discriminant $D$.

Fix an exceptional prime $p$. Lemma \ref{lem:all-field-embeddings-conjugate} implies that it is enough to find an arbitrary embedding  $\jmath\colon E_p\hookrightarrow \Mat(\Q_p)$ satisfying $|\operatorname{disc}(\jmath(E_p)\cap \Mat(\Z_p)|_p=|D_p|_p$. Let $\scO\subset E_p$ be an order of discriminant $D_p$; we claim an embedding $\jmath\colon E_p\hookrightarrow\Mat(\Q_p)$ exists with $\jmath(E_p)\cap R_p=\jmath(\scO)$. To construct such an embedding take a $\Z_p$-basis $\lambda_1,\lambda_2$ of $\scO$, and define $\jmath$ using $x\cdot \lambda_i=\sum_j \jmath(x)_{i,j}\lambda_j$, i.e.,\ $\jmath(x)$ is the matrix for the multiplication by $x$ operator in the basis $\lambda_1,\lambda_2$. Then $\jmath^{-1}(R_p)=\{x\in E_p \mid x \scO\subset \scO\}=\scO$. Thus we have proved that every admissible $D$ is the discriminant of a Heegner packet in $Y_K$ as claimed.

We continue to count the number of distinct packets with a fixed admissible discriminant $D$. Because the discriminant fixes the field $E$, Lemma \ref{lem:all-field-embeddings-conjugate} implies that all homogeneous Heegner sets of discriminant $D$ can be written as $[\bT(\A)g]$ for some $g\in \bG(\A)$ and $\bT$  fixed. The archimedean double coset $\bT(\R) g_\infty K_\infty$ is determined uniquely by the requirement $g_\infty^{-1} \bT(\R) g_\infty=K_\infty$. Because the discriminant $D$ is a product of local invariants $D_p$, the discussion in \S\ref{sec:variation} reduces the claim to the local question how many classes of $g_p$ in $\bT(\Q_p)\backslash \bG(\Q_p) \slash K_p$ have discriminant $p$, for all primes $p$. That is, we need to find all $K_p$-conjugacy classes of embeddings $\iota_p\colon E_p\hookrightarrow B_p$) such that $\iota_p(E_p)\cap R_p$ has discriminant $D_p$. 

If $B_p$ is split, then we can consider equivalently embeddings of $E_p$ into $\Mat(\Q_p)$. The proof of \cite[Corollary 4.4]{ELMV1}, adapted to $\Q_p$, implies that $\mathbf{GL}_2(\Z_p)$-conjugacy classes of embedding $\iota_p\colon E_p\hookrightarrow B_p$ are in bijection with $E_p^\times$-homothety classes of $\Z_p$ lattices $L\subset E_p$. This bijection satisfies $\iota_p^{-1}(R_p)=\scO(L)=\left\{x \in E_p \mid xL\subset L\right\}$, and $D_p^{-1}=|\operatorname{disc} \scO(L)|_p$. Because $E_p$ is a quadratic $\Q_p$ \'etale-algebra all its orders are monogenic and of the form $\mathbb{Z}_p+f\scO_{E_p}$ with $f\in\mathbb{Z}_p$ the conductor. The discriminant is $|f|_p^{-1}\operatorname{disc}(\scO_{E_p})$ and it uniquely determines the local order. Hence, in order to show that all embedding with a fixed discriminant $D_p$ are in the same $\mathbf{GL}_2(\Z_p)$-conjugacy class, it suffices to prove that all $\Z_p$-lattices $L\subset E_p$ satisfying $\scO(L)=\scO_p$, for some fixed order $\scO_p\subset E_p$, are in the same $E_p^\times$-homothety class. This is true, because $\scO_P$ is monogenic, and every proper fractional $\scO_p$-ideal is principal, cf.\  \cite[Proposition 2.1]{ELMV3}.

If $B_p$ is a division algebra then it has a unique maximal order --- the subring of integral elements. Thus $K_p$ is a normal subgroup of $\bG(\Q_p)$, and $\bG(\Q_p)\slash K_p\simeq \Z\slash 2\Z$ because the ramification degree of $B_p$ is $2$. It is the subgroup of elements that have a representative with even valuation. If $E_p\slash \Q_p$ is ramified, then $\iota_p(E_p)$ necessarily contains a uniformizer for $B_p$ and $\bT(\Q_p)\backslash \bG(\Q_p) \slash K_p$ is trivial. Otherwise if $E_p$ is inert then $\bT(\Q_p)\subset K_p$ and $\bT(\Q_p)\backslash \bG(\Q_p) \slash K_p=\bG(\Q_p) \slash K_p$ contains exactly two elements. These correspond to two embedding $\bT(\Q_p)\hookrightarrow B_p$ that are not $K_p$ conjugate, yet they have the same discriminant (all embeddings have the same discriminant at a ramified place). Nevertheless the right action of $\bG(\Q_p) \slash K_p$ on $Y_K$ does switch the Heegner packets corresponding to these embeddings.
\end{proof}

\subsection{Alignment}\label{subsec:alignment}

We now pass to a more general level structure $U$, and seek to understand the structure of the level $U$ Heegner packets lying over a given level $1$ Heegner packet $[\bT(\A)g]_K$ in the decomposition \eqref{eq:Heegner-level-decomposition}. In particular, we would like to ensure conditions for which we can \textit{lift} a level 1 Heegner packet to a level $U$ packet of the same size. We address this question in this and the next subsection.

We begin by observing that the discriminant of the Heegner packet $[\bT(\A)g]_U$, as defined in \S\ref{sec:disc}, does not necessarily coincide with other possible definitions of a discriminant if $U$ is not an almost maximal compact subgroup. For example, if $U_\scD=\bT(\A)\cap gUg^{-1}$ is a proper subgroup of $K_\scD=\bT(\A) \cap g K g^{-1}$, then the Heegner packet $[\bT(\A) g]_U$, which is a principal homogeneous space for $\bT(\Q)\backslash \bT(\A)\slash U_\scD$, is a bigger set than $[\bT(\A)g]_K$, which is itself a principal homogeneous space for $\bT(\Q)\backslash \bT(\A)\slash K_\scD$. It is even possible for $U_\scD$ to be a projectivized group of units of some proper suborder of $\scO_\scD$.

\begin{remark}
Note that \cite{ELMV2} defines a notion of \textit{volume} of a homogeneous toral set, for which they prove an analytic class number formula in terms of the discriminant. This volume is comparable to the size of the Heegner packet $[\bT(\A)g]_K$, but the size of the Heegner packet $[\bT(\A)g]_U$ for $U\lneq K$ is typically larger.
\end{remark}

To remedy this problem, we make the following definition.

\begin{defn}\label{defn:aligned}
We say that $[\bT(\A)g]_U$ is an \emph{aligned} Heegner packet if
\[
\bT(\A)\cap gUg^{-1}=\bT(\A)\cap gKg^{-1}.
\]
Similarly, $[\bT(\A)g]_U$ is \textit{aligned at a prime $p$} if $\bT(\Q_p) \cap g_p U_p g_p^{-1}=\bT(\Q_p) \cap g_p K_p g_p^{-1} $. 
\end{defn}

We make a few immediate observations:
\begin{enumerate}
\item a packet $[\bT(\A)g]_U$ is aligned if, and only if, it is aligned at all primes $p$;
\item in view of the motivating discussion, as well as \eqref{eq:iso-Pic}, a packet $[\bT(\A)g]_U$ is aligned precisely when it is a principal homogeneous space for $
{\rm Pic}(\scO_\scD)$;
\item for an aligned Heegner packet $[\bT(\A)g]_U$, the natural projection $[\bT(\A)g]_U\rightarrow [\bT(\A)g]_K=H_\scD$ is a bijection.
\end{enumerate}
The last property shows that aligned packets can be viewed as trivial lifts of level 1 packets.

With $U<K$, $D=D_0f^2$ admissible as in Lemma \ref{lemma:adm-disc}, and $E$ satisfying ${\rm disc}(E)=-D_0$, it then makes sense to define $H(U,D)$ as the subset of $H_E(U)$ consisting of \textit{aligned} Heegner packets of level $U$. This will then capture the classical definition of the set of Heegner points of level $N$ and discriminant $D$ when $B=\Mat(\Q)$ is the split algebra and $U_f=K_0(N)$.

\subsection{Lifting level 1 Heegner packets}

We give ourselves now a $g\in\bG(\A)$ whose class in $\bT(\A)\backslash\bG(\A)/K$ defines a level 1 Heegner packet. We will be interested in lifting $[\bT(\A)g]_K$ to a level $U$ Heegner packet, where
\begin{equation}\label{eq:U-cap-conj}
U=K\cap g^{-1}s gK g^{-1}s^{-1} g
\end{equation}
for some $s\in\bT(\A_f)$. To do so, we consider, as in \S\ref{sec:variation}, the set $H_E(U)$ of Heegner packets associated with $E$ and of level structure $U$; we wish to determine which packets in the decomposition \eqref{eq:Heegner-level-decomposition} are aligned. This requires that we compute the double quotient
\[
K_{\scD,f} \backslash K_f \slash U_f=\prod_p K_{\scD,p} \backslash K_p \slash U_p\qquad (K_{\scD,p}=\bT(\Q_p) \cap g_p K_p g_p^{-1}),
\]
and in view of the above factorization, it suffices to do so locally.

Note that the trivial class in $K_{\scD,f} \backslash K_f \slash U_f$ already gives an aligned level $U$ Heegner packet. Indeed, since $\bT(\A)$ is abelian, we have $\bT(\A)\cap gKg^{-1}=\bT(\A)\cap sgKg^{-1}s^{-1}$ for all $s\in\bT(\A)$. This implies that $[\bT(\A)g]_U$ is aligned when $U$ is of the form \eqref{eq:U-cap-conj}. It remains to examine the packets associated to the non-trivial classes.

In what follows, we shall assume, for simplicity, that the global order $\scO_\scD$ from \S\ref{sec:disc} is the maximal order in $E$. We shall find that there are both aligned and non-aligned packets in $H_E(U)$ lying over $[\bT(\A)g]_K$. The exact statement is as follows.

\begin{lemma}\label{lemma:alignment}
Let $\scD=(\iota, g)$ be a homogeneous Heegner datum for which $\scO_\scD=\scO_E$. Let $U=\prod_p U_p$ be as in \eqref{eq:U-cap-conj} for some $s=\prod_p s_p\in\bT(\A_f)$. Let $p$ be a prime.
\begin{enumerate}
\item If $p$ ramifies in $B$ or is inert in $E$, then $K_{\scD,p} \backslash K_p \slash U_p$ is a singleton, and the packet $[\bT(\A)g]_U$ is aligned at $p$;

\item If $p$ splits in $B$ and $E$, and $s_p\not\in K_{\scD}$, then $[K_p\colon U_p]=(p+1)p^{r-1}$ for some $r>0$. There are $2r-1$ lifts of the Heegner packet $[\bT(\A)g]_K$ to a level $U$ Heegner packet. Of these, exactly two correspond to aligned packets, namely, $[\bT(\A) g]_U$ and $[\bT(\A) g w_p]_U$, where $w_p \in K_p\smallsetminus g_p^{-1}\bT(\Q_p)g_p$ normalizes $g_p^{-1}\bT(\Q_p)g_p$.
\end{enumerate}
\end{lemma}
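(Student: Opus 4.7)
The plan is to prove both claims prime by prime, since the fiber of $H_E(U)\to H_E(K)$ above $[\bT(\A)g]_K$ factors as $K_{\scD,f}\backslash K_f/U_f = \prod_p K_{\scD,p}\backslash K_p/U_p$ and the alignment condition of Definition \ref{defn:aligned} is a product of independent conditions at each prime. At each $p$ I would start from $U_p = K_p \cap (g_p^{-1}s_pg_p)K_p(g_p^{-1}s_pg_p)^{-1}$. Case (1) reduces to showing $U_p = K_p$, whence the double coset collapses and alignment is automatic. If $p\in {\rm Ram}_B$, then $K_p$ is the index-two normal subgroup of $\bG(\Q_p)$ consisting of elements of even valuation (the unique projectivization of the maximal order), so conjugation by any $\bG(\Q_p)$-element preserves it and $U_p=K_p$. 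If $p$ is inert in $E$ and split in $B$, then $E_p/\Q_p$ is the unramified quadratic extension and the restriction $v_{E_p}|_{\Q_p^\times}=v_p$ forces $\bT(\Q_p)=\scO_{E_p}^\times/\Z_p^\times$ to be compact; combined with the hypothesis $\scO_\scD=\scO_E$, this pins down $K_{\scD,p}=\bT(\Q_p)$, so $s_p\in K_{\scD,p}$, $g_p^{-1}s_pg_p\in K_p$, and once more $U_p=K_p$.

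For case (2), I would first fix coordinates using Lemma \ref{lem:all-field-embeddings-conjugate}: conjugate so that $g_p=e$, $\bG(\Q_p)=\PGL_2(\Q_p)$, $K_p=\PGL_2(\Z_p)$, and $\bT$ is the standard diagonal torus, with $K_{\scD,p}$ its unit subtorus. Up to a double-sided $K_{\scD,p}$-action, one may then take $s_p=[\operatorname{diag}(p^r,1)]$ with $r=|v_p(s_p)|>0$. A direct matrix computation of $K_p\cap s_pK_ps_p^{-1}$ yields $U_p = \{[\left(\begin{smallmatrix}a&b\\c&d\end{smallmatrix}\right)]\in K_p: b\in p^r\Z_p\}$, a congruence subgroup of index $(p+1)p^{r-1}$ in $K_p$, establishing the first claim of (2).

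Next I would count lifts and isolate the aligned ones via the $K_p$-equivariant bijection $K_p/U_p \xrightarrow{\sim} \mathbb{P}^1(\Z/p^r\Z)$ sending $kU_p\mapsto k\cdot [0:1]$; the lifts correspond to $K_{\scD,p}$-orbits on $\mathbb{P}^1(\Z/p^r\Z)$ under the diagonal torus action, and classifying these orbits by the $p$-adic valuations of homogeneous coordinates produces the stated count. For alignment, an orbit represented by $k_p=\left(\begin{smallmatrix}a&b\\c&d\end{smallmatrix}\right)$ gives an aligned lift iff $k_p^{-1}K_{\scD,p}k_p\subset U_p$; a short calculation shows the $(1,2)$-entry of $k_p^{-1}\operatorname{diag}(\alpha,\beta)k_p$ equals $bd(\alpha-\beta)/(ad-bc)$, so the condition becomes $bd\equiv 0\pmod{p^r}$ together with $ad-bc\in\Z_p^\times$. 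This has exactly two solution families modulo left-$K_{\scD,p}$ and right-$U_p$: $b\equiv 0\pmod{p^r}$ collapses $k_p$ into $U_p$ itself, and $d\equiv 0\pmod{p^r}$ collapses $k_p$ into $w_pU_p$ with $w_p=\left(\begin{smallmatrix}0&1\\1&0\end{smallmatrix}\right)$. Since $w_p$ normalizes $\bT(\Q_p)$ and lies in $K_p\setminus \bT(\Q_p)$, this exhibits the two aligned packets $[\bT(\A)g]_U$ and $[\bT(\A)gw_p]_U$ as asserted. The main obstacle I anticipate is the careful $p^r$-adic bookkeeping in this last step: one must check that the alignment constraint genuinely excludes every intermediate orbit on $\mathbb{P}^1(\Z/p^r\Z)$, not only the obviously non-fixed ones, and that the two surviving double cosets $K_{\scD,p}U_p$ and $K_{\scD,p}w_pU_p$ are distinct.
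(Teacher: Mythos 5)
Your proof follows essentially the same path as the paper's: reduce to a prime‑by‑prime analysis, show $U_p = K_p$ in the ramified/inert cases (hence a trivial double coset), and in the split case identify $K_p/U_p$ with $\mathbb{P}^1(\Z/p^r\Z)$, count orbits of the diagonal torus, and single out the two aligned orbits. Your coordinates put the level condition on the $(1,2)$‑entry (so $U_p = K^0(p^r)$) rather than the $(2,1)$‑entry; this is a harmless transposition. Notably, you actually carry out the computation that decides alignment — the identity
\[
\left(k_p^{-1}\operatorname{diag}(\alpha,\beta)k_p\right)_{12}=\frac{bd(\alpha-\beta)}{ad-bc},
\]
forcing $bd\equiv 0 \pmod{p^r}$ — while the paper merely asserts which orbits are aligned without proof; in this respect your argument is more complete. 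The worries you raise at the end are easily dispatched from your own formula: on the orbit $[1:p^j]$ (resp.\ $[p^j:1]$) a representative $k_p$ has $bd\equiv p^j\pmod{p^r}$, so alignment fails for all $0\le j<r$ including $j=0$ (the "generic" orbit $[1:1]$), and the two remaining orbits $[0:1]$, $[1:0]$ are distinct as soon as $r>0$.

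One point deserves a second look. You write that "classifying these orbits by the $p$-adic valuations of homogeneous coordinates produces the stated count" of $2r-1$. If you run the classification you describe, you in fact obtain $2r+1$ orbits: $[1:1]$ together with $[p^j:1]$ and $[1:p^j]$ for $0<j\le r$, i.e.\ $1+r+r$ classes (for $r=1$ this is the $3 = |\mathbb{P}^1(\F_p)|/|\F_p^\times|\cdot$‑ish count; explicitly $[0:1],[1:0],[1:1]$). The figure $2r-1$ in the statement of the lemma appears to be a typo inconsistent even with the paper's own displayed decomposition. It does not affect the main conclusion (exactly two aligned lifts), but since your proof relies on matching "the stated count," you should perform and report the orbit count yourself rather than defer to the statement.
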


\begin{proof}
If $p$ ramifies in $B$, then $K_p$ is a normal subgroup so that $U_p=K_p$. If $p$ is inert then $g_p^{-1}\bT(\Q_p)g_p\subset K_p$ and $g_p^{-1}s_pg_p\in K_p$, then again $U_p=K_p$ by \eqref{eq:U-cap-conj}. In both cases, $K_{\scD,p}\backslash K_p \slash U_p$ is a singleton, proving the claim.

Now suppose that $p$ splits in both $B$ and $E$. Then we can fix  an isomorphism $\tilde{\bG}(\Q_p)\xrightarrow{\sim} \mathbf{GL}_2(\Q_p)$ sending $g_p^{-1}R_pg_p$ to $\Mat(\Z_p)$ and sending $g_p^{-1}\bT(\Q_p)g_p$ to the diagonal subgroup. Then\footnote{The last bijection can be established by letting $\mathbf{GL}_2(\Z_p)$ act on $\mathbb{P}^1\left(\Z/ p^r \Z\right)$ via the reduction modulo $p^r$ map. Then $K_0(p^r)$ is the stabilizer of $[1,0]$.} 
$K_p \slash U_p \simeq \mathbf{GL}_2(\Z_p) \slash K_0(p^r)\simeq  \mathbb{P}^1\left(\Z/ p^r \Z\right)$, where $K_0(p^r)$ is the subgroup of matrices in $\mathbf{GL}_2(\Z_p)$ congruent to upper diagonal matrices mod $p^r$ for a fixed $r$ depending on $s_p$. Moreover, $r>0$ because $s_p\not\in K_{\scD}$. Thus $K_{\scD,p}\backslash K_p \slash U_p$ can be identified with
\[
\operatorname{diag}\left(\F_p^\times,\F_p^\times \right)\backslash \mathbb{P}^1\left(\Z/ p^r \Z\right)\simeq \{[1,1]\}\sqcup \left\{[p^i,1] \colon 0<i\leq r \right\}\sqcup \left\{[1,p^i] \colon 0<i\leq r \right\}.
\]
Of these $2r-1$ classes, exactly two are aligned, namely, those corresponding to the vectors $[0,1]=[p^r,1]\in \mathbb{P}^1(\Z/p^r \Z)$ and $[1,0]=[1,p^r]\in \mathbb{P}^1(\Z/p^r \Z)$. These are represented by the identity element $e\in K_p$ and by an element $w_p \in K_p\smallsetminus g_p^{-1}\bT(\Q_p)g_p$ that normalizes $g_p^{-1}\bT(\Q_p)g_p$.
\end{proof}

We deduce that the aligned Heegner packets correspond to classical Heegner points of discriminant $-D$. The other classes are not aligned; although they project onto $[\bT(\A)g]_K$ they carry an action by a bigger ray class group. The reader is invited to compare the second part of Lemma \ref{lemma:alignment} with \cite[Proposition 2.5]{EMV}.

\subsection{Change of orientation}\label{sec:orientation}

Fix a homogeneous Heegner set $[\bT(\A)g]$. For each prime $p$ there exists an element $w_p\in g_p^{-1}\left(N_\bG(\bT)(\Q_p)- \bT(\Q_p)\right)g_p\cap K_p$.  This can be seen, for example, from the explicit adapted coordinate systems in \cite[\S5]{Kh}. Hence, for every finite place $p$ we have the operation of change of orientation at $p$ given by $[\bT(\A)g]\mapsto [\bT(\A)gw_p]$. Because the Weyl group of $\bT(\Q_p)$ is isomorphic to $\Z\slash 2 \Z$, the change of orientation at $p$ is an involution.
Changes of orientation at different places are commutative transformations. Because $w_p$ normalizes the torus $g^{-1}\bT(\A)g$, two homogeneous Heegner sets that differ by a change of orientation are invariant under the \emph{same} torus subgroup of $\bG(\A)$.

Let now $U<K$ be of finite index and consider the change of orientation of a Heegner packet $[\bT(\A)g]_U$. If $U=K$, then the change of orientation is a trivial operation on Heegner packets, because $w_p\in K_p$ for all primes $p$. Moreover, the change of orientation at $p$ is a non-trivial transformation of $[\bT(\A)g]_U$ only if the class of $w_p$ in $K_{\scD,p}\backslash K_p \slash U_p$ is not the trivial identity class.

We conclude from the above discussion that there are finitely many different Heegner packets one can obtain from $[\bT(\A)g]_U$ by change of orientation. We may organize them as follows. Let $N_\scD$ denote the positive square-free integer given by the product of all primes $p$ which are split in $B$ and $E$ and for which the class of the change of orientation element $w_p$ is non-trivial in the double quotient $K_{\scD,p}\backslash K_p \slash U_p$. For any divisor $M\mid N_\scD$ let $w_M$ denote the element $\prod_{p\mid M}w_p$ in $\prod_{p\mid M}K_p$.

With the notation and terminology we have set up over the course of this section, we can now state the following conclusion.

\begin{cor}
Fix $g\in\bG(\A)$. Let $D=D_0f^2$ be admissible, as in Lemma \ref{lemma:adm-disc}, and let the imaginary quadratic field $E$ satisfy ${\rm disc}(E)=-D_0$. Let $U<K$ be of the form \eqref{eq:U-cap-conj}, for some $s\in\bT(\A_f)$.  Recall the set $H(U,D)$ from \S\ref{subsec:alignment} consisting of aligned Heegner packets of level $U$ and discriminant $D$. Then
\[
H(U,D)=\bigsqcup_{M\mid N_\scD} [\bT(\A)gw_M]_U.
\]
\end{cor}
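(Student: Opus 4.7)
The plan is to combine the level-1/level-$U$ stratification from \eqref{eq:Heegner-level-decomposition} with the local alignment analysis of Lemma \ref{lemma:alignment}. Since the discriminant is an invariant of the level-1 projection (as noted in \S\ref{sec:variation}), and since, for $U$ of the form \eqref{eq:U-cap-conj}, the trivial class $[\bT(\A)g]_U$ is automatically aligned (because $\bT(\A)\cap g U g^{-1}=\bT(\A)\cap s g K g^{-1} s^{-1}=\bT(\A)\cap gKg^{-1}$ using the commutativity of $\bT$), the aligned lifts of the level-1 packet $[\bT(\A)g]_K$ that appear in $H(U,D)$ are in bijection with the aligned classes in the finite double coset space $K_{\scD,f}\backslash K_f/U_f$.

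The next step is to reduce to a local computation. Since $K_f=\prod_p K_p$, $K_{\scD,f}=\prod_p K_{\scD,p}$ and $U_f=\prod_p U_p$ factor as restricted products, and since alignment can be checked one prime at a time by Definition \ref{defn:aligned}, the set of aligned classes in $K_{\scD,f}\backslash K_f/U_f$ is the product over finite primes $p$ of the set of aligned classes in $K_{\scD,p}\backslash K_p/U_p$.

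Now I would invoke Lemma \ref{lemma:alignment} at each prime. At primes $p$ that ramify in $B$ or are inert in $E$, or at split primes where $s_p\in K_{\scD,p}$, one has $U_p=K_p$ and the local double coset is a singleton: the unique class is aligned, contributing nothing to the parametrization. At primes where $p$ splits in both $B$ and $E$ and $s_p\notin K_{\scD,p}$, the second part of Lemma \ref{lemma:alignment} identifies exactly two aligned classes, represented by $e$ and by an orientation-swap element $w_p$. By the definition of $N_\scD$ in \S\ref{sec:orientation}, the latter set of primes is precisely $\{p:p\mid N_\scD\}$. Taking the product of local choices, the aligned classes are parametrized by subsets $S\subseteq\{p:p\mid N_\scD\}$; equivalently, by divisors $M=\prod_{p\in S}p$ of $N_\scD$, with the class $M$ represented by $w_M=\prod_{p\mid M}w_p$.

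Finally, the corresponding aligned level $U$ Heegner packets are $[\bT(\A)gw_M]_U$ for $M\mid N_\scD$. They are distinct elements of $H_E(U)$ because they correspond to distinct classes in $K_{\scD,f}\backslash K_f/U_f$; this gives the disjoint union $H(U,D)=\bigsqcup_{M\mid N_\scD}[\bT(\A)gw_M]_U$. The main subtlety — and the step that deserves the most care — is matching the defining condition of $N_\scD$ in \S\ref{sec:orientation} (non-triviality of the class of $w_p$ in $K_{\scD,p}\backslash K_p/U_p$) with the condition in case (2) of Lemma \ref{lemma:alignment} ($p$ split in $B$ and $E$ with $s_p\notin K_{\scD}$); this amounts to verifying, via the formula \eqref{eq:U-cap-conj}, that $U_p=K_p$ precisely at the primes not dividing $N_\scD$, which in turn forces $w_p$ to lie in $U_p$ at those primes. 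Once this bookkeeping is in place, the conclusion follows.
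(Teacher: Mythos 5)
Your approach — reducing via \eqref{eq:Heegner-level-decomposition} to a prime-by-prime analysis of alignment in $K_{\scD,p}\backslash K_p/U_p$ and then invoking Lemma \ref{lemma:alignment} — is the right one, and most of the bookkeeping is correct. However, your case dichotomy misses a genuine case, and your closing ``subtlety'' claim is false. You split primes into (a) ramified in $B$, inert in $E$, or split with $s_p\in K_{\scD,p}$, versus (b) split in both $B$ and $E$ with $s_p\notin K_{\scD,p}$. This omits primes $p$ that are \emph{ramified in $E$} (and split in $B$) with $s_p\notin K_{\scD,p}$. Such primes do occur: the primitive integral ideal $\sk$ representing $s$ may be divisible by ramified primes (see \S\ref{sec:level-N-Hecke-cor}), so $s_p$ may represent a uniformizer of $\bT(\Q_p)\simeq E_p^\times/\Q_p^\times$. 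At such $p$, your claim that ``$U_p=K_p$ precisely at the primes not dividing $N_\scD$'' fails: taking $\delta$ a uniformizer of $\scO_{E_p}$ with minimal polynomial $x^2-pmx-pu$ (so $\iota_p(\delta)=\left(\begin{smallmatrix}0&1\\pu&pm\end{smallmatrix}\right)$), a direct computation shows $U_p=K_p\cap s_pK_ps_p^{-1}$ is $K_p$-conjugate to $\mathcal{K}_{0,p}(p)$, yet $p\nmid N_\scD$ because the definition of $N_\scD$ in \S\ref{sec:orientation} requires $p$ to split in $E$. Moreover $K_{\scD,p}\backslash K_p/U_p$ has two elements, not one: the image of $K_{\scD,p}$ in $\PGL_2(\F_p)$ is the upper-unipotent subgroup, which has two orbits on $\mathbb{P}^1(\F_p)\simeq K_p/U_p$.

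The corollary's conclusion is still correct, but you must prove it in this case: only the trivial class is aligned. Indeed if $k\in K_p$ represents the non-trivial class, then $\bT(\Q_p)\cap kU_pk^{-1}$ is the image of the unit group of the conductor-$p$ suborder $\Z_p+p\scO_{E_p}$, which is a proper subgroup of $K_{\scD,p}=\scO_{E_p}^\times/\Z_p^\times$, so the packet $[\bT(\A)gk]_U$ is not aligned at $p$. (One may also check that every choice of $w_p$ lies in $U_p$ at such a prime, consistent with $p\nmid N_\scD$.) Once this case is added, the parametrization by $M\mid N_\scD$ goes through as you describe. A smaller mismatch, worth flagging: you invoke Lemma \ref{lemma:alignment}, which is stated under the hypothesis $\scO_\scD=\scO_E$ (fundamental discriminant), while the corollary is stated for admissible $D=D_0f^2$; your argument as written does not address primes $p\mid f$ in the non-fundamental case.
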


The above result nearly captures (and extends) the classical statement \eqref{eq:HE-decomp}, except for the fact that the Heegner points there were defined on the modular correspondence $Y^\Delta_0(N)$ rather than $Y_0(N)$. This aspect will be explored in the next section.

\section{Homogeneous Hecke sets and Hecke correspondences}\label{sec:HHS}
Denote by $\Delta\bG$ the image of the diagonal embedding $\bG\hookrightarrow\bG\times\bG$. Fix now $h_f\in\bG(\A_f)\subset\bG(\A)$ and consider the set $[\Delta\bG(\A)(e,h_f)]\subset[(\bG\times \bG) (\A)]$. Following \cite{Kh} we call it a \emph{homogeneous Hecke set}. This set is a homogeneous space for $(e,h_f)^{-1}\Delta\bG(\A)(e,h_f)$ and carries a unique Borel probability measure invariant under this action.

\subsection{Level structures}\label{sec:Hecke-corresp--level}
Let $U<K$ be a finite index subgroup. Define
\begin{equation}\label{def:Eichler-congruence-sbgps}
U(h_f)=U\cap h_f U h_f^{-1} < \bG(\A)\quad\textrm{ and }\quad U_f(h_f)=U_f\cap h_f U_f h_f^{-1} < \bG(\A_f).
\end{equation}
Note that $U(h_f)=K_\infty U_f(h_f)$.   If $U=K$ is almost  maximal then $U$ is the projectivized completion of the group $R^\times$ of units of $R$. We argue that in this case $U(h_f)$ is the projectivized completion of the group of units of an Eichler order. In particular, if $\bG$ is split and $U=K$, then $U(h_f)$ is conjugate to $K_0(q)$ --- the completion of $\Gamma_0(q)$ --- for some $q\in\mathbb{N}$.
Set $R^{h_f}=h_f \widehat{R} h_f^{-1}\cap B$; then $R^{h_f}$ is also a maximal order, because it is everywhere locally maximal. Moreover, $R^{h_f}$ is a maximal order in the same genus as $R$. Let $R^\sharp=R\cap R^{h_f}$. It is an Eichler order, i.e.,\ the intersection of two maximal orders. Then $U(h_f)$ is the projectivized completion of ${R^\sharp}^\times$. 

Consider the commutative diagram
\medskip
\begin{center}
\begin{tikzcd}[row sep=0.8cm, column sep=1.5cm]
&{[\bG(\A)]}            \arrow[d] \arrow[r,"\beta"] &{[\Delta\bG(\A)(e,h_f)]} \arrow[d]\\
&{[\bG(\A)]}_{U_f(h_f)} \arrow[d] \arrow[r,"\beta_{U_f}"] &{[\Delta\bG(\A)(e,h_f)]_{U_f\times U_f}} \arrow[d]\\
&{[\bG(\A)]}_{U(h_f)}             \arrow[r,"\beta_U"] &{[\Delta\bG(\A)(e,h_f)]}_{U\times U}
\end{tikzcd}
\end{center}
\medskip
The vertical arrows are the surjective quotient maps and the horizontal arrows $\beta, \beta_{U_f}, \beta_U$ are induced by the map $\Delta\bG(\A)\hookrightarrow(\bG\times \bG)(\A)$, defined by $g\mapsto (g,g h_f)$.

\begin{lemma}\label{lem:birational}
The following properties of the above diagram hold:
\begin{enumerate}
\item The maps $\beta,\beta_{U_f}$ are homeomorphisms;
\item\label{non-inj} The map $\beta_U$ is surjective but not necessarily injective. Specifically, $\beta_U([g])=\beta_U([g'])$ if, and only if, $[g]$ and $[g']$  belong to Heegner packets $[\bT(\A)g]_{U(h_f)}$ and $[\bT(\A)g']_{U(h_f)}$ respectively, such that $g'=gu$, with $u\in U\cap h_f U h_f^{-1} \cdot g^{-1}\bT(\Q)g $.
\end{enumerate}
\end{lemma}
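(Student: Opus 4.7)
The backbone of the argument is the injection $\bG(\Q) \hookrightarrow \bG(\R)$, which holds because $B \cap \R = \Q$ inside $B \otimes \R$; this will let us rigidify rational elements whose archimedean component is constrained to lie in a small compact subgroup. For Part (1), both $\beta$ and $\beta_{U_f}$ are continuous by construction and surjective by reading off representatives, with well-definedness at level $U_f$ hinging on the identity $U_f(h_f) = U_f \cap h_f U_f h_f^{-1}$. Injectivity of $\beta$ is immediate: an equality $(g_1, g_1 h_f) = (\gamma_1 g_2, \gamma_2 g_2 h_f)$ forces $\gamma_1 = \gamma_2$. For the injectivity of $\beta_{U_f}$, I would combine the two defining relations $g_2 = \gamma_1 g_1 u_1$ and $g_2 h_f = \gamma_2 g_1 h_f u_2$ to obtain
\[
g_1^{-1}(\gamma_2^{-1}\gamma_1) g_1 = h_f u_2 h_f^{-1} u_1^{-1} \in \bG(\A_f),
\]
so that the archimedean component of $\gamma_2^{-1}\gamma_1 \in \bG(\Q)$ is trivial. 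The injection $\bG(\Q) \hookrightarrow \bG(\R)$ then forces $\gamma_1 = \gamma_2$, whence $u_1 = h_f u_2 h_f^{-1} \in U_f \cap h_f U_f h_f^{-1} = U_f(h_f)$, giving $[g_1]_{U_f(h_f)} = [g_2]_{U_f(h_f)}$. Both maps are homeomorphisms because their inverses, induced by the continuous first-coordinate projection, are continuous.

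For Part (2), surjectivity is identical, so the substance lies in characterizing non-injectivity. Repeating the above computation with $u_i \in U = U_f K_\infty$, the right-hand side now has archimedean component $u_{2,\infty} u_{1,\infty}^{-1} \in K_\infty$, so $\delta := \gamma_2^{-1}\gamma_1$ satisfies only $\delta \in g_{1,\infty} K_\infty g_{1,\infty}^{-1}$. When $\delta = e$ the argument of Part (1) yields $u_1 \in U(h_f)$ and there is no failure of injectivity at this point. When $\delta \ne e$, one observes that $\delta$ is a non-trivial rational element lying in a compact real torus; for $\bG$ of type $A_1$ the centralizer $\bT := Z_\bG(\delta)$ is a maximal $\Q$-torus, forced to be $\R$-anisotropic by the ellipticity of $\delta$, with $\bT(\R) = g_{1,\infty} K_\infty g_{1,\infty}^{-1}$ since that is the unique maximal compact real torus containing $\delta$. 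This is precisely the archimedean Heegner condition for $(\iota, g_1)$, where $\iota$ is the embedding of the splitting field of $\bT$; after the innocuous replacement $g_2 \leftarrow \gamma_1^{-1} g_2 = g_1 u_1$, the commutativity of $K_\infty$ shows that the same $\bT$ also gives a Heegner datum through $g_2$. Rearranging the defining identity yields
\[
u_1 = h_f u_2 h_f^{-1} \cdot g_1^{-1}\delta^{-1} g_1 \in h_f U h_f^{-1} \cdot g_1^{-1}\bT(\Q)g_1,
\]
which is the asserted characterization.

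The converse direction would be obtained by reversing the construction: starting from $g' = gu$ with $u \in U$ admitting a decomposition $u = h_f u_2 h_f^{-1} \cdot g^{-1}\tau g$, with $u_2 \in U$ and $\tau \in \bT(\Q)$ in a Heegner torus through $g$, one produces $\gamma_1, \gamma_2 \in \bG(\Q)$ with $\gamma_2^{-1}\gamma_1 = \tau$ and auxiliary $v_1, v_2 \in U$ witnessing the target equivalence $(g, gh_f) \sim (gu, guh_f)$ in $[(\bG\times\bG)(\A)]_{U\times U}$. The main obstacle lies in this converse step: although the Heegner condition makes $\tau$ commute at infinity with the archimedean contribution of $h_f u_2 h_f^{-1}$ (because $\bT(\R)$ is abelian, which disposes of the archimedean bookkeeping transparently), the corresponding finite-adelic identity forces the auxiliary element $v_2$ to be an expression of the form $u_2^{-1} h_f^{-1} u h_f$ up to conjugation by $g^{-1}\tau g$, and checking $v_2 \in U$ is the delicate part that exploits the full strength of the ambient hypothesis $u \in U$ together with the specific factorization of $u$ along $h_f U h_f^{-1}$ and $g^{-1}\bT(\Q)g$. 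Once this verification is in place, the equivalence is exhibited explicitly and the lemma follows.
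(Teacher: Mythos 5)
Your injectivity argument for $\beta_{U_f}$ matches the paper's: isolating $g_1^{-1}(\gamma_2^{-1}\gamma_1)g_1\in\bG(\A_f)$ and reading off the archimedean component is exactly how the paper proceeds. Likewise your forward analysis of Part (2) (extracting the elliptic $\delta\neq e$, forming the anisotropic centralizer $\bT=Z_\bG(\delta)$, and rearranging to land in $h_f U h_f^{-1}\cdot g^{-1}\bT(\Q)g$) reproduces the paper's argument.

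There are, however, two genuine problems. First, your assertion that $\beta$ and $\beta_{U_f}$ have continuous inverses ``induced by the continuous first-coordinate projection'' is wrong for $\beta_{U_f}$. The first-coordinate projection lands in $[\bG(\A)]_{U_f}$, not in $[\bG(\A)]_{U_f(h_f)}$; composed with $\beta_{U_f}$ it is the natural covering $[\bG(\A)]_{U_f(h_f)}\to [\bG(\A)]_{U_f}$ of degree $[U_f:U_f(h_f)]$, which is not injective. So the first coordinate alone cannot furnish the inverse. The paper avoids this by appealing to the inverse function theorem: $\beta_{U_f}$ is a smooth bijective immersion between manifolds of the same dimension with nowhere-vanishing Jacobian, hence a diffeomorphism; the case of $\beta$ is then handled by letting $U_f$ shrink.

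Second, you treat the converse implication in Part (2) as the delicate step and leave it incomplete (``once this verification is in place...''). In fact this direction is elementary and the paper dismisses it as a direct computation. Concretely: given $\gamma=g u h_f u' h_f^{-1} g^{-1}\in\bT(\Q)$ with $u,u'\in U$, rearranging gives $guh_f = \gamma g h_f u'^{-1}$, so
\[
(gu,\ guh_f) = (gu,\ \gamma g h_f u'^{-1}) = (e,\gamma)\cdot(g,\ gh_f)\cdot(u, u'^{-1}),
\]
where $(e,\gamma)\in\bG(\Q)\times\bG(\Q)$ and $(u,u'^{-1})\in U\times U$. This shows $[(gu,guh_f)]_{U\times U}=[(g,gh_f)]_{U\times U}$ directly; and since the identity $\gamma=gu h_f u' h_f^{-1} g^{-1}$ persists upon replacing $g$ by $tg$ for $t\in\bT(\A)$ (by abelianity of $\bT$), the whole Heegner packet is in the non-injective locus. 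There is no auxiliary element $v_2$ to control, so the obstacle you anticipate does not arise.
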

Note that there are countable many points in $Y_{U(h_f)}$ that belong to some Heegner packet. The map $\beta_U$ is a bijection onto its image when restricted to $Y_{U(h_f)}- \bigcup_{\scD} H_\scD$, and the union runs over all Heegner data.

\begin{example*}
As an example of non-injectivity in \eqref{non-inj}, consider the case when $\bG=\mathbf{PGL}_2$ is split and $U=K$ is maximal. Fix a prime $p$ and take $h_f=h_p\in \bG(\Q_p)$ with Cartan decomposition $h_p\in K_p  \left(\begin{smallmatrix} p^i & 0 \\ 0 & 1
\end{smallmatrix}\right) K_p$. Then $\beta_K$ is the map $Y_0(p^i)\to Y_0(1)\times Y_0(1)$ induced by the Hecke correspondence $T_{p^i}$. To construct Heegner points where the map $\beta_K$ is not injective, take an imaginary quadratic field $E/\mathbb{Q}$, where $p=\mathfrak{p}\bar{\mathfrak{p}}$ is split, and the ideal class of $\mathfrak{p}^{2i}$ is trivial. 

Let $\iota\colon E \to B$ be an optimal embedding, i.e.,\ $E\cap \iota^{-1}(R)=\scO_E$. The optimality assumption implies that  $\bT(\Q_p)$ is conjugate to the diagonal subgroup of $\mathbf{PGL}_2(\Q_p)$ by an element of $K_p$, hence there is a decomposition $h_p=k_1 s_p k_2$ with $k_1,k_2\in K_p$ and $s_p\in\bT(\Q_p)$.
Fix $g_\infty$ so that $g_\infty^{-1} \bT(\R) g_\infty =K_\infty$ and let $g_f=k_1^{-1}$. This defines a Heegner datum $\scD=(\iota, g)$.

Let $w_p\in K_p$ be the orientation-switching element from \S\ref{sec:orientation}. Then $\beta_K\left([\bT(\A)g]_{U(h_f)}\right)=\beta_K\left([\bT(\A)g w_p]_{U(h_f)}\right)$, i.e.,\ we have two packets of Heegner points with different orientations mapping to a single joint packet in $Y_U\times Y_U$. That these packets have the same image is a direct computation that uses an element $\gamma\in \bT(\Q)$ satisfying $\gamma\in s_p^2 K_\scD$. Such an element exists because $\mathfrak{p}^{2i}$ is equivalent to a principal ideal. 
\end{example*}

\begin{proof}
The top map $\beta$ is obviously a continuous bijection, hence each horizontal map is surjective.
For the injectivity of $\beta_{U_f}$, it suffices to show that if $(\gamma_1 g, \gamma_2 g)=(g' u_f,g' h_f u_f' h_f^{-1})$ then $\gamma_1=\gamma_2$. This follows by looking at the archimedean component $(\gamma_1 g_\infty,\gamma_2 g_\infty)=(g'_\infty,g'_\infty)$.  The maps $\beta$, $\beta_{U_f}$ also have continuous inverses. This follows by the inverse function theorem for $\beta_{U_f}$ because it is a smooth diffeomorphism between two real manifolds with an everywhere non-vanishing Jacobian. The same follows for $\beta$ by considering $\beta_{U_f}$ for arbitrarily small open subgroups $U_f<K_f$.

To verify part \eqref{non-inj}, let $[g],[g']\in[\bG(\A)]_{U(h_f)}$ be distinct elements mapping to the same point. Then we can choose representatives $g$, $g'$, such that there are $e,\gamma\in\bG(\Q)$, $e\neq \gamma$, and $u,u'\in U$, satisfying $g=g'u^{-1}$, $\gamma g=g'h_f u' h_f^{-1}$. Equivalently, $g^{-1} \gamma g=u h_f u' h_f^{-1}$. At the archimedean place we deduce $g_\infty^{-1} \gamma g_\infty\in K_\infty$. In particular, $\gamma$ is regular semisimple and the centralizer $Z_\bG(\gamma)<\bG$ is a maximal torus defined over $\Q$ and satisfies $g_\infty^{-1}Z_\bG(\gamma) g_\infty=K_\infty$. The condition $g^{-1} \gamma g=u h_f u' h_f^{-1}$ is satisfied even if we replace $g$ by any element of $Z_\bG(\gamma)(\A)g$. Hence the whole Heegner packet $[Z_\bG(\gamma)(\A) g]_{U(h_f)}$ belongs to the non-injective locus. In the other direction, assume that there are distinct $e,\gamma\in \bT(\Q)$ with $\gamma=g u h_f u' h_f^{-1} g^{-1}$, $u,u'\in U$. Then it is a direct computation to check that $\beta_U([t g])=\beta_U([t g u])$ for all $t\in\bT(\A)$.
\end{proof}

We complete this subsection by observing that if $h_f,h'_f\in\bG(\A_f)$ with $h'_f=u_1 h_f u_2$ for some $u_1,u_2\in U_f$, then 
\begin{equation}\label{eq:same-Hecke-corr}
[\Delta\bG(\A)(e,h_f)(\A)]_{U\times U}=[\Delta\bG(\A)(e,h'_f)(\A)]_{U\times U}
\end{equation}
and the map $g\mapsto g u_1^{-1}$ induces an isomorphism $[\bG(\A)]_{U(h_f)}\to [\bG(\A)]_{U(h'_f)}$. 

\subsection{Intermediate Hecke sets}\label{sec:int-Hecke-sets}
Fix a Heegner datum $\scD$ and a shift $s\in\bT(\A_f)$. We have a commutative diagram
\medskip
\begin{center}
	\begin{tikzcd}
		&{[\bT(\A)g]_{U(s)}}\arrow[r]\arrow[d, "\eqref{eq:alphaU}"]  &{[\bG(\A)]_{U(s)}}\arrow[d,"\beta_U"] \\
		&{H^\Delta_\scD(s)} \arrow[r] &{[\Delta\bG(\A)(e,s)]}_{U\times U}.
	\end{tikzcd}
\end{center}
The leftmost map \eqref{eq:alphaU} is a measure-preserving homeomorphism. This diagram is fundamental to our approach in the small $q$ range, as it allows us to represent the joint Heegner period, defined in \S\ref{sec:joint-Heegner-packet}, as a Heegner period, defined in \S\ref{sec:special}, with a different level structure. Specifically, for $\varphi \in C(Y_U\times Y_U)$ we have
\begin{equation}\label{eq:joint-period-to-reg}
P^\Delta_{\scD}(\varphi;s)=P_\scD(\beta_U \circ \varphi\restriction_{{[\Delta\bG(\A)(e,s)]}_{U\times U}} )\;.
\end{equation}

It is important to note that although $t_\Q s$ and $s$ give rise to the same joint Heegner packet for all $t_\Q\in\bT(\Q)$, the intermediate Hecke sets ${[\Delta\bG(\A)(e,s)]}_{U\times U}$ and ${[\Delta\bG(\A)(e,t_\Q s)]}_{U\times U}$ are distinct and so are $U(s)$ and $U(t_\Q s)$. The latter groups are in general not conjugate and represent different level structures. If $U=K$ is almost maximal, let $\mathfrak{s}, \mathfrak{s'}\subset\scO_\scD$ be the primitive integral ideals associated to $s$ and  $s'=t_\Q s$ (see \S \ref{sec:adelic-shift}). Then $K(s)$ and $K(s')$ are the projectivized group of units of an Eichler order of level $\rm N\mathfrak{s}$ and $\rm N\mathfrak{s'}$ respectively. If moreover $\bG=\mathbf{PGL}_2$ is split then ${[\mathbf{PGL}_2(\A)]_{K(s)}}\simeq Y_0(\rm N \mathfrak{s})$ and ${[\mathbf{PGL}_2(\A)]_{K(s')}}\simeq Y_0(\rm N \mathfrak{s'})$.

This observation explains why we need to consider the minimal norm of any integral ideal representing $[s_i]\in\operatorname{Pic} (\scO_{\scD_i})$. Indeed, every such ideal gives rise to an intermediate Hecke set; and if $q_i\not\to \infty$ then we can find a subsequence of Heegner packets which are all contained in the same fixed intermediate Hecke set.

\subsection{Shifts with the same norm}\label{sec521}
In this subsection we assume $U=K$ is almost maximal. Our goal is to analyze how joint Heegner packets with shifts by integral ideals of the same norm $q$ embed in the graph of Hecke correspondence of level $q$. Fix a Heegner datum $\scD$ and let $\bT<\bG$ be the associated torus. Assume $\scO_\scD=\scO_E$ is maximal. Let $s,s'\in\bT(\A_f)$ both correspond to primitive integral ideals $\mathfrak{s}$, $\mathfrak{s}'$ of the same norm $N\in\mathbb{N}$, and assume that $\mathfrak{s}$ and $\mathfrak{s}'$ are primitive integral representatives in their respective ideal classes. We can then write $\mathfrak{s}=\gcd(\mathfrak{s},\mathfrak{s}') \prod_{i=1}^n \mathfrak{p}_i^{k_i}$ and $\mathfrak{s}'=\gcd(\mathfrak{s},\mathfrak{s}') \prod_{i=1}^n \overline{\mathfrak{p}}_i^{k_i}$, where each prime $\mathfrak{p}_i$  sits above a split prime $p_i$. Let $s_0\in\bT(\A_f)$ represent $\gcd(\mathfrak{s},\mathfrak{s}')$ and similarly let $s_i$ represent $\mathfrak{p}_i^{k_i}$. Then 
\[
s K_{\scD,f}= s_0 \prod_{i=1}^n s_i \cdot K_{\scD,f},\quad\textrm{ and }\quad s' K_{\scD,f}= s_0 \prod_{i=1}^nw_{p_i} s_i w_{p_i}^{-1} \cdot K_{\scD,f}.
\]
This holds because the Galois action on $\bT(\Q_p)$ coincides with the action of the Weyl group. Denote $w_s=\prod_{1\leq i \leq n} w_{p_i}\in \bG(\A_f)$, then $s'\in w_s s w_s^{-1} U_{\scD,f}$. Expanding on the diagram from \S\ref{sec:int-Hecke-sets} above, we have the following commutative diagram
\medskip
\begin{center}
		\begin{tikzcd}[row sep=1.3cm, column sep=0.2cm]
		&{[\bT(\A)g]_{K(s)}}\arrow[r]\arrow[d, "\eqref{eq:alphaU}"]  &{[\bG(\A)]_{K(s)}}\arrow[d,"\beta_K"] &[1.75cm] {[\bG(\A)]_{K(s')}}\arrow[l,"{[g w_s]}\mapsfrom{[g]}"] \arrow[d,"\beta_K'"]& {[\bT(\A)g]_{K(s')}}\arrow[l] \arrow[d, "\eqref{eq:alphaU}"]\\
		&{H^\Delta_\scD(s)} \arrow[r] &{[\Delta\bG(\A)(e,s)]}_{K\times K}  & {[\Delta\bG(\A)(e,s')]_{K\times K}} \arrow[l,,"{=}"]& 
		{H^\Delta_\scD(s')}, \arrow[l]
	\end{tikzcd}
\end{center}
\medskip
where the bottom middle map $[(g,gs')]\mapsto [(g w_s, g w_s s)]$ is the equality map, since
\[
[(g w_s, g w_s s)] \equiv [(g,g w_s s w_s^{-1})] \equiv [(g, g s)] \bmod K\times K.
\]
In particular, we have two embeddings of $[\bT(\A)g ]_{K_\scD}$ into $[\bG(\A)]_{K(s)}$, one from the packet shifted by $s$ and another from the packet shifted by $s'$. Chasing the maps in the diagram above it is easy to see that these two embeddings are Heegner packets obtained from each other by a change of orientation exactly at the places $\{p_i\}_i$. In this way, fixing a Heegner datum $\scD$ and a norm $N$, we obtain $2^k$ total Heegner packets in $[\bG(\A)]_{K(s)}$, corresponding to orientation change of the original datum $\scD$, where $k$ is the number of primes $p\mid N$ that split in $E$.

\subsection{Hecke correspondences}

Corresponding to the finite index subgroup $U_f\subset K_f$, and an element $h_f\in \bG(\A_f)$, we have two degeneracy maps
\[
[\bG(\A)]_U \xleftarrow{\;\; \delta_1\;\; }  [\bG(\A)]_{U(h_f)} \xrightarrow{\;\; \delta_2\;\; } [\bG(\A)]_U
\]
defined as follows. For an element $g\in\bG(\A)$ let us write $[g]_{U(h_f)}$ and $[g]_U$ for the class $\bG(\Q) g U(h_f)\in [\bG(\A)]_{U(h_f)}$ and $\bG(\Q) g U\in [\bG(\A)]_U$, respectively. Then we put
\begin{align*}
&\delta_1: [\bG(\A)]_{U(h_f)}\rightarrow [\bG(\A)]_U,  \qquad [g]_{U(h_f)}\mapsto [g]_U,\\
&\delta_2: [\bG(\A)]_{U(h_f)}\rightarrow [\bG(\A)]_U,\qquad [g]_{U(h_f)}\mapsto [gh_f]_U,
\end{align*}
the latter being well-defined thanks to the inclusion $h_f^{-1}U(h_f)h_f\subset U$. Both $\delta_1$ and $\delta_2$ are degree $\deg(h_f)=[U_f(h_f): U_f]$ coverings of $[\bG(\A)]_U$. 

\begin{remark}\label{rem:deg}
When $h_f=s\in\bT(\A_f)$ and $U=K$ is almost maximal, the index $\deg(s)$ is easily calculated; cf.\ \cite[Lemma 7.6]{Kh}; in particular, if $\sk\in\mathcal{I}(\mathscr{O}_\scD)$ denotes the unique primitive integral ideal representing the image of $s$ in $\Q^\times\backslash\mathcal{I}(\mathscr{O}_\scD)$ under the map \eqref{eq:s-to-ideal}, then $\deg(s)=\prod_{p^n\| N} p^{n-1}(p+1)=N\prod_{p\mid N}(1+p^{-1})$, where $N=\Nr\mathfrak{s}$.
\end{remark}

The Hecke correspondence at $h_f$ is given by ${\rm Div}([\bG(\A)]_U)\rightarrow {\rm Div}([\bG(\A)]_U)$, $x\mapsto \delta_2(\delta_1^{-1}(x))$. Its graph is defined to be
\begin{equation}\label{eq:first-def-Hecke-corr}
Y_U^\Delta(h_f)=\left\{(x,y)\in [(\bG\times\bG)(\A)]_{U\times U}: y\in \delta_2(\delta_1^{-1}(x))\right\}.
\end{equation}
\begin{lemma}\label{lemma:Hecke-graph}
We have $Y_U^\Delta(h_f)=[\Delta\bG(\A)(e,h_f)]_{U\times U}$.
\end{lemma}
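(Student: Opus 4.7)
The plan is to unwind the definitions of $\delta_1$, $\delta_2$ and the fibered product on the one hand, and of the diagonal homogeneous Hecke set on the other, and to observe that both sides describe the same pairs $([g]_U, [gh_f]_U)$ as $g$ ranges over $\bG(\A)$.

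For the inclusion $[\Delta(\bG)(\A)(e,h_f)]_{U\times U}\subseteq Y_U^\Delta(h_f)$, I would take a point of the form $(x,y)=[(g,gh_f)]_{U\times U}$ and exhibit a preimage in $[\bG(\A)]_{U(h_f)}$. Namely, since $h_f^{-1}U(h_f)h_f\subseteq U$, the class $\tilde{x}=[g]_{U(h_f)}$ satisfies $\delta_1(\tilde{x})=[g]_U=x$ and $\delta_2(\tilde{x})=[gh_f]_U$, and the latter equals $y$ because, by assumption on $(x,y)$, there exist $\gamma_1,\gamma_2\in\bG(\Q)$ and $u_1,u_2\in U$ with $\gamma_1gu_1=g$ (any representative works) and $\gamma_2gh_fu_2=gh_f$, exhibiting $y=[gh_f]_U$. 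Hence $y\in\delta_2(\delta_1^{-1}(x))$.

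For the reverse inclusion $Y_U^\Delta(h_f)\subseteq[\Delta(\bG)(\A)(e,h_f)]_{U\times U}$, I would start with $(x,y)\in Y_U^\Delta(h_f)$, pick $\tilde{x}\in\delta_1^{-1}(x)$ with $\delta_2(\tilde{x})=y$, and compute an explicit representative. Writing $x=[g]_U$ and $\tilde{x}=[g']_{U(h_f)}$ with $\delta_1(\tilde{x})=[g']_U=[g]_U$, one has $g'=\gamma g u$ for some $\gamma\in\bG(\Q)$ and $u\in U$. Then $y=\delta_2(\tilde{x})=[g'h_f]_U=[guh_f]_U$, and therefore
\[
(x,y)=([g]_U,[guh_f]_U)=[(gu,guh_f)]_{U\times U},
\]
where the last equality uses right multiplication by $(u^{-1},e)\in U\times U$ on the first coordinate. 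Setting $g_\star=gu$, this exhibits $(x,y)$ as the class of $(g_\star,g_\star h_f)\in\Delta(\bG)(\A)(e,h_f)$, as required.

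There is no real obstacle here: the argument is purely formal, using that $\delta_1$ is a well-defined covering map (so its fibers are $U/U(h_f)$-torsors on representatives) and that $\delta_2$ is well-defined on $[\bG(\A)]_{U(h_f)}$. The only subtle point is the freedom to replace $g$ by $gu$ on both coordinates simultaneously, which I would spell out once to make clear that the apparent mismatch between $(g,guh_f)$ and the diagonal translate $(g_\star,g_\star h_f)$ disappears after acting by $U\times U$ on the right.
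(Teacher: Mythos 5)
Your proof is correct and follows essentially the same approach as the paper, namely identifying both sides with the image of the map $[\bG(\A)]_{U(h_f)}\to[(\bG\times\bG)(\A)]_{U\times U}$, $[g]\mapsto([g]_U,[gh_f]_U)$. The paper phrases this more compactly by recognizing that map as $\beta_U$ and citing the surjectivity established in Lemma~\ref{lem:birational}(2), whereas you re-derive that surjectivity by hand in your second inclusion; the unfolding in your argument is correct, and the extra justification you offer in the first inclusion for $y=[gh_f]_U$ is harmless but unnecessary, as it holds by definition.
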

\begin{proof}
By definition, $Y_U^\Delta(h_f)$ is the image of the map
\[
[\bG(\A)]_{U(h_f)}\rightarrow [(\bG\times \bG)(\A)]_{U\times U}, \qquad x\mapsto (\delta_1(x),\delta_2(x)).
\]
This is precisely the definition of $\beta_U$ from \S \ref{sec:Hecke-corresp--level}, as
\[
\beta_U([g]_{U(h_f)})=[(g,gh_f)]_{U\times U}=([g]_U,[gh_f]_U)=(\delta_1([g]_{U(h_f)}),\delta_2([g]_{U(h_f)})).
\]
We conclude from part \eqref{non-inj} of Lemma \ref{lem:birational}.
\end{proof}
Then the associated Hecke operator
\begin{equation}\label{defn:Hecke-operator}
T_{h_f}: L^2([\bG(\A)]_U,\mu)\rightarrow L^2([\bG(\A)]_U,\mu),\quad (T_{h_f} \varphi)(x)=\frac{1}{{\rm deg}(h_f)}\sum_{y\in \delta_2(\delta_1^{-1}(x))} \varphi(y)
\end{equation}
is self-adjoint relative to the inner product in \eqref{eq:inner-prod}, with spectrum contained in $[-1,1]$.

\begin{remark}\label{rem:Hecke}
Let us recall the relation between the Hecke operator in \eqref{defn:Hecke-operator} and the classical Hecke operator. To put them on the same footing, for a non-zero rational $y\in\Q^\times$ coprime to ${\rm Ram}_B$, we define $a(y)\in \bG(\A_f)$ by $a(y)_p=e$ for $p\mid d_B$ and
\[
\phi_p(a(y)_p)=\begin{pmatrix} y & \\ & 1\end{pmatrix},
\]
for $p\nmid d_B$, where $\phi_p\colon\bG(\Q_p)\xrightarrow{\,\sim\,} \mathbf{PGL}_2(\Q_p)$ is the isomorphism defined in \S\ref{sec:Shimura}. For an integer $N\in\N$, prime to $d_B$, we wish to relate $T_{a(1/N)}$ to the normalized classical Hecke operator $T_N$.

By definition, when $N$ is square-free, $T_N=N^{-1/2}[K_fa(1/N)K_f]$ where $[K_fa(1/N)K_f]$ is correspondence the on the quaternionic variety $[\bG(\A)]_U=Y_U$ induced by the double coset $K_fa(1/N)K_f\subset \bG(\A_f)$. Thus, in the square-free case, the two operators agree up to normalization, namely ${\rm deg}(a(1/N)) T_{a(1/N)}=N^{1/2}T_N$. In general, we have
\[
N^{1/2}
T_{N} =   \sum_{M^2 \mid N} {\rm deg}(a(M^2/N)) T_{a(M^2/N)}.
\]
and hence by M\"obius inversion
 \[
T_{a(1/N)} = \frac{N^{1/2}}{{\rm deg}(a(1/N))} \sum_{M^2 \mid N}  \frac{\mu(M)}{M} T_{N/M^2}.
\]
In particular, if $\lambda(N)$ is an eigenvalue for $T_{a(1/N)}$ on $L^2(Y_U, \mu)$, the Ramanujan conjecture for the classical Hecke operator $T_N$ translates to the statement that $\lambda(N)\ll_\varepsilon N^{-1/2+\varepsilon}$.
\end{remark}

\section{The spectral expansion}\label{sec:small-q}

Recall the joint Heegner period $P_\scD^\Delta(\varphi;s)$ from \eqref{eq:defn-PEdelta}, as well as the context of Theorem \ref{main-thm}, in which we take $U=K$ almost maximal and write
\[
Y_K=[\bG(\A)]_K,\qquad Y_K\times Y_K=[(\bG\times \bG)(\A)]_{K\times K}.
\]
In this section we make an explicit choice of orthonormal basis of $L^2(Y_{K_0(N)})$ and use \eqref{eq:joint-period-to-reg} to spectrally expand $P_\scD^\Delta(\varphi;s)$ across the Hecke correspondence $Y_{K_0(N)}^\Delta$, introduced in \eqref{eq:first-def-Hecke-corr}. We record this result in Proposition \ref{lemma-spectral-decomp}. 

\subsection{The level $N$ Hecke correspondence}\label{sec:level-N-Hecke-cor}

We begin by observing the equality between $Y_K^\Delta(s)$ and another Hecke correspondence, which will be more suitable for computations. The finite index subgroup $K(s)_f=K_f\cap sK_f s^{-1}$ of $K_f$ only depends on the image of $s$ in $\Q^\times\backslash\mathcal{I}(\mathscr{O}_\scD)$ under the map \eqref{eq:s-to-ideal}. As in \S\ref{sec:adelic-shift}, let $\sk\in\mathcal{I}(\mathscr{O}_\scD)$ denote the unique primitive integral ideal representing this image. We let $N=\Nr\sk$ denote the norm of $\sk$. Unlike in the definition \eqref{eq:defn-q}, we shall not assume that $\sk$ is of minimal norm in its ideal class. Note that $N$ is divisible only by primes that are split or ramified in $E$, since $\sk$ was assumed primitive. Recall furthermore the reduction steps in \S\ref{sec:reductions}, from which it follows that $N$ and $d_B$ are relatively prime.

We denote by $Y_0^\Delta(N)$ the Hecke correspondence $Y_K^\Delta(a(1/N))$ defined in \eqref{eq:first-def-Hecke-corr}.  

\begin{lemma}\label{lemma:conversion-to-Hecke-congruence}
We have $Y^\Delta_0(N)=Y^\Delta_K(s)$.
\end{lemma}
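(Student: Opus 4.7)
The plan is to reduce the equality of Hecke graphs to an equality of double cosets. By \eqref{eq:same-Hecke-corr} together with Lemma \ref{lemma:Hecke-graph}, it suffices to show that $s$ and $a(1/N)$ lie in the same $K_f$-double coset in $\bG(\A_f)$. Since $K_f=\prod_p K_p$ and $\bG(\A_f)={\prod}'_p \bG(\Q_p)$ factor as restricted products, this in turn reduces to the local assertion that $s_p\in K_p\cdot a(1/N)_p\cdot K_p$ for every finite prime $p$.

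I would split the verification into three cases. At primes $p\mid d_B$, the reduction in \S\ref{sec442} gives $s_p=e$, matching $a(1/N)_p=e$ by definition. At primes $p\nmid d_B$ with $p\nmid N$, the primitive ideal $\sk$ has trivial $p$-component, whence $s_p\in K_{\scD,p}\subset K_p$; meanwhile $1/N\in\Z_p^\times$ places $a(1/N)_p$ in $K_p$ as well. In both cases the common double coset is the trivial one. The substantive case is $p\nmid d_B$ with $v_p(N)=k\geq 1$. Here, using the fixed isomorphism $\phi_p\colon\bG(\Q_p)\xrightarrow{\sim}\PGL_2(\Q_p)$ with $\phi_p(K_p)=\PGL_2(\Z_p)$, the Cartan decomposition identifies $K_p\backslash \bG(\Q_p)/K_p$ with $\Z_{\geq 0}$, and I would show that both elements correspond to the integer $k$.

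For $a(1/N)_p$ the computation is direct: writing $N=p^k N'$ with $N'\in\Z_p^\times$, one sees that $\phi_p(a(1/N)_p)=\operatorname{diag}(1/N,1)$ belongs to $K_p\cdot\operatorname{diag}(p^k,1)\cdot K_p$ modulo center, hence has Cartan invariant $k$. For $s_p$, I would pick $\alpha_p\in\scO_{E,p}$ generating the local ideal $\sk_p$, and analyze the matrix $M_p:=\phi_p(\iota_p(\alpha_p))$, which represents multiplication by $\alpha_p$ on $\scO_{E,p}\simeq\Z_p^2$. The fundamental discriminant hypothesis $\scO_\scD=\scO_E$ ensures the embedding $\iota_p$ is optimal, so $M_p\in\Mat(\Z_p)$ and its cokernel is $\scO_{E,p}/\sk_p$. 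Primitivity of $\sk$ then forces this cokernel to be cyclic of order $p^k$: in the split case $\sk_p$ equals $\pf^k$ or $\bar\pf^k$, giving cokernel $\Z_p/p^k$; in the ramified case primitivity allows only $k\leq 1$, and for $k=1$ the cokernel is the residue field of $E_p$. The Smith normal form of $M_p$ is therefore $(1,p^k)$, yielding Cartan invariant $k$ as required.

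The main obstacle I anticipate is handling the ramified case in the computation of the Cartan invariant of $s_p$. Naive bookkeeping would allow $\sk_p=\pf^a$ for any $a\geq 0$, but primitivity is crucial here: for $a\geq 2$ one has $\pf^2=(p)$, which would introduce a rational divisor. Once this subtle interaction between primitivity and ramification is correctly accounted for, the structure theorem for $\scO_{E,p}/\sk_p$ falls out, and the comparison of Cartan invariants closes the proof.
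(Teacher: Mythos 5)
Your proof is correct and follows essentially the same route as the paper: both arguments use \eqref{eq:same-Hecke-corr} and Lemma \ref{lemma:Hecke-graph} to reduce to the local claim that $s_p$ and $a(1/N)_p$ define the same Cartan cell in $K_p\backslash\bG(\Q_p)/K_p$, and then check this place by place. The paper computes the Cartan invariant of $s_p$ separately in the split case (by conjugating $\bT(\Q_p)$ to the diagonal torus and reading off $a(p^{-k})$ up to $K_p$) and in the ramified case (by exhibiting the explicit antidiagonal representative $\left(\begin{smallmatrix} 0 & p^{-k}\\ 1 & 0\end{smallmatrix}\right)$), whereas you unify both cases by computing the Smith normal form of the multiplication-by-$\alpha_p$ matrix, with primitivity of $\sk$ forcing the cokernel $\scO_{E_p}/\sk_p$ to be cyclic. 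That is a modest conceptual gain, not a different strategy. One small imprecision to note: $\phi_p(\iota_p(\alpha_p))$ is only $\GL_2(\Z_p)$-conjugate to the matrix of multiplication by $\alpha_p$ in a $\Z_p$-basis of $\scO_{E_p}$ (this follows from Lemma \ref{lem:all-field-embeddings-conjugate} and optimality), rather than literally equal to it; since Cartan invariants are unchanged under such conjugation, this does not affect the conclusion, but it should be stated.
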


\begin{proof}
Recall from Lemma \ref{lemma:Hecke-graph} that
\[
Y^\Delta_0(N)=[\Delta\bG(\A)(e,a(1/N))]_{K\times K},\qquad Y^\Delta_K(s)=[\Delta\bG(\A)(e,s)]_{K\times K}.
\]
From \eqref{eq:same-Hecke-corr} it suffices to show that there is $k_N, k_N'\in \prod_{p\mid N}K_p$ such that $a(1/N)=k_N s k_N'$. We do this place by place, and so we factorize $s=\prod_p s_p$ and $N=\prod_{p\mid N}p^k$.

Let $p\mid N$ be prime. If $p$ splits in $E$ then, as noted in the example following Lemma \ref{lem:birational}, we may find $k_p\in K_p$ such that $k_p^{-1}\bT(\Q_p)k_p$ is the diagonal subgroup of $\mathbf{PGL}_2(\Q_p)$. It follows that $s_p=k_p a(p^{-k}) k_p^{-1}$. When $p$ is ramified in $E$, then $s_p$ is conjugate over $K_p$ to $\left(\begin{smallmatrix} 0 & p^{-k} \\ 1 & 0\end{smallmatrix}\right)$, and the Cartan coset of the latter element is given by $a(p^{-k})$; we deduce that there are $k_p,k_p'\in K_p$ such that $s_p=k_p a(p^{-k})k_p'$. Note finally, as was done in \S \ref{subsec:alignment}, that when $p$ is insert in $E$, then $\bT(\Q_p)\subset K_p$; in this case there is nothing to prove.
\end{proof}

\subsection{Level $K_0(N)$ toric periods}\label{sec:level-N-toric-periods}
For an integer $N\geq 1$ coprime with $d_B$, we let
\begin{equation}\label{def:K0N}
\begin{aligned}
K_0(N)_f&=K_f\cap a(1/N)K_fa(1/N)^{-1}<\bG(\A_f),\\
K_0(N)&=K\cap a(1/N)Ka(1/N)^{-1}=K_0(N)_fK_\infty<\bG(\A).
\end{aligned}
\end{equation}
When $\bG=\mathbf{PGL}_2$ the definitions \eqref{def:K0N} recover the classical Hecke congruence subgroup of $\mathbf{PGL}_2(\A_f)$; as we are borrowing the notation from that $B=\Mat(\Q)$ setting, we caution the reader to recall the dependence of $K_0(N)$ on $\bG$. Such subgroups fall into the form described in \eqref{def:Eichler-congruence-sbgps}, with $h_f=a(1/N)$.

Recall from the reduction step \S\ref{reduction-to-arch-trans} that we may take the shifting element $g$ in the homogeneous Heegner datum $\scD$ to satisfy $g_f=e$. (The Heegner condition, on the other hands, asks that $g_\infty$ conjugates $\bT(\R)$ to $K_\infty$.) Since $K_0(N)$ is of the form \eqref{eq:U-cap-conj}, it follows that $[\bT(\A)g]_{K_0(N)}$ is an aligned Heegner packet, in the sense of Definition \ref{defn:aligned}. The natural projection
\[
[\bT(\A)g]_{K_0(N)}\rightarrow [\bT(\A)g]_K=H_\scD
\]
is therefore bijective, and we may view $H_\scD$ as a subset of $Y_{K_0(N)}=[\bG(\A)]_{K_0(N)}$. It makes sense then to consider, for $u\in C^\infty(Y_{K_0(N)})$, the period integral
\begin{equation}\label{periodintegral}
W_\scD(u)=\int_{[\bT(\A)]} u(tg)\,{\rm d} t=\frac{1}{|H_\scD|}\sum_{t\in H_\scD} u(t),
\end{equation}
which we shall refer to as a \textit{level $K_0(N)$ toric period}.

In particular, the period integral $P_\scD^\Delta(f_1\otimes f_2;s)$, originally defined in \eqref{defn:PEdelta}, may be viewed as a level $K_0(N)$ toric period of the restriction $f_1\otimes f_2|_{Y^\Delta_0(N)}$ by means of the identification  \eqref{eq:joint-period-to-reg} and Lemma \ref{lemma:conversion-to-Hecke-congruence}. As a first step towards Proposition \ref{prop-smallq}, we now spectrally expand the joint Heegner period $P_\scD^\Delta(f_1\otimes f_2;s)$ across any intermediate Hecke set $Y_0^\Delta(N)=Y_K^\Delta(s)$ containing $H^\Delta_\scD(s)$. (Recall from \S\ref{sec:joint-Heegner-packet}, as well as the identification \eqref{eq:iso-Pic}, that $H^\Delta_\scD(s)$ depends only on the class of $s$ in ${\rm Pic}(\mathscr{O}_\scD)$.) In preparation for this, we recall some well-known facts about the spectral theory of the automorphic quotient $[\bG(\A)]$.

\subsection{The spectral decomposition of $L^2([\bG(\A)],m_\bG)$}\label{sec:spec-decomp-K(s)}

The right regular representation of $\bG(\A)$ on $L^2([\bG(\A)],m_\bG)$ decomposes as a Hilbert space direct sum of closed $\bG(\A)$-invariant subspaces
\[
L^2_{\rm disc}([\bG(\A)],m_\bG)\oplus L^2_{\rm cont}([\bG(\A)],m_\bG).
\]
The continuous subspace $L^2_{\rm cont}([\bG(\A)],m_\bG)$, which is non-zero only when $B$ is the split algebra, is spanned by Eisenstein series; we shall describe it in more detail below.

Its orthocomplement --- the discrete subspace --- breaks up further as
\[
L^2_{\rm disc}([\bG(\A)],m_\bG)=L^2_{\rm res}([\bG(\A)],m_\bG)\oplus L^2_{\rm cusp}([\bG(\A)],m_\bG),
\]
with the cuspidal subspace $L^2_{\rm cusp}([\bG(\A)],m_\bG)$ being defined here as the orthocomplement of $L^2_{\rm res}([\bG(\A)],m_\bG)$ in $L^2_{\rm disc}([\bG(\A)],m_\bG)$. The residual subspace is given by
\begin{equation}\label{eq:residual-decomp}
L^2_{\rm res}([\bG(\A)],m_\bG)=\bigoplus_{\eta\in {\rm Nr}_B(\bG(\A))^\vee} \C_{\eta\circ {\rm Nr}_B},
\end{equation}
where ${\rm Nr}_B:\bG\rightarrow\mathbb{G}_m$ is induced by the reduced norm on $\bB^\times$. The map ${\rm Nr}_B\restriction_{\bG(\A)}$ factorizes through the compact abelian group $\bG(\Q)\backslash\bG(\A)/\bG(\A)^+$, where $\bG(\A)^+$ is the image in $\bG(\A)$ of the $\A$-points of the simply connected cover $\bG^{(1)}$ --- the norm one quaternions --- of $\bG$, and induces an isomorphism of $\bG(\Q)\backslash\bG(\A)/\bG(\A)^+$ onto 
\begin{equation} \label{eq:char-group}
\begin{cases}
\Q^\times\backslash\A^\times/{\A^\times}^2,& B_\infty=\Mat(\R);\\
\Q_{>0}\backslash (\R_{>0}\times \A_f^\times)/{\A^\times}^2,& \textrm{else}.
\end{cases}
\end{equation}
For more details, see \cite[\S 2.3]{Kh}. In particular, when $B_\infty=\Mat(\R)$, the characters appearing in $L^2_{\rm res}$ are the quadratic Dirichlet characters composed with the determinant map. 

We now take $\bG=\mathbf{PGL}_2$. Finally, we recall the description of the continuous spectrum of $L^2([\bG(\A)],m_{\bG})$; see \cite[\S 4]{GJ} for more details. Let $\mathbf{P}$ the Borel subgroup consisting of upper triangular matrices mod the center, with unipotent radical $\mathbf{N}$. As usual, we let $K=\mathbf{PGL}_2(\widehat\Z){\rm PSO}(2)$. We have the Iwasawa decomposition $\bG(\A)=\mathbf{P}(\A)K$. For ${\sf s}\in\C$ --- to avoid a notational clash with the adelic shifting parameter, we shall write a complex number in \textit{sans serif} font, as ${\sf s}={\sf \sigma}+it$ --- and a unitary character $\chi$ of $\Q^\times\backslash\A^\times$, we consider the induced representation $\mathbf{U}(\chi,{\sf s})$ consisting of smooth $K$-finite functions $\phi:\bG(\A)\rightarrow \C$ transforming by $\chi(a/d)|a/d|^{\sf s}$ under left-multiplication by $\left[\begin{smallmatrix}a& b\\ 0 & d \end{smallmatrix}\right]\in \mathbf{P}(\A)$ and such that
\begin{equation}\label{big-induced-integral}
\int_K|\phi (k)|^2\, {\rm d} k<\infty.
\end{equation}
Here, ${\rm d} k$ is any Haar measure on $K=\prod_v K_v$, which we henceforth take to be the product of the probability Haar measures of $K_v$ for each $v$. We equip $\mathbf{U}(\chi,{\sf s})$ with the inner product induced by \eqref{big-induced-integral} and let $\mathbf{H}(\chi,{\sf s})$ be its completion. This defines a Hilbert space representation of $\bG(\A)$, which is unitary when ${\rm Re}({\sf s})=1/2$. 

For a right $K$-finite function $f\in\mathbf{H}(\chi,{\sf s})$, the associated Eisenstein series is defined on ${\rm Re}({\sf s})>1$ by
\[
E_{f,\chi}(g,{\sf s})=\sum_{\gamma\in \mathbf{P}(\Q)\backslash\bG(\Q)} f(\gamma g).
\]
Then $E_{f,\chi}(g,{\sf s})$ converges absolutely and uniformly on compacta (modulo the center) as a function of $g\in \bG(\A)$, and continues meromorphically as a function of ${\sf s}$ to all of $\C$, with at most a simple pole (when $\chi^2=1$) at ${\sf s}=1$ on the right half plane ${\rm Re}({\sf s})\geq 1/2$. In particular, $E_{f,\chi}(g,{\sf s})$ is regular on the unitary axis ${\rm Re}({\sf s})=1/2$ and spans $L^2_{{\rm cont}}([\bG(\A)],m_{\bG})$.

For ${\sf s}\in\C$ let $\delta^{\sf s}$ denote the character of $\mathbf{P}(\A)$ sending $\left[\begin{smallmatrix}a& b\\ 0 & d \end{smallmatrix}\right]$ to $|a/d|_\A^{\sf s}$. For each character $\chi$ of $\Q^\times\backslash\A^1$ we make a $K$-equivariant identification of $\mathbf{H}(\chi):=\mathbf{H}(\chi,0)$ with $\mathbf{H}(\chi, {\sf s})$ via the map $f\mapsto f_{\sf s}:=f\delta^{\sf s}$. The $L^2$-spectral expansion states that for a function $F\in L^2([\bG(\A)],m_{\bG})$, we have
\begin{equation}
\begin{aligned}
\label{eq:spectral-expansion}
F=&\sum_{\eta\in ({\rm Nr}_B(\bG(\A))^\vee}\langle F,\eta\circ {\rm Nr}_B\rangle (\eta\circ {\rm Nr}_B)+\sum_{\sigma \textrm{ cuspidal } }\sum_{u\in\mathscr{B}_\sigma}\langle F,u\rangle u  \\
&\qquad+ \delta_{B=\Mat(\Q)}\frac{1}{4\pi i}\int_{{\rm Re}({\sf s})=1/2} \sum_{\chi\in\widehat{\Q^\times\backslash\A^1}} \sum_{f\in \mathscr{B}_\chi}  \langle F, E_{f_{\sf s},\chi}(\cdot, {\sf s})\rangle  E_{f_{\sf s},\chi}(\cdot ,{\sf s}) \, {\rm d} {\sf s},
\end{aligned}
\end{equation}
where $\mathscr{B}_\sigma$ is an orthonormal basis for the cuspidal representation $\sigma\subset L^2_{\rm cusp}([\bG(\A)],m_{\mathbf{PGL}})$ and $\mathscr{B}_\chi$ is an orthonormal basis for the principal series representation $\mathbf{H}(\chi)$ of $\bG(\A)$. The above spectral expansion holds \textit{a priori} only in $L^2([\bG(\A)],m_{\bG})$, but it can be shown to hold pointwise for any $F\in C_c^\infty([\bG(\A)])$, with the sums and integrals on the right-hand side converging absolutely (see \cite[\S2.2.1, Remark]{MV}).

\subsection{Choice of basis for $L^2(Y_{K_0(N)},\mu)$}\label{sec:basis-choice}
Having described the spectral decomposition of $L^2([\bG(\A)],m_\bG)$, we now take $K_0(N)$-invariants to obtain the corresponding spectral decomposition of $L^2(Y_{K_0(N)},\mu)$, where $\mu$ is the Borel probability measure on $Y_{K_0(N)}=[\bG(\A)]_{K_0(N)}$ discussed in \S\ref{sec:Shimura}. In preparation for the spectral expansion of Proposition \ref{lemma-spectral-decomp}, we shall prescribe bases of the discrete and, when $B=\Mat(\Q)$, continuous spectrum of $L^2(Y_{K_0(N)},\mu)$. 

We begin with the residual spectrum. The characters appearing in $L^2_{\rm res}([\bG(\A)],m_\bG)^{K_0(N)}$ are those whose kernels contain the image of $K_0(N)$ under ${\rm Nr}_B$.\footnote{At full level $U=K$ the residual subspace is reduced to the trivial character. This accounts for the simpler form of the limiting measure in Theorem \ref{main-thm} relative to the results in \cite{Kh}, which are formulated on the homogeneous space $[(\bG\times\bG)(\A)]$.} Then $\eta\circ {\rm Nr}_B$, as $\eta$ varies over characters of ${\rm Nr}_B(\bG(\A))$ with ${\rm Nr}_B(K_0(N))\subset\ker\eta$, forms an orthonormal basis of $L^2_{\rm res}(Y_{K_0(N)},\mu)=L^2_{\rm res}([\bG(\A)],m_\bG)^{K_0(N)}$ adapted to the decomposition \eqref{eq:residual-decomp}. We note that the functions $\eta\circ {\rm Nr}_B$ are indeed $L^2$-normalized since $\mu$ is a probability measure on $Y_{K_0(N)}$.

We now pass to the cuspidal spectrum. We have a Hilbert direct sum decomposition
\begin{equation}\label{eq:L2-K(s)-decomp}
L^2_{\rm cusp}([\bG(\A)]_{K_0(N)},\mu)=\bigoplus_{\sigma\subset L^2_{\rm cusp}([\bG(\A)],m_\bG)} \sigma^{K_0(N)}
\end{equation}
into irreducible subrepresentations having non-zero $K_0(N)$-invariants. We define an orthonormal basis $\mathscr{B}_{\rm cusp}(N)=\bigsqcup_\sigma \mathscr{B}_\sigma(N)$ of $L^2_{\rm cusp}(Y_{K_0(N)},\mu)$, adapted to the decomposition \eqref{eq:L2-K(s)-decomp}, in the following way. For every $\sigma$ appearing non-trivially in \eqref{eq:L2-K(s)-decomp}, let $M\mid N$ be the minimal divisor of $N$ for which $\sigma$ has non-zero $K_0(M)$ invariants. We let $\varphi_\sigma^{\rm new}\in \sigma^{K_0(M)}$ be $L^2$-normalized and take
\[
\mathscr{B}_\sigma^{\rm skew}(N)=\left\{a(L)^{-1}.\varphi_\sigma^{\rm new} : L\mid N/M \right\}.
\]

While each member of $\mathscr{B}_\sigma^{\rm skew}(N)$ is $L^2$-normalized, they are are not orthogonal to each other. We may, however, orthonormalize this ``skew'' basis, while controlling the complexity of the coefficients, as the next lemma shows.

\begin{lemma}\label{lemma:skew-basis}
We may find an orthonormal basis $\mathscr{B}_\sigma(N)$ of $\sigma^{K_0(N)}$ consisting of linear combinations of $\mathscr{B}_\sigma^{\rm skew}(N)$ with coefficients bounded by $O(1)$.
\end{lemma}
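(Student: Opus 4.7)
The plan is to reduce the orthonormalization to a local Gram--Schmidt procedure at each prime $p \mid N/M$, and to reassemble the local outputs via the tensor product structure of both $\sigma$ and the level.

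First I would factorize $\sigma \simeq \widehat{\bigotimes}'_v \sigma_v$ as a restricted tensor product of local irreducible representations. Since $M$ is the conductor of $\sigma$, the local component $\sigma_p$ is unramified for every $p \mid N/M$, and the local oldform space $\sigma_p^{K_0(N)_p}$ has dimension $v_p(N/M)+1$, spanned by the local skew oldforms $\{a_p(p^i)^{-1}.\varphi_{\sigma_p}^{\rm new} : 0 \leq i \leq v_p(N/M)\}$. The global skew basis $\mathscr{B}_\sigma^{\rm skew}(N)$ therefore factors as a pure tensor of these local bases (with the remaining places contributing $\varphi_{\sigma_v}^{\rm new}$), and the global inner product decomposes as a product of local pairings. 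Consequently the Gram matrix of $\mathscr{B}_\sigma^{\rm skew}(N)$ factors as $G = \bigotimes_{p \mid N/M} G_p$, where $G_p$ is the local Gram matrix of the local skew basis.

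Next I would compute each $G_p$ explicitly via the realization of $\sigma_p$ as an unramified principal series of $\mathbf{PGL}_2(\mathbb{Q}_p)$. By a standard calculation with local Whittaker functions or spherical matrix coefficients (Macdonald or Casselman--Shalika), the entries of $G_p$ are explicit rational expressions in the Satake parameters $\alpha_p, \alpha_p^{-1}$ of $\sigma_p$ and in negative powers of $p$. Under the Ramanujan bound $|\alpha_p|=1$ (assumed in Theorem \ref{main-thm} when $B$ is indefinite, and known by Deligne when $B$ is definite), the diagonal entries of $G_p$ may be normalized to equal $1$, and the off-diagonal entries decay as negative powers of $p$; in particular $G_p$ is an $O(1)$ perturbation of the identity and its principal minors are bounded away from zero uniformly in $p$ and in $\sigma$. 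Applying Gram--Schmidt to the local skew basis then yields a local change-of-basis matrix $T_p$ whose entries are $O(1)$ and which tends to the identity as $p \to \infty$ at a rate controlled by the decay of the off-diagonal entries of $G_p$.

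Finally I would take the tensor product $T = \bigotimes_{p \mid N/M} T_p$ and apply it to $\mathscr{B}_\sigma^{\rm skew}(N)$ to obtain the global orthonormal basis $\mathscr{B}_\sigma(N)$. Each coefficient of a new basis vector with respect to $\mathscr{B}_\sigma^{\rm skew}(N)$ is then a finite product of local entries of $T_p$, each bounded by $O(1)$; the uniform convergence of $T_p$ to the identity in $p$ ensures that this product remains $O(1)$ globally, as required.

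The main obstacle is the careful execution of the local Gram matrix computation and the uniform control on $T_p$: tracking the precise normalizations so that $\|\varphi_{\sigma_p}^{\rm new}\|=1$ with respect to the local pairing dovetails with the global $L^2$-normalization on $Y_{K_0(N)}$, and verifying that the off-diagonal entries of $G_p$ decay quickly enough in $p$ for the infinite product defining the entries of $T$ to remain bounded. This is a standard exercise in the theory of Atkin--Lehner oldforms for $\mathbf{PGL}_2(\mathbb{Q}_p)$, but the bookkeeping of volume factors at the various open compact subgroups requires care.
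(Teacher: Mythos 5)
Your approach is essentially the one the paper takes: the paper's proof is a pointer to \cite[Lemma~9]{BMi}, noting that the classical Atkin--Lehner oldform orthonormalization argument there can be written adelically and localized to the primes $p\mid N/M$, which is exactly the tensor-factorization reduction you describe. The main substantive caveat is that your claim ``the off-diagonal entries decay as negative powers of $p$; in particular $G_p$ is an $O(1)$ perturbation of the identity and its principal minors are bounded away from zero uniformly'' is stated as if it were self-evident, but it is in fact the crux, and for small $p$ it does \emph{not} follow from mere decay: for $p=2$ the entries $g_{i,i+1}\asymp p^{-1/2}\lambda_p$ are not small, the row sums of $|g_{ij}|$ over $j\neq i$ are $\gg 1$, and Gershgorin/diagonal-dominance gives nothing when $v_2(N/M)$ is large. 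What actually makes the argument work is the special structure of the oldform Gram matrix (it is, up to scaling, the moment matrix of a measure whose density is $|L_p(\tfrac12+i\theta,\sigma_p)|^{-2}$, bounded away from zero under Ramanujan), and this is precisely what \cite[Lemma~9]{BMi} (building on the Iwaniec--Luo--Sarnak oldform calculus) verifies by explicit inversion. So your plan is correct in outline and matches the paper's route, but ``a standard exercise in bookkeeping'' undersells the one step that requires a real computation; you should either carry out the explicit inversion, or simply quote \cite[Lemma~9]{BMi} as the paper does.

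Two smaller remarks. First, you invoke the Ramanujan conjecture at finite places, which is indeed what the paper assumes globally in Theorem~\ref{main-thm}; note however that \cite[Lemma~9]{BMi} gives coefficients $\ll_\varepsilon N^\varepsilon$ \emph{unconditionally}, which would already suffice for the application in Lemma~\ref{lem:cuspidal-cont} (the $O(1)$ sharpening in the statement is a convenience under Ramanujan, not a necessity). Second, the factorization $G=\bigotimes_{p\mid N/M}G_p$ of the Gram matrix is fine, but you should note that the pure-tensor structure of $\mathscr{B}_\sigma^{\rm skew}(N)$ requires fixing compatible local $L^2$-normalizations $\|\varphi^{\rm new}_{\sigma_p}\|_p=1$ so that the global $L^2$-norm on $Y_{K_0(N)}$ factors; this is the normalization-matching you flag as requiring care, and it does go through because the choice of $L$-invariant probability measures on each $Y_{K_0(N)}$ (rather than measures with growing total mass) absorbs the index factors $[K_f:K_0(N)_f]$.
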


\begin{proof}
This is proved in \cite[Lemma 9]{BMi} for the case $\bG=\mathbf{PGL}_2$, in the classical language; see \cite{SPY} for a more general version and further references. The argument can be written adelically and be made to apply to any $\bG$. Indeed, since $N$ is prime to $d_B$, one only has to work locally at places for which $\bG(\Q_p)\simeq\mathbf{PGL}_2(\Q_p)$.
\end{proof}

In the case $B=\Mat(\Q)$, we choose an explicit orthonormal basis for the level $N$ Eisenstein series. Fix a prime $p$. For $k\in\Z_{\geq 0}$ let
\begin{equation}\label{def-hecke-cong-sbgps}
\mathcal{K}_{0,p}(p^k)=\left\{\begin{pmatrix} a & b\\ c & d\end{pmatrix}\in \mathbf{GL}_2(\Z_p): c\in p^k\Z_p\right\}
\end{equation}
denote the Hecke congruence subgroup of $\mathbf{GL}_2(\Z_p)$ of exponential level $k$. When $k=0$ we write $\mathcal{K}_p$ instead of $\mathcal{K}_{0,p}(1)$.

We let $\chi_p$ be a unitary character of $\Q_p^\times$. We denote by $\ell\in\Z_{\geq 0}$ the exponential conductor of $\chi$. Let $\bH(\chi_p):=\bH(\chi_p,0)$ denote the induced representation from \S\ref{sec:spec-decomp-K(s)}, with inner product given by integration over $\mathbf{GL}_2(\Z_p)$, equipped with its probability Haar measure.

\begin{lemma}\label{lemma:ONB}
The dimension of $\bH(\chi_p)^{\mathcal{K}_{0,p}(p^k)}$ is $\max\{0,k-2\ell+1\}$. In particular, this space is non-zero precisely when $2\ell\leq k$.

When $k=\ell=0$, the one-dimensional space $\bH(\chi_p)^{\mathcal{K}_p}$ is generated by $f_{\chi_p}^\circ$, the unique function in $\bH(\chi_p)$ whose restriction to $\mathbf{GL}_2(\Z_p)$ is the constant function $1$. Moreover $f_{\chi_p}^\circ$ is of $L^2$-norm 1.

When $k\geq1$ and $2\ell\leq k$, an orthonormal basis of $\bH(\chi_p)^{\mathcal{K}_{0,p}(p^k)}$ is given by $\mathscr{B}_{\chi_p}(p^k)=\{f_{\chi_p}^{(j,k,\ell)}:\ell\leq j\leq k-\ell\}$, with $f_{\chi_p}^{(j,k,\ell)}$ defined as follows:
\begin{enumerate}
\item\label{ell0} Assume $\ell=0$. Then $f_{\chi_p}^{(j,k,0)}$ is the unique function in $\bH(\chi_p)$ sending $\left(\begin{smallmatrix} a & b \\ c & d\end{smallmatrix}\right)\in\mathbf{GL}_2(\Z_p)$ to
\[
\begin{cases}
A_j{\bf 1}_{p^j\Z_p^\times}(c),& 0\leq j\leq k-1;\\
A_k{\bf 1}_{p^k\Z_p-\{0\}}(c),&  j=k.
\end{cases}
\]
\item Assume $\ell\geq 1$. Then $f_{\chi_p}^{(j,k,\ell)}$ is the unique function in $\bH(\chi_p)$ sending $\left(\begin{smallmatrix} a & b \\ c & d\end{smallmatrix}\right)\in\mathbf{GL}_2(\Z_p)$ to $\chi_p(ac^{-1})A_j{\bf 1}_{p^j\Z_p^\times}(c)$.
\end{enumerate}
The normalization constant $A_j$ satisfies $A_j\asymp p^{j/2}$, for $0\leq j\leq k$. 
\end{lemma}

\begin{proof}
The uniqueness property follows from the Iwasawa decomposition, which implies that functions in $\bH(\chi_p)$ are uniquely determined by their restriction to $\mathbf{GL}_2(\Z_p)$. The calculation of the dimension of $\bH(\chi_p)^{\mathcal{K}_{0,p}(p^k)}$ is a classical result of Casselman \cite{Ca}. When $\ell\geq 1$, the family $\mathscr{B}_{\chi_p}(p^k)$ is precisely the orthonormal basis described in \cite[\S 2.6]{BH}. When $\ell=0$, the functions $f_{\chi_p}^{(j,k,0)}$ differ from those given in \textit{loc.\ cit.}, but the proof for $\ell\geq 1$ extends to the $\ell=0$ family in \eqref{ell0}. Indeed, for $\chi_p$ unramified the functions $f_{\chi_p}^{(j,k,0)}$ can easily be shown to lie in $\bH(\chi_p)^{\mathcal{K}_{0,p}(p^k)}$; their supports are disjoint, their normalization is unitary, and $|\mathscr{B}_{\chi_p}(p^k)|=\dim \bH(\chi_p)^{\mathcal{K}_{0,p}(p^k)}$.
\end{proof}

If $N=\prod_{p\mid N} p^{k_p}$ is the prime factorization of $N$ we let $\mathcal{K}_0(N)=\prod_{p\nmid N} {\rm GL}_2(\Bbb{Z}_p) \prod_{p\mid N}\mathcal{K}_{0,p}(p^{k_p})$ be the level $N$ Hecke congruence subgroup of $\mathbf{GL}_2(\widehat{\Z})$, where $\mathcal{K}_{0,p}(p^k)\subset\mathbf{GL}_2(\Z_p)$ is defined in \eqref{def-hecke-cong-sbgps}. Let $\chi$ be a character of $\Q^\times\backslash\A^1$ such that $\bH(\chi)$ has non-zero $\mathcal{K}_0(N)$-invariants. In other words, the conductor $M$ of $\chi$ should satisfy $M^2\mid N$. Let $L\mid N/M^2$. We factorize $M=\prod_p p^{\ell_p}$ and $L=\prod_p p^{j_p-\ell_p}$, so that $\ell_p\leq j_p\leq k_p-\ell_p$. Let
\[
f_\chi^{(L,M,N)}=\prod_{p\nmid N}f_{\chi_p}^\circ\prod_{p\mid N}f_{\chi_p}^{(j_p,k_p,\ell_p)}f_{\chi_\infty}^\circ\in\bH(\chi),
\]
where the local factors at the finite places are defined in Lemma \ref{lemma:ONB} and $f_{\chi_\infty}^\circ$ is a unit vector in the one-dimensional space of $K_\infty$-invariants for $\bH(\chi_\infty)$. Let
\[
\mathscr{B}_\chi(N)=\left\{f_\chi^{(L,M,N)} : L\mid N/M^2 \right\}.
\]
Finally we write $\mathscr{B}_{\rm cont}(N)= \bigsqcup_{M^2\mid N} \bigsqcup_{\chi\!\!\mod M}\mathscr{B}_\chi(N)$. We agree to write $(f,\chi)$ for a general element of $\mathscr{B}_{\rm cont}(N)$, where $f\in\mathscr{B}_\chi(N)$.

\begin{remark}\label{rem-cusp-ONB}
The basis in $\mathscr{B}_\sigma^{\rm skew}(N)$ (resp. $\mathscr{B}_\chi(N)$), will be used in the proof of Lemma \ref{lemma-triple-prod} (resp. Lemma \ref{prop-local-RS-bound1}), to bound the local factors of triple product integrals. On the other hand, a pleasant feature of the proof of Lemma \ref{lemma-L-function-sum} (resp. Lemma \ref{lemma:Weyl-Eis-bound}), which bounds the local factors of the toric integrals, is that for a given $\sigma\subset L^2_{\rm cusp}([\bG(\A)],m_\bG)$ (resp. character $\chi$ of $\Q^\times\backslash\A^1$) any orthonormal basis for $\sigma^{K_0(N)}$ (resp. ${\mathbf H}(\chi)$) will do.
\end{remark}

\subsection{Decomposing the joint Heegner period across intermediate Hecke correspondences}\label{sec:spectral-decomp}
Recall the $L^2$-spectral expansion \eqref{eq:spectral-expansion} and its pointwise version in $C_c^\infty([\bG(\A)])$. By taking $K_0(N)$-invariants, we deduce that for any $\varphi\in C_c^\infty(Y_0(N))$ we have
\begin{equation}
\label{eq:spectral-expansion2}
\begin{aligned}
\varphi(g)=\langle \varphi,{\bf 1}\rangle&+\sum_{u\in\mathscr{B}_{\rm cusp}(N)}\langle \varphi,u\rangle u(g) \\
&+ \delta_{B=\Mat(\Q)}\frac{1}{2\pi i}\int_{{\rm Re}({\sf s})=1/2} \sum_{(f,\chi)\in \mathscr{B}_{\rm cont}(N)}  \langle \varphi, E_{f,\chi}(\cdot, {\sf s})\rangle  E_{f,\chi}(g,{\sf s}) \, {\rm d} {\sf s}.
\end{aligned}
\end{equation}
Here, we have simplified the contribution from the residual spectrum, whose $K_0(N)$-invariants reduce to a sum over $\eta\in ({\rm Nr}_B(\bG(\A))^\vee$ with $\ker\eta\subset {\rm Nr}_B(K_0(N))$. A local computation shows that ${\rm Nr}_B(K_0(N))\equiv\widehat{\Z}^\times \bmod {\A_f^{\times}}^2$. The description of ${\rm Nr}_B(\bG(\A))$ in \eqref{eq:char-group} and the triviality of the class group of $\Q$ imply ${\rm Nr}_B(\bG(\A))\slash {\rm Nr}_B(K_0(N))=1$, so that only the trivial character ${\bf 1}$ contributes.

The above spectral expansion holds \textit{a priori} only in $C_c^\infty(Y_0(N))$. Nevertheless, we claim that a version of it continues to hold in the non-compact case when $B=\Mat(\Q)$, again with absolute convergence in the sums and integrals on the right-hand side. Namely, for $A\geq 1$ large enough, and for functions in the Sobolev-type space
\begin{equation}\label{eq:Sob}
\{\varphi\in C^\infty(Y_0(N)):  \|\Delta^a\varphi\|_1\ll_{a,\varphi}1,\; \Delta^a\varphi \textrm{ decays rapidly at all cusps}, \forall a\leq A\},
\end{equation}
we can argue as follows. For the sum over $u\in\mathscr{B}_{\rm cusp}(N)$ in \eqref{eq:spectral-expansion2}, integration by parts, a global sup-norm bound $\|u\|_\infty\ll_N \lambda_\sigma^B$ for some fixed $B>0$, and the assumed estimates on $\|D\varphi\|_1$ imply that $\langle \varphi,u\rangle \ll_{A,\varphi} \lambda_{\sigma}^{-A+B}$, whence $\langle \varphi,u\rangle u(g)\ll_{A,\varphi} \lambda_{\sigma}^{-A+2B}$ uniformly in $g$. Taking $A$ large enough, the Weyl law for the cuspidal spectrum implies that the sum over $u\in\mathscr{B}_{\rm cusp}(N)$ converges absolutely. For the integral over Eisenstein series in \eqref{eq:spectral-expansion2}, a similar argument applies, but we need to use the rapid decay of the derivatives of $\varphi$, and a sup-norm bound of the form $\|E_{f,\chi}(\cdot,{\sf s})\restriction_\Omega\|_\infty \ll_{\Omega, f,\chi} |{\sf s}|^B$ for all $\Omega\subset Y_0(N)$ compact. With the claim established, the right-hand side of \eqref{eq:spectral-expansion2} for such $\varphi$ is a continuous function, and a pointwise equality can be deduced from the $L^2$-expansion.

Let $s\in \bT(\A)$, $\sk\in\mathcal{I}(\mathscr{O}_\scD)$, and $N=\Nr\sk$ be as in \S\ref{sec:level-N-Hecke-cor}. We are now ready to prove  the spectral expansion of the joint Heegner period $P_\scD^\Delta  (f_1\otimes f_2;s)$ across the Hecke correspondence $Y_0^\Delta(N)$.

\begin{prop}\label{lemma-spectral-decomp}
Let $f_1,f_2\colon Y_K\to \mathbb{C}$ be cuspidal Laplace eigenfunctions. For $N\in\N$ coprime to $d_B$ let $T_{a(1/N)}$ be the Hecke operator from \eqref{defn:Hecke-operator}, and assume that one of $f_1,f_2$  is a $T_N$-eigenfunction. Denote by $\lambda(N)$ its eigenvalue. Let $\scD$ be a homogeneous Heegner datum with $g_f=e$. With $W_\scD$ as in \eqref{periodintegral} we have
\begin{align*}
P_\scD^\Delta  (f_1\otimes f_2;s) &=\lambda(N)\langle f_1,f_2\rangle\\
&+ \sum_{u\in\mathscr{B}_{\rm cusp}(N)} \langle f_1(\cdot) f_2(\cdot\, a(1/N)), u\rangle  W_\scD(u)+ \delta_{B=\Mat(\Q)} {\rm Eis}_N(f_1,f_2),
\end{align*}
where ${\rm Eis}_N(f_1,f_2)$ is defined as
\[
\frac{1}{2\pi i}\int_{{\rm Re}({\sf s})=1/2} \sum_{(f,\chi)\in \mathscr{B}_{\rm cont}(N)}\langle f_1(\cdot )f_2(\cdot a(1/N)), E_{f,\chi}(\cdot, {\sf s})\rangle W_\scD(E_{f,\chi}(\cdot, {\sf s}))\, {\rm d} {\sf s}.
\]
\end{prop}

\begin{remark}
The joint Heegner period $P_\scD^\Delta  (f_1\otimes f_2;s)$ is invariant under $s\mapsto t_\Q s$, where $t_\Q\in\bT(\Q)$. Such a substitution changes $\sk$ to another primitive integral ideal $\mathfrak{n}$ in the same ideal class as $\sk$, and all such ideals arise from an appropriate choice of $t_\Q$. Applying Proposition \ref{lemma-spectral-decomp} with $t_\Q s$ then yields another expansion of $P_\scD^\Delta  (f_1\otimes f_2;s)$, but now relative to the level $\Nr \mathfrak{n}$. In particular, one may choose $t_\Q$ in such a way as to minimize $\Nr \mathfrak{n}$, as in the definition \eqref{eq:defn-q}.
\end{remark}
\begin{proof}
We use \eqref{eq:joint-period-to-reg} to write $P_\scD^\Delta  (f_1\otimes f_2;s)=P_\scD\big(\beta_K\circ(f_1\otimes f_2)\restriction_{Y^0_\Delta(N)}\big)$.
We spectrally expand $\beta_K\circ(f_1\otimes f_2)|_{Y_0^\Delta(N)}$ across an orthonormal basis for $L^2(Y_{K_0(N)},\mu)$. We apply the spectral expansion \eqref{eq:spectral-expansion2} to the function $g\mapsto f_1(g)f_2(gs)$ on $Y_{K_0(N)}$, which lies in \eqref{eq:Sob} since both $f_1,f_2$ are cuspidal, and take the integral over $H_\scD=[\bT(\A)g]$. Absolute convergence in the spectral expansion above  implies that we can exchange the summation over the discrete spectrum and the integral over the continuous spectrum with the torus integral. We obtain the statement of the proposition, upon writing $\langle f_1(\cdot)f_2(\cdot\, a(1/N)),{\bf 1}\rangle$ as
\begin{align*}
 \int_{[\bG(\A)]_{K_0(N)}}f_1(g)f_2(ga(1/N))\,{\rm d}\mu(g)&=\int_{[\bG(\A)]_K}\frac{1}{{\rm deg}(a(1/N))}\sum_{g\in\delta_1^{-1}(x)} f_1(g)f_2(g a(1/N))\,{\rm d}\mu(x)\\
&=\int_{[\bG(\A)]_K}f_1(x)\frac{1}{{\rm deg}(a(1/N))}\sum_{y\in \delta_2(\delta_1^{-1}(x))}f_2(y)\,{\rm d}\mu(x),
\end{align*}
which one recognizes as $\langle f_1, T_{a(1/N)}f_2\rangle=\lambda(N)\langle f_1,f_2\rangle$.\end{proof}

\section{The main estimate for small $q$}\label{sec:main-estimate}
Let $s\in\bT(\A_f)$. As usual we denote by $\sk$ the unique primitive integral ideal representing the image of $s$ under \eqref{eq:s-to-ideal} and write $N=\Nr\sk$. As in \S\ref{sec442}, we may and will assume that $N$ is coprime to $d_B$. Recall from \eqref{eq:defn-q} that $q\leq N$ denotes the minimal norm of an integral ideal representing the class of $\sk$ in ${\rm Pic}(\mathscr{O}_E)$. Let $\scD=(\iota,g)$ be a homogeneous Heegner datum, defined in \S\ref{sec:special}, and assume that $g_f=e$, as in \S\ref{reduction-to-arch-trans}.

The goal of this section is to prove a non-trivial bound on the joint Heegner period $P_\scD^\Delta(\varphi;s)$, for arithmetic test functions $\varphi$ in $L^2_{\rm cusp}(Y_K\times Y_K,\mu\times\mu)$, when $q$ is small relative to $D^{1/2}$.

\begin{prop}\label{prop-smallq}
Let $f_1,f_2\in L^2_{\rm cusp}(Y_K,\mu)$ be $L^2$-normalized Hecke cusp forms. Assume the Lindel\"of Hypothesis and, whenever $B$ is indefinite, the Ramanujan conjecture for $\mathbf{PGL}_2/\Q$. Then, with notation as above,
\[
P_\scD^\Delta(f_1\otimes f_2;s) \ll_\varepsilon N^{-1/2+\varepsilon}+ N^{1/2}D^{-1/4+\varepsilon}.
\]
In particular, for any $\varepsilon>0$, as $q\rightarrow\infty$ subject to $q\leq D^{1/2-\varepsilon}$, we have $P_\scD^\Delta(f_1\otimes f_2;s) =o(1)$.
\end{prop}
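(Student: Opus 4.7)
The plan is to apply Proposition \ref{lemma-spectral-decomp} to the joint period $\mathscr{P}_\scD^\Delta(f_1\otimes f_2;s)$, which expresses it as the sum of three contributions -- a diagonal term $\lambda(N)\langle f_1,f_2\rangle$, a cuspidal spectral sum, and (when $B=\Mat(\Q)$) an Eisenstein contribution -- and to bound each in turn. The first yields the summand $N^{-1/2+\varepsilon}$ in the claimed bound, while the latter two each contribute $N^{1/2}D^{-1/4+\varepsilon}$.

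The diagonal term is immediate: either $\langle f_1, f_2\rangle = 0$ (if $f_1 \neq f_2$), or $f_1 = f_2$, in which case $|\lambda(N)|\ll_\varepsilon N^{-1/2+\varepsilon}$ under the normalization of \eqref{defn:Hecke-operator}, by Deligne's bound when $B$ is definite or the assumed Ramanujan conjecture when $B$ is indefinite. For the cuspidal sum, I would first truncate effectively at spectral parameter of polylogarithmic size using integration by parts against the Laplacian -- the forms $f_1,f_2$ being smooth Laplace eigenfunctions, the pairing $\langle f_1(\cdot) f_2(\cdot\, a(1/N)), u\rangle$ decays rapidly in the Laplace eigenvalue of $u$ -- so that Weyl's law on $Y_{K_0(N)}$ caps the effective length at $O(N^{1+\varepsilon})$. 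For each term I would apply Ichino's triple product formula to obtain a bound of the shape
\[
|\langle f_1(\cdot) f_2(\cdot\, a(1/N)), u\rangle|^2 \ll \frac{L(1/2, \pi_{f_1}\times\pi_{f_2}\times\pi_u)}{L(1,\mathrm{Ad}\,\pi_{f_1})L(1,\mathrm{Ad}\,\pi_{f_2})L(1,\mathrm{Ad}\,\pi_u)} \cdot \frac{1}{N^{1-\varepsilon}},
\]
the local factors at primes dividing $N$ supplying the $N^{-1}$ saving because $u$ is $L^2$-normalized against the curve $Y_{K_0(N)}$ of volume $\asymp N$. Under Lindelöf the numerator is $(DN)^\varepsilon$ and the denominator $(DN)^{-\varepsilon}$, yielding $|\langle f_1\cdot f_2(\cdot\, a(1/N)), u\rangle| \ll N^{-1/2+\varepsilon}$. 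Parallel to this, Waldspurger's formula \eqref{eq:FMP-Walds} applied with trivial class group character factors $|\mathscr{W}_\scD(u)|^2$ as $D^{-1/2}\cdot L(1/2,\pi_u\times\chi_E)/(L(1,\chi_E)^2 L(1,\mathrm{Ad}\,\pi_u))$ times local factors, giving $|\mathscr{W}_\scD(u)| \ll D^{-1/4+\varepsilon}$ under Lindelöf. Multiplying and summing over $O(N^{1+\varepsilon})$ basis elements produces $N^{1/2}D^{-1/4+\varepsilon}$; oldforms are absorbed by Lemma \ref{lemma:skew-basis}, since the orthonormal basis $\mathscr{B}_\sigma(N)$ differs from the skew basis $\mathscr{B}_\sigma^{\rm skew}(N)$ by an $O(1)$ change of coordinates and the number of divisors $L\mid N/M$ is $O(N^\varepsilon)$.

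The Eisenstein contribution, present only when $B=\Mat(\Q)$, I would treat by the same strategy using the Schwartz basis $\mathscr{B}_{\rm cont}(N)$ of Section \ref{sec:sch-basis-continuous}. The triple product $\langle f_1\cdot f_2(\cdot\, a(1/N)), E_{\Psi,\chi}(\cdot,{\sf s})\rangle$ unfolds via Rankin--Selberg to a product of Hecke $L$-values $L(1/2\pm {\sf s}, \pi_{f_1}\times\pi_{f_2}\times\chi^{\pm 1})$, while the Heegner period $\mathscr{W}_\scD(E_{\Psi,\chi}(\cdot,{\sf s}))$ decomposes by Hecke's formula into $L(1/2+{\sf s},\chi)L(1/2+{\sf s},\chi\chi_E)$ divided by values at $1$; under Lindelöf both sides are $O((DN(1+|{\sf s}|))^\varepsilon)$ on the critical line. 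The integral over ${\sf s}$ converges absolutely by the rapid decay of the triple product in $\Im({\sf s})$ inherited from the archimedean type of $f_1,f_2$, and the sum over $(\Psi,\chi)\in\mathscr{B}_{\rm cont}(N)$ is again effectively of length $O(N^{1+\varepsilon})$, yielding the same $N^{1/2}D^{-1/4+\varepsilon}$.

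The main technical obstacle I foresee lies in the careful bookkeeping of the local factors at primes $p\mid N$ appearing in Ichino's formula for oldforms expressed via $\mathscr{B}_\sigma^{\rm skew}(N)$, and in the analogous Rankin--Selberg local integrals for the Eisenstein basis $\Psi_\chi^{(L,M,N)}$. These local factors depend delicately on the auxiliary divisors and on the conductor $M$ of $\chi$, and one must verify that after summing over the basis and applying Lemma \ref{lemma:skew-basis} the combined dependence is absorbed in an acceptable $N^\varepsilon$, leaving the uniform per-term savings of $N^{-1/2+\varepsilon}$ intact.
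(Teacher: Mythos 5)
Your proposal follows the same route the paper takes: expand $\mathscr{P}_\scD^\Delta(f_1\otimes f_2;s)$ across $Y_0^\Delta(N)$ via Proposition~\ref{lemma-spectral-decomp}, bound the residual term by the Ramanujan bound on $\lambda(N)$, bound the cuspidal terms by Ichino (Lemma~\ref{lemma-triple-prod}) and Waldspurger (Lemma~\ref{lemma-L-function-sum}) giving $N^{-1/2+\varepsilon}$ and $D^{-1/4+\varepsilon}$ per term under Lindel\"of, truncate the spectral sum to $\sim N^{1+\varepsilon}$ effective terms, and treat the Eisenstein contribution analogously via Rankin--Selberg and the Hecke formula. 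You also correctly identify the genuine technical weight of the argument as the local-factor bounds at $p\mid N$ for oldforms and for the Schwartz basis, which is exactly what Sections~\ref{appendix:triple-prod} and~\ref{sec:bounding-local-Tate} of the paper carry out.
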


We remind the reader that the implied constant in Proposition \ref{prop-smallq}, which provides the first estimate on $h_\varepsilon(q,D))$ in the error term of Theorem \ref{main-thm}, depends on the quaternion algebra $B$. We shall not include this dependence in the notation, since $B$ is considered to be fixed throughout.

The proof follows the basic contours of the proof sketch in \S \ref{sec:bottom-q-sketch}. Indeed, after applying the spectral expansion from Proposition \ref{lemma-spectral-decomp}, it remains to bound the triple  products
\[
\langle f_1(\cdot)f_2(\cdot\, a(1/N)),u\rangle\quad\textrm{and}\quad \langle f_1(\cdot )f_2(\cdot\, a(1/N)), E_{f,\chi}(\cdot, 1/2+it)\rangle,
\]
as well as the level $K_0(N)$ toric periods
\[
W_\scD(u)\quad\textrm{and}\quad W_\scD(E_{f,\chi}(\cdot, 1/2+it)).
\]
We shall do so now by using their relation with various $L$-functions, whose size is controlled by the Lindel\"of Hypothesis. Such period formulae also involve products of local factors at bad primes, which we furthermore have to control. We bound these local factors for the toric periods of cusp forms and Eisenstein series in Lemmata \ref{lemma-L-function-sum} and \ref{lemma:Weyl-Eis-bound}. For the triple product local factors, we do this in \S\ref{appendix:triple-prod}, with the assumption of the Ramanujan conjecture providing optimal exponents. Somewhat weaker bounds can be obtained unconditionally in the same way. 

\subsection{The cuspidal periods}\label{sec:estimate-cusp-cont}  
In this section we bound both $\langle f_1(\cdot)f_2(\cdot\, a(1/N)),u\rangle$ and $W_\scD(u)$ for $u$ cuspidal of level $N$. Our immediate goal will simply be to relate these to $L$-functions using appropriate period formulae, while retaining good control on the local factors.

For $\sigma \subset L^2_{\rm cusp}([\bG(\A)],m_\bG)$ with $\sigma^{K_0(N)}\neq \{0\}$, and $u\in\mathcal{B}_\sigma(N)$, we begin by bounding the cuspidal triple product $\langle f_1(\cdot)f_2(\cdot\, a(1/N)),u\rangle$ (or, rather, its square) by a central $L$-value, using the Ichino--Watson formula, and the bounds on local factors proved in \S\ref{appendix:triple-prod}.

Let $f_1,f_2\in L^2_{\rm cusp}(Y_K)$ be $L^2$-normalized Hecke eigenforms, as in the statement of Proposition \ref{prop-smallq}. Then $f_i$ generates an irreducible cuspidal automorphic representation $\sigma_i$ of $\bG(\A)$. Let $\pi_i=\sigma_i^{\rm JL}$ denote the Jacquet--Langlands lift of $\sigma_i$ to $\mathbf{PGL}_2(\A)$. Since the level structure $U=K$ is taken to be almost maximal, the conductor of $\pi_1$ and $\pi_2$ is the reduced discriminant $d_B$ of $B$.

\begin{lemma}\label{lemma-triple-prod}
Let $N\geq 1$ be an integer coprime with $d_B$. Let $\sigma \subset L^2_{\rm cusp}([\bG(\A)],m_\bG)$ with $\sigma^{K_0(N)}\neq \{0\}$ and let $\pi=\sigma^{\rm JL}$ denote the Jacquet--Langlands lift of $\sigma$ to $\mathbf{PGL}_2(\A)$. Let $M\mid N$ be the conductor of $\sigma$ and let $L\mid N/M$. Let $u^{\rm new}\in\sigma^{K_0(M)}$ be an $L^2$-normalized new vector. Then, assuming the Ramanujan conjecture for $\mathbf{PGL}_2/\Q$,
\begin{displaymath}
\begin{split}
\langle f_1(\cdot)f_2(\cdot\, a(1/N)), &a(1/L).u^{\rm new}\rangle^2\\
&\ll_\varepsilon N^{-1+\varepsilon} L(1/2,\pi_1\otimes \pi_2\otimes \pi) \min\Big(1, \exp[- \pi (|\nu_\pi | - |\nu_{\pi_1}| - |\nu_{\pi_2}|) ]\Big),
\end{split}
\end{displaymath}
where the spectral parameter $\nu_\pi$ of $\pi$ was defined in \S\ref{sec:reduction-to-sparse}. 
\end{lemma}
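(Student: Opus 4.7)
The plan is to apply Ichino's triple product formula to convert the squared period into a central $L$-value divided by adjoint $L$-values, multiplied by a product of local integrals, and then to estimate each of the local integrals as well as the adjoint $L$-values. More precisely, first I would write
\[
\langle f_1(\cdot)f_2(\cdot\, a(1/N)), a(1/L).u^{\rm new}\rangle = \int_{[\bG(\A)]_{K_0(N)}} f_1(g)\,f_2(g\,a(1/N))\,\overline{a(1/L).u^{\rm new}(g)}\,{\rm d}\mu(g),
\]
so that the left-hand side is a triple period of three vectors (in $\sigma_1$, $\sigma_2$ and $\sigma$) viewed as automorphic forms on $\bG(\A)$ that are each $L^2$-normalized. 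By Ichino's formula (applied after Jacquet--Langlands transfer and combined with the bound on the Petersson norms of newforms), we obtain an identity of the shape
\[
\bigl|\langle f_1(\cdot)f_2(\cdot\, a(1/N)), a(1/L).u^{\rm new}\rangle\bigr|^2 = \frac{1}{8}\,\frac{L(1/2,\pi_1\otimes\pi_2\otimes\pi)}{L(1,\pi_1,\mathrm{Ad})L(1,\pi_2,\mathrm{Ad})L(1,\pi,\mathrm{Ad})}\prod_v I_v,
\]
where each $I_v$ is the normalized local trilinear form evaluated on the local vectors.

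Next, I would estimate the local factors place by place. At primes $p\nmid Nd_B$ every vector is spherical, hence $I_p=1$. At primes $p\mid d_B$ the representations $\sigma_{i,p}$ and $\sigma_p$ are all fixed (special or supercuspidal) and the local vectors are newvectors, so the local integrals are bounded by $O(1)$ uniformly. At the archimedean place, since $K_\infty$ fixes all three vectors and the representations involved have explicit matrix coefficients, a standard computation (as in Michel--Venkatesh or Nelson) gives
\[
I_\infty \ll_{f_1,f_2,\varepsilon} \min\bigl(1, \exp[-\pi(|\nu_\pi|-|\nu_{\pi_1}|-|\nu_{\pi_2}|)]\bigr),
\]
with at most polynomial dependence on the fixed archimedean parameters of $f_1,f_2$.

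The main obstacle is the local analysis at primes $p\mid N$. Here the vector $a(1/L).u^{\rm new}$ is a translate of a newvector of level $p^{f_p}$ by $a(1/p^{\ell_p})$, with $f_p\le k_p$ and $f_p+\ell_p\le k_p$, while $f_{2,p}$ is translated by $a(1/p^{k_p})$. Using the explicit formulas of Hu/Nelson--Pitale--Saha (together with the Ramanujan conjecture to bound local Hecke eigenvalues by $1$ in the resulting geometric sums), one computes
\[
I_p \ll_\varepsilon p^{-k_p+\varepsilon p^{-1}},
\]
so that multiplying over all $p\mid N$ one gets $\prod_{p\mid N} I_p \ll_\varepsilon N^{-1+\varepsilon}$. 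The exponent $-1$ reflects the $\asymp N$ degree of the covering $[\bG(\A)]_{K_0(N)}\to[\bG(\A)]_K$, which is the price of localizing the triple product at Eichler level $N$.

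Finally, under the Ramanujan conjecture and Hoffstein--Lockhart/Brumley, the adjoint $L$-values satisfy $L(1,\pi_i,\mathrm{Ad})^{-1},\,L(1,\pi,\mathrm{Ad})^{-1}\ll_\varepsilon (C(\pi)d_BN)^\varepsilon$, where $C(\pi)$ is the analytic conductor of $\pi$; these powers of $C(\pi)$ are absorbed into the implicit polynomial dependence on the archimedean parameter of $\pi$. Assembling all of these estimates yields the bound claimed in the lemma.
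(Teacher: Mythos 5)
Your proposal is correct and follows essentially the same route as the paper: convert the squared period into the triple product $L$-value via the Ichino--Watson formula (applied directly on $\bG$; note one does not ``transfer and then apply Ichino'' --- the formula lives on the quaternion algebra, with the $L$-functions attached to the Jacquet--Langlands lifts), control the adjoint $L$-values via Hoffstein--Lockhart, bound $I_\infty$ by the explicit archimedean computation, and show $\prod_{p\mid N}I_p\ll_\varepsilon N^{-1+\varepsilon}$. The only point of divergence is that you outsource the non-archimedean local estimates at $p\mid N$ to Hu / Nelson--Pitale--Saha, whereas the paper proves its own bound (Proposition~\ref{prop-local-RS-bound}(1), via a linearization to a Rankin--Selberg integral and an Iwahori-coordinate computation) precisely because the test-vector configuration here --- the oldform $a(1/L).u^{\rm new}$ with $L\mid N/M$ paired against a translate $a(1/N).f_{2}$ of a spherical vector --- is not covered verbatim in the cited literature; if you do want to cite rather than prove, you should check that the references handle this exact translated-oldform case with the uniformity in $L, M, N$ that the final summation over $\mathscr{B}_{\rm cusp}(N)$ requires.
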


\begin{proof}
Fix isomorphisms $\pi_i\simeq\otimes_v\pi_{i,v}$ and denote  by $\otimes_v f_{i,v}$ the image of $f_i$. Let $u=a(1/L).u^{\rm new}$ and write $\otimes_v u_v$ for the image of $u$ under a fixed isomorphism $\sigma\simeq\otimes_v \sigma_v$. Thus, if $p^\ell\| M$, $p^k\| N$, and $p^d\| L$, with $\ell+d\leq k$, then $u_p=a(\varpi_p^{-d}).u_p^{\rm new}$. For all but finitely many places $v$ the local components $f_{i,v}, u_v$ are unitary.

We now convert $|\langle f_1(\cdot)f_2(\cdot\, a(1/N)), u\rangle|^2$ to a product of local triple product model integrals and a global triple product $L$-function. We use the local measure conventions of \S\ref{sec:Shimura}, for which $m_\bG=C_\bG^{-1}\prod_v{\rm d}g_v$. We begin by defining the following $L^2$-normalized matrix coefficients for $\pi_{1, v},\pi_{2, v}$ and $\sigma_v$, respectively:
\[
\Phi_{1,v}(g)=\langle g.f_{1,v},f_{1,v} \rangle/\| f_{1, v}\|^2,
\]
\[
\Phi_{2,v}(g)=
\begin{cases}
\langle ga(\varpi_p^{-k}).f_{2,p},a(\varpi_p^{-k}).f_{2,p} \rangle/\| f_{2, p}\|^2,& v=p, \; p^k\|N, k\geq 1;\\
\langle g.f_{2,v},f_{2,v} \rangle/\| f_{2, v}\|^2,& v\nmid N,
\end{cases}
\]
and
\[
\Upsilon_v(g)=
\begin{cases}
\langle g.a(\varpi_p^{-d}).u_p^{\rm new},a(\varpi_p^{-d}).u_p^{\rm new} \rangle/\| u_p\|^2, & v=p,\, p^d\|L,\, d\geq 1;\\
\langle g.u_v,u_v \rangle/\| u_v\|^2,& v\nmid L.
\end{cases}
\]
The local triple product integral
\[
I_v =\int_{\bG(\Q_v)}\Phi_{1,v}(g_v)\Phi_{2,v}(g_v)\Upsilon_v(g_v)\, {\rm d} g_v
\]
converges absolutely \cite[Lemma 2.1]{Ich}. Let $I_S=\prod_{v\in S}I_v$, where $S$ denotes the set of primes dividing $Nd_B$, as well as the archimedean place. The Ichino--Watson formula states \cite{Ich, Wat} that
\[
|\langle f_1(\cdot)f_2(\cdot\, a(1/N)), u\rangle|^2=c I_S\frac{L^S(1/2, \pi_1\otimes \pi_2\otimes \pi)}{L^S(1,\pi_1, {\rm Ad})L^S(1, \pi_2, {\rm Ad})L^S(1,\pi, {\rm Ad})},
\]
for a constant $c>0$ depending only on local measure normalizations and coarsely bounded by $c \ll \exp(O(|S|)) \ll (Nd_B)^\varepsilon$, where the Euler products defining the $L$-functions are restricted to primes $p\notin S$. For the sake of cleanly quoting bounds on $L$-functions, we may insert the local $L$-factors at primes in $S$, at the cost of a multiplicative factor of $O_\varepsilon((Nd_B)^\varepsilon)$. Indeed, this follows from the control on their Dirichlet coefficients granted by the Ramanujan conjecture (power-saving improvements over the trivial bound suffice).

We now apply the Hoffstein--Lockhart bound \cite{HL} on the adjoint $L$-values, and incorporate the dependence on $d_B$ into the implied constant (as is our general convention throughout the paper), to obtain
\[
\langle f_1(\cdot)f_2(\cdot\, a(1/N)), u\rangle^2\ll_\varepsilon N^\varepsilon |I_S| |L(1/2, \pi_1\otimes \pi_2\otimes \pi)|.
\]
It therefore remains to estimate the contribution of the local integrals $I_S$.

For $p\mid d_B$, we bound $I_p$ using H\"older's inequality
\[
|I_p|\leq \|\Phi_{1,p}\|_3 \|\Phi_{2,p}\|_3\|\Upsilon_p\|_3.
\]
Instead of the full strength of the Ramanujan conjecture, we may use the Kim--Sarnak bound \cite{KS} for the $L^3$-integrability of matrix coefficients, to obtain the finiteness of the right-hand side. This yields a $O(1)$ bound depending on $p$; as $p$ divides the fixed $d_B$, we do not indicate this dependence. For $p^k\|N$, $k\geq 1$, we bound $I_p$ using Proposition \ref{prop-local-RS-bound} and the Ramanujan conjecture, which yields $I_p\ll p^{-k}$. Finally, at the real place we invoke Lemma \ref{lemma:arch-3prod}, which in the Maass case simplifies under the hypothesis that $\Re\nu_j=0$, while in the holomorphic case the characteristic function on $|\nu_\pi | \leq  |\nu_{\pi_1}| + |\nu_{\pi_2}|$ is majorized by the exponential.
\end{proof}

Let $\sigma\subset L^2_{\rm cusp}([\bG(\A)])$ and $u\in\mathscr{B}_\sigma(N)$. Using Waldspurger's theorem, along with some elementary upper bounds on local factors, we shall now bound $|W_\scD(u)|^2$ by a central $L$-value. Recall that the global order $\mathscr{O}_\mathscr{D}$ is maximal, so that $D$ is the discriminant of the underlying imaginary quadratic field $E$.

\begin{lemma}\label{lemma-L-function-sum}
We have 
\[
W_\scD(u)\ll_\varepsilon D^{-1/4+\varepsilon} N^\varepsilon L(1/2, \sigma^{\rm JL}\times \theta_E)^{1/2}.
\]
\end{lemma}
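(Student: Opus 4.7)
The plan is to convert the squared Weyl sum into a central $L$-value by Waldspurger's period formula for the trivial class-group character, after first reducing from the orthonormal basis $\mathscr{B}_\sigma(N)$ to the more tractable skew basis.

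First I would invoke Lemma \ref{lemma:skew-basis} to write $u$ as an $O(1)$-coefficient linear combination of the skew-basis vectors $\{a(L)^{-1}.\varphi_\sigma^{\rm new} : L\mid N/M\}$, where $M\mid N$ is the exact conductor of $\sigma$. It then suffices to bound each such period. By unfolding, $\mathscr{W}_\scD(a(L)^{-1}.\varphi_\sigma^{\rm new})$ is the period of the $L^2$-normalized new vector $\varphi_\sigma^{\rm new}$ against the trivial character along the translated homogeneous toral set $[\bT(\A)g_\infty a(L)^{-1}]$ with its invariant probability measure.

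Next I would apply Waldspurger's formula in the explicit form of \cite{FMP} (the same reference used for \eqref{eq:FMP-Walds}, but now specialized to $\chi=\mathbf{1}$) to obtain
\[
\bigl|\mathscr{W}_\scD(a(L)^{-1}.\varphi_\sigma^{\rm new})\bigr|^2 = c_\bG\cdot C_{\rm loc}(L)\cdot \frac{1}{\sqrt{D}}\cdot \frac{L^S(1/2,\pi\otimes \theta_E)}{L(1,\chi_E)^2\, L(1,\pi,\mathrm{Ad})}\cdot F(\pi_\infty),
\]
where $\pi=\sigma^{\rm JL}$, $\theta_E=\mathbf{1}\boxplus \chi_E$ (so that $L(s,\pi\otimes \theta_E)=L(s,\pi)L(s,\pi\otimes \chi_E)$), the constant $c_\bG$ depends only on $\bG$, $F(\pi_\infty)$ depends only on the archimedean data of $\pi$, and $C_{\rm loc}(L)$ is a product of local matrix-coefficient integrals at the finitely many places in $S$. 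The two standard estimates $L(1,\pi,\mathrm{Ad})\gg_\varepsilon (Nd_B)^{-\varepsilon}$ of Hoffstein--Lockhart \cite{HL} and $L(1,\chi_E)\gg_\varepsilon D^{-\varepsilon}$ then absorb the denominator into the implied constant, yielding the advertised $D^{-1/4}$ factor after taking square roots.

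The main obstacle is control of the local constant $C_{\rm loc}(L)$. At primes $p\mid d_B$ it is $O(1)$ by the same H\"older/$L^3$-integrability argument (via the Kim--Sarnak bound) as used in the proof of Lemma \ref{lemma-triple-prod}. At primes $p\mid L$ the local test vector $a(p^{-d_p}).\varphi_{\sigma_p}^{\rm new}$ is a non-standard shift of the local new vector, and the corresponding local Waldspurger integral must be bounded carefully; this is precisely the content of the local Tate-style analysis of \S\ref{sec:bounding-local-Tate} and is expected to yield $C_{\rm loc}(L)\ll_\varepsilon N^\varepsilon$. Inserting these estimates into the displayed identity and taking square roots completes the proof, with all $\varepsilon$-losses absorbed into the implied constant.
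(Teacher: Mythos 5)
Your high-level plan (Waldspurger $\Rightarrow$ central $L$-value, then Hoffstein--Lockhart and Siegel to kill the denominator and a local-factor bound to extract $D^{-1/4}$) is the right one, but there are two issues, one of which is a genuine gap.

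First, the detour through Lemma \ref{lemma:skew-basis} and the shifted new vectors $a(L)^{-1}.\varphi_\sigma^{\rm new}$ is unnecessary and costs you an $N^\varepsilon$ you don't need to pay: the paper applies the abstract form of Waldspurger's formula \cite[Proposition 7]{Wa} directly to the vector $u$, whatever it is. The local factors are then $I_p=\int_{\bT(\Q_p)}\langle\sigma_p(t)u_p,u_p\rangle\,{\rm d}t$, and the crucial (and very simple) observation is that $|\langle\sigma_p(t)u_p,u_p\rangle|\le 1$ for \emph{any} unit vector $u_p$ by Cauchy--Schwarz, so $I_p\ll 1$ for $p\nmid D$ (convergence) and $I_p\le\vol(\bT(\Q_p))=D_p^{-1/2}$ for $p\mid D$. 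Nothing in the argument cares which orthonormal basis of $\sigma^{K_0(N)}$ you use — the paper even flags this explicitly in the remark following Lemma \ref{lemma:skew-basis}.

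Second, and this is the real problem: you assert that the local constant $C_{\rm loc}(L)$ at primes $p\mid L$ "is precisely the content of the local Tate-style analysis of \S\ref{sec:bounding-local-Tate}." That is not so. Section \ref{sec:bounding-local-Tate} bounds the local \emph{Tate integrals} $\mathcal{Z}_{\scD,p}(\Psi_p,\cdot)$ arising from the Hecke formula for \emph{Eisenstein series} toric periods (Corollary \ref{cor:Hecke}); it says nothing about the local Waldspurger integrals $\int_{\bT(\Q_p)}\langle\sigma_p(t)u_p,u_p\rangle\,{\rm d}t$ attached to a cuspidal $\sigma_p$ with shifted-newvector test vector. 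If you really wanted to run the proof through the explicit formula of \cite{FMP}, you would need to compute these local integrals for the vectors $a(\varpi_p^{-d_p}).\varphi_{\sigma_p}^{\rm new}$ from scratch — an analogue of Proposition \ref{prop-local-RS-bound}, not of Lemmata \ref{lemma:Tate-split-p}--\ref{lemma:Tate-ramified-p} — and no such computation appears in the paper. The paper's crude bound sidesteps this entirely, which is why it is the route actually taken.
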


\begin{proof}
Write $\otimes_vu_v$ for the image of $u$ under a fixed isomorphism $\sigma\simeq\otimes_v\sigma_v$. Let $S$ denote the set of primes dividing $ND d_B$, along with the real place. Waldspurger's formula \cite[Proposition 7]{Wa} (cf.\ also  \cite[Theorem 1.1]{FMP} and \eqref{rho} for the definition of $\rho_E$) states that
\begin{equation}\label{eq:Walds}
|W_\scD(u)|^2= \frac{c}{\rho_E} I_{S,\scD} \frac{L^S(1/2, \sigma^{\rm JL}\times \theta_E)}{L^S (1,\chi_E)L^S(1,\sigma^{\rm JL},{\rm Ad})},
\end{equation}
for a constant $c \ll \exp(O(|S|))$ as above, where (recalling the assumption that $g_p=e$ for all primes $p$) the local integrals are given by
\[
I_{S,\scD}=I_{\infty, \scD_\infty}\prod_{v\in S} I_v,\quad   I_{\infty,\scD_\infty}=\int_{\bT(\R)} \frac{\langle\sigma_\infty(t g_\infty)u_{j,\infty},u_{j,\infty}\rangle}{\| u_{j, \infty}\|^2} \, {\rm d} t, \quad I_p=\int_{\bT(\Q_p)}\frac{\langle \sigma_p(t)u_{j,p},u_{j,p}\rangle}{\| u_{j, p}\|^2} \, {\rm d} t.
\]
We may add in the $L$-factors dividing $S$, as in the proof of Lemma \ref{lemma-triple-prod}, using $e^{|S|} \ll_{\varepsilon} (ND)^\varepsilon$. 

We may bound $I_p$, for $p\in S$ dividing $D$, by
\[
\|\langle \sigma_p(\cdot )u_p,u_p\rangle/\| u_{j, p}\|^2\|_\infty {\rm vol}(\bT(\Q_p))\leq D_p^{-1/2},
\]
where we used $|\langle \sigma_v(g)u_v,u_v\rangle|/\| u_v \|^2\leq 1$ by the Cauchy--Schwarz inequality and the measure normalizations in \S\ref{sec:periodic-measure}. For primes $p\in S$ not dividing $D$ we argue according to whether $p$ is split in $E$ or not. If $p$ is inert, then the same argument for $p\mid D$ yields
$|I_p|\leq 1$. When $p$ is split in $E$, we may find $k_p\in K_p$ such that $k_p^{-1}\bT(\Q_p)k_p$ is the diagonal subgroup of $\bG(\Q_p)\simeq \mathbf{PGL}_2(\Q_p)$, as in the proof of Lemma \ref{lemma:conversion-to-Hecke-congruence}. It follows that $\langle \sigma_p(t)u_p,u_p\rangle=\langle \sigma_p(a(x))u_p',u_p'\rangle$, with $u_p'=\sigma_p(k_p)u_p$ being invariant by $k_pK_0(p^k)k_p^{-1}$. Replacing $u_p$ by $u_p'$, and invoking the decay of matrix coefficients \cite[(9.1)]{Ven}, we have
\[
\langle \sigma_p(t)u_p ,u_p \rangle/\| u_p \|^2 \ll (1 + |t|_p)^{-\delta} \dim \sigma_p^{K_0(p^k)}
\]
for some $\delta > 0$ (we can take any $\delta < 1/2 - 7/64$, but this is irrelevant here).  Hence $I_p \ll k$. Finally, since $g_\infty^{-1}\bT(\R)g_\infty=K_\infty$, and $u_\infty$ is fixed by $K_\infty$, we have $I_{\infty,\scD}={\rm vol}(\bT(\R))\langle\sigma_\infty(g_\infty)u_\infty,u_\infty\rangle$; by the Cauchy--Schwarz inequality, $I_{\infty,\scD}=O(1)$. The lemma follows then from an application of Siegel's lower bound on $L(1,\chi_E)$ and the Hoffstein--Lockhart lower bound on $L(1,\sigma^{\rm JL},{\rm Ad})$.
\end{proof}

\subsection{The Eisenstein periods}\label{sec:Eis-cont}

It remains to bound the contributions from the Eisenstein series to Proposition \ref{lemma-spectral-decomp}. Throughout the remainder of this section we therefore have $\bG=\mathbf{PGL}_2$ and $K=\mathbf{PGL}_2(\widehat{\Z}){\rm PSO}(2)$.

Recall that $f_1$ and $f_2$ are $L^2$-normalized Hecke--Maass cusp forms on the modular surface $Y_K=[\mathbf{PGL}_2(\A)]_K={\rm PSL}_2(\Z)\backslash\mathbb{H}$, generating level 1 cuspidal automorphic representations $\pi_1$ and $\pi_2$ of $\mathbf{PGL}_2(\A)$. We now bound the triple product of $f_1$ and $f_2$ with the Eisenstein series. Just as for Lemma \ref{lemma-triple-prod}, we postpone until \S \ref{appendix:triple-prod} the relevant estimates of local factors.

\begin{lemma}\label{lemma:3prod-Eis}
Let $M^2\mid N$ and $\chi$ be a character of $\Q^\times\backslash\A^1$ with conductor $M$. Let $f\in\mathscr{B}_N(\chi)$, for the basis $\mathscr{B}_N(\chi)$ defined in \S\ref{sec:basis-choice}. Then, under the Ramanujan conjecture for $\mathbf{PGL}_2/\Q$,
\begin{align*}
\langle f_1(\cdot)f_2(\cdot\, a(1/N &)),E_{f,\chi}(\cdot, 1/2+it)\rangle\\
&\ll_\varepsilon N^{-1/2+\varepsilon} |L(1/2+it, \pi_1\times\pi_2\otimes\chi)|\min\Big(1, \exp[- \pi (|t| - |\nu_{\pi_1}| - |\nu_{\pi_2}|)]\Big).
\end{align*}
 \end{lemma}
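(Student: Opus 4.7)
The plan is to apply the Rankin--Selberg unfolding method to express the inner product as a local product times a central value of the target $L$-function, and then bound the local factors individually. This is the Eisenstein analogue of the Ichino--Watson argument used in Lemma \ref{lemma-triple-prod}, with the difference that the unfolding gives the $L$-function linearly (rather than its absolute square), which is why the bound is linear in $|L|$.

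First I would unfold. Writing $E_{\Psi,\chi}(\,\cdot\,,s)=\sum_{\gamma\in B(\Q)\backslash\GL_2(\Q)}f_{\Psi,\chi}(\gamma\,\cdot\,,s)$, I collapse the sum against the $\GL_2(\Q)$-invariance of $f_1$ and Fourier expand $f_2(\cdot\, a(1/N))$ in the unipotent variable. This rewrites the inner product as an integral over $N(\A)Z(\A)\backslash \GL_2(\A)$ of the product $W_{f_1}(g)\,\overline{W_{f_2}(g\,a(1/N))}\,\overline{f_{\Psi,\chi}(g,1/2+i{\sf t})}$ against $|\det g|^{-s}$, where $W_{f_j}$ is the global Whittaker function of $\pi_j$. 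By factorizability of the Whittaker functions and of the section $\Psi=\bigotimes_v\Psi_v$ prescribed in \S\ref{sec:global-sch-basis}, the integral splits as a product of local Rankin--Selberg zeta integrals $Z_v(W_{1,v},W_{2,v},f_{\Psi,\chi,v};1/2+i{\sf t})$.

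Second, at all finite primes $p\nmid N d_B$, both $\pi_{1,p}$ and $\pi_{2,p}$ are unramified spherical, $\chi_p$ is unramified, and $\Psi_p=\Psi_p^\circ$; the Jacquet--Piatetski-Shapiro--Shalika local theory then identifies $Z_p$ with the local Euler factor of $L(s,\pi_1\times\pi_2\otimes\chi)$ (normalized by the appropriate $L_p(2s,\chi^2)$). Multiplying these contributions over all $p\nmid N d_B$ produces precisely $L(1/2+i{\sf t},\pi_1\times\pi_2\otimes\chi)$, up to finitely many Euler factors which contribute only an $O_\varepsilon(N^\varepsilon)$ loss under the Ramanujan hypothesis (the number of such primes is $\ll_\varepsilon N^\varepsilon$).

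Third, and this is the main obstacle, I need a sharp estimate for each $p\mid N$. The section $\Psi_p^{(j_p,f_p,k_p)}$ is supported on the thin set $\Lambda_p^{(j_p,f_p,k_p)}$, and carries a normalization constant $A_{j_p}\asymp p^{j_p/2}$. Using the Bruhat/Iwasawa decomposition together with the support conditions of Lemma \ref{lemma:ONB}, I reduce $Z_p$ to a small number of explicit integrals over cosets of measure $\asymp p^{-j_p}$; translating $W_{f_2,p}$ by $a(\varpi_p^{-k_p})$ introduces an additional factor of $p^{-k_p/2}$ from the substitution $y\mapsto \varpi_p^{-k_p}y$ in the Kirillov model (together with $\pi_{2,p}$-unramifiedness). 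Combining these with the Ramanujan bound on the Whittaker function of $f_1$ yields $|Z_p|\ll_\varepsilon p^{-k_p/2+\varepsilon}$, and hence a total savings of $N^{-1/2+\varepsilon}$ after multiplying over $p\mid N$. This step is the heart of the proof and mirrors the analogous computation in Lemma \ref{lemma-triple-prod} via Proposition \ref{prop-local-RS-bound}.

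Fourth, at the archimedean place, with $\Psi_\infty$ the standard Gaussian, the zeta integral $Z_\infty$ is a Mellin transform of a product of three $K$-Bessel type Whittaker functions, with spectral parameters $\nu_{\pi_1},\nu_{\pi_2}$ and $i{\sf t}$. Standard stationary-phase/Stirling asymptotics (as in Lemma \ref{lemma:arch-3prod}, or equivalently \cite[Lemma 3.2]{MV}) show $Z_\infty$ to be essentially bounded in the ``triangle'' region $|{\sf t}|\leq|\nu_{\pi_1}|+|\nu_{\pi_2}|$ and to decay like $\exp[-\tfrac{\pi}{2}(|{\sf t}|-|\nu_{\pi_1}|-|\nu_{\pi_2}|)]$ outside of it; this explains the precise archimedean factor stated in the lemma. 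Assembling the bounds from the previous steps gives the claimed inequality.
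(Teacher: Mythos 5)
Your proposal follows essentially the same route as the paper: unfold the Rankin--Selberg integral, factorize over places, identify unramified local factors with Euler factors of $L(s,\pi_1\times\pi_2\otimes\chi)$, obtain $N^{-1/2+\varepsilon}$ from the local zeta integrals at $p\mid N$ (the paper's Proposition~\ref{prop-local-RS-bound1}), and use Stirling at the archimedean place (the paper's Lemma~\ref{lemma:first-arch-int}). The one point your sketch glosses over is that with $L^2$-normalized Whittaker vectors the unramified local integrals actually produce $L_p(s,\pi_1\times\pi_2\otimes\chi)$ divided by $L_p(1,\mathrm{Ad}\,\pi_1)^{1/2}L_p(1,\mathrm{Ad}\,\pi_2)^{1/2}L_p(1,\chi^2)^{1/2}$, so to reach the stated bound in terms of $|L(1/2+i{\sf t},\pi_1\times\pi_2\otimes\chi)|$ alone one needs lower bounds on those denominator values (Hoffstein--Lockhart for the adjoint $L$-values and Siegel for $L(1,\chi^2)$), as the paper does.
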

\begin{proof}

Similarly to the proof of Lemma \ref{lemma-triple-prod} (now with $\bG=\mathbf{PGL}_2$), we fix isomorphisms $\pi_j\simeq\otimes_v\pi_{j,v}$ for $j=1,2$ and let $\otimes f_{j,v}$ be the image of $f_j$. Fix the standard non-trivial additive character $\psi_v$ of $\Q_v$. Let $\mathcal{W}_{1,v}(\pi_{1,v};\psi_v)$ be the $\psi_v$-Whittaker model of $\pi_{1,v}$ and $\mathcal{W}_{2,v}(\pi_{j,v};\overline{\psi}_v)$ the $\overline{\psi}_v$-Whittaker model of $\pi_{2,v}$. These models are non-zero since both $\pi_1,\pi_2$ are cuspidal (and hence everywhere locally generic). Denote by $W_{1,v}$ the image of $f_{1,v}$ in $\mathcal{W}_{1,v}(\pi_{1,v};\psi_v)$ under the isometric intertwining \eqref{eq:isometric-intertwine} and similarly for $W_{2,v}$.

Let $f_{\chi}(\cdot ,{\sf s})=\otimes f_{\chi_v}(\cdot ,{\sf s}) \in {\bf H}(\chi,{\sf s})=\otimes_v {\bf H}(\chi_v,{\sf s})$ be as in \S\ref{sec:spec-decomp-K(s)}. Let
\[
\alpha_v(W_{1,v},W_{2,v}, f_{\chi_v})=\int_{{\bf N}(\Q_v)\backslash\mathbf{PGL}_2(\Q_v)} W_{1,v}(g)W_{2,v}(g)f_{\chi_v}(g,{\sf s})\, {\rm d} g
\]
be the local Rankin--Selberg integral, as in \eqref{defn-local-RS}. Then for all primes $p\nmid N$, we have
\[
\alpha_p(W_{1,p},W_{2,p}, f_{\chi_p})=\frac{L_p({\sf s},\pi_{1,p}\times\pi_{2,p}\otimes\chi_p)}{L_p(1,{\rm Ad},\pi_{1,p})^{1/2}L_p(1,{\rm Ad},\pi_{2,p})^{1/2}L_p(1,\chi_p^2)^{1/2}}.
\]
Let
\[
\alpha_S=\alpha_\infty(W_{1,\infty}, W_{2,\infty}, f_{\infty}) \prod_{p^k\| N}\alpha_p(W_{1,p}, a(\varpi_p^{-k}).W_{2,p}, f_{\chi_p}).
\]
Let $S$ denote the set of primes dividing $N$, as well as the archimedean place. Then
\[
\langle f_1(\cdot)f_2(\cdot\, a(1/N)),E_{f,\chi}(\cdot, {\sf s})\rangle=\alpha_S\frac{L^S({\sf s},\pi_1\times\pi_2\otimes\chi)}{L^S(1,{\rm Ad},\pi_1)^{1/2}L^S(1,{\rm Ad},\pi_2)^{1/2}L^S(1,\chi^2)^{1/2}}.
\]
As in the proof of Lemma \ref{lemma-triple-prod}, we may add in the local $L$-factors at primes in $S$, while incurring a $O_\varepsilon(N^\varepsilon)$ loss. We then apply the Hoffstein--Lockhart bound \cite{HL} and the Siegel bound on the denominator $L$-values. It remains to estimate the local integrals $\alpha_S$. When ${\rm Re}({\sf s})=1/2$, Proposition \ref{prop-local-RS-bound1} implies $\prod_{p^k\| N}\alpha_p\ll N^{-1/2}$, where we have made use of the Ramanujan bounds. We conclude the proof by estimating $\alpha_\infty$ using Lemma \ref{lemma:first-arch-int}.\end{proof}

We now turn to bounding the level $K_0(N)$ toric period of the Eisenstein series. This bears some resemblance to \cite[Section 10]{DFI}. 

\begin{lemma}\label{lemma:Weyl-Eis-bound}
Let $(f,\chi)\in\mathscr{B}_{\rm cont}(N)$. Then
\[
W_\scD(E_{f,\chi}(\cdot, 1/2+it))\ll_\varepsilon N^\varepsilon D^{-1/4+\varepsilon}|L(1/2+it, \chi\circ \Nr_{E/\Q})|\exp(-\pi |t|/2).
\]
\end{lemma}

\begin{proof}
Let $S$ consist of the archimedean place along with all primes dividing $N$ and $D$. A variant of the Hecke--Wielonsky formula \cite{Wie}, proved by Wu \cite[Prop.\ 2.23]{Wu}, shows that\footnote{A typo appears in \cite[\S 2.9, line 4]{Wu}: the representation $\pi_{s,\xi}$ should be ${\rm Ind}_{\mathbf{B}(\A)}^{\GL_2(\A)}(\xi |\cdot |^{s},\xi^{-1} |\cdot |^{-s})$, so that the Eisenstein series has trivial central character, and not ${\rm Ind}_{\mathbf{B}(\A)}^{\GL_2(\A)}(\xi |\cdot |^{s},\xi |\cdot |^{-s})$. Note, furthermore, that our  normalization for the periodic measure on $[\bT(\A)g]$ differs from that in \cite{Wu}, which accounts for the additional factor of $D(\mathbf{F})^{-1/2}$ in \cite[Prop. 2.23]{Wu}.}
\[
|W_\scD(E_{f,\chi}(\cdot, 1/2+it))|^2 = \frac{c}{\rho_E} I_{S,\mathscr{D}}\frac{|L^S(1/2+it, \chi\circ \Nr_{E/\Q})|^2}{L^S(1,\chi_E)L^S(1+2it,\chi^2)L^S(1-2it,\chi^{-2})},
\]
where $c \ll \exp(O(|S|))$ is as above, and the local factors $I_{S,\mathscr{D}}=I_{\infty,\mathscr{D}}\prod_{p\in S}I_p$ are the same as those appearing in \eqref{eq:Walds}, with the local representation $\sigma_p$ of $\bG(\Q_p)$ replaced by the unitary principal series representation $\mathbf{H}(\chi_p,1/2+it)$ of $\mathbf{PGL}_2(\Q_p)$. The proof of Lemma \ref{lemma-L-function-sum} then applies without changes to bound the local factors. Alternatively, one could apply the Hecke--Wielonsky formula \cite{Wie} and realize the square of the local factors as the Waldspurger integral $I_v$, as was done in \cite[(43)]{ELMV2}, and once again apply the proof of Lemma \ref{lemma-L-function-sum}.
\end{proof}

\subsection{Completing the proof of Proposition \ref{prop-smallq}}

We now bound the cuspidal contribution to Proposition \ref{lemma-spectral-decomp} using standard arguments which combine the results of \S\ref{sec:estimate-cusp-cont} along with the Lindel\"of Hypothesis. 

\begin{lemma}\label{lem:cuspidal-cont}
Under the Ramanujan conjecture for $\mathbf{PGL}_2/\Q$ and the Lindel\"of Hypothesis, 
\[
\sum_{u\in\mathscr{B}_{\rm cusp}(N)} |\langle f_1(\cdot) f_2(\cdot\, a(1/N)), u\rangle  W_\scD(u)|\ll_\varepsilon D^{-1/4+\varepsilon}N^{1/2+\varepsilon}.
\]
\end{lemma}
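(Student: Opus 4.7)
The plan is to assemble the bound from the two period estimates already established in \S\ref{sec:estimate-cusp-cont}, combined with the Lindel\"of Hypothesis on the resulting $L$-values and the Weyl law to control the number of contributing representations.

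First, I would reduce the sum over $u\in\mathscr{B}_{\rm cusp}(N)$ to sums over automorphic representations $\sigma\subset L^2_{\rm cusp}([\bG(\A)],m_\bG)$ using the decomposition $\mathscr{B}_{\rm cusp}(N)=\coprod_\sigma \mathscr{B}_\sigma(N)$, where $\sigma$ runs over representations with $\sigma^{K_0(N)}\neq 0$. Writing $M=M_\sigma$ for the conductor of $\sigma$ and $\pi=\sigma^{\rm JL}$, Lemma \ref{lemma:skew-basis} lets me replace each $u\in\mathscr{B}_\sigma(N)$ by a bounded linear combination of the skew basis $\{a(1/L)u^{\rm new}: L\mid N/M\}$, of size $d(N/M)\ll N^\varepsilon$. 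Hence
\[
|\langle f_1(\cdot)f_2(\cdot\,a(1/N)),u\rangle|\ll N^\varepsilon\max_{L\mid N/M}|\langle f_1(\cdot)f_2(\cdot\,a(1/N)),a(1/L)u^{\rm new}\rangle|,
\]
and each term on the right is bounded via Lemma \ref{lemma-triple-prod}.

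Next, combining Lemmata \ref{lemma-triple-prod} and \ref{lemma-L-function-sum} (noting that the period bound of Lemma \ref{lemma-L-function-sum} applies uniformly to any element of $\mathscr{B}_\sigma(N)$), I obtain for each $u\in\mathscr{B}_\sigma(N)$:
\[
|\langle f_1(\cdot)f_2(\cdot\,a(1/N)),u\rangle\mathscr{W}_\scD(u)|\ll N^{-1/2+\varepsilon}D^{-1/4}|L(1/2,\pi_1\otimes\pi_2\otimes\pi)|^{1/2}|L(1/2,\pi\otimes\theta_E)|^{1/2}\,\Theta_\pi,
\]
where $\Theta_\pi=\min(1,\exp[-\tfrac{\pi}{2}(|\nu_\pi|-|\nu_{\pi_1}|-|\nu_{\pi_2}|)])$ captures the archimedean decay. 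The Lindel\"of Hypothesis then gives $|L(1/2,\pi_1\otimes\pi_2\otimes\pi)|^{1/2}\ll(M|\nu_\pi|)^\varepsilon$ and $|L(1/2,\pi\otimes\theta_E)|^{1/2}\ll(MD|\nu_\pi|)^\varepsilon$, with polynomial dependence on the fixed parameters of $\pi_1,\pi_2$ absorbed into the implicit constants (and into the Sobolev norm dependence on $f_1,f_2$).

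Finally, I would sum over $\sigma$. Writing the sum as $\sum_{M\mid N}\sum_{\sigma:M_\sigma=M}$ and using Weyl's law for newforms of conductor $M$ (and spectral parameter $|\nu_\pi|\leq T$), the count is $O_\varepsilon((MT)^{1+\varepsilon}T)$; multiplied by the basis size $d(N/M)\ll N^\varepsilon$ and summed over $M\mid N$, the total number of basis elements with $|\nu_\pi|\leq T$ is $O_\varepsilon(N^{1+\varepsilon}T^{2+\varepsilon})$. The archimedean factor $\Theta_\pi$ cuts off the effective range of $T$ at $T_0=|\nu_{\pi_1}|+|\nu_{\pi_2}|+C\log(ND)$ for any fixed $C>0$, with the tail $|\nu_\pi|>T_0$ contributing only $O(N^{-A})$ for any $A$. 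Combining these estimates yields
\[
\sum_{u\in\mathscr{B}_{\rm cusp}(N)}|\langle f_1(\cdot)f_2(\cdot\,a(1/N)),u\rangle\mathscr{W}_\scD(u)|\ll_\varepsilon N^{1+\varepsilon}\cdot N^{-1/2+\varepsilon}\cdot D^{-1/4+\varepsilon}=N^{1/2+\varepsilon}D^{-1/4+\varepsilon},
\]
as claimed. The main obstacle is verifying that the archimedean decay in Lemma \ref{lemma-triple-prod} is strong enough to offset the $T^{2+\varepsilon}$ growth from Weyl's law, which it is since $\Theta_\pi$ is exponentially decaying beyond the threshold while all polynomial losses are logarithmic after cutoff; the rest is bookkeeping with divisor sums.
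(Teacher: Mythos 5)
Your argument is essentially the paper's: reduce to the skew basis via Lemma \ref{lemma:skew-basis}, bound each term by $N^{-1/2+\varepsilon}D^{-1/4}$ times square roots of central $L$-values using Lemmata \ref{lemma-triple-prod} and \ref{lemma-L-function-sum}, truncate the spectral parameter by pitting the exponential decay against the convexity bound, then count representations via Weyl's law and apply Lindel\"of. Your version is slightly more explicit about the archimedean cutoff threshold and the divisor-sum bookkeeping, but the decomposition and key inputs are identical, and the remark that the Waldspurger bound of Lemma \ref{lemma-L-function-sum} is insensitive to the choice of orthonormal basis in $\sigma^{K_0(N)}$ is exactly the point noted after Lemma \ref{lemma:skew-basis}.
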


\begin{proof}
From Lemma \ref{lemma:skew-basis} we may replace the basis $\mathscr{B}_\sigma(N)$ by the skew basis $\mathcal{B}_\sigma^{\rm skew}(N)$. We may therefore assume that $u$ is of the form $a(1/L).u^{\rm new}$, where $L\mid N/M$. Lemmata \ref{lemma-triple-prod} and \ref{lemma-L-function-sum} then together imply that
\[
\sum_{u\in\mathscr{B}_{\rm cusp}(N)} |\langle f_1(\cdot) f_2(\cdot\, a(1/N)), u\rangle  W_\scD(u)|\ll_\varepsilon D^{-1/4+\varepsilon}N^{-1/2+\varepsilon}\mathcal{L}(D,N),
\]
where
\[
\mathcal{L}(D,N)=\sum_{\substack{\pi\subset L^2_{\rm cusp}([\mathbf{PGL}_2(\A)])\\ \pi^{K_0(Nd_B)}\neq \{0\},\; |\nu_\pi|\ll N^\varepsilon}} |L(1/2,\pi_1\otimes \pi_2\otimes \pi)|^{1/2} |L(1/2, \pi\times \theta_E)|^{1/2}.
\]
The truncation to $|\nu_\pi|\ll N^\varepsilon$ is justified by pitting (when $B$ is indefinite) the exponential decay in the spectral parameter against the convexity bound on the $L$-function. By the Weyl law, there are $O(N^{1+\varepsilon})$ terms in $\mathcal{L}(D,N)$, and an application of the Lindel\"of bound to the central $L$-values in each term (i.e.\ each $L$-value is bounded by an $\varepsilon$-power of its conductor) yields result. 
\end{proof}

Putting together the results in \S\ref{sec:Eis-cont}, we obtain the following estimate on the Eisenstein term in Proposition \ref{lemma-spectral-decomp}.

\begin{lemma}\label{cor:Eis-cont}
Under the Lindel\"of Hypothesis, we have ${\rm Eis}_N(f_1,f_2)\ll_\varepsilon N^{1/2+\varepsilon}D^{-1/4+\varepsilon}$.
\end{lemma}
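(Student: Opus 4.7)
The plan is to follow the same blueprint as Lemma \ref{lem:cuspidal-cont}, combining the pointwise bounds in Lemmata \ref{lemma:3prod-Eis} and \ref{lemma:Weyl-Eis-bound} with an enumeration of the basis $\mathscr{B}_{\rm cont}(N)$ described in \S\ref{sec:global-sch-basis}, and then invoking the Lindel\"of hypothesis.

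First, from the definition of ${\rm Eis}_N(f_1,f_2)$ in Proposition \ref{lemma-spectral-decomp}, apply the triangle inequality to move absolute values inside both the contour integral and the sum over $(\Psi,\chi)\in\mathscr{B}_{\rm cont}(N)$. For each such pair (with $\chi$ of conductor $M$ satisfying $M^2\mid N$, and $\Psi=\Psi_\chi^{(L,M,N)}$ for some $L\mid N/M^2$), bound the triple product with Lemma \ref{lemma:3prod-Eis} and the toric Weyl sum with Lemma \ref{lemma:Weyl-Eis-bound}. The product of the two \emph{a priori} independent exponential factors in the archimedean variable ${\sf t}$ majorizes into a single decay $\exp(-\pi |{\sf t}|/2)$ once $|{\sf t}|\geq |\nu_{\pi_1}|+|\nu_{\pi_2}|$, so the contour integral over ${\rm Re}({\sf s})=1/2$ is absolutely convergent, and, modulo a convexity bound for the tail, the effective range can be cut down to $|{\sf t}|\leq N^{\varepsilon}$.

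Next, under the Lindel\"of hypothesis both central $L$-values appearing in the bounds are $O_\varepsilon\big((NMD(1+|{\sf t}|))^\varepsilon\big)$; indeed, the conductor of $\pi_1\times\pi_2\otimes\chi$ is $\ll M^4$ (since $\pi_1,\pi_2$ are of fixed conductor $d_B$), while $L({\sf s},\chi\circ\Nr_{E/\Q})=L({\sf s},\chi)L({\sf s},\chi\cdot\chi_E)$ has analytic conductor $\ll M^2 D(1+|{\sf t}|)^2$. Combining with the truncation above, each term in the $(\Psi,\chi)$-sum contributes at most $O_\varepsilon\big(N^{-1/2+\varepsilon}D^{-1/4+\varepsilon}\big)$.

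It remains to count the terms. For fixed conductor $M$ with $M^2\mid N$ there are at most $\phi(M)\leq M$ primitive characters $\chi$ of conductor $M$, and for each such $\chi$ the subbasis $\mathscr{B}_\chi(N)$ has cardinality $d(N/M^2)\ll N^\varepsilon$. Hence
\[
|\mathscr{B}_{\rm cont}(N)|\;\ll\;N^{\varepsilon}\sum_{M^{2}\mid N} M \;\ll\; N^{1/2+\varepsilon},
\]
since $\sum_{M^2\mid N}M\leq\prod_{p^a\|N}(1+p+\cdots+p^{\lfloor a/2\rfloor})\ll N^{1/2+\varepsilon}$. Multiplying by the per-term bound furnishes the claimed estimate $N^{1/2+\varepsilon}D^{-1/4+\varepsilon}$ (in fact one could extract a factor $N^{-1/2+\varepsilon}$ here, but this suffices for the application).

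The main obstacle I anticipate is not conceptual but bookkeeping: one must keep careful track of the interplay between the level of $\Psi$ (parametrized by $L$), the conductor of $\chi$, and the resulting ``old-form dilation'' $a(1/L)$ introduced in Lemma \ref{lemma:Weyl-Eis-bound}. In particular, verifying that the local normalisation of $\Psi_\chi^{(L,M,N)}$ is compatible with the periodization giving $E_{\Psi,\chi}(\cdot,{\sf s})$ at the correct level, and that the archimedean integral still decays as $\exp(-\pi|{\sf t}|/2)$ after all shifts, is the place where small numerical factors could trip one up and where Sections \ref{appendix:triple-prod}--\ref{sec:bounding-local-Tate} (on which Lemmata \ref{lemma:3prod-Eis} and \ref{lemma:Weyl-Eis-bound} rest) are used in an essential way.
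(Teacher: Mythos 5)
Your proof is correct and follows essentially the same route as the paper's: insert Lemmata \ref{lemma:3prod-Eis} and \ref{lemma:Weyl-Eis-bound}, truncate the contour integral via exponential decay against convexity, apply Lindel\"of, and sum over $(M,\chi)$. The only additional content in your write-up is that you carry out the count $|\mathscr{B}_{\rm cont}(N)|\ll N^{1/2+\varepsilon}$ explicitly; combined with the per-term bound $N^{-1/2+\varepsilon}D^{-1/4+\varepsilon}$ this in fact yields $N^{\varepsilon}D^{-1/4+\varepsilon}$, which is stronger than the stated $N^{1/2+\varepsilon}D^{-1/4+\varepsilon}$ (your parenthetical remark slightly garbles this point, but the conclusion is fine and the lemma as stated follows a fortiori).
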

\begin{proof}
Inserting Lemmata \ref{lemma:3prod-Eis} and \ref{lemma:Weyl-Eis-bound}, we obtain a majorant of the form
\[
N^{-1/2+\varepsilon}D^{-1/4+\varepsilon}\sum_{\substack{M^2\mid N\\\chi\!\!\!\mod M}}\int_{{\rm Re}({\sf s})=1/2} |L({\sf s},\pi_1\times \pi_2\otimes\chi)L({\sf s}, \chi\circ \Nr_{E/\Q})|  G({\sf s})\, {\rm d} |{\sf s}|,
\]
where $G(1/2+it)$ decays exponentially with $|t|$. Estimating the contribution of the integral $|{\rm Im}({\sf s})|=|t|\gg (DN)^\varepsilon$ by pitting the exponential decay against the convexity bound for the $L$-functions  truncates the integral to $|t|\ll (DN)^\varepsilon$. The lemma then follows by applying the Lindel\"of Hypothesis and summing over all $(M,\chi)$.
\end{proof}

The second term in the bound of Proposition \ref{prop-smallq} follows from Lemmata \ref{lem:cuspidal-cont} and \ref{cor:Eis-cont}. To conclude the proof of Proposition \ref{prop-smallq}, it remains to bound $\lambda(N)$ coming from the trivial representation in \eqref{lemma-spectral-decomp}. We do so by applying the bound arising from the Ramanujan conjecture, as in Remark \ref{rem:Hecke}.

\begin{remark}\label{rem:unconditional}
For $N \ll D^{1/2}$ we may bound the right-hand side of Lemma \ref{lem:cuspidal-cont} unconditionally by a weaker, but still non-trivial, bound, using best known estimates towards subconvexity for the $L$-functions appearing in the quantity $\mathcal{L}(D,N)$. Note the factorization $L(1/2, \pi\times \theta_E)=L(1/2, \pi\times \chi_E)L(1/2, \pi)$, so that we need to bound
$$\sum_{u \text{ level } N} 1 \cdot L(1/2, u)^{1/2} \cdot L(1/2, u \times \chi_E)^{1/2} \cdot L(1/2, u \times \pi_1 \times \pi_2)^{1/2}$$
where the spectral parameter of $u$ is essentially fixed, and $\pi_1, \pi_2$ are also fixed. We apply H\"older's inequality with exponents $12, 4, 6, 2$ (in this order). For the first term we simply use Weyl's law, and for the second the easy bound $\sum_\pi L(1/2, u)^2 \ll N^{1+\varepsilon}$. For the third we use a version of \cite[Theorem 4.1]{AW} to obtain
$$\sum_{u \text{ level } N}  L(1/2, u \times \chi_E)^{3}  \ll (Nq)^{1+\varepsilon}.$$
Finally, the fourth term  is roughly like a fourth moment of $L(1/2, \pi)$. The technology of \cite{KMV} (which treats the case when $\pi_1$ and $\pi_2$ are Eisenstein) should yield the bound $N^{1+\varepsilon}$ for this sum, so that in total we obtain the estimate 
\[
\sum_{u\in\mathscr{B}_{\rm cusp}(N)} |\langle f_1(\cdot) f_2(\cdot\, a(1/N)), u\rangle  W_\scD(u)|\ll_\varepsilon D^{-1/12+\varepsilon}N^{1/2}
\]
which is non-trivial for $N \leq D^{1/6 - \varepsilon}$. Similar considerations hold for the $L$-function expression in the proof of Lemma \ref{cor:Eis-cont}.
\end{remark}

\section{Bounds on local Rankin--Selberg and triple product integrals}\label{appendix:triple-prod}

Let $v$ be a fixed place of $\Q$. Since we work locally at $v$ throughout this section, we shall often omit the dependence on $v$ from the notation.

Let $\pi_i$, $i=1,2,3$, be irreducible unitary representations of $B_v^\times$, the product of whose central characters is trivial. Let $v_i\in\pi_i$. We write
\[
I(v_1,v_2,v_3)=\int_{\bPB^\times(\Q_v)}\prod_{i=1}^3\langle \pi_i (g) v_i,v_i\rangle\,  {\rm d} g,
\]
for the local triple product integral. 

When $B=\Mat(\Q)$, we furthermore define the local Rankin--Selberg integral as follows. We suppose that one of the three representations of $\mathbf{GL}_2(\Q_v)$, say $\pi_3$, is a principal series representation $\bH(\chi,{\sf s})$. Fix the standard character $\psi$ of $\Q_v$. Let $\mathcal{W}(\pi_1,\psi)$ and $\mathcal{W}(\pi_2,\overline{\psi})$ denote the Whittaker model of $\pi_1$ and $\pi_2$, respectively.  The Jacquet integral provides for an isometric intertwining
\begin{equation}\label{eq:isometric-intertwine}
\pi_1\rightarrow \mathcal{W}(\pi_1;\psi); \qquad \phi\mapsto W(g)=\int_{\Q_v} \phi(wn(x)g)\overline{\psi}(x)\, {\rm d} x,
\end{equation}
and similarly for $\pi_2$, with $\psi$ replaced by $\overline{\psi}$. For $W_1\in \mathcal{W}(\pi_1;\psi)$, $W_2\in \mathcal{W}(\pi_2;\overline{\psi})$, and $f_3\in\bH(\chi,{\sf s})$ we put
\begin{equation}\label{defn-local-RS}
\alpha(W_1, W_2, f_3)=\int_{\mathbf{N}(\Q_v)\backslash\mathbf{PGL}_2(\Q_v)}W_1(g)W_2(g)f_3(g)\, {\rm d} g
\end{equation}
for the local Rankin--Selberg integral. 

Our goal in this section will be to bound $I(v_1,v_2,v_3)$ and $\alpha(W_1, W_2, f_3)$, for representations $\pi_i$ and for test vectors appearing in the proofs of Lemmata \ref{lemma-triple-prod} and \ref{lemma:3prod-Eis}. Similar bounds appear in the literature with varying assumptions, but we are not aware of a reference which covers our precise setting. For example, computations that are similar in spirit appear in \cite[\S\S3-4]{Hu}, but deal with at least two newforms (we have only one).

\subsection{Archimedean case}
We begin with the local Rankin--Selberg integral in the archimedean case; see also \cite[p.\ 40]{Wat}.

\begin{lemma}\label{lemma:first-arch-int}
For $j=1,2$, let $\pi_j$ be irreducible unitary principal series representations of $\mathbf{PGL}_2(\R)$ as in \S\ref{sec:reduction-to-sparse}. Fix the additive character $\psi(x)=e^{2\pi ix}$ of $\R$ and equip $\mathcal{W}(\pi_j,\psi)$ with the inner product given in the Kirillov model by $\Gamma_\R(2)\int_{\R^\times} |W(a(y))|^2{\rm d}^\times y$. Let $\chi_\infty$ be either the trivial character or the sign character of $\R^\times$, and let ${\sf s}\in\C$.

Then we may choose unit vectors $W_1\in\mathcal{W}(\pi_1,\psi)^{K_\infty}$, $W_2\in\mathcal{W}(\pi_2,\overline{\psi})^{K_\infty}$, and $f_3\in \mathbf{H}(\chi_\infty,{\sf s})^{K_\infty}$ such that, for ${\rm Re}({\sf s})>{\rm Re}\,\nu_1+{\rm Re}\,\nu_2$, 
\[
\alpha (W_1,W_2, f_3)=\frac{\Gamma_\R(2{\sf s})^{-1}L_\infty({\sf s},\pi_1\times\pi_2)}{L_\infty(1,{\rm Ad}\, \pi_1)^{1/2}L_\infty(1,{\rm Ad}\, \pi_2)^{1/2}},
\]
where $L_\infty({\sf s},\pi_1\times\pi_2)=\prod_{\pm,\pm} \Gamma_\R({\sf s}\pm \nu_1\pm\nu_2)$ and  $L_\infty(1,{\rm Ad}\, \pi_j)=\prod_{\pm} \Gamma_\R(1\pm 2\nu_j)$. In particular,
\[
\alpha (W_1,W_2, f_3)\ll_{\pi_1,\pi_2} \min\Big(1, \exp[- \pi (|t| - |\nu_{\pi_1}| - |\nu_{\pi_2}|)]\Big).
\]
\end{lemma}

\begin{proof}
Multiplying $f_3$ by a complex number of modulus 1, we may assume that $f_3(1)=1$. From the Iwasawa decomposition and the right $K_\infty$-invariance of $W_1,W_2, f_3$, we have
\[
\alpha (W_1,W_2, f_3)=\int_{\R^\times} W_1(a(y))W_2(a(y))|y|^{{\sf s}-1}\, {\rm d}^\times y.
\]
Let $W_j^{\rm new}\in \mathcal{W}(\pi_j,\psi)$ be the unique vector such that $W_j^{\rm new}(a(y))=2\, {\rm sgn}^{\epsilon_j}(y)\sqrt{|y|}K_{\nu_j}(2\pi |y|)$. We may assume that $\epsilon_1=\epsilon_2$ since otherwise the above integral vanishes. It follows from \cite[p.\ 205]{Iw2} that, for ${\rm Re}({\sf s})>{\rm Re}\,\nu_1+{\rm Re}\,\nu_2$, we have 
\[
\Gamma_\R(2{\sf s})\int_{\R^\times} W_1^{\rm new}(a(y))W_2^{\rm new}(a(y))|y|^{{\sf s}-1}\, {\rm d} y=L_\infty({\sf s},\pi_1\times\pi_2).
\]
It follows from this and the unitarity of $\pi_j$ that $\| W_j^{\rm new}\|^2=L_\infty(1,{\rm Ad}\, \pi_j)$. Multiplying $W_j$ by a complex number of modulus $1$, we may therefore assume that $W_j=L_\infty(1,{\rm Ad}\, \pi_j)^{-1/2}W_j^{\rm new}$. This proves the stated formula $\alpha (W_1,W_2, f_3)$ and the estimate follows from an application of Stirling's formula.
\end{proof}

We now bound the local archimedean triple product formula. We write $\mathcal{H}=\mathbf{B}^{(2,\infty)}(\R)$ for the Hamiltonians (over $\Bbb{R}$), and we denote by  $\left(\begin{smallmatrix} j_1&j_2&j_3\\ m_1&m_2&m_3\end{smallmatrix}\right)$ the Wigner 3-$j$ symbol. We only need to know \cite[Table 2]{Ed}
\begin{equation}\label{wig3j}
\left(\begin{matrix} j_1&j_2&j_3\\ 0&0&0\end{matrix}\right)^2 = \frac{(j_1 + j_2 - j_3)!(j_1 + j_3 - j_2)!(j_2 + j_3 - j_1)! (\frac{j_1 + j_2 + j_3}{2})!^2}{(j_1 + j_2 + j_3 + 1)! (\frac{j_1 + j_2 - j_3}{2})!^2(\frac{j_1 + j_3 - j_2}{2})!^2(\frac{j_2 + j_3 - j_1}{2})!^2}
\end{equation}
which is understood to be zero unless $j_1 + j_2 + j_3$ is even and all parentheses are non-negative, in particular $|j_1 - j_2| \leq j_3 \leq j_1 + j_2$.

\begin{lemma}\label{lemma:arch-3prod}  
${}$ 
\begin{enumerate}
\item\label{lemma:3prod-arch1} Let $\pi_j$, for $j=1,2,3$, be irreducible unitary principal series representations of $\mathbf{PGL}_2(\R)$ with spectral parameters $\nu_j$, as in \S\ref{sec:reduction-to-sparse}. Assume that $\sum_j |\Re\,\nu_j|<1/2$. Then we may choose unit vectors $v_j\in \pi_j^{K_\infty}$ such that
\[  
I(v_1,v_2,v_3)=\frac{L_\infty(1/2,\pi_1\otimes\pi_2\otimes\pi_3)}{\prod_{j=1}^3 L_\infty(1,{\rm Ad}\,\pi_j)}=\frac{\prod_{e_1,e_2,e_3\in \{\pm 1\}}\Gamma_\R(1/2+e_1\nu_1+e_2\nu_2+e_3\nu_3)}{\prod_{j=1}^3 \Gamma_\R(1)\Gamma_\R(1+2\nu_j)\Gamma_\R(1-2\nu_j)}.
\]
In particular, 
\[
I(v_1,v_2,v_3)\ll_{\pi_1,\pi_2} \min\Big(1, \exp[- \pi (|{\rm Im}\,\nu_\pi | - |{\rm Im}\, \nu_{\pi_1}| - |{\rm Im}\,\nu_{\pi_2}|) ]\Big).
\]
\item\label{lemma:3prod-arch2} Let $\sigma_j$, for $j=1,2,3$, be finite dimensional representations of $\R^\times\backslash\mathcal{H}^\times$ of dimensions $d_j=2k_j+1$. Then we may choose unit vectors $v_j\in \pi_j^{K_\infty}$ such that
\[
I(v_1,v_2,v_3)=\sqrt{\frac{d_1d_2d_3}{4\pi}}\begin{pmatrix}k_1 & k_2 & k_3\\ 0 & 0 & 0\end{pmatrix}^2,
\]
where $\left(\begin{smallmatrix} j_1&j_2&j_3\\ m_1&m_2&m_3\end{smallmatrix}\right)$ is the Wigner 3-$j$ symbol. In particular,
\[
I(v_1,v_2,v_3)\ll_{\sigma_1,\sigma_2} {\bf 1}_{[0,1]}\left(\frac{k_3}{k_1+k_2}\right).
\]
\end{enumerate}
\end{lemma}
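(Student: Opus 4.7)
For part \eqref{lemma:3prod-arch1}, the plan is to appeal to Watson's explicit archimedean computation \cite{Wat}, which evaluates the spherical triple product integral at $\PGL_2(\R)$ in precisely the normalization needed and produces the stated ratio of $\Gamma$-factors. Equivalently, the identity is Ichino's local formula \cite{Ich} combined with the direct evaluation of the Rankin--Selberg integral carried out in Lemma~\ref{lemma:first-arch-int}: expressing the third matrix coefficient $\langle\pi_3(g)v_3,v_3\rangle$ as an integral against the spherical section $f_\Psi$ converts $I(v_1,v_2,v_3)$ into $|\alpha(W_1,W_2,f_\Psi)|^2$ up to local $L$-factors, and the ratio of $\Gamma$-factors drops out from that lemma after a routine normalization comparison. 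Either route yields the identity, after fixing the Haar measure on $\bPB^\times(\R)$ so that the Ichino constant is $1$.

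For the upper bound, apply the standard Stirling asymptotic $|\Gamma_\R(\sigma+it)|\asymp |t|^{(\sigma-1)/2}e^{-\pi|t|/4}$. Writing $\nu_j=it_j$ on the unitary axis, each of the eight factors in the numerator satisfies $|\Gamma_\R(1/2+i(e_1t_1+e_2t_2+e_3t_3))|\ll e^{-\pi(|t_3|-|t_1|-|t_2|)/4}$ by the reverse triangle inequality, giving total numerator decay of order $e^{-2\pi(|t_3|-|t_1|-|t_2|)}$. In the denominator, the factors $L_\infty(1,\mathrm{Ad}\,\pi_1)$ and $L_\infty(1,\mathrm{Ad}\,\pi_2)$ are absorbed into the implicit $O_{\pi_1,\pi_2}(1)$ constant, while $L_\infty(1,\mathrm{Ad}\,\pi_3)$ contributes growth $\asymp e^{-\pi|t_3|}$ via $\Gamma_\R(1\pm 2\nu_3)$, yielding an overall ratio of order $e^{-\pi(|t_3|-|t_1|-|t_2|)}$ up to polynomial prefactors (harmless once the exponential is small). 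The $\min$ with $1$ merely records the trivial bound from absolute convergence of the matrix coefficient integral \cite[Lemma 2.1]{Ich}, valid under the hypothesis $\sum_j|\Re\nu_j|<1/2$.

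For part \eqref{lemma:3prod-arch2}, use the isomorphism $\R^\times\backslash\mathbb{H}^\times\simeq\mathrm{SO}(3)$ to identify the $(2\ell_j+1)$-dimensional representation with the spin-$\ell_j$ irreducible; the one-dimensional space of $K_\infty\simeq\mathrm{SO}(2)$-invariants is spanned by the weight-zero vector, whose diagonal matrix coefficient, as a bi-$K_\infty$-invariant function, is the Legendre polynomial $P_{\ell_j}(\cos\theta)$. Weyl integration on $\mathrm{SO}(2)\backslash\mathrm{SO}(3)/\mathrm{SO}(2)\cong[0,\pi]$ then reduces $I(v_1,v_2,v_3)$ to the classical triple Legendre integral $\int_{-1}^1 P_{\ell_1}(x)P_{\ell_2}(x)P_{\ell_3}(x)\,\tfrac{dx}{2}$, which by Gaunt's formula equals $\left(\begin{smallmatrix}\ell_1&\ell_2&\ell_3\\0&0&0\end{smallmatrix}\right)^2$. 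The prefactor $\sqrt{d_1d_2d_3/(4\pi)}$ arises from the ratio between the $L^2$-normalization of the zonal spherical function built into the definition of the matrix coefficient and the normalization appearing in Gaunt's identity. The support condition $I\ll\mathbf{1}_{[0,1]}(\ell_3/(\ell_1+\ell_2))$ is then immediate from the classical vanishing of the $3$-$j$ symbol outside the triangle range $|\ell_1-\ell_2|\leq\ell_3\leq\ell_1+\ell_2$. The main point of difficulty throughout is purely bookkeeping, namely matching the measure and inner-product normalizations so the constants in the final formulas come out as stated; no conceptual obstacle arises beyond standard special-function identities.
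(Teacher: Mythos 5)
Your approach is essentially the same as the paper's. For part~\eqref{lemma:3prod-arch1}, the paper cites Ikeda \cite[Theorem 2.5]{Ik} directly for the identity; your citations to Watson \cite{Wat}, or to Ichino combined with the Rankin--Selberg evaluation of Lemma~\ref{lemma:first-arch-int}, are equally standard, though the latter route additionally requires the linearization identity (as in \cite[Lemma 6]{BJN}) converting the matrix-coefficient integral into the square of a Rankin--Selberg integral, together with the factorization of the archimedean triple product $L$-factor into two Rankin--Selberg $L$-factors when one representation is a principal series --- this is more than a ``routine normalization comparison.'' For the bound, your termwise reverse-triangle-inequality argument is slightly lossy, giving $e^{-\pi(|t_3|-2|t_1|-2|t_2|)}$ rather than $e^{-\pi(|t_3|-|t_1|-|t_2|)}$, but since the implied constant is allowed to depend on $\pi_1,\pi_2$, this loss is harmless.

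For part~\eqref{lemma:3prod-arch2}, there is one genuine soft spot concerning the prefactor. Your Weyl integration --- pushing forward probability Haar on $\mathrm{PSO}(3)$ to $[0,\pi]$ and applying Gaunt --- correctly yields
\[
\tfrac{1}{2}\int_{-1}^{1}P_{\ell_1}(x)P_{\ell_2}(x)P_{\ell_3}(x)\,dx = \begin{pmatrix}\ell_1&\ell_2&\ell_3\\0&0&0\end{pmatrix}^2,
\]
with no prefactor; and this is indeed the value of $I(v_1,v_2,v_3)$ for $L^2$-normalized weight-zero vectors and probability Haar on $\bPB^\times(\R)$, since $\langle \pi_j(g)v_j,v_j\rangle = P_{\ell_j}(\cos\theta(g))$ exactly. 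Your subsequent assertion that the factor $\sqrt{d_1d_2d_3/(4\pi)}$ ``arises from the ratio between the $L^2$-normalization of the zonal spherical function\ldots and the normalization appearing in Gaunt's identity'' does not cohere with your own Legendre computation: if you track the replacements $P_{\ell_j}=\sqrt{4\pi/d_j}\,Y_{\ell_j,0}$ and the factor $\tfrac{1}{4\pi}$ in the pushforward measure, these conspire to cancel the prefactor coming from Gaunt, returning $\begin{pmatrix}\ell_1&\ell_2&\ell_3\\0&0&0\end{pmatrix}^2$. (The paper's own one-line proof is no more careful: it silently equates $I$ to the $S^2$-integral of the $Y_{\ell_j,0}$'s, a step that involves exactly this bookkeeping.) Happily, the discrepancy is inconsequential for the lemma's application: for fixed $\ell_1,\ell_2$ the $3$-$j$ symbol vanishes outside the finite triangle range, so the bound $I\ll_{\pi_1,\pi_2}\mathbf{1}_{[0,1]}(\ell_3/(\ell_1+\ell_2))$ holds regardless of any prefactor depending polynomially on $\ell_3$.
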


\begin{proof}
The identity in Part \eqref{lemma:3prod-arch1} is \cite[Theorem 2.5]{Ik} (after taking into account the $L^2$ normalization of the vector $v$), and the estimate follows from this by Stirling's formula. For the second, we may identify the quotient $\R^\times\backslash\mathcal{H}^\times$ with ${\rm SO}(3)$. Then $v_j$ can be realized as the ($L^2$-normalized) zonal spherical harmonic $Y_{k_j,0}$ on $S^2$. In this case, the Gaunt integral relation (see \cite[(3.5.5)]{Ed}) gives
\[
I(v_1,v_2,v_3)=\int_0^{2\pi}\int_0^\pi Y_{k_1,0}(\theta,\varphi)Y_{k_2,0}(\theta,\varphi)Y_{k_3,0}(\theta,\varphi)d\theta d\varphi=\sqrt{\frac{d_1d_2d_3}{4\pi}}\begin{pmatrix}k_1 & k_2 & k_3\\ 0 & 0 & 0\end{pmatrix}^2,
\]
proving the identity in Part \eqref{lemma:3prod-arch2}. The estimate follows easily from \eqref{wig3j}.\end{proof}

\subsection{Non-archimedean case: Rankin--Selberg}
We shall prove the following estimate.

\begin{prop}\label{prop-local-RS-bound1}
Let $\pi_i$, for $i=1,2$, be unramified $\vartheta_i$-tempered principal series representations of ${\bf GL}_2(\Q_p)$, with trivial central character. Here $\vartheta_i\in [0,1/2)$. Assume that $\vartheta_1+\vartheta_2<1/2$. Let $\chi$ be a unitary character of $\Q_p^\times$ of conductor $p^\ell$.

Let $v_i\in \pi_i$, $i=1,2$, be the unique up-to-scalar spherical vector, normalized to satisfy $\|v_i\|=1$. Let $W_1\in \mathcal{W}(\pi_1;\psi)$ and $W_2\in \mathcal{W}(\pi_2;\overline{\psi})$ be the images of $v_1$ and $v_2$ in the Whittaker models of $\pi_1$ and $\pi_2$. Let $k\geq 2\ell$ and fix $j\in\{\ell,\ldots , k-\ell\}$. For ${\rm Re}({\sf s})=1/2$, let $f_3=f_\chi^{(j,k,\ell)}\in\bH(\chi,{\sf s})$ be as in Lemma \ref{lemma:ONB}.

Then $\alpha (W_1,a(\varpi^{-k}).W_2,f_3)\ll p^{-k(1/2-\vartheta_2)+j(\vartheta_1-\vartheta_2)}$.
\end{prop}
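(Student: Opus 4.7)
The plan is to evaluate $\alpha$ via the Iwasawa decomposition $\PGL_2(\Q_p) = \mathbf{N}(\Q_p)\mathbf{A}(\Q_p)K$ with $K = \PGL_2(\Z_p)$, combined with the explicit $K$-support of $f_3$ provided by Lemma \ref{lemma:recovering-GH}. Using the right $K$-invariance of the spherical $W_1$ and the transformation law $f_3(na(y)g) = \chi(y)|y|^{\sf s} f_3(g)$, the integral reduces to
\[
\alpha = \int_{\Q_p^\times} W_1(a(y))\,\chi(y)|y|^{\sf s}\,I(y)\,\frac{d^*y}{|y|}, \qquad I(y) := \int_K W_2(a(y) k a(\varpi^{-k}))\,f_3(k)\,dk.
\]

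The key step is evaluating $I(y)$. For $k = \bigl(\begin{smallmatrix} a & b \\ c & d\end{smallmatrix}\bigr)$ in the support of $f_3\restriction_K$, so with $v(c) = j$ and $v(d) = 0$, a direct matrix computation yields an Iwasawa decomposition $k a(\varpi^{-k}) = n(x_k)\,a(y_k)\,k'_k$ in $\PGL_2$, with $v(y_k) = k - 2j$, $v(x_k) = j - k$, and explicit unit factors depending on the entries of $k$. Left $\psi$-equivariance of the Whittaker model and right $K$-invariance of the spherical $W_2$ then give $W_2(a(y) k a(\varpi^{-k})) = \psi(y x_k)\,W_2(a(y y_k))$, so that $|W_2(a(y y_k))|$ depends on $k$ only through the constraint $v(y) \ge 2j - k$.

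Substituting the Casselman--Shalika bound $|W_i(a(z))| \ll (v(z)+1)|z|^{1/2-\vartheta_i}\mathbf{1}_{v(z)\ge 0}$ valid under $\vartheta_i$-temperedness, together with $\|f_3\restriction_K\|_\infty \ll A_j \asymp p^{j/2}$ on its support of measure $\asymp p^{-j}$ from Lemma \ref{lemma:ONB}, I would reduce the outer $y$-integration to a geometric series over valuations, convergent under the hypothesis $\vartheta_1 + \vartheta_2 < 1/2$. This already delivers a bound of the desired shape $p^{-k(1/2-\vartheta_2) + j c}$ for some exponent $c$ depending on $\vartheta_1, \vartheta_2$.

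The principal difficulty is recovering the \emph{sharp} exponent $c = \vartheta_1 - \vartheta_2$, rather than a weaker constant produced by naive absolute-value estimates alone. This requires exploiting cancellation between the additive phase $\psi(y x_k)$ and the multiplicative phase $\chi((ad-bc)/cd)$ appearing in $f_3(k)$ via Lemma \ref{lemma:recovering-GH}: for fixed bottom row $(c,d)$, the inner integration over the parameter $a \in \Z_p^\times$ produces a Gauss-sum-type integral in the product $\chi\psi$, whose support condition imposes a sharper restriction on the effective $v(y)$-range calibrated by the conductor $p^f$ of $\chi$ and the relative positions of $j$ and $k-j$. Combining this refined inner estimate with the outer geometric series delivers the asymmetric exponent of the proposition.
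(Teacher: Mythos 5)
Your plan follows essentially the same route as the paper's proof, simply parametrizing $\mathbf{N}\backslash\PGL_2(\Q_p)$ by Iwasawa $\mathbf{A}K$ coordinates rather than by $a(y)n'(x)$ with $n'$ the opposite unipotent (the coordinates the paper labels ``Iwahori''). Both reduce to computing the same Iwasawa decomposition and both rest on the same cancellation: an oscillatory sum over a $\Z_p^\times$-variable which restricts the outer $y$-integral to $v(y)\gg j$, converting the naive exponent $j(1/2-2\vartheta_2)$ into the sharp $j(\vartheta_1-\vartheta_2)$. So the strategy is sound, but two corrections are needed before it would actually go through. First, your Iwasawa datum is off: for $\kappa=\left(\begin{smallmatrix}a&b\\c&d\end{smallmatrix}\right)$ in the support of $f_3\restriction_K$ one has $v(c)=j$, $v(d)=0$ and hence $v(a)=0$ (since $\det\kappa$ is a unit while $v(bc)\geq j\geq 1$); the correct valuations are $v(y_\kappa)=k-2j$ as you say, but $v(x_\kappa)=-j$, not $j-k$ --- indeed $x_\kappa$ equals $a/c$ up to units. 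With your value $j-k$ the $\psi$-constraint would read $v(y)\geq k-j$, which does \emph{not} produce the claimed exponent, so this is not merely cosmetic. Second, you over-attribute the gain to a ``$\chi\psi$'' interaction. When $\chi$ is unramified the multiplicative phase $\chi((ad-bc)/cd)$ is constant in $a$ on the support, and the load-bearing cancellation is the pure Ramanujan sum $\int_{\Z_p^\times}\overline\psi(ya/c)\,da$, which already forces $v(y)\geq j-1$. When $\chi$ is ramified the genuine $\chi\psi$ Gauss sum forces $v(y)=j-f$ with an extra $p^{-f/2}$ saving, so the ramified case is strictly better and the unramified one is extremal --- exactly opposite to your suggestion that the conductor of $\chi$ sharpens the bound. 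Finally, you should separate out the boundary case $(j,f)=(k,0)$, where $\Lambda^{(k,0,k)}=(p^k\Z_p\setminus\{0\})\times\Z_p^\times$ allows all $v(c)\geq k$ and the bottom row of $\kappa a(\varpi^{-k})$ is dominated by $d$ rather than by $c\varpi^{-k}$; the paper treats this by a short separate computation, and your $AK$ analysis would have to do so as well.
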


\begin{proof}
In the integral \eqref{defn-local-RS} defining $\alpha(W_1,W_2,f_3)$ we may restrict to the image $C_w=\mathbf{A}(\Q_p)w\mathbf{N}(\Q_p)$ in $\mathbf{N}(\Q_p)\backslash {\bf PGL}_2(\Q_p)$ of the open Bruhat cell $\mathbf{P}(\Q_p)w\mathbf{N}(\Q_p)\subset \mathbf{GL}_2(\Q_p)$, where $w=\left(\begin{smallmatrix} & 1\\1 & \end{smallmatrix}\right)$. The right-invariant measure ${\rm d}g$ on $\mathbf{N}(\Q_p)\backslash {\bf PGL}_2(\Q_p)$ factorizes in Bruhat coordinates as
\begin{equation}\label{bruhat}
C_w \ni g = a(y)w n(x)= a(y) n'(x)w  = \left(\begin{matrix}  y & \\ & 1\end{matrix}\right) \left(\begin{matrix}1 & \\ x& 1\end{matrix}\right)w, \quad {\rm d}g = \frac{{\rm d}^{\times}y}{|y|} {\rm d}x,
\end{equation}
where ${\rm d}^\times y={\rm d}_{\Q_p}^\times y$ and ${\rm d}x={\rm d}_{\Q_p}x$ are the local multiplicative and additive Haar measures fixed in \S\ref{sec:periodic-measure}. By right-$K_p$ invariance, the integral $\alpha (W_1,a(\varpi^{-k}).W_2,f_3)$ is
\[
\int_{y\in\Q_p^\times}\int_{x\in\Q_p}W_1(a(y)n'(x))W_2(a(y)n'(x)a(\varpi^{-k})) f_3(a(y)n'(x))) \,{\rm d} x \, |y|^{-1} {\rm d}^\times y. 
\]
We use $n'(x)a(y^{-1})=a(y^{-1})n'(xy^{-1})$ to write the above integral as
\[
\int_{y\in\Q_p^\times}\int_{x\in\Q_p}W_1\left(a(y)n'(x)\right)W_2\left(a(y\varpi^{-k})n'(x\varpi^{-k})\right) f_3\left(a(y)n'(x)\right) {\rm d} x\,  |y|^{-1}{\rm d}^\times y.
\]
Using $f_3(a(y)g)=|y|^{1/2}\chi(y)f(g)$ we get
\[
\int_{y\in\Q_p^\times}\int_{x\in\Q_p}W_1(a(y)n'(x))W_2(a(y\varpi^{-k})n'(\varpi^{-k}x))|y|^{-1/2} \chi(y)  f_3(n'(x)) \,{\rm d} x\, {\rm d}^\times y.
\]
We let $I$ denote the contribution to the above integral from $|x|\leq 1$ and $J$ the contribution from $|x|>1$. Note that
\[
I=\int_{y\in\Q_p^\times}\int_{x\in\Z_p}W_1\left(a(y)\right)W_2\left(a(y\varpi^{-k})n'(\varpi^{-k}x)\right)|y|^{-1/2} \chi(y)  f_3\left(n'(x)\right) {\rm d} x\, {\rm d}^\times y,
\]
from the right $\mathbf{GL}_2(\Z_p)$ invariance of $W_1$.

We begin with the range when $\ell\leq j\leq \min\{k-\ell,k-1\}$; the case $(j,\ell)=(k,0)$ will be treated separately. Inserting the definition of $f_3=f_\chi^{(j,k,\ell)}$ and changing variables, we get
\begin{equation}\label{I-before-cas-subdivision}
I=A_jp^{-j}\int_{y\in\Q_p^\times}W_1\left(a(y)\right) |y|^{-1/2} \chi(y)\left( \int_{x\in \Z_p^\times}W_2\left(a(y\varpi^{-k})n'(\varpi^{j-k}x)\right){\rm d} x\right) {\rm d}^\times y.
\end{equation}
Observe that $j-k\leq -1$. When $|X|>1$, the Iwasawa decomposition of $a(Y)n'(X)$ is
\begin{equation}\label{Iwahori-Iwasawa}
a(Y)n'(X)=\begin{pmatrix} Y& \\ X& 1\end{pmatrix}=\begin{pmatrix} X& \\ & X\end{pmatrix} \begin{pmatrix} 1 & Y/X\\ & 1\end{pmatrix}\begin{pmatrix} -Y/X^2 & \\ & 1\end{pmatrix}\begin{pmatrix} 0& 1\\ 1&1/X \end{pmatrix}.
\end{equation}
Thus, using the right $\mathbf{GL}_2(\Z_p)$ invariance and left $(\mathbf{N}(\Q_p),\bar{\psi})$-equivariance of $W_2$, we have
\[
W_2\left(a(y\varpi^{-k})n'(\varpi^{j-k}x)\right)=\psi(-y/\varpi^jx)W_2(a(-y\varpi^{k-2j})).
\]
We find that
\[
I=A_jp^{-j}\int_{y\in\Q_p^\times}W_1\left(a(y)\right)W_2(a(-y\varpi^{k-2j})) |y|^{-1/2} \chi(y)\left( \int_{x\in \Z_p^\times}\psi(-y/\varpi^jx){\rm d} x\right) {\rm d}^\times y.
\]
In the $x$-integral, we change variables $x\mapsto 1/x$ and evaluate
\[
\int_{x\in \Z_p^\times}\psi(-yx/\varpi^j){\rm d} x=\int_{x\in \Z_p}(\psi(-yx/\varpi^j)-p^{-1}\psi(-yx/\varpi^{j+1})){\rm d} x=
\begin{cases}
(1-1/p),&  y\in p^{j+1}\Z_p,\\
1,&  y\in p^j\Z_p^\times,\\
0,&  \textrm{else}.
\end{cases}
\]
Inserting this and $A_j\asymp p^{-j/2}$, we get
\[
|I|\asymp p^{-j/2} \Big|\int_{y\in p^j\Z_p}W_1\left(a(y)\right)W_2(a(-y\varpi^{k-2j})) |y|^{-1/2} \chi(y)\, {\rm d}^\times y\Big|.
\]
We now insert the bounds $W_i(a(y))\ll |y|^{1/2-\vartheta_i}{\bf 1}_{\Z_p}(y)$, and since $2j-k\leq j$, deduce
\[
p^{-k(1/2-\vartheta_2)} p^{j(1/2-2\vartheta_2)}   \int_{y\in p^j\Z_p} |y|^{1/2-\vartheta_1-\vartheta_2}\,{\rm d}^\times y\asymp p^{-k(1/2-\vartheta_2)+j(\vartheta_1-\vartheta_2)},
\]
the integral converging in light of the assumption that $\vartheta_1+\vartheta_2<1/2$.

To complete the analysis in this case, we observe that when $j\geq\max\{1,\ell\}$ the remaining integral $J$ vanishes, since when $|x|>1$, the Iwasawa decomposition \eqref{Iwahori-Iwasawa} shows that $f_3(n'(x))=|x|^{-1}\chi(-1/x^2)f_3\left(\begin{smallmatrix} 0 & 1\\ 1 & x^{-1}\end{smallmatrix}\right)=0$. When $(j,\ell)=(0,0)$, more work is required. 
In this case \eqref{Iwahori-Iwasawa} implies $f_3(n'(x))=|x|^{-1}\chi(x)^{-2}f_3\left(\begin{smallmatrix} 0 & 1\\ 1 & x^{-1}\end{smallmatrix}\right)=|x|^{-1}\chi(x)^{-2}A_0$. Applying \eqref{Iwahori-Iwasawa} to the arguments of $W_1$ and $W_2$, while using the right-$\mathbf{GL}_2(\Z_p)$-invariance and the left $(\mathbf{N}(\Q_p),\psi)$- and $(\mathbf{N}(\Q_p),\bar\psi))$-equivariance of these, we get
\[
J=A_0\sum_{t=1}^\infty \int_{y\in\Q_p^\times}W_1\left(a(y\varpi^{2t})\right)W_2\left( a(y\varpi^{k+2t})\right)|y|^{-1/2} \chi(y)\left(\int_{x\in \Z_p^\times}
 |x|^{-1}\chi(x)^{-2}  {\rm d} x \right){\rm d}^\times y.
\]
The inner integral is zero if $\chi$ is non-trivial and $1$ otherwise. In the latter case, we obtain
\begin{align*}
J&=A_0\sum_{t=1}^\infty \int_{y\in\Q_p^\times}W_1\left(a(y\varpi^{2t})\right)W_2\left( a(y\varpi^{k+2t})\right)|y|^{-1/2}\, {\rm d}^\times y\\
&\ll p^{-k(1/2-\vartheta_2)} \sum_{t=1}^\infty p^{-2t(1-\vartheta_1-\vartheta_2)}\int_{y\in p^{-2t}\Z_p- \{0\}}|y|^{1/2-\vartheta_1-\vartheta_2} \,{\rm d}^\times y, 
\end{align*}
which is $\asymp p^{-k(1/2-\vartheta_2)} \sum_{t=1}^\infty p^{-t}\asymp p^{-k(1/2-\vartheta_2)-1}$ as desired.

It remains to consider the case when $(j,\ell)=(k,0)$. Here, the $J$ integral again vanishes, and the $I$ integral in \eqref{I-before-cas-subdivision}, using right $\mathbf{GL}_2(\Z_p)$-invariance of $W_2$ and a change of variables, satisfies
\begin{align*}
|I|&  =A_kp^{-k}\Big|\int_{y\in\Q_p^\times}W_1\left(a(y)\right) |y|^{-1/2} \chi(y)\left( \int_{x\in \Z_p-\{0\}}W_2\left(a(y\varpi^{-k})n'(x)\right){\rm d} x\right) {\rm d}^\times y\Big|\\
&\asymp p^{-k/2}\Big|\int_{y\in\Q_p^\times}W_1\left(a(y)\right)W_2\left(a(y\varpi^{-k})\right) |y|^{-1/2} \chi(y) \, {\rm d}^\times y\Big|.
\end{align*}
Inserting $W_i(a(y))\ll |y|^{1/2-\vartheta_i}{\bf 1}_{\Z_p}(y)$ as before yields 
\[
I\ll p^{-k\vartheta_2}\Big|\int_{y\in p^k\Z_p-\{0\}} |y|^{1/2-\vartheta_1-\vartheta_2} \chi(y)\, {\rm d}^\times y \Big|\asymp p^{-k(1/2-\vartheta_1)},
\]
as claimed.\end{proof}

\subsection{Non-archimedean case: triple product}
We shall prove the following estimate on the local triple product integrals. 

\begin{prop}\label{prop-local-RS-bound}
Let $\pi_i$, for $i=1,2$, be unramified $\vartheta_i$-tempered principal series representations. Let $\pi_3$ be $\vartheta_3$-tempered of conductor $p^\ell$. Here $\vartheta_i\in [0,1/2)$. Assume that $\sum_i\vartheta_i<1/2$.

Let $v_i\in \pi_i$, $i=1,2,3$, be the unique up-to-scalar new vector, normalized to satisfy $\|v_i\|=1$. Let $W_1$ and $W_3$ be the image in the Whittaker model of $v_1$ and $v_3$. Let $\chi$ be the inducing character of the principal series $\pi_2$ and write $f_2$ for the spherical vector in the induced model of $\pi_2$ given by $\delta^{1/2}\chi$ as a function on $\Bbb{Q}_p^{\times}$.

Let $d,k\in\N$ be such that $\ell+d\leq k$. Then
\begin{enumerate}
\item the local triple product integral satisfies
\[
I(v_1,a(\varpi^{-k}).v_2,a(\varpi^{-d}).v_3)\ll k^4p^{-k(1-2\vartheta_2)+2d(-\vartheta_2+\vartheta_1)};
\]
\item\label{part2-RSbound} the local Rankin--Selberg integral satisfies
\[
\alpha (W_1,a(\varpi^{-k}).f_2,a(\varpi^{-d}).W_3)\ll k^2p^{-k(1/2-\vartheta_2)+d(-\vartheta_2+\vartheta_1)}.
\]
\end{enumerate}
\end{prop}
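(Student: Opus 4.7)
The two parts are closely related: I would first prove Part (2), the bound on the local Rankin--Selberg integral, and then deduce Part (1) from Part (2) via the fact that $\pi_2$ is an induced principal series, for which the matrix-coefficient triple-product integral admits a local Ichino-type decomposition into squared Rankin--Selberg integrals.

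For Part (2), I would parallel the Iwahori-coordinate calculation carried out in Proposition \ref{prop-local-RS-bound1}. The induced-model vector $a(\varpi^{-k}).f_2 \in \bH_p(\chi,\chi^{-1},0)$ is right-$\mathcal{K}_{0,p}(p^k)$-invariant (since $\chi$ is unramified and $\pi_2$ has trivial central character), and so can be expanded in the orthonormal basis $\{f_{\Psi_p^{(j,0,k)},\chi}\}_{j=0}^{k}$ of level-$p^k$ invariants provided by Lemma \ref{lemma:ONB}, with expansion coefficients of size $O(1)$ from a standard newform/oldform matrix computation. For each fixed $j$, the Rankin--Selberg integral $\alpha(W_1, a(\varpi^{-d}).W_3, f_{\Psi_p^{(j,0,k)},\chi})$ is evaluated in Iwahori coordinates $g = a(y)n'(x)$ split into the ranges $|x|\leq 1$ and $|x|>1$. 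In the first range, the identity $n'(x)a(\varpi^{-d}) = a(\varpi^{-d})n'(\varpi^d x)$ together with right-$\mathcal{K}_{0,p}(p^\ell)$-invariance of $W_3$ localizes $x$ to a disc of measure $\asymp p^{-k+j}$ and shifts the effective argument of $W_3$ by $a(\varpi^{-d})$; inserting the tempered bound $W_3(a(y)) \ll |y|^{1/2-\vartheta_3}\mathbf{1}_{\Z_p}(y)$ on the diagonal torus then produces the $p^{-k(1/2-\vartheta_2)+j(\vartheta_1-\vartheta_2)}$ factor. The range $|x|>1$ is handled via the Iwasawa decomposition \eqref{Iwahori-Iwasawa} exactly as before, the additive character of the Whittaker functional yielding a Gauss-sum-type cancellation that reduces to the same bound. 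Summing over the $O(k)$ basis vectors and over dyadic ranges of $|y|$ produces the $k^2$ polylogarithmic factor.

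For Part (1), I would use the local Ichino identity expressing the matrix-coefficient triple-product integral when $\pi_2$ is an induced principal series as a sum over an orthonormal basis of $\pi_2^{\mathcal{K}_{0,p}(p^k)}$ of squared Rankin--Selberg integrals, up to a bounded ratio of local $L$-factors. Applying Part (2) to each summand and using that the basis has $O(k)$ elements squares the earlier bound $k^2 \cdot p^{-k(1/2-\vartheta_2)+d(-\vartheta_2+\vartheta_1)}$, producing the stated $k^4 \cdot p^{-k(1-2\vartheta_2)+2d(-\vartheta_2+\vartheta_1)}$ bound. The hypothesis $\vartheta_1+\vartheta_2+\vartheta_3 < 1/2$ enters precisely to guarantee absolute convergence of the $y$-integrals after the tempered bounds are inserted on all three matrix coefficients/Whittaker functions.

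The main obstacle will be the careful bookkeeping of the three overlapping right-invariance conditions --- full $K$ for $W_1$, $\mathcal{K}_{0,p}(p^{\ell+d})$ for $a(\varpi^{-d}).W_3$, and $\mathcal{K}_{0,p}(p^k)$ for $a(\varpi^{-k}).f_2$ --- in the Iwahori decomposition, especially in the $|x|>1$ range when $\pi_3$ is ramified. In particular, one must verify that the interaction between the translation $a(\varpi^{-d})$ and the newform-support of $W_3$ in the Kirillov model produces exactly the exponent $d(-\vartheta_2+\vartheta_1)$ (rather than a weaker shift); this is the key arithmetic content that makes Lemma \ref{lemma-triple-prod} and Lemma \ref{lemma:3prod-Eis} strong enough to cover the small-$q$ range down to $q \leq D^{1/2-\varepsilon}$.
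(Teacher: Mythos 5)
Your proposal gets the high-level architecture right --- Part (1) is deduced from Part (2), and Part (2) is an Iwahori-coordinate calculation --- but both deductions contain gaps.

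For Part (1), the paper does not use a "sum over an orthonormal basis of $\pi_2^{\mathcal{K}_{0,p}(p^k)}$ of squared Rankin--Selberg integrals." It invokes the linearization identity of \cite[Lemma 6]{BJN}, which expresses $I(v_1,v_2,v_3)$ as a constant (an explicit ratio of local $L$-factors) times the \emph{single} quantity $|\alpha(W_1,a(\varpi^{-k}).f_2, a(\varpi^{-d}).W_3)|^2$. This is what makes the exponents in Part (1) \emph{exactly} the squares of those in Part (2): $k^4 = (k^2)^2$, etc. Your sum-over-basis variant is problematic on two counts. If you expand $a(\varpi^{-k}).v_2 = \sum_j c_j e_j$ inside the matrix coefficient $\langle\pi_2(g)v_2,v_2\rangle$ you get a \emph{double} sum with nonvanishing cross-terms $c_j\bar c_{j'}\alpha_j\bar\alpha_{j'}$, not a sum of squares. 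And even if it were a sum of squares, $O(k)$ terms each bounded by $(k^2 p^{\cdots})^2$ yields $k^5 p^{2\cdots}$, not the claimed $k^4$; "using that the basis has $O(k)$ elements squares the earlier bound" is not an explanation that produces the stated exponent.

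For Part (2), your proposed reorganization --- expand $a(\varpi^{-k}).f_2$ in the orthonormal basis of Lemma \ref{lemma:ONB} and treat each basis element separately --- is plausible but changes where the hard work sits, and you have not identified where it sits. Two specific problems. First, you state that the $|x|>1$ range is handled by "a Gauss-sum-type cancellation," but for the basis functions $f_{\Psi_p^{(j,0,k)},\chi}$ the contribution from $|x|>1$ simply \emph{vanishes}: by Lemma \ref{lemma:recovering-GH} these functions are supported, on $\GL_2(\Z_p)$, where the bottom row $(c,d)$ lies in $\Lambda^{(j,0,k)}$, which forces $d\in\Z_p^\times$, whereas for $n'(x)$ with $|x|>1$ the Iwasawa decomposition \eqref{Iwahori-Iwasawa} produces $d = x^{-1}\notin\Z_p^\times$. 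So there is no Gauss sum there. Second, and more importantly, after the coordinate change the Whittaker function $W_3$ ends up evaluated at $a(y\varpi^{-d})n'(\varpi^{d}x)$ with $x\in p^j\Z_p^\times$; when $d+j<\ell$ this element of $\GL_2(\Z_p)$ is \emph{not} in $\mathcal{K}_{0,p}(p^\ell)$, so you are evaluating the newvector of a ramified representation at a non-Iwahori point. The paper controls this by combining Lemma \ref{lemma:large-x}, the Sobolev-norm/distortion bound of Lemma \ref{lemma-MV} (after Michel--Venkatesh), and the $L^2$ average of \cite[Proposition 2.10]{Sa}. This is the genuine technical content of Part (2), and nothing in your sketch addresses it; you point to "the interaction between the translation $a(\varpi^{-d})$ and the newform-support of $W_3$ in the Kirillov model" but do not say how to bound $W_3$ off the Iwahori. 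Note also that Proposition \ref{prop-local-RS-bound1} cannot be quoted as a black box here, as it is stated only for \emph{unramified} $W_2$ translated by the \emph{full} level parameter $k$, not for a ramified $W_3$ translated by a smaller $d$.

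A final minor quibble: the $k^2$ in Part (2) does not come from "summing over the $O(k)$ basis vectors" (the paper does not use a basis expansion at all). It comes from dyadic ranges of $|x|$ and a Cauchy--Schwarz argument in the $y$-integral via \cite[Proposition 2.10]{Sa}.
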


\begin{proof}
We apply the linearization identity of \cite[Lemma 3.4.2]{MV2}, valid under the assumption that $\sum_i\vartheta_i<1/2$. We use the form appearing in \cite[Lemma 6]{BJN}. It therefore suffices to prove part \eqref{part2-RSbound} of the proposition.

Using Bruhat coordinates as in \eqref{bruhat}, the integral $\alpha (v_1,a(\varpi^{-k}).f_2,a(\varpi^{-d}).v_3)$ can be seen to be
\[
\int_{y\in\Q_p^\times}\int_{x\in\Q_p}W_1(a(y)n'(x))W_3(a(y)n'(x)a(\varpi^{-d})) f_2(a(y)n'(x)a(\varpi^{-k}))\, {\rm d} x\, |y|^{-1}{\rm d}^\times y.
\]
Similarly to the proof of Lemma \ref{prop-local-RS-bound1}, we use $n'(x)a(y^{-1})=a(y^{-1})n'(xy^{-1})$, and a change of variables $x\mapsto \varpi^k x$, to write this as
\[
p^{-k}\int_{y\in\Q_p^\times}\int_{x\in\Q_p}W_1\left(a(y)n'(\varpi^k x)\right)W_3\left(a(y\varpi^{-d})n'(\varpi^{k-d}x)\right) f_2\left(a(y\varpi^{-k})n'(x)\right) \,{\rm d} x\, |y|^{-1}{\rm d}^\times y.
\]
Using $f_2(a(y)g)=|y|^{1/2}\chi(y)f(g)$ we get
\[
p^{-k/2}\chi(\varpi^{-k})\int_{y\in\Q_p^\times}\int_{x\in\Q_p}W_1(a(y)n'(\varpi^kx))W_3(a(y\varpi^{-d})n'(\varpi^{k-d}x))|y|^{-1/2} \chi(y)  f_2(n'(x))\, {\rm d} x \,{\rm d}^\times y.
\]
Note that the factor $p^{-k/2}\chi(\varpi^{-k})$ is of size $O(p^{-k(1/2-\vartheta_2)})$, so we must show that the remaining integral is $O(k^2p^{d(-\vartheta_2+\vartheta_1)})$. We write said integral as $I+J$, where $I$ is the contribution from $|x|\leq 1$ and $J$ the contribution from $|x|>1$. 

Since $n'(x)\in \mathbf{GL}_2(\Z_p)$ for $|x|\leq 1$ and $f_2(\mathbf{GL}_2(\Z_p))=f_2(e)$, we obtain
\[
I=f_2(e)\int_{y\in\Q_p^\times}\int_{x\in\Z_p}W_1\left(a(y)n'(\varpi^kx)\right)W_3\left(a(y\varpi^{-d})n'(\varpi^{k-d}x)\right)|y|^{-1/2} \chi(y)\, {\rm d} x\, {\rm d}^\times y.
\]
We now use the right $\mathbf{GL}_2(\Z_p)$ invariance of $W_1$ and the right-$\mathcal{K}_{0, p}(p^\ell)$-invariance of $W_3$ to write this as
\[
\int_{y\in\Q_p^\times}W_1\left(a(y)\right)W_3\left(a(y\varpi^{-d})\right) |y|^{-1/2}\chi(y)  \,{\rm d}^\times y\int_{x\in\Z_p}\, {\rm d} x.
\]
The $x$ integral is 1, while for the $y$-integral, we use the estimate $W_i(a(y))\ll |y|^{1/2-\vartheta_i}{\bf 1}_{\Z_p}(y)$. This yields a $y$-integral that is bounded by  
\[
p^{d(1/2-\vartheta_3)}\int_{|y|\leq p^{-d}} |y|^{-1/2+\vartheta_2-\vartheta_1-\vartheta_3} {\rm d} y\ll p^{d(-\vartheta_2+\vartheta_1)},
\]
as desired.

We now treat $J$. Using \eqref{Iwahori-Iwasawa} and the definition of $f_2$, it follows that, for $|x|>1$, $f_2(n'(x))=(\delta^{1/2}\chi)(n'(x))=|1/x^2|^{1/2}\chi(-1/x^2)=|x|^{-1}\chi(-1/x^2)$. Thus
\[
|J| \leq  \int_{y\in\Q_p^\times}\int_{|x|>1}
|W_1\left(a(y)n'(\varpi^k x)\right)W_3\left(a(y\varpi^{-d})n'(\varpi^{k-d}x)\right)| |y|^{-1/2}  |x|^{-1-2\vartheta_2}\,{\rm d} x\, {\rm d}^\times y.
\]
For $1<|x|\leq p^k$ we have $n'(\varpi^k x)\in {\rm GL}_2(\Z_p)$, in which case $W_1\left(a(y)n'(\varpi^k x)\right)=W_1\left(a(y)\right)\ll |y|^{1/2-\vartheta_1}{\bf 1}_{\Z_p}(y)$. Similarly, when $|x|>p^k$, $\big|W_1\left(a(y)n'(\varpi^k x)\right)\big|$ is
\begin{align*}
\bigg|W_1\left(\begin{pmatrix} 1 & y/(\varpi^kx)\\ & 1\end{pmatrix}\begin{pmatrix} -y/(\varpi^k x)^2 & \\ & 1\end{pmatrix}\right)\bigg|
&=\bigg|W_1\left(\begin{pmatrix} -y/(\varpi^k x)^2 & \\ & 1\end{pmatrix}\right)\bigg|\\
&\ll \begin{cases} p^{2k(1/2-\vartheta_1)} |x|^{-1+2\vartheta_1} |y|^{1/2-\vartheta_1},& |y|\leq |x|^2 p^{-2k};\\ 0,& \text{else}.
\end{cases}
\end{align*}
Thus we get
\begin{equation}\label{eq:reduce-to-W3}
\begin{aligned}
J\ll&\int_{1<|x|\leq p^k}\int_{|y|\leq 1}
|W_3(a(y\varpi^{-d})n'(\varpi^{k-d}x))|  |x|^{-1} |y|^{-\vartheta_1} \,{\rm d} x\, {\rm d}^\times y\\
&+ p^{2k(1/2-\vartheta_1)}\int_{|x|>p^k} \int_{|y|\leq |x|^2p^{-2k}}
|W_3\left(a(y\varpi^{-d})n'(\varpi^{k-d}x)\right)| |x|^{-2} |y|^{-\vartheta_1}\,{\rm d} x\, {\rm d}^\times y.
\end{aligned}
\end{equation}
We shall estimate the contribution to the integrals in \eqref{eq:reduce-to-W3} according to the following three ranges of $x$: $1<|x|\leq p^{k-(\ell+d)}$, $|x|\geq p^{k-d}$, and $p^{-\ell +k-d}<|x|\leq p^{k-d}$.

The contribution of the double integral when $1<|x|\leq p^{k-(\ell+d)}$ is rather simple to estimate, since $n'(\varpi^{k-d}x)\in\mathcal{K}_0(p^\ell)$ in that range. We obtain
\[
\int_{1<|x|\leq p^{k-(\ell+d)}}\int_{|y|\leq 1}
|W_3(a(y\varpi^{-d}))|  |x|^{-1} |y|^{-\vartheta_1}\, {\rm d}^\times y\, {\rm d} x.
\]
Using $W_3(a(y))\ll |y|^{1/2-\vartheta_3}{\bf 1}_{\Z_p}(y)$ this is at most 
\[
p^{d(1/2-\vartheta_3)}\int_{1<|x|\leq p^{k-(\ell+d)}}\int_{|y|\leq p^{-d}}
 |y|^{-1/2-\vartheta_3} |x|^{-1}\, {\rm d} y\, {\rm d} x\ll \int_{1<|x|\leq p^{k-(\ell+d)}} |x|^{-1}\, {\rm d} x,
\]
which is $O(k)$.

Next, for the range $|x|\geq p^{k-d}$, we use the following result.

\begin{lemma}\label{lemma:large-x}
For $|x|\geq p^{k-d}$ we have
\[
W_3(a(y\varpi^{-d})n'(\varpi^{k-d}x))\ll \begin{cases}
p^{(2k-d-\ell)(1/2-\vartheta_3)}|y|^{1/2-\vartheta_3}|x|^{-1+2\vartheta_3}, & |y|\leq |x|^2 p^{-2k+d+\ell};\\
0, & \textrm{else}.
\end{cases}
\]
\end{lemma}

\begin{proof} Observe that for the stated range of $x$ we have $\varpi^{d-k}x^{-1}\in\Z_p$, so that $n(\varpi^{d-k}x^{-1})\in K$. Note furthermore that for $X,Y\neq 0$, we have 
\begin{equation}\label{n'-into-Bruhat}
a(Y)n'(X)=z(X) n(Y/X)a(YX^{-2})wn(X^{-1}),
\end{equation}
where $z(X)={\rm diag}(X,X)$. Thus, using the matrix computation \eqref{n'-into-Bruhat}, we have the following equivalence up to right $\mathcal{K}_0(p^\ell)$- and left $ZN$-multiplication:
\[
a(y\varpi^{-d})n'(\varpi^{k-d}x)\equiv a(y\varpi^{d-2k} x^{-2})w\equiv a(y\varpi^{d-2k}x^{-2})w\begin{pmatrix} & 1\\ -\varpi^\ell& \end{pmatrix}.
\]
The latter is $a(y\varpi ^{d-2k}x^{-2}) a(\varpi^\ell)=a(y\varpi^{d-2k+\ell} x^{-2})$. Thus, for $|x|\geq p^{k-d}$ we obtain $W_3(a(y\varpi^{-d})n'(\varpi^{k-d}x))=W_3(a(y\varpi^{d-2k+\ell} x^{-2}))$. The lemma follows from an application of the bound $W_3(a(y))\ll |y|^{1/2-\vartheta_3}{\bf 1}_{\Z_p}(y)$.
\end{proof}

Using Lemma \ref{lemma:large-x} we find that when $p^{k-d}\leq |x|\leq p^k$, we have
\begin{align*}
\int_{p^{k-d}\leq |x|\leq p^k}&\int_{|y|\leq |x|^2 p^{-2k+d+\ell}}
|W_3(a(y\varpi^{-d})n'(\varpi^{k-d}x))| |y|^{-\vartheta_1} \, {\rm d}^\times y\, |x|^{-1} {\rm d} x\\
&\ll p^{(2k-d-\ell)(1/2-\vartheta_3)}\int_{p^{k-d}\leq |x|\leq p^k}\int_{|y|\leq |x|^2 p^{-2k+d+\ell}}
|y|^{-1/2-\vartheta_1-\vartheta_3} \, {\rm d} y\, |x|^{-2+2\vartheta_3} {\rm d} x\\
&\ll p^{(2k-d-\ell)\vartheta_1} \int_{p^{k-d}\leq |x|\leq p^k}|x|^{-1-2\vartheta_1} \, {\rm d} x,
\end{align*}
which is $O(kp^{(d-\ell)\vartheta_1})$ as desired. Similarly, when $|x|>p^k$ we have
\begin{align*}
p^{2k(1/2-\vartheta_1)}\int_{|x|>p^k} &\int_{|y|\leq |x|^2p^{-2k}}
|W_3\left(a(y\varpi^{-d})n'(\varpi^{k-d}x)\right)| |x|^{-2} |y|^{-\vartheta_1}\, {\rm d} x\, {\rm d}^\times y\\
&\ll p^{2k(1-\vartheta_1-\vartheta_3)-(d+\ell)(1/2-\vartheta_3)}\int_{|x|>p^k}\int_{|y|\leq |x|^2p^{-2k}}
|y|^{-1/2-\vartheta_1-\vartheta_3}\, {\rm d} y\,  |x|^{-3+2\vartheta_3} \, {\rm d} x  \\
 &\ll p^{k-(d+\ell)(1/2-\vartheta_3)}\int_{|x|>p^k}
 |x|^{-2-2\vartheta_1} \, {\rm d} x,
 \end{align*}
which is $O(p^{-(d+\ell)(1/2-\vartheta_3)-1-2\vartheta_1(k+1)})=O(p^{-1})$.

For the final range on $x$, we claim that every fixed $x$ such that $p^{-\ell +k-d}<|x|\leq p^{k-d}$ the $y$-integral in \eqref{eq:reduce-to-W3} is $O(k)$, uniformly in $x$. This will be enough since
\begin{align*}
\int_{p^{-\ell +k-d}<|x|\leq p^{k-d}}&\int_{|y|\leq 1}
|W_3(a(y\varpi^{-d})n'(\varpi^{k-d}x))| |y|^{-\vartheta_1}\, {\rm d}^\times y\, |x|^{-1} {\rm d} x\\
&\ll  k\int_{p^{-\ell +k-d}<|x|\leq p^{k-d}}  |x|^{-1}\, {\rm d} x= k\sum_{j=-\ell +k-d+1}^{k-d} p^{-j} (p^j(1-1/p))=O(k^2).
\end{align*}

To establish this claim, the following result will treat the $y$-integral close to $0$.

\begin{lemma}[Michel--Venkatesh]\label{lemma-MV}
There is $A>0$ such that, for $1\leq |x|\leq p^k$, we have 
\[
W_3\left(a(y\varpi^{-d})n'(\varpi^{k-d}x)\right)\ll_N p^{A k}\min (|y|^{1/2-\vartheta_3}, |y|^{-N}).
\]
\end{lemma}

\begin{proof}
Let $\mathcal{S}_d$ be the degree $d$ Sobolev norm on the Kirillov model of $\pi_3$, defined as in \cite[Proposition 2.3.2]{MV2}. We apply \cite[\S 3.2.3]{MV2} and the distortion property in \cite[ \S 2.4.1]{MV2} to get
\begin{align*}
W_3\left(a(y\varpi^{-d})n'(\varpi^{k-d}x)\right)&\ll_N\mathcal{S}_d(n'(\varpi^{k-d}x).W_3)\min (|y\varpi^{-d}|^{1/2-\vartheta_3}, |y\varpi^{-d}|^{-N})\\
&\ll p^{d(1/2-\vartheta_3)}\|{\rm Ad}(n'(\varpi^kx))\|\mathcal{S}_d(W_3)\min (|y|^{1/2-\vartheta_3}, |y|^{-N}).
\end{align*}
Now $\|{\rm Ad}(n'(\varpi^{k-d}x))\|\ll p^{kA_1}$, for some $A_1>0$, since $p^{-k}\leq p^{-k+d}\leq |x\varpi^{k-d}|\leq p^d\leq p^k$. We then have $\mathcal{S}_d(W_3)\ll_d p^{\ell A_2}$, for an $A_2>0$ depending on $d$, by the $\mathcal{K}_0(p^\ell)$-invariance of $W_3$ (see \cite[Remark 2.9]{Ne}).\end{proof}

Indeed, inserting Lemma \ref{lemma-MV} we obtain
\begin{align*}
\int_{|y|\leq p^{-6Ak}}
|W_3\left(a(y\varpi^{-d})n'(\varpi^{k-d}x)\right)| |y|^{-\vartheta_1}\, {\rm d}^\times y&\ll p^{Ak}\int_{|y|\leq p^{-6Ak}}|y|^{-1/2-\vartheta_1-\vartheta_3}\, {\rm d} y,
\end{align*}
which is $O(p^{-2Ak(1-\vartheta_1-\vartheta_3)})=O(1)$ as desired. For the remaining range on $y$, we apply the Cauchy--Schwarz inequality and \cite[Proposition 2.10]{Sa} to the remaining $y$-integral, getting
\begin{align*}
\int_{p^{-6Ak}< |y|\leq 1}
&|W_3\left(a(y\varpi^{-d})n'(\varpi^{k-d}x)\right)| |y|^{-\vartheta_1}\, {\rm d}^\times y\\[-0.2cm]
&\leq \sum_{j=-6Ak}^0p^{-\vartheta_1 j}\left(\int_{|y|=p^j}
|W_3\left(a(y\varpi^{-d})n'(\varpi^{k-d}x)\right)|^2\,  {\rm d}^\times y\right)^{1/2}\ll k.
\end{align*}
Putting everything together finishes the proof of part \eqref{part2-RSbound} of Proposition \ref{prop-local-RS-bound}.\end{proof}

\end{document}